\title[Higher Nakayama algebras I]{Higher Nakayama algebras I: Construction}
\author[G.~Jasso]{Gustavo Jasso}
\address[Jasso]{Mathematisches Institut\\
  Universit\"at Bonn\\
  Endenicher Allee 60\\
  D-53115 Bonn\\
  Germany}%
\email{gjasso@math.uni-bonn.de}%
\urladdr{http://gustavo.jasso.info}
\author[J.~K\"ulshammer]{Julian K\"ulshammer}
\address[K\"ulshammer]{Department of Mathematics\\
  Uppsala University\\
  Box 480\\
  751 06 Uppsala\\
  Sweden}%
\email{julian.kuelshammer@math.uu.se}%
\urladdr{https://katalog.uu.se/profile/?id=N18-1115}%
\address[Psaroudakis]{Department of Mathematics\\
  Aristotle University of Thessaloniki\\
  54124 Thessaloniki\\
  Greece}%
\email{chpsaroud@math.auth.gr}%
\urladdr{https://sites.google.com/view/chrysostomos-psaroudakis/}%
\address[Kvamme]{Sondre Kvamme\\
Laboratoire de math\'ematiques d'Orsay\\
Universit\'e Paris-Sud, CNRS\\
Universit\'e Paris-Saclay\\
91405 Orsay\\
France
}%
\email{sondre.kvamme@u-psud.fr}%
\urladdr{https://www.math.u-psud.fr/~kvamme}%
\let\@wraptoccontribs\wraptoccontribs
\begin{document}

\date{\today}

\begin{abstract}
  We introduce higher dimensional analogues of the Nakayama algebras from the
viewpoint of Iyama's higher Auslander--Reiten theory. More precisely, for each
Na\-ka\-ya\-ma algebra $A$ and each positive integer $d$, we construct a finite
dimensional algebra $A^{(d)}$ having a distinguished $d$-cluster-tilting
$A^{(d)}$-module whose endomorphism algebra is a higher dimensional analogue of
the Auslander algebra of $A$. We also construct higher dimensional analogues of
the mesh category of type $\mathbb{ZA}_\infty$ and the tubes.


\end{abstract}

\subjclass[2010]{Primary: 16G70. Secondary: 16G20}

\keywords{Auslander--Reiten theory; Auslander--Reiten quiver; Nakayama algebras;
  cluster-tilting; homological embedding;}

\thanks{The authors would like to thank Erik Darp{\"o} and Osamu Iyama for
  sharing a preliminary version of their preprint \cite{DI17}. We also thank
  Martin Herschend for useful conversations surrounding the combinatorics used
  in this article. Finally, the authors thank the anonymous referee for their
  numerous suggestions and corrections which improved the presentation of the article.}

\maketitle

\setcounter{tocdepth}{1}
\tableofcontents


\section*{Introduction}

Let $A$ be a finite dimensional algebra over some field, which for simplicity we
assume to be algebraically closed. The most basic problem in representation
theory is to classify the indecomposable finite dimensional (right) $A$-modules.
Since the general problem is known to be hopeless, special attention has been
paid to finite dimensional algebras of finite representation type. These are the
algebras for which there exist only finitely many isomorphism classes of
indecomposable finite dimensional $A$-modules. Classifying such algebras is
still a daunting problem. This lead to the study of algebras satisfying
additional homological constraints. An important class of such algebras is that
of hereditary algebras of finite representation type, which were classified by
Gabriel in \cite{Gab72}. Another significant class is that of selfinjective
algebras, which includes the group algebras of finite groups. Selfinjective
algebras of finite representation type were classified by Riedtmann in
\cite{Rie80}.

Yet another important class of algebras of finite representation type is that of
Nakayama algebras, see for example Chapter IV.2 in \cite{ARS97} and Chapter V in
\cite{ASS06}. In relation to the above, it is worth noting that symmetric
Nakayama algebras are stably equivalent to those Brauer tree algebras which
arise from blocks of group algebras of finite representation type.

The classification of the Nakayama algebras over an algebraically closed field
up to Morita equivalence is straightforward. For each non-negative integer $n$,
the path algebra of the linearly oriented quiver of type $\mathbb{A}_n$ is the
unique (basic, connected) \emph{hereditary} Nakayama algebra having $n$ simple
modules. Similarly, for each positive integer $n$ and each $\ell\geq2$ the
quotient of the path algebra of the circular quiver with $n$ vertices and $n$
arrows modulo the ideal generated by all paths of length $\ell$ is the unique
(basic, connected) \emph{selfinjective} Nakayama algebra having $n$ simple
modules whose projective covers have Loewy length $\ell$. The (basic,
connected) Nakayama algebras are the \emph{admissible quotients} of the
hereditary and the selfinjective Nakayama algebras.

The purpose of this article is to construct, for each (basic, connected)
Nakayama algebra $A$ and each positive integer $d\geq2$, a finite dimensional
algebra $A^{(d)}$ which we regard as a `$d$-dimensional analogue' of $A$. Our
construction is similar in spirit to that of the classical Nakayama algebras. We
consider finite dimensional algebras $A_n^{(d)}$ and
$\widetilde{A}_{n-1,\ell}^{(d)}$ which we think of as higher dimensional
analogues of the hereditary and of the selfinjective Nakayama algebras (see
\th\ref{def:An,def:A-tilde:selfinjective}). The first family of algebras was
introduced by Iyama in \cite{Iya11} while the second one is constructed using
recent results from Darp\"o and Iyama \cite{DI17}. Every other higher Nakayama
algebra is then constructed as an \emph{idempotent quotient} of a higher
Nakayama algebra of the form $A_n^{(d)}$ or $\widetilde{A}_{n-1,\ell}^{(d)}$
(see \th\ref{def:An:l,def:A-tilde}). The idempotent quotients that arise are
determined by the combinatorial data classifying the classical Nakayama
algebras, their so-called Kupisch series.

The analogy between the classical Nakayama algebras and the higher Nakayama
algebras is better appreciated in the context of Iyama's higher
Auslander--Reiten theory \cite{Iya07,Iya07a}, which we review in Section
\ref{subsec:higher_AR_theory}. Instead of algebras of finite representation type
one considers algebras $A$ having a so-called $d$-cluster tilting $A$-module
$M$. From the viewpoint of this theory, the additive closure of $M$ in $\mmod
A$, denoted by $\add M$, is considered to be a replacement of the module
category. This is justified by the existence of a `$(d+1)$-dimensional'
Auslander--Reiten theory \emph{inside} $\add M$ (see Subsection
\ref{subsec:higher_AR_theory} for definitions and precise statements).
Constructing algebras having a $d$-cluster-tilting module is a difficult task,
nonetheless higher Auslander--Reiten theory has found connections and
applications in subjects such as algebraic geometry, combinatorics, and higher
category theory \cite{OT12,IT13,IW14,DJW18}. Previous work on this direction has
been mostly restricted to algebras of global dimension $d$ and to selfinjective
algebras (see \cite{IO11,HI11} and \cite{DI17} respectively). To our knowledge,
the higher Nakayama algebras are the first family of algebras having a
$d$-cluster-tilting module and which are neither selfinjective nor have global
dimension precisely $d$.

Our higher Nakayama algebras are equipped with distinguished $d$-cluster-tilting
modules whose additive closures are, by design, higher dimensional analogues of
the module categories of their classical counterparts (see
\th\ref{thm:An,thm:A-tilde}). The explicit combinatorial nature of Nakayama
algebras makes them particularly suitable for computations. As such, we expect
the higher Nakayama algebras to become fertile ground for testing conjectures in
higher Auslander--Reiten theory.

Along the way we also construct higher dimensional analogues of the mesh
category of type $\mathbb{ZA}_\infty$ and of the tubes. The latter categories,
ubiquitous in representation theory \cite{Rin78,Web82,Cra88,Erd95}, can be
thought of as infinite limits of module categories of Nakayama algebras. Their
higher analogues are constructed in a similar vein (see
\th\ref{def:mesh,def:tubes}). To the best of our knowledge, these categories
provide the first non-trivial examples of abelian categories having no non-zero
projective objects and no non-zero injective objects and having a
cluster-tilting subcategory (see \th\ref{thm:mesh,thm:tubes}).

Further properties of the higher Nakayama algebras and the higher analogues of
the mesh category of type $\mathbb{ZA}_\infty$ and the tubes are investigated in
separate works \cite{JKa,JKb}.

The article is structured as follows. In Section \ref{sec:preliminaries} we
collect a few preliminaries which are needed in the construction of the higher
Nakayama algebras. This includes an `idempotent reduction' lemma
(\th\ref{lemma:d-CT:idempotent_reduction}) which is the main technical tool in
our constructions. In Section \ref{sec:An} we recall Iyama's construction of the
higher Auslander algebras of type $\mathbb{A}$ and use it to introduce the
higher Nakayama algebras of type $\mathbb{A}$. In Section \ref{sec:A-infty} we
introduce the higher dimensional analogues of the mesh category of type
$\mathbb{ZA}_\infty$ as well as infinite versions of the higher Nakayama
algebras of type $\mathbb{A}$. The latter categories are needed in the
construction of the higher Nakayama algebras of type $\widetilde{\mathbb{A}}$ in
Section \ref{sec:A-tilde}. The higher dimensional analogues of the tubes are
introduced at the end of Section \ref{sec:A-tilde}. For the convenience of the
reader we include an index of symbols at the end of the article.


\section*{Conventions}

Unless noted otherwise, throughout the article $d$ denotes an arbitrary positive
integer. We fix an arbitrary field
$\mathbbm{k}$\nomenclature[00]{$\mathbbm{k}$}{the ground field} and write
$D:=\Hom_{\mathbbm{k}}(-,\mathbbm{k})$\nomenclature[01]{$D$}{the usual duality
  on the category of finite dimensional vector spaces} for the usual duality on
the category of finite dimensional vector spaces. By `algebra' we mean unital
associative $\mathbbm{k}$-algebra and by `module' we mean right module. By
`category' we mean $\mathbbm{k}$-category, that is category enriched in vector
spaces. Thus, for two objects $x$ and $y$ in a category $\mathcal{C}$ there is a
vector space of morphisms $\mathcal{C}(x,y)$ and the composition law in
$\mathcal{C}$ is bilinear. We compose morphisms in an abstract category as
functions: if $f\colon x\to y$ and $g\colon y\to z$ are morphisms in some
category, then we denote their composite by $g\circ f\colon x\to z$. Given an
object $X$ in an additive category $\mathcal{C}$, we denote by $\add X$ the
smallest additive subcategory of $\mathcal{C}$ containing $X$ and which is
closed under direct summands. Recall that the Loewy length of a finite
dimensional module $M$ is the length of its radical filtration; we denote this
number by $\len(M)$\nomenclature[03]{$\len(M)$}{the Loewy length of a module}.


\section{Preliminaries}
\label{sec:preliminaries}

In this section we recall some terminology as well as results which are needed
in the remainder of the article. Our construction of the higher Nakayama
algebras requires us to consider not only finite dimensional algebras but, more
generally, locally bounded categories. Of particular interest for us are certain
locally bounded categories constructed from partially ordered sets. Therefore we
begin with a brief review of basic aspects of the representation theory of
locally bounded categories and partially ordered sets. We also include an
overview of higher Auslander--Reiten theory in which we prove a technical
result, \th\ref{lemma:d-CT:idempotent_reduction}, which is the key ingredient in
our construction of the higher Nakayama algebras.

\subsection{Representation theory of locally bounded categories}
\label{subsec:locally_bounded_categories}

Let $\mathcal{A}$ be a small category\footnote{We remind the reader of our
  conventions regarding categories.}. By definition, a \emph{(right)
  $\mathcal{A}$-module} is a linear functor
$M\colon\mathcal{A}^\op\to\Mod\mathbbm{k}$, that is a functor such that for all
$x,y\in\mathcal{A}$ the induced function
\[
  M_{yx}\colon\mathcal{A}(x,y)\to\Hom_{\mathbbm{k}}(M_y,M_x)
\]
is a linear map. We denote the abelian category of $\mathcal{A}$-modules and
natural transformations between them by $\Mod\mathcal{A}$.
\nomenclature[04]{$\Mod\mathcal{A}$}{the category of $\mathcal{A}$-modules}By
Yoneda's lemma, for every object $x\in\mathcal{A}$ the representable functor
\[
  \mathcal{A}(-,x)\colon\mathcal{A}^\op\to\Mod\mathbbm{k}
\]
is a projective $\mathcal{A}$-module. The category of \emph{finitely generated
  projective $\mathcal{A}$-modules} is the subcategory
\[
  \proj\mathcal{A}:=\add\setP{\mathcal{A}(-,x)}{x\in\mathcal{A}}\subset\Mod\mathcal{A}.
\]\nomenclature[05]{$\proj\mathcal{A}$}{the category of finitely generated
  projective $\mathcal{A}$-modules}
By definition, $\proj\mathcal{A}$ is an idempotent complete additive category.

\begin{remark}
  Let $\mathcal{A}$ be a category with finitely many objects. There is a
  canonical isomorphism between $\Mod\mathcal{A}$ and the category of right
  modules over the unital algebra
  \[
    \bigoplus_{x,y\in\mathcal{A}}\mathcal{A}(x,y),
  \]
  whose multiplication is induced by the composition law in $\mathcal{A}$ (the
  unit is the tuple $\tupleP{1_x}{x\in\mathcal{A}}$). In view of this
  observation, throughout the article we identify categories with finitely many
  objects and algebras.
\end{remark}

A small category $\mathcal{A}$ is \emph{locally finite dimensional} if for every
pair of objects $x,y\in\mathcal{A}$ the vector space of morphisms
$\mathcal{A}(x,y)$ is finite dimensional; note that this property implies that
the Krull--Schmidt theorem holds in $\proj\mathcal{A}$, see for example
Corollary 4.4 in \cite{Kra15}. Let $\mathcal{A}$ be a locally finite dimensional
category. In order to simplify the exposition we make the additional assumptions
that all objects in $\mathcal{A}$ have local endomorphism ring and that objects
in $\mathcal{A}$ are pairwise non-isomorphic. These assumptions are equivalent
to requiring that the Yoneda embedding
$\mathcal{A}\hookrightarrow\Mod\mathcal{A}$ identifies $\mathcal{A}$ with a
complete set of representatives of the isomorphism classes of indecomposable
finitely generated projective $\mathcal{A}$-modules. Morita theory guarantees
that these assumptions do not result in a loss of generality.

Throughout the article we are mostly concerned with the following class of
$\mathcal{A}$-modules. An $\mathcal{A}$-module $M$ is \emph{finite dimensional}
if $\bigoplus_{x\in\mathcal{A}}M_x$ is a finite dimensional vector space. Thus,
$M$ is finite dimensional if for every $x\in\mathcal{A}$ the vector space $M_x$
is finite dimensional and $M_x=0$ for all but finitely many objects
$x\in\mathcal{A}$. The finite dimensional $\mathcal{A}$-modules form an abelian
subcategory $\mmod\mathcal{A}$ of
$\Mod\mathcal{A}$.\nomenclature[06]{$\mmod\mathcal{A}$}{the category of finite
  dimensional $\mathcal{A}$-modules}

A locally finite dimensional category $\mathcal{A}$ is \emph{locally bounded} if
for every object $x\in\mathcal{A}$ there are only finitely many objects
$y\in\mathcal{A}$ such that $\mathcal{A}(x,y)\neq0$ and only finitely many
objects $w\in\mathcal{A}$ such that $\mathcal{A}(w,x)\neq0$. Equivalently,
$\mathcal{A}$ is locally bounded if for each object $x\in\mathcal{A}$ the
projective $\mathcal{A}$-module $\mathcal{A}(-,x)$ and the projective
$\mathcal{A}^\op$-module $\mathcal{A}(x,-)$ are finite dimensional. This readily
implies that, if $\mathcal{A}$ is a locally bounded category, then the abelian
category $\mmod\mathcal{A}$ has enough projectives and a classical argument
shows that every finite dimensional $\mathcal{A}$-module has a projective cover
in $\mmod\mathcal{A}$, see for example Proposition 4.1 in \cite{Kra15}. As a
consequence of the duality
\begin{align*}
  D\colon\mmod\mathcal{A}&\to\mmod(\mathcal{A}^\op),\\
  M&\mapsto D\circ M
\end{align*}
the abelian category $\mmod\mathcal{A}$ also has enough injectives and every
finite dimensional $\mathcal{A}$-module has an injective envelope in
$\mmod\mathcal{A}$. We denote the category of finitely generated injective
$\mathcal{A}$-modules by\nomenclature[07]{$\inj\mathcal{A}$}{the category of
  finitely generated injective $\mathcal{A}$-modules}
\[
  \inj\mathcal{A}:=D(\proj(\mathcal{A}^\op)).
\]
Given a finite dimensional $\mathcal{A}$-module $M$, its Auslander--Bridger
transpose $\operatorname{Tr}M$ and its Auslander--Reiten translate
\[
  \tau(M):=D\operatorname{Tr}(M)
\]
\nomenclature[08]{$\tau(M)$}{the Auslander--Reiten translate of $M$}can be
defined in the usual way, see for example pages 337 and 338 in \cite{AR74}. It
is also a classical fact that the abelian category $\mmod\mathcal{A}$ has
almost-split sequences, see for example page 343 in \cite{AR74}.

Let $\mathcal{A}$ be an essentially small locally finite dimensional category
(with local endomorphism rings) and $\mathcal{X}$ a full subcategory of
$\mathcal{A}$. The \emph{idempotent ideal of $\mathcal{A}$ generated by
  $\mathcal{X}$}, denoted by $[\mathcal{X}]$, is the ideal of morphisms in
$\mathcal{A}$ which factor through an object in $\mathcal{X}$. The category
$\underline{\mathcal{A}}_{\mathcal{X}}$ has the same objects as $\mathcal{A}$
but has vector spaces of morphisms
\[
  \underline{\mathcal{A}}_{\mathcal{X}}(x,y):=\frac{\mathcal{A}(x,y)}{[\mathcal{X}](x,y)}.
\]
An object $x\in\mathcal{A}$ becomes a zero object in
$\underline{\mathcal{A}}_{\mathcal{X}}$ if and only if it belongs to
$\mathcal{X}$. It is well known that the canonical functor
$\pi\colon\mathcal{A}\to\underline{\mathcal{A}}_{\mathcal{X}}$ induces a fully
faithful exact functor
$\pi^*\colon\Mod\underline{\mathcal{A}}_{\mathcal{X}}\to\Mod\mathcal{A}$ which
restricts to a fully faithful exact functor
\[
  \pi^*\colon\mmod\underline{\mathcal{A}}_{\mathcal{X}}\to\mmod\mathcal{A},
\]
see for example \cite{Ste71} in the case of rings. Note that $\pi^*$ identifies
$\Mod\underline{\mathcal{A}}_{\mathcal{X}}$ with the full subcategory of
$\Mod\mathcal{A}$ consisting of those $\mathcal{A}$-modules $M$ such that
$M_x=0$ for all $x\in\mathcal{X}$. To alleviate the notation we leave this
identification implicit in the sequel. We also use the standard notations
\[
  \underline{\mmod}\,\mathcal{A}:=\underline{\mmod\mathcal{A}}_{\proj\mathcal{A}}
  \qquad\text{and}\qquad
  \overline{\mmod}\,\mathcal{A}:=\underline{\mmod\mathcal{A}}_{\inj\mathcal{A}}.
\]

A locally bounded category $\mathcal{A}$ is \emph{selfinjective} if
$\proj\mathcal{A}=\inj\mathcal{A}$. In this case $\mmod\mathcal{A}$ is a
Frobenius abelian category and
$\underline{\mmod}\,\mathcal{A}=\overline{\mmod}\,\mathcal{A}$ has the structure
of a triangulated category for which the suspension automorphism is given by
Heller's cosyzygy functor
$\Omega^-\colon\underline{\mmod}\,\mathcal{A}\to\underline{\mmod}\,\mathcal{A},$
see Section 2 in \cite{Hap88} for details. In this case the Auslander--Reiten
translation induces an exact equivalence
$\tau\colon\underline{\mmod}\,\mathcal{A}\to\underline{\mmod}\,\mathcal{A}$. In
particular, there is a natural isomorphism
$\tau\circ\Omega^-\cong\Omega^-\circ\tau$. Moreover, the classical
Auslander--Reiten formula implies the existence of bifunctorial isomorphisms
\[
  D\underline{\Hom}_{\mathcal{A}}(M,N)\cong\Ext_{\mathcal{A}}^1(N,\tau(M))\cong\underline{\Hom}_{\mathcal{A}}(N,(\tau\circ\Omega^-)(M))
\]
for all $M,N\in\mmod\mathcal{A}$, where the rightmost isomorphism is obtained by
the usual dimension-shifting argument.

Let $\mathcal{T}$ be a triangulated category with finite dimensional spaces of
morphisms. Recall from \cite{BK89} that an exact autoequivalence
$\mathbb{S}\colon\mathcal{T}\to\mathcal{T}$ is a \emph{Serre functor} if there
are bifunctorial isomorphisms
$D\mathcal{T}(X,Y)\cong\mathcal{T}(Y,\mathbb{S}(X))$ for all
$X,Y\in\mathcal{T}$. Note that Yoneda's embedding implies that the Serre
functor, if it exists, is unique up to canonical isomorphism. In particular, if
$\mathcal{A}$ is a selfinjective locally bounded category, then the
Auslander--Reiten formula shows that
\[
  \mathbb{S}:=(\tau\circ\Omega^-)\colon\underline{\mmod}\,\mathcal{A}\to\underline{\mmod}\,\mathcal{A}
\]
is a Serre functor.

Let $\mathcal{A}$ be a locally finite dimensional category. Although the abelian
category $\mmod\mathcal{A}$ does not necessarily have enough projectives nor
enough injectives, we can always define the extension spaces using equivalence
classes of extensions in the sense of Yoneda \cite{Yon60}. With this in mind, we
say that the abelian category $\mmod\mathcal{A}$ has \emph{global dimension
  $\delta$} if
\[
  \delta=\sup\setP{i\geq0}{\exists{M,N}\in\mmod\mathcal{A}:\Ext_{\mathcal{A}}^i(M,N)\neq0}.
\]

\subsection{Representation theory of partially ordered sets}
\label{subsec:posets}

Let $(P,\leq)$ be a poset. The \emph{incidence category of $(P,\leq)$} is the
category $\mathcal{P}$ with set of objects $P$ and vector spaces of morphisms
\[
  \mathcal{P}(x,y):=\begin{cases}
    \mathbbm{k}f_{yx}&\text{if }x\leq y,\\
    0&\text{otherwise}
  \end{cases}
\]
(by convention, $f_{yx}:=0$ if $x\not\leq y$). The composition law in
$\mathcal{P}$ is completely determined by requiring the equation
\[
  f_{zx}=f_{zy}\circ f_{yx}
\]
to be satisfied whenever $x\leq y\leq z$. When convenient, we identify
$(P,\leq)$ with its incidence category $\mathcal{P}$, which is clearly a locally
finite dimensional category.

\begin{example}
  \th\label{ex:poset:An} Let $n$ be a positive integer. A basic example of a
  poset, which is of central importance in this article, is the finite linear
  order
  \[
    A_n:=\set{0<1<\cdots<n-1}.
  \]
  When viewed as a locally finite dimensional category, the above poset is
  canonically isomorphic to the path category of the linear quiver
  \[
    \mathbb{A}_n:0\to 1\to\cdots n-1.
  \]
\end{example}

Given $x_1,x_2\in P$, define the \emph{closed interval from $x_1$ to $x_2$} to
be the subset
\[
  [x_1,x_2]:=\setP{x\in P}{x_1\leq x\leq x_2}.
\]
Quotients by idempotent ideals are easy to describe in this setting.

\begin{lemma}
  \th\label{lemma:posets:idempotent_quotient} Let $\mathcal{P}=(P,\leq)$ be a
  poset, $X$ a subset of $P$ and $\mathcal{X}$ the full subcategory of
  $\mathcal{P}$ spanned by $X$. Then,
  \[
    \underline{\mathcal{P}}_{\mathcal{X}}(x_1,x_2)\cong\begin{cases}
      \mathbbm{k}&\text{if }x_1\leq x_2\text{ and }[x_1,x_2]\cap X=\emptyset,\\
      0&\text{otherwise.}
    \end{cases}
  \]
\end{lemma}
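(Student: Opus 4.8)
The plan is to reduce everything to the fact that in an incidence category all morphism spaces are at most one-dimensional, so that computing the quotient $\underline{\mathcal{P}}_{\mathcal{X}}(x_1,x_2)$ amounts to deciding whether the single basis morphism $f_{x_2x_1}$ lies in the idempotent ideal $[\mathcal{X}]$. I would organize the argument around the two cases appearing in the claimed formula.

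First I would dispose of the case $x_1\not\leq x_2$: here $\mathcal{P}(x_1,x_2)=0$ by definition of the incidence category, so the quotient is automatically $0$, which agrees with the \emph{otherwise} branch. For the rest of the argument I assume $x_1\leq x_2$, so that $\mathcal{P}(x_1,x_2)=\mathbbm{k}f_{x_2x_1}$ is one-dimensional; it therefore suffices to determine the subspace $[\mathcal{X}](x_1,x_2)$.

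The key step is the observation that a nonzero morphism from $x_1$ to $x_2$ can factor through an object $w$ only when $x_1\leq w\leq x_2$, that is, only when $w\in[x_1,x_2]$. Indeed, a factorization $x_1\to w\to x_2$ is nonzero precisely when both legs are nonzero, which forces $x_1\leq w$ and $w\leq x_2$; and in that situation the composition law $f_{x_2w}\circ f_{wx_1}=f_{x_2x_1}$ shows the composite is a nonzero scalar multiple of $f_{x_2x_1}$. Since $[\mathcal{X}](x_1,x_2)$ is the $\mathbbm{k}$-span of all such factorizations through objects $w\in X$, this span equals $\mathbbm{k}f_{x_2x_1}$ when some $w\in[x_1,x_2]\cap X$ exists, and equals $0$ when no such $w$ exists.

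Putting these together finishes the argument: when $[x_1,x_2]\cap X\neq\emptyset$ we get $[\mathcal{X}](x_1,x_2)=\mathcal{P}(x_1,x_2)$, so the quotient vanishes; when $x_1\leq x_2$ and $[x_1,x_2]\cap X=\emptyset$ we get $[\mathcal{X}](x_1,x_2)=0$, so the quotient is $\mathbbm{k}f_{x_2x_1}\cong\mathbbm{k}$. I do not anticipate a serious obstacle. The only point deserving a moment's care is to confirm that, because every Hom-space here is at most one-dimensional, membership of the generator $f_{x_2x_1}$ in the ideal is equivalent to the ideal exhausting the entire Hom-space, so no genuinely additive behaviour (a sum of several distinct factorizations producing something new) can occur.
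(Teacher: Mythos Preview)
Your proposal is correct and follows essentially the same approach as the paper: the paper's proof is a one-liner observing that $f_{x_2x_1}$ factors through $x\in X$ if and only if $x\in[x_1,x_2]$, and your argument is a careful unpacking of exactly that observation (together with the trivial case $x_1\not\leq x_2$ and the remark on one-dimensionality that the paper leaves implicit).
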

\begin{proof}
  The morphism $f_{x_2x_1}\colon x_1\to x_2$ factors through $x\in X$ if and
  only if $x_1\leq x\leq x_2$ if and only if $x\in[x_1,x_2]$. The claim follows.
\end{proof}

Let $\mathcal{P}=(P,\leq)$ be a poset. We are mostly interested in the following
class of $\mathcal{P}$-modules.

\begin{definition}
  \th\label{def:posets:intervals} Let $\mathcal{P}=(P,\leq)$ be a poset and
  $x_1,x_2\in P$. The \emph{interval $\mathcal{P}$-module $M[x_1,x_2]$} is
  defined by associating to $x\in P$ the vector space
  \[
    M[x_1,x_2]_x:=\begin{cases}
      \mathbbm{k}&\text{if }x\in[x_1,x_2],\\
      0&\text{otherwise;}
    \end{cases}
  \]
  \nomenclature[09]{$M[x_1,x_2]$}{the interval module with top $S_{x_2}$ and
    socle $S_{x_1}$ over some poset}and to $x\leq y$ the linear map
  \[
    M[x_1,x_2]_{yx}:=\begin{cases}
      1&\text{if }x,y\in[x_1,x_2],\\
      0&\text{otherwise.}
    \end{cases}
  \]
\end{definition}

The combinatorial nature of interval modules makes them particularly suited to
explicit calculations. As a first example of this, we describe the space of
morphisms between two interval modules in combinatorial terms.

\begin{proposition}
  \th\label{prop:posets:interval_modules:interlacing} Let $\mathcal{P}=(P,\leq)$
  be a poset and $x_1,x_2,y_1,y_2\in P$. Then,
  \[
    \Hom_{\mathcal{P}}(M[x_1,x_2],M[y_1,y_2])\cong\begin{cases}
      \mathbbm{k}&\text{if } x_1\leq y_1\leq x_2\leq y_2,\\
      0&\text{otherwise.}
    \end{cases}
  \]
  Moreover, if $\eta\colon M[x_1,x_2]\to M[y_1,y_2]$ is a non-zero morphism of
  $\mathcal{P}$-modules, then its image is isomorphic to the interval module
  $M[y_1,x_2]$.
\end{proposition}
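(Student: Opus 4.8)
The plan is to compute the space of natural transformations $\eta\colon M[x_1,x_2]\to M[y_1,y_2]$ directly from the definition, exploiting that each interval module is at most one-dimensional at every object. Write $I:=[x_1,x_2]$ and $J:=[y_1,y_2]$ for the two supports. A natural transformation $\eta$ is a family of linear maps $\eta_x\colon M[x_1,x_2]_x\to M[y_1,y_2]_x$; since both spaces are at most one-dimensional, each $\eta_x$ is multiplication by a scalar $c_x\in\mathbbm{k}$, and $c_x$ can be non-zero only when $x\in I\cap J$. Thus $\eta$ is encoded by a family $(c_x)_{x\in I\cap J}$, subject to the naturality squares indexed by the pairs $x\leq y$ in $P$.

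First I would extract the constraints imposed by naturality. Because every structure map of an interval module is either an identity or zero, each naturality square reduces to a relation among the scalars. The outcome is twofold: (i) whenever $a\leq b$ with both $a,b\in I\cap J$, commutativity forces $c_a=c_b$; and (ii) certain squares force individual scalars to vanish, namely if $a\in I\cap J$ admits some $b\geq a$ with $b\in I\setminus J$ then the square for $a\leq b$ forces $c_a=0$, and dually if $a\in I\cap J$ admits some $b\leq a$ with $b\in J\setminus I$ then $c_a=0$. I expect the bookkeeping of these vanishing squares, rather than the equalities in (i), to be the main obstacle, since in a general poset the interval endpoints need not be comparable and one must invoke the convexity of $I$ and $J$ together with transitivity at each step.

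With these constraints in hand I would prove necessity of the interlacing condition. Suppose $\eta\neq0$, so $c_w\neq0$ for some $w\in I\cap J$; in particular $I\cap J\neq\emptyset$, whence $y_1\leq w\leq x_2$ gives $y_1\leq x_2$. Applying (ii) with $a=w$ and $b=x_2$ (noting $x_2\in I$ and $w\leq x_2$) shows $x_2\in J$, hence $x_2\leq y_2$; applying the dual statement with $a=w$ and $b=y_1$ (noting $y_1\in J$ and $y_1\leq w$) shows $y_1\in I$, hence $x_1\leq y_1$. Together these yield $x_1\leq y_1\leq x_2\leq y_2$, so if the interlacing fails then $\Hom_{\mathcal{P}}(M[x_1,x_2],M[y_1,y_2])=0$.

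For the converse I would assume $x_1\leq y_1\leq x_2\leq y_2$ and first identify the overlap: using $x_1\leq y_1$ and $x_2\leq y_2$ together with transitivity one checks that $I\cap J=[y_1,x_2]$. Every element of this interval is comparable to $y_1$, so its comparability graph is connected, and constraint (i) forces all scalars $c_w$ to equal the single value $c_{y_1}$; this already gives $\dim\Hom_{\mathcal{P}}(M[x_1,x_2],M[y_1,y_2])\leq1$. To see the bound is attained, I would verify that the constant family $c_w=1$ on $[y_1,x_2]$ satisfies every naturality square, the point being that under interlacing the two bad configurations of (ii) cannot occur, again by convexity and transitivity, so this family defines a non-zero morphism. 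Finally, for such a non-zero $\eta$ the image is the submodule of $M[y_1,y_2]$ supported on $[y_1,x_2]$ with identity transition maps, which is by definition $M[y_1,x_2]$, proving the last assertion.
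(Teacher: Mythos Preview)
Your proposal is correct and follows essentially the same approach as the paper: both arguments unwind the definition of a natural transformation between interval modules and track the resulting scalar constraints. The paper's proof is a terse sketch that singles out the component $f_{x_2}$ as determining the whole transformation, whereas you organise the same computation more systematically via the constraints (i) and (ii) and use connectivity of $[y_1,x_2]$ through the endpoint $y_1$; these are two ways of packaging the same elementary verification.
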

\begin{proof}
  
  Unwinding of the definitions, one can verify that a natural transformation
  $f\colon M[x_1,x_2]\to M[y_1,y_2]$ is completely determined by its component
  $f_{x_2}$. This component can be non-zero only if $y_1\leq x_2\leq y_2$ in
  which case it can be canonically identified with a non-zero scalar
  $\lambda\in\mathbbm{k}^\times$. We can also verify that the fact that $f$ is a
  natural transformation also implies that $x_1\leq y_1\leq x_2$. We leave the
  details to the reader.
\end{proof}

\begin{corollary}
  Let $\mathcal{P}=(P,\leq)$ be a poset and $x_1,x_2\in P$ such that $x_1\leq
  x_2$. Then, the $\mathcal{P}$-module $M[x_1,x_2]$ is indecomposable.
\end{corollary}
\begin{proof}
  According to \th\ref{prop:posets:interval_modules:interlacing}, the
  endomorphism algebra of $M[x_1,x_2]$ is isomorphic to the ground field whence
  it is local. The claim follows.
\end{proof}

\begin{example}
  \th\label{ex:poset:An:interlacing} Let $A_n$ be the linear order with $n$
  elements, see \th\ref{ex:poset:An}. It is well known that every indecomposable
  $A_n$-module is isomorphic to an interval module $M[i,j]$, usually depicted as
  \[
    0\to\cdots\to0\to\mathbbm{k}\xrightarrow{1}\cdots\xrightarrow{1}\mathbbm{k}\to0\to\cdots0
  \]
  where the rightmost $\mathbbm{k}$ is at position $i$ and the leftmost
  $\mathbbm{k}$ is at position $j$, see for example Section 2.6 in \cite{Bar15}.
  \th\ref{prop:posets:interval_modules:interlacing} can be seen as a
  straightforward generalisation of the corresponding statement for the poset
  $A_n$.
\end{example}

One of the main ingredients in our construction of the higher Nakayama algebras
are Iyama's higher Auslander algebras of type $\mathbb{A}$. These algebras were
introduced in \cite{Iya11}. A combinatorial approach for describing this
algebras was introduced by Oppermann and Thomas in \cite{OT12}. The notation
introduced below, a slight modification of theirs, helps us make the analogy
between the algebras we construct and the classical Nakayama algebras more
transparent.

\begin{notation}
  Let $\mathcal{P}=(P,\leq)$ be a poset and $d$ a positive integer. We endow the
  cartesian product
  \[
    P^d=\underbrace{P\times\cdots\times P}_{d\text{ times}}
  \]
  with the product order: given tuples $\bm{x}=(x_1,\dots,x_d)$ and
  $\bm{y}=(y_1,\dots,y_d)$ in $P^d$, we define $\bm{x}\preceq\bm{y}$ if and only
  if for each $i\in\set{1,\dots,d}$ the relation $x_i\leq y_i$ is satisfied. We
  denote the resulting poset by
  $\mathcal{P}^d:=(P^d,\preceq)$.\nomenclature[10]{$\mathcal{P}^d$}{the $d$-fold
    cartesian product of the poset $\mathcal{P}$ with itself}
\end{notation}

\begin{definition}
  \th\label{def:poset:d-cone} Let $\mathcal{P}=(P,\leq)$ be a poset and $d$ a
  positive integer.
  \begin{enumerate}
  \item Given $\bm{x},\bm{y}\in P^d$, we say that \emph{$\bm{x}$ interlaces
      $\bm{y}$}\nomenclature[11]{$\bm{x}\rightsquigarrow\bm{y}$}{the interlacing
      relation between ordered sequences of length $d$ $\bm{x}$ and $\bm{y}$ in
      some poset} if
    \[
      x_1\leq y_1\leq x_2\leq y_2\leq\cdots\leq x_d\leq y_d.
    \]
    We use the symbol $\bm{x}\rightsquigarrow\bm{y}$ to signify this relation.
    By definition, $\bm{x}$ is an \emph{ordered sequence of length $d$} if it
    interlaces with itself. Thus, $\bm{x}$ is an ordered sequence of length $d$
    if and only if
    \[
      x_1\leq x_2\leq\cdots\leq x_d.
    \]
    We denote the set of ordered sequences of length $d$ in $\mathcal{P}$ by
    $\mathbf{os}^d(\mathcal{P})$.\nomenclature[11]{$\mathbf{os}^d(\mathcal{P})$}{the
      poset of ordered sequences of length $d$ in the poset $\mathcal{P}$}
  \item Let $\mathcal{P}=(P,\leq)$ be a poset and $d$ a positive integer. We
    define the \emph{$d$-cone of
      $\mathcal{P}$}\nomenclature[12]{$\cone(\mathcal{P}^d)$}{the $d$-cone of
      the poset $\mathcal{P}$} to be the idempotent quotient
    \[
      \cone(\mathcal{P}^d):=\mathcal{P}^d/[\mathcal{P}^d\setminus\mathbf{os}^d(\mathcal{P})].
    \]
  \end{enumerate}
\end{definition}

The introduction of the interlacing relation on the set of ordered sequences of
length $d$ in a poset is justified by the following elementary observation.

\begin{proposition}
  \th\label{prop:poset:d-cone:interlacing} Let $\mathcal{P}=(P,\leq)$ be a poset
  and $d$ a positive integer. Then, for every pair of objects $\bm{x},\bm{y}\in
  \mathcal{P}^d$ there is an interlacing $\bm{x}\rightsquigarrow\bm{y}$ if and
  only if $\bm{x}\preceq\bm{y}$ and
  $[\bm{x},\bm{y}]\subset\mathbf{os}^d(\mathcal{P})$. In particular,
  \[
    \cone(\mathcal{P}^d)(\bm{x},\bm{y})\cong\begin{cases}
      \mathbbm{k}&\text{if }\bm{x}\rightsquigarrow\bm{y},\\
      0&\text{otherwise}.
    \end{cases}
  \]
\end{proposition}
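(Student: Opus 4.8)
The plan is to establish the stated equivalence between the interlacing relation and the product-order-plus-interval condition by elementary manipulations of inequalities, and then to read off the formula for the morphism spaces of the $d$-cone directly from \th\ref{lemma:posets:idempotent_quotient} applied to the product poset $\mathcal{P}^d$.

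For the forward implication, I would suppose $\bm{x}\rightsquigarrow\bm{y}$, that is
\[
  x_1\leq y_1\leq x_2\leq y_2\leq\cdots\leq x_d\leq y_d.
\]
Reading off the inequalities $x_i\leq y_i$ already yields $\bm{x}\preceq\bm{y}$. To see that $[\bm{x},\bm{y}]\subset\mathbf{os}^d(\mathcal{P})$, I would take an arbitrary $\bm{z}\in[\bm{x},\bm{y}]$, so that $x_i\leq z_i\leq y_i$ for all $i$, and verify for each $i\in\set{1,\dots,d-1}$ the chain
\[
  z_i\leq y_i\leq x_{i+1}\leq z_{i+1},
\]
whose middle inequality $y_i\leq x_{i+1}$ is precisely one of the interlacing relations. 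This forces $z_1\leq\cdots\leq z_d$, so $\bm{z}$ is an ordered sequence.

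For the reverse implication, I would suppose $\bm{x}\preceq\bm{y}$ and that every element of $[\bm{x},\bm{y}]$ is an ordered sequence. The inequalities $x_i\leq y_i$ are immediate from $\bm{x}\preceq\bm{y}$, so it remains to produce the relations $y_i\leq x_{i+1}$. The key idea is to exhibit, for each $i$, a suitable witness inside the interval: I would consider the tuple $\bm{z}$ defined by $z_j:=y_j$ for $j\leq i$ and $z_j:=x_j$ for $j>i$. Since $x_j\leq y_j$ for all $j$, one checks at once that $\bm{x}\preceq\bm{z}\preceq\bm{y}$, whence $\bm{z}\in[\bm{x},\bm{y}]$ and so $\bm{z}$ is ordered; the relation $z_i\leq z_{i+1}$ then reads exactly $y_i\leq x_{i+1}$. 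Assembling these with the inequalities $x_i\leq y_i$ recovers the full interlacing chain.

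Finally, for the \emph{in particular} clause I would apply \th\ref{lemma:posets:idempotent_quotient} to the product poset $\mathcal{P}^d$ with the subset $X:=P^d\setminus\mathbf{os}^d(\mathcal{P})$ and the full subcategory $\mathcal{X}$ it spans; by definition $\cone(\mathcal{P}^d)$ is the corresponding idempotent quotient. The lemma gives a one-dimensional morphism space exactly when $\bm{x}\preceq\bm{y}$ and $[\bm{x},\bm{y}]\cap X=\emptyset$, and the latter condition is nothing but $[\bm{x},\bm{y}]\subset\mathbf{os}^d(\mathcal{P})$; by the equivalence just proved this is precisely $\bm{x}\rightsquigarrow\bm{y}$, and the space vanishes otherwise. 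I do not anticipate any serious obstacle here: both implications reduce to chasing inequalities, and the only point requiring a moment's thought is the choice of the mixed witness tuple $\bm{z}$ in the reverse direction, which must be checked to lie in the interval $[\bm{x},\bm{y}]$.
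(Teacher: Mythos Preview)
Your proof is correct and follows the same strategy as the paper: establish the equivalence by elementary inequality manipulations, using a mixed witness tuple in the interval for the reverse implication, and then read off the morphism spaces via \th\ref{lemma:posets:idempotent_quotient}. Your witness $\bm{z}=(y_1,\dots,y_i,x_{i+1},\dots,x_d)$ is the mirror image of the paper's $(x_1,\dots,x_j,y_{j+1},\dots,y_d)$; in fact your choice is the one that directly produces the needed relation $y_i\leq x_{i+1}$, whereas the paper's stated witness only yields $x_j\leq y_{j+1}$ and so appears to contain a minor slip.
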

\begin{proof}
  When $d=1$ the statement is obvious. Suppose that $d>1$. By
  \th\ref{lemma:posets:idempotent_quotient} it is enough to show that $\bm{x}$
  interlaces $\bm{y}$ if and only if $\bm{x}\preceq\bm{y}$ and
  $[\bm{x},\bm{y}]\subset\mathbf{os}^d(\mathcal{P})$. Suppose that $\bm{x}$
  interlaces $\bm{y}$, which by definition implies $\bm{x}\preceq\bm{y}$. Let
  $\bm{z}\in[\bm{x},\bm{y}]$. Then, for each $i\in\set{1,\dots,d-1}$ the
  inequalities
  \[
    x_i\leq z_i\leq y_i\leq x_{i+1}\leq z_{i+1}\leq y_{i+1}
  \]
  are satisfied (the inequality in the middle uses
  $\bm{x}\rightsquigarrow\bm{y}$ while the others follow from
  $\bm{x}\preceq\bm{z}\preceq\bm{y}$). This shows that
  $\bm{z}\in\mathbf{os}^d(\mathcal{P})$, as required.
  
  Conversely, suppose that $\bm{x}\preceq\bm{y}$ and
  $[\bm{x},\bm{y}]\subset\mathbf{os}^d(\mathcal{P})$. We need to prove that
  $\bm{x}$ interlaces $\bm{y}$. For this, let $j\in\set{1,\dots,d-1}$ and
  consider the auxiliary tuple
  \[
    \bm{z}:=(x_1,\dots,x_j,y_{j+1},\dots,y_d)
  \]
  Given that $\bm{x}\preceq\bm{y}$, the tuple $\bm{z}$ belongs to
  $[\bm{x},\bm{y}]\subset\mathbf{os}^d(\mathcal{P})$ whence it is an ordered
  sequence of length $d$. In particular, the inequality
  \[
    x_j=z_j\leq z_{j+1}=y_{j+1}
  \]
  is satisfied. This shows that $\bm{x}$ interlaces $\bm{y}$.
\end{proof}

The $d$-cone can be constructed inductively in representation-theoretic terms.
This construction extends the inductive construction of the higher Auslander
algebras of type $\mathbb{A}$ given in \cite{Iya11} from finite total orders to
arbitrary posets.

\begin{notation}
  Let $\mathcal{P}=(P,\leq)$ be a poset and $d$ a positive integer. Let
  $\bm{x}\in\mathbf{os}^{d+1}(\mathcal{P})$. Note that there is an obvious
  interlacing
  \[
    (x_1,\dots,x_d)\rightsquigarrow(x_2,\dots,x_{d+1})
  \]
  in $\mathbf{os}^d(\mathcal{P})$. Then, in view of
  \th\ref{prop:poset:d-cone:interlacing}, the $\mathcal{P}^{d}$-module
  \nomenclature[13]{$M(\bm{x})$}{the interval module
    $M[(x_1,\dots,x_d),(x_2,\dots,x_{d+1})]$ associated to the ordered sequence
    $(x_1,\dots,x_d,x_{d+1})$ in some poset}
  \[
    M(\bm{x}):=M[(x_1,\dots,x_d),(x_2,\dots,x_{d+1})]
  \]
  is in fact a module over $\cone(\mathcal{P}^d)$.
\end{notation}

We conclude our general discussion on interval modules with an elementary
observation. It should be compared with Proposition 3.12 in \cite{OT12} which
corresponds to the case of the poset $A_n$ of
\th\ref{ex:poset:An,ex:poset:An:interlacing}.

\begin{proposition}
  \th\label{prop:d-cone:interval_modules:interlacing} Let $\mathcal{P}=(P,\leq)$
  be a poset and $d$ a positive integer. Then, for every
  $\bm{x},\bm{y}\in\mathbf{os}^{d+1}(\mathcal{P})$ there is an isomorphism
  \[
    \Hom_{\cone(\mathcal{P}^d)}(M(\bm{x}),M(\bm{y}))\cong\begin{cases}
      \mathbbm{k}&\text{if }\bm{x}\rightsquigarrow\bm{y},\\
      0&\text{otherwise}.
    \end{cases}
  \]
  Moreover, the image of a non-zero morphism of $\mathcal{P}^{(d+1)}$-modules
  $M(\bm{x})\to M(\bm{y})$ is isomorphic to the interval module
  \[
    M[(y_1,\dots,y_d),(x_2,\dots,x_{d+1})].
  \]
  In particular, there is an equivalence of categories
  \[
    \mathcal{P}^{(d+1)}\cong\setP{M(\bm{x})\in\mmod\cone(\mathcal{P}^d)}{\bm{x}\in\mathbf{os}^{d+1}(\mathcal{P})}\subset\mmod\cone(\mathcal{P}^d).
  \]
\end{proposition}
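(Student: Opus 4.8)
The plan is to reduce the morphism-space computations, via the fully faithful embedding furnished by the idempotent quotient, to the two-variable interlacing computation already carried out over an arbitrary poset in \th\ref{prop:posets:interval_modules:interlacing}, applied here to the product poset $\mathcal{P}^d$; the equivalence of categories will then follow by packaging these computations into a functor and checking compatibility with composition. First I would record that $M(\bm{x})=M[(x_1,\dots,x_d),(x_2,\dots,x_{d+1})]$ is genuinely a $\cone(\mathcal{P}^d)$-module: its support is the interval $[(x_1,\dots,x_d),(x_2,\dots,x_{d+1})]$, which lies inside $\mathbf{os}^d(\mathcal{P})$ because $(x_1,\dots,x_d)\rightsquigarrow(x_2,\dots,x_{d+1})$ and \th\ref{prop:poset:d-cone:interlacing} identifies interlacing with an interval contained in the ordered sequences. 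Since $\pi^*\colon\mmod\cone(\mathcal{P}^d)\to\mmod\mathcal{P}^d$ is fully faithful and identifies $\cone(\mathcal{P}^d)$-modules with those $\mathcal{P}^d$-modules supported on $\mathbf{os}^d(\mathcal{P})$, the two Hom spaces $\Hom_{\cone(\mathcal{P}^d)}(M(\bm{x}),M(\bm{y}))$ and $\Hom_{\mathcal{P}^d}(M(\bm{x}),M(\bm{y}))$ coincide. This reduction is what makes the computation tractable, since $\cone(\mathcal{P}^d)$ is itself not a poset incidence category (interlacing is not transitive), so \th\ref{prop:posets:interval_modules:interlacing} does not apply to it directly.

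With this in hand, the Hom formula and the description of the image follow by a direct application of \th\ref{prop:posets:interval_modules:interlacing} to the poset $\mathcal{P}^d$ and the interval modules $M[\bm{a},\bm{b}]$ and $M[\bm{c},\bm{e}]$, where $\bm{a}=(x_1,\dots,x_d)$, $\bm{b}=(x_2,\dots,x_{d+1})$, $\bm{c}=(y_1,\dots,y_d)$, and $\bm{e}=(y_2,\dots,y_{d+1})$. The proposition yields a one-dimensional Hom space exactly when $\bm{a}\preceq\bm{c}\preceq\bm{b}\preceq\bm{e}$, and the crux is to observe that this fourfold chain of coordinatewise inequalities unwinds precisely to the single chain $x_1\leq y_1\leq x_2\leq y_2\leq\cdots\leq x_{d+1}\leq y_{d+1}$, that is, to $\bm{x}\rightsquigarrow\bm{y}$. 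The image of a non-zero morphism is then read off as $M[\bm{c},\bm{b}]=M[(y_1,\dots,y_d),(x_2,\dots,x_{d+1})]$; one checks as before, using $(y_1,\dots,y_d)\rightsquigarrow(x_2,\dots,x_{d+1})$ together with \th\ref{prop:poset:d-cone:interlacing}, that this interval again lies in $\mathbf{os}^d(\mathcal{P})$, so the image is a legitimate $\cone(\mathcal{P}^d)$-module.

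For the equivalence of categories I would define a functor $\Phi$ on $\mathcal{P}^{(d+1)}=\cone(\mathcal{P}^{d+1})$ by $\bm{x}\mapsto M(\bm{x})$ on objects and by sending the generator of $\cone(\mathcal{P}^{d+1})(\bm{x},\bm{y})$, which is non-zero exactly when $\bm{x}\rightsquigarrow\bm{y}$ by \th\ref{prop:poset:d-cone:interlacing}, to a chosen non-zero morphism $M(\bm{x})\to M(\bm{y})$. By the Hom computation above, $\Phi$ is bijective on morphism spaces, and it is bijective on objects onto the full subcategory on the right-hand side by construction; thus essential surjectivity and full faithfulness are immediate once $\Phi$ is shown to be a functor.

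The step I expect to be the main obstacle is precisely this last point, the compatibility with composition, because interlacing is not transitive: one may have $\bm{x}\rightsquigarrow\bm{y}$ and $\bm{y}\rightsquigarrow\bm{z}$ while $\bm{x}\not\rightsquigarrow\bm{z}$, in which case $f_{\bm{z}\bm{y}}\circ f_{\bm{y}\bm{x}}$ vanishes in $\cone(\mathcal{P}^{d+1})$ and the composite $M(\bm{x})\to M(\bm{y})\to M(\bm{z})$ must vanish as well. I would verify this by tracking supports: a non-zero morphism between interval modules is the identity on the overlap of their supports, and chasing the two overlaps shows that the composite is supported on $[(z_1,\dots,z_d),(x_2,\dots,x_{d+1})]$, which is non-empty if and only if $z_i\leq x_{i+1}$ for all $i$; since $\bm{x}\rightsquigarrow\bm{y}\rightsquigarrow\bm{z}$ already forces $x_i\leq z_i$, this non-emptiness is equivalent to $\bm{x}\rightsquigarrow\bm{z}$. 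Hence the composite is non-zero exactly when $f_{\bm{z}\bm{y}}\circ f_{\bm{y}\bm{x}}$ survives, and normalising the chosen morphisms makes $\Phi$ into a genuine functor, completing the proof.
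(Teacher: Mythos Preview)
Your proof is correct and follows essentially the same approach as the paper: reduce via \th\ref{prop:posets:interval_modules:interlacing} applied to the product poset $\mathcal{P}^d$, then observe that the condition $(x_1,\dots,x_d)\preceq(y_1,\dots,y_d)\preceq(x_2,\dots,x_{d+1})\preceq(y_2,\dots,y_{d+1})$ is tautologically equivalent to $\bm{x}\rightsquigarrow\bm{y}$. Your treatment is in fact more careful than the paper's one-line proof, since you make explicit the fully faithful passage through $\pi^*$ and verify the composition compatibility needed for the ``in particular'' equivalence, points the paper leaves entirely to the reader.
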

\begin{proof}
  In view of \th\ref{prop:posets:interval_modules:interlacing}, it is enough to
  show that $\bm{x}$ interlaces $\bm{y}$ if and only if
  \[
    (x_1,\dots,x_d)\preceq(y_1,\dots,y_d)\preceq(x_2,\dots,x_{d+1})\preceq(y_2,\dots,y_{d+1}).
  \]
  But this is tautological.
\end{proof}

\subsection{Higher Auslander--Reiten theory}
\label{subsec:higher_AR_theory}

We conclude this section with a brief overview of Iyama's higher
Auslander--Reiten theory. We begin by recalling the classical concept of a
functorially finite subcategory from \cite{AS80}.

\begin{definition}
  Let $\mathcal{A}$ be a category and $\mathcal{X}$ a full subcategory of
  $\mathcal{A}$. We say that $\mathcal{X}$ is \emph{contravariantly finite in
    $\mathcal{A}$} if for every object $a\in\mathcal{A}$ there exists an object
  $x\in\mathcal{X}$ and a morphism $f\colon x\to a$ such that for every object
  $x'\in\mathcal{X}$ and every morphism $g\colon x'\to a$ there exists a (not
  necessarily unique) morphism $h\colon x'\to x$ such that $g=f\circ h$, that is
  such that the diagram
  \[
    \begin{tikzcd}
      &x'\dar{\forall g}\dlar[dotted,swap]{\exists h}\\
      x\rar{f}&a
    \end{tikzcd}
  \]
  commutes. Such a morphism $f$ is called a \emph{right
    $\mathcal{X}$-approximation of $\mathcal{A}$}. The notions of a covariantly
  finite subcategory of $\mathcal{A}$ and of a left approximation are defined
  dually. We say $\mathcal{X}$ is \emph{functorially finite in $\mathcal{A}$} if
  it is both contravariantly finite and covariantly finite in $\mathcal{A}$.
\end{definition}

One of the central concepts in higher Auslander--Reiten theory is the notion of
a $d$-cluster-tilting subcategory which was introduced by Iyama in \cite{Iya07}.
We recall the definition below as well as the stronger notion of a
$d\mathbb{Z}$-cluster-tilting subcategory which is implicit in \cite{Iya11} and
\cite{GKO13} in the triangulated case and was introduced in \cite{IJ17} in the
abelian case.

\begin{definition}
  \th\label{def:d-CT} Let $\mathcal{A}$ be an abelian or a triangulated category
  and $\mathcal{C}$ a subcategory of $\mathcal{A}$. We call $\mathcal{C}$ a
  \emph{$d$-cluster-tilting subcategory} if the following conditions are
  satisfied.
  \begin{enumerate}
  \item $\mathcal{C}$ is a functorially finite subcategory of $\mathcal{A}$.
  \item If $\mathcal{A}$ is abelian we require $\mathcal{C}$ to be a
    \emph{generating-cogenerating subcategory of $\mathcal{A}$}, that is for
    every object $X\in\mathcal{A}$ there exist objects $C',C''\in\mathcal{C}$,
    an epimorphism $C'\to X$, and a monomorphism $X\to C''$.
  \item\label{it:d-CT-property} There are equalities
    \begin{align*}
      \mathcal{C}&=\setP{X\in\mathcal{A}}{\forall i\in\set{1,\dots,d-1}\colon\Ext_{\mathcal{A}}^i(X,\mathcal{C})=0}\\
                 &=\setP{Y\in\mathcal{A}}{\forall i\in\set{1,\dots,d-1}\colon\Ext_{\mathcal{A}}^i(\mathcal{C},Y)=0}.
    \end{align*}
  \end{enumerate}
  We call $\mathcal{C}$ a \emph{$d\mathbb{Z}$-cluster-tilting subcategory} if
  the following additional condition is satisfied:
  \begin{enumerate}
    \setcounter{enumi}{3}
  \item\label{it:dZ-CT} If $\Ext_{\mathcal{A}}^i(\mathcal{C},\mathcal{C})\neq0$,
    then $i\in d\mathbb{Z}$.
  \end{enumerate}
  
\end{definition}

\begin{remark}
  Let $\mathcal{A}$ be an abelian or triangulated category. We make a few simple
  observations.
  \begin{enumerate}
  \item The category $\mathcal{A}$ itself is the unique $1$-cluster-tilting
    subcategory of $\mathcal{A}$.
  \item If $\mathcal{A}$ is an abelian category with enough projectives and
    enough injectives, then every subcategory of $\mathcal{A}$ satisfying
    condition \eqref{it:d-CT-property} in \th\ref{def:d-CT} is a
    generating-cogenerating subcategory.
  \item If $\mathcal{A}$ is a triangulated category with suspension functor
    $\Sigma$ and $\mathcal{C}$ is a full subcategory of $\mathcal{A}$ satisfying
    condition \eqref{it:d-CT-property} in \th\ref{def:d-CT}, then $\mathcal{C}$
    satisfies condition \eqref{it:dZ-CT} in \th\ref{def:d-CT} if and only if
    $\Sigma^d(\mathcal{C})=\mathcal{C}$.
  \end{enumerate}
\end{remark}

\begin{remark}
  We recall the `higher homological algebra' perspective on cluster-tilting
  subcategories.
  \begin{enumerate}
  \item Let $\mathcal{A}$ be an abelian category and $\mathcal{C}$ a
    $d$-cluster-tilting subcategory of $\mathcal{A}$. Then, $\mathcal{A}$ is a
    $d$-abelian category in the sense of \cite{Jas16}, see Theorem 3.16 therein.
  \item Analogously, if $\mathcal{A}$ is a triangulated category with suspension
    functor $\Sigma$ and $\mathcal{C}$ is a $d\mathbb{Z}$-cluster-tilting
    subcategory of $\mathcal{A}$, then $\mathcal{C}$ is a $(d+2)$-angulated
    category in the sense of \cite{GKO13}, see Theorem 1 therein.
  \end{enumerate}
\end{remark}

\begin{definition}
  Let $\mathcal{A}$ and $\mathcal{B}$ be abelian categories and
  $\iota\colon\mathcal{B}\to\mathcal{A}$ a fully faithful exact functor. Let
  $\mathcal{X}$ be a full subcategory of $\mathcal{B}$.
  \begin{enumerate}
  \item We say that $\iota$ is a \emph{contravariantly $\mathcal{X}$-relative
      $(d-1)$-homological embedding} if for all $B\in\mathcal{B}$ and for all
    $i\in\set{1,\dots,d-1}$ the induced morphism
    \[
      \begin{tikzcd}[column sep=small]
        \Ext_\mathcal{B}^i(-,B)|_\mathcal{X}\rar&\Ext_\mathcal{A}^i(-,B)|_\mathcal{X}
      \end{tikzcd}
    \]
    is an isomorphism.
  \item We say that $\iota$ is a \emph{covariantly $\mathcal{X}$-relative
      $(d-1)$-homological embedding} if for all $B\in\mathcal{B}$ and for all
    $i\in\set{1,\dots,d-1}$ the induced morphism
    \[
      \begin{tikzcd}[column sep=small]
        \Ext_{\mathcal{B}}^i(B,-)|_{\mathcal{X}}\rar&\Ext_{\mathcal{A}}^i(B,-)|_{\mathcal{X}}
      \end{tikzcd}
    \]
    is an isomorphism.
  \item We say that $\iota$ is an \emph{$\mathcal{X}$-relative $(d-1)$-homological
      embedding} if it is a covariantly $\mathcal{X}$-relative
    $(d-1)$-homological embedding and a contravariantly $\mathcal{X}$-relative
    $(d-1)$-homological embedding.
  \end{enumerate}
\end{definition}

The following `idempotent reduction' lemma is one of the key technical tools
used in our construction of the higher Nakayama algebras.

\begin{lemma}
  \th\label{lemma:d-CT:idempotent_reduction} Let $\mathcal{A}$ be a locally
  bounded category and $\mathcal{M}$ a $d$-cluster-tilting subcategory of
  $\mmod\mathcal{A}$. Let $\mathcal{X}$ be a full subcategory of $\mathcal{A}$
  such that the following conditions are satisfied.
  \begin{itemize}
  \item All the projective and all the injective
    $\underline{\mathcal{A}}_{\mathcal{X}}$-modules belong to $\mathcal{M}$.
  \item Every indecomposable $\mathcal{A}$-module $M\in\mathcal{M}$ which does
    not lie in $\mmod\underline{\mathcal{A}}_{\mathcal{X}}$ is
    projective-injective.
  \end{itemize}
  Define
  $\mathcal{M}_{\mathcal{X}}:=\mathcal{M}\cap\mmod\underline{\mathcal{A}}_{\mathcal{X}}$.
  Then, the following statements hold.
  \begin{enumerate}
  \item\label{it:MX-relative} The canonical inclusion
    $\mmod\underline{\mathcal{A}}_{\mathcal{X}}\to\mmod\mathcal{A}$ is an
    $\mathcal{M}_{\mathcal{X}}$-relative $(d-1)$-homological embedding.
  \item\label{it:MX-d-CT} $\mathcal{M}_{\mathcal{X}}$ is a $d$-cluster-tilting
    subcategory of $\mmod\underline{\mathcal{A}}_{\mathcal{X}}$.
  \end{enumerate}
\end{lemma}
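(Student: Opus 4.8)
The plan is to prove the homological embedding \eqref{it:MX-relative} first and then deduce that $\mathcal{M}_{\mathcal{X}}$ is $d$-cluster-tilting in \eqref{it:MX-d-CT} from it. Throughout I keep in mind that the inclusion $\pi^*\colon\mmod\underline{\mathcal{A}}_{\mathcal{X}}\to\mmod\mathcal{A}$ is exact and fully faithful, that both categories have enough projectives and injectives, and that $\mathcal{M}$ is $d$-rigid, i.e. $\Ext_{\mathcal{A}}^i(\mathcal{M},\mathcal{M})=0$ for $1\le i\le d-1$, by \eqref{it:d-CT-property} of \th\ref{def:d-CT}.

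For \eqref{it:MX-relative} the naive idea of resolving the distinguished argument $M\in\mathcal{M}_{\mathcal{X}}$ by projectives fails, since it would force me to control $\Ext_{\mathcal{A}}^i(\mathcal{M}_{\mathcal{X}},B)$ for arbitrary $B$, which need not vanish. The key observation is to instead perform d\'evissage on the \emph{free} argument. Consider the contravariant case: fix $B\in\mmod\underline{\mathcal{A}}_{\mathcal{X}}$ and let $M$ range over $\mathcal{M}_{\mathcal{X}}$. The comparison maps $\Ext_{\underline{\mathcal{A}}_{\mathcal{X}}}^i(M,B)\to\Ext_{\mathcal{A}}^i(M,B)$ form a morphism of cohomological $\delta$-functors in $B$, because $\pi^*$ sends short exact sequences to short exact sequences. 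When $B=I$ is an injective $\underline{\mathcal{A}}_{\mathcal{X}}$-module the first hypothesis gives $I\in\mathcal{M}_{\mathcal{X}}$; then the source vanishes in positive degrees since $I$ is injective, while the target vanishes for $1\le i\le d-1$ since $M,I\in\mathcal{M}$ and $\mathcal{M}$ is $d$-rigid. Thus the comparison is an isomorphism on injectives, and I would climb up an injective coresolution of a general $B$ by the usual dimension-shifting argument (the five lemma at the bottom, two connecting isomorphisms higher up) to obtain isomorphisms for $0\le i\le d-1$. The covariant case is dual: one resolves $B$ by projectives and uses that projective $\underline{\mathcal{A}}_{\mathcal{X}}$-modules lie in $\mathcal{M}_{\mathcal{X}}$.

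For \eqref{it:MX-d-CT} I would verify the conditions of \th\ref{def:d-CT}. Write $\mathcal{M}=\add(\mathcal{M}_{\mathcal{X}}\cup\mathcal{P})$, where $\mathcal{P}$ is the set of indecomposable projective--injective $\mathcal{A}$-modules in $\mathcal{M}$ not lying in $\mmod\underline{\mathcal{A}}_{\mathcal{X}}$; this is exactly the second hypothesis. For the Ext-orthogonality condition, suppose $X\in\mmod\underline{\mathcal{A}}_{\mathcal{X}}$ satisfies $\Ext_{\underline{\mathcal{A}}_{\mathcal{X}}}^i(X,\mathcal{M}_{\mathcal{X}})=0$ for $1\le i\le d-1$; by the $d$-cluster-tilting property of $\mathcal{M}$ it suffices to check $\Ext_{\mathcal{A}}^i(X,N)=0$ for all $N\in\mathcal{M}$ and $1\le i\le d-1$. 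If $N\in\mathcal{M}_{\mathcal{X}}$ this follows from the covariant part of \eqref{it:MX-relative} and the hypothesis on $X$; if $N\in\mathcal{P}$ it is automatic since $N$ is injective. The reverse inclusion and the second equality in \eqref{it:d-CT-property} are symmetric, using instead that the objects of $\mathcal{P}$ are projective. Once \eqref{it:d-CT-property} holds, the generating--cogenerating condition is automatic because $\mmod\underline{\mathcal{A}}_{\mathcal{X}}$ has enough projectives and injectives, as recorded in the remarks following \th\ref{def:d-CT}.

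The step I expect to be the main obstacle is functorial finiteness of $\mathcal{M}_{\mathcal{X}}$, since this is the one place where finiteness of $\mathcal{M}$ must genuinely be transported across the quotient. Here I would use that $\pi^*$ is the inclusion of the Serre subcategory of modules supported off $\mathcal{X}$, and so admits a left adjoint $\ell$ (the largest quotient supported off $\mathcal{X}$) and a right adjoint (the largest submodule supported off $\mathcal{X}$). Being left adjoint to the exact functor $\pi^*$, the functor $\ell$ preserves projectives; since it fixes $\mathcal{M}_{\mathcal{X}}$ and sends the projective--injective objects of $\mathcal{P}$ to projective $\underline{\mathcal{A}}_{\mathcal{X}}$-modules, we obtain $\ell(\mathcal{M})\subseteq\mathcal{M}_{\mathcal{X}}$. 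Now given $X\in\mmod\underline{\mathcal{A}}_{\mathcal{X}}$, I would take a right $\mathcal{M}$-approximation $f\colon M\to X$ in $\mmod\mathcal{A}$ and apply $\ell$; because $\ell\pi^*\cong\mathrm{id}$, any $g\colon M'\to X$ with $M'\in\mathcal{M}_{\mathcal{X}}$ factors through $\ell(f)\colon\ell(M)\to X$, so $\ell(f)$ is a right $\mathcal{M}_{\mathcal{X}}$-approximation. Covariant finiteness follows dually from the right adjoint together with the hypothesis that injective $\underline{\mathcal{A}}_{\mathcal{X}}$-modules lie in $\mathcal{M}$. The only real bookkeeping throughout is to keep straight which hypothesis (projectives-in-$\mathcal{M}$ versus injectives-in-$\mathcal{M}$, and projectivity versus injectivity of $\mathcal{P}$) feeds each variance; both are used, in symmetric pairs.
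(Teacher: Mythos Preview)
Your proposal is correct and follows the same overall architecture as the paper: prove the relative homological embedding by resolving the \emph{arbitrary} argument inside $\mmod\underline{\mathcal{A}}_{\mathcal{X}}$ (so that the terms of the resolution land in $\mathcal{M}$), then deduce the $d$-cluster-tilting property by transferring Ext-vanishing via part \eqref{it:MX-relative} and killing the residual summands with the projective--injective hypothesis. Your handling of \eqref{it:MX-relative} and of the Ext-orthogonality in \eqref{it:MX-d-CT} is essentially identical to the paper's; the paper packages the dimension shift by citing a general ``compute Ext via an acyclic resolution'' statement, but the content is the same.

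The one genuine difference is functorial finiteness. The paper argues by hand: given a right $\mathcal{M}$-approximation $f\colon Q\oplus M'\to N$ with $Q$ projective--injective and $M'\in\mathcal{M}_{\mathcal{X}}$, it replaces the $Q$-component by a projective cover $P\to N$ in $\mmod\underline{\mathcal{A}}_{\mathcal{X}}$ and checks directly that $[f_P\ f_{M'}]$ is a right $\mathcal{M}_{\mathcal{X}}$-approximation, using only that $Q$ is projective to reroute morphisms through $P$. Your argument via the adjoints $\ell\dashv\pi^*\dashv r$ is cleaner and more conceptual: the single observation $\ell(\mathcal{M})\subseteq\mathcal{M}_{\mathcal{X}}$ (which uses that $\ell$ preserves projectives and that projective $\underline{\mathcal{A}}_{\mathcal{X}}$-modules lie in $\mathcal{M}$) makes the transport of approximations automatic. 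The paper's approach has the virtue of being completely elementary and self-contained, while yours isolates exactly which adjunction is doing the work and explains why both hypotheses on $\mathcal{P}$ (projective \emph{and} injective) are needed, one for each variance.
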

\begin{proof}
  \eqref{it:MX-relative} We only prove that the canonical inclusion is a
  covariantly $\mathcal{M}_{\mathcal{X}}$-relative $(d-1)$-homological
  embedding, that it is also a contravariantly
  $\mathcal{M}_{\mathcal{X}}$-relative $(d-1)$-homological embedding follows by
  duality. To prove this, we apply Proposition 3.18 in \cite{Jas16} which is a
  version of the usual `dimension shifting' argument. Let $M\in
  \mathcal{M}_{\mathcal{X}}\subseteq
  \mmod\underline{\mathcal{A}}_{\mathcal{X}}$, $N\in\mmod
  \underline{\mathcal{A}}_{\mathcal{X}}$ and
  \[
    \cdots \to P^1 \to P^0\to N\to 0
  \]
  a projective resolution of $N$ as an
  $\underline{\mathcal{A}}_{\mathcal{X}}$-module. By assumption, for all
  $i\in\mathbb{Z}$ the projective $\underline{\mathcal{A}}_{\mathcal{X}}$-module
  $P^i$ belongs to $\mathcal{M}$. Given that $\mathcal{M}$ is a
  $d$-cluster-tilting subcategory of $\mmod \mathcal{A}$, for all $i\in
  \set{1,\dots,d-1}$ the extension group $\Ext^i_{\mathcal{A}}(\mathcal{M},M)$
  vanishes. Hence, by Proposition 3.18 in \cite{Jas16} for each
  $i\in\set{1,\dots,d-1}$ there is an isomorphism between
  $\Ext^i_{\mathcal{A}}(N,M)$ and the cohomology of the induced complex
  \[
    \Hom_{\mathcal{A}}(P^0,M)\to\dots\to\Hom_{\mathcal{A}}(P^d,M)
  \]
  at $\Hom_{\mathcal{A}}(P^i,M)$ which, by definition, is isomorphic to
  $\Ext^i_{\underline{\mathcal{A}}_{\mathcal{X}}}(N,M)$. This shows that the
  canonical inclusion is a covariantly $\mathcal{M}_{\mathcal{X}}$-relative
  $(d-1)$-homological embedding.
  
  \eqref{it:MX-d-CT} By assumption $\mathcal{M}_{\mathcal{X}}$ contains all
  projective and all injective $\underline{\mathcal{A}}_{\mathcal{X}}$-modules
  whence it is a generating-cogenerating subcategory of
  $\mmod\underline{\mathcal{A}}_{\mathcal{X}}$. Let us show that
  $\mathcal{M}_{\mathcal{X}}$ is a functorially finite subcategory of
  $\mmod\underline{\mathcal{A}}_{\mathcal{X}}$. For this, let $N$ be an
  $\underline{\mathcal{A}}_{\mathcal{X}}$-module and $f\colon M\to N$ a right
  $\mathcal{M}$-approximation of $N$. Write $M=Q\oplus M'$ where $M'$ is the
  largest summand of $M$ which lies in $\mathcal{M}_{\mathcal{X}}$. Note that by
  assumption $Q$ is a projective-injective $\mathcal{A}$-module. Write
  \[
    f=[f_Q\ f_{M'}]\colon Q\oplus M'\to N
  \]
  and let $f_P\colon P\to N$ a projective cover of $N$ as an
  $\underline{\mathcal{A}}_{\mathcal{X}}$-module. We claim that
  \[
    f'=[f_P\ f_{M'}]\colon P\oplus M'\to N
  \]
  is a right $\mathcal{M}_{\mathcal{X}}$-approximation of $N$. Indeed, let
  $g\colon M''\to N$ be a morphism with $M''\in\mathcal{M}_{\mathcal{X}}$. Since
  $f$ is a right $\mathcal{M}$-approximation of $N$ there exists a
  morphism
  \[
    h=[h_0\ h_1]^\top\colon M''\to Q\oplus M'
  \]
  such that $g=f\circ h$, that is
  \[
    g=f_Q\circ h_0+f_{M'}\circ h_1.
  \]
  Moreover, given that $f_P$ is an epimorphism and $Q$ is a projective(-injective)
  $\mathcal{A}$-module, there exists a morphism $q\colon Q\to P$ such that
  $f_Q=f_P\circ q$. It readily follows that $g=f'\circ h'$ where
  \[
    h'=[q\circ h_0\ h_1]^\top\colon M''\to P\oplus M.
  \]
  This shows that $\mathcal{M}_{\mathcal{X}}$ is contravariantly finite in
  $\mmod\underline{\mathcal{A}}_{\mathcal{X}}$; that $\mathcal{M}_{\mathcal{X}}$
  is covariantly finite in $\mmod\underline{\mathcal{A}}_{\mathcal{X}}$ follows
  by duality.

  Next, let $N$ be an $\underline{\mathcal{A}}_{\mathcal{X}}$-module such that
  for each $i\in\set{1,\dots,d-1}$ we have
  \[
    \Ext_{\underline{\mathcal{A}}_{\mathcal{X}}}^i(\mathcal{M}_{\mathcal{X}},N)\cong\Ext_{\mathcal{A}}^i(\mathcal{M}_{\mathcal{X}},N)=0
  \]
  where the first isomorphism follows from part \eqref{it:MX-relative}. Since by
  assumption every indecomposable $\mathcal{A}$-module $M\in\mathcal{M}$ which
  does not lie in $\mmod\underline{\mathcal{A}}_{\mathcal{X}}$ is
  projective-injective, for each $i\in\set{1,\dots,d-1}$ we conclude that
  $\Ext_{\mathcal{A}}^i(\mathcal{M},N)=0$. Since $\mathcal{M}$ is a
  $d$-cluster-tilting subcategory of $\mmod\mathcal{A}$, we deduce that
  $N\in\mathcal{M}\cap\mmod\underline{\mathcal{A}}_{\mathcal{X}}=\mathcal{M}_{\mathcal{X}}$,
  which is what we needed to show. By duality, every
  $\underline{\mathcal{A}}_{\mathcal{X}}$-module $N$ such that for each
  $i\in\set{1,\dots,d-1}$ there is an equality
  $\Ext_{\underline{\mathcal{A}}_{\mathcal{X}}}^i(N,\mathcal{M}_{\mathcal{X}})=0$
  also belongs to $\mathcal{M}_{\mathcal{X}}$. This shows that
  $\mathcal{M}_{\mathcal{X}}$ is a $d$-cluster-tilting subcategory of
  $\mmod\underline{\mathcal{A}}_{\mathcal{X}}$.
\end{proof}

\begin{remark}
  In the setting of \th\ref{lemma:d-CT:idempotent_reduction}, it can be shown
  that the canonical inclusion
  $\mmod\underline{\mathcal{A}}_{\mathcal{X}}\to\mmod\mathcal{A}$ is moreover a
  $(d-1)$-homological embedding, see Appendix
  \ref{app:relative_homological_embeddings}.
\end{remark}

For the convenience of the reader we state
\th\ref{lemma:d-CT:idempotent_reduction} in the case of finite dimensional
algebras.

\begin{lemma}
  \th\label{lemma:d-CT:idempotent_reduction:algebras} Let $A$ be a finite
  dimensional algebra and $\mathcal{M}$ a $d$-cluster-tilting subcategory of
  $\mmod A$. Let $e\in A$ be an idempotent such that the following conditions
  are satisfied.
  \begin{itemize}
  \item All the projective and all the injective $(A/AeA)$-modules belong to
    $\mathcal{M}$.
  \item Every indecomposable $A$-module $M\in\mathcal{M}$ which does not lie in
    $\mmod(A/AeA)$ is projective-injective.
  \end{itemize}
  Define $\mathcal{M}_{e}:=\mathcal{M}\cap\mmod(A/AeA)$. Then, the following
  statements hold.
  \begin{enumerate}
  \item\label{it:MX-relative-algebras} The canonical inclusion
    $\mmod(A/AeA)\to\mmod A$ is an $\mathcal{M}_{e}$-relative
    $(d-1)$-homological embedding.
  \item\label{it:MX-d-CT-algebras} $\mathcal{M}_{e}$ is a $d$-cluster-tilting
    subcategory of $\mmod(A/AeA)$.
  \end{enumerate}
\end{lemma}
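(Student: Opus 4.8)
The plan is to deduce this statement directly from \th\ref{lemma:d-CT:idempotent_reduction} via the identification between finite dimensional algebras and locally bounded categories recorded in the remark following the definition of $\Mod\mathcal{A}$. By Morita theory we may assume without loss of generality that $A$ is basic; passing to a basic representative preserves the $d$-cluster-tilting subcategory $\mathcal{M}$, the class of projective-injective modules, and the relevant extension groups, and it carries the idempotent $e$ to an idempotent of the basic algebra, so that neither the hypotheses nor the conclusions are affected. Let $\mathcal{A}$ denote the locally bounded category associated to the basic algebra $A$: its objects form a complete set of representatives of the indecomposable projective $A$-modules and, since there are only finitely many of them and each morphism space is finite dimensional, the local boundedness conditions hold automatically. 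Under this identification $\mmod\mathcal{A}=\mmod A$.

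Next I would translate the idempotent data into the categorical setting. Writing $e=e_1+\cdots+e_k$ as a sum of primitive orthogonal idempotents, let $\mathcal{X}$ be the full subcategory of $\mathcal{A}$ spanned by the objects corresponding to $e_1,\dots,e_k$. The one point requiring verification is that the idempotent ideal $[\mathcal{X}]$ of morphisms factoring through an object of $\mathcal{X}$ coincides with the two-sided ideal $AeA$; this is immediate once one observes that a morphism between indecomposable projectives factors through the projectives attached to $\mathcal{X}$ precisely when the corresponding element of $A$ lies in $AeA$. Consequently $\underline{\mathcal{A}}_{\mathcal{X}}$ is identified with $A/AeA$, and the fully faithful exact functor $\pi^*$ identifies $\mmod\underline{\mathcal{A}}_{\mathcal{X}}$ with $\mmod(A/AeA)$, viewed as the full subcategory of $\mmod A$ consisting of those modules $M$ with $Me=0$.

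With these identifications in place, the two bulleted hypotheses of \th\ref{lemma:d-CT:idempotent_reduction:algebras} become verbatim the two hypotheses of \th\ref{lemma:d-CT:idempotent_reduction}, and the subcategory $\mathcal{M}_e=\mathcal{M}\cap\mmod(A/AeA)$ is identified with $\mathcal{M}_{\mathcal{X}}=\mathcal{M}\cap\mmod\underline{\mathcal{A}}_{\mathcal{X}}$. Applying \th\ref{lemma:d-CT:idempotent_reduction} then yields both assertions at once: part \eqref{it:MX-relative} gives that the inclusion $\mmod(A/AeA)\to\mmod A$ is an $\mathcal{M}_e$-relative $(d-1)$-homological embedding, and part \eqref{it:MX-d-CT} gives that $\mathcal{M}_e$ is a $d$-cluster-tilting subcategory of $\mmod(A/AeA)$. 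I expect no genuine obstacle here, since the entire content has already been established in the categorical setting; the only points requiring care are the bookkeeping of the Morita reduction together with the identity $[\mathcal{X}]=AeA$, both of which are routine.
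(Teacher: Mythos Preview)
Your proposal is correct and matches the paper's approach: the paper states this lemma explicitly as a restatement of \th\ref{lemma:d-CT:idempotent_reduction} in the special case of finite dimensional algebras, relying on the identification of algebras with categories having finitely many objects recorded earlier. The bookkeeping you spell out (Morita reduction, $[\mathcal{X}]=AeA$, and $\underline{\mathcal{A}}_{\mathcal{X}}\cong A/AeA$) is exactly what makes that identification work, and the paper does not give a separate proof.
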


Almost split sequences are the main object of study in classical
Auslander--Reiten theory. The analogous concept in higher Auslander--Reiten
theory is that of a $d$-almost split sequence, see Definition 3.1 in
\cite{Iya07}.

\begin{definition}
  \th\label{def:d-AR-seq} Let $\mathcal{M}$ be a Krull--Schmidt additive
  category. A sequence $\delta$ in $\mathcal{M}$ of the form
  \[
    \delta\colon0\to L\to M^1\to \cdots\to M^d\to N\to 0
  \]
  is \emph{$d$-almost split} if the following conditions are satisfied.
  \begin{enumerate}
  \item All morphisms in $\delta$ belong to the Jacobson radical of
    $\mathcal{M}$.
  \item For each $X\in\mathcal{M}$ the sequences of abelian groups
    \[
      \begin{tikzcd}[column sep=tiny, row sep=tiny]
        0\rar&\mathcal{M}(X,L)\rar&\mathcal{M}(X,M^1)\rar&\cdots\rar&\mathcal{M}(X,M^d)\rar&\mathcal{M}(X,N)
      \end{tikzcd}
    \]
    and
    \[
      \begin{tikzcd}[column sep=tiny, row sep=tiny]
        0\rar&\mathcal{M}(N,X)\rar&\mathcal{M}(M^d,X)\rar&\cdots\rar&\mathcal{M}(M^1,X)\rar&\mathcal{M}(L,X)
      \end{tikzcd}
    \]
    are exact.
  \item Every non-split monomorphism $L\to X$ in $\mathcal{M}$ factors through
    $L\to M^1$.
  \item Every non-split epimorphism $X\to N$ in $\mathcal{M}$ factors through
    $M^d\to N$.
  \end{enumerate}
\end{definition}

\begin{remark}
  Note that $1$-almost split sequences are nothing but classical almost split
  sequences.
\end{remark}

\begin{definition}
  Let $\mathcal{M}$ be a Krull--Schmidt additive category. We say that
  \emph{$\mathcal{M}$ has $d$-almost split sequences} if for every
  non-projective indecomposable object $N\in\mathcal{M}$ (resp. non-injective
  indecomposable object $L\in\mathcal{M}$) there exists a $d$-almost split
  sequence $0\to L\to M^1\to \cdots\to M^d\to N\to 0$.
\end{definition}

It is shown in Theorem 2.3.1 in \cite{Iya07a} that the contravariant functor
\[
  \Tr_d:=\Tr\circ\Omega^{d-1}\colon\underline{\mmod}\,\mathcal{A}\to\underline{\mmod}\,\mathcal{A}^\op
\]
induces an adjoint pair of functors
\[
  \begin{tikzcd}[column sep=small]
    \tau_d:=D\circ\Tr_d\colon\underline{\mmod}\,\mathcal{A}\rar[shift
    left]&\overline{\mmod}\,\mathcal{A}\colon\tau_d^-:=\Tr_d\circ D\lar[shift
    left]
  \end{tikzcd}
\]
called the \emph{$d$-Auslander--Reiten
  translations}.\nomenclature[14]{$\tau_d(M)$}{the $d$-Auslander--Reiten
  translate of $M$} The subsequent result is one of the main motivations for the
study of $d$-cluster-tilting subcategories of locally bounded categories. It
combines Theorems 2.5.1 and 2.5.3 in \cite{Iya07a} and adapts them to our
setting.

\begin{theorem}
  \th\label{thm:d-CT:existence_d-AR_seq} Let $\mathcal{A}$ be a locally bounded
  category and $\mathcal{M}\subseteq\mmod\mathcal{A}$ a $d$-cluster-tilting
  subcategory. Then, the following statements hold.
    \begin{enumerate}
  \item There are mutually inverse equivalences
    \[
      \begin{tikzcd}[column sep=small]
        \tau_d\colon\underline{\mathcal{M}}\rar[shift
        left]&\overline{\mathcal{M}}\colon\tau_d^-\lar[shift left]
      \end{tikzcd}
    \]
  \item\label{it:AR-formulas} For all $M,N\in\mathcal{M}$ there are bifunctorial
    isomorphisms
    \begin{align*}
      D\underline{\Hom}_{\mathcal{A}}(N,\tau_d M)&\cong\Ext_{\mathcal{A}}^d(M,N)\intertext{and}
                                            D\overline{\Hom}_{\mathcal{A}}(\tau_d^-N,M)&\cong\Ext_{\mathcal{A}}^d(M,N).
    \end{align*}
    Moreover, if $M$ has projective dimension at most $d$, then there is a
    bifunctorial isomorphism
    \[
      D\Hom_{\mathcal{A}}(N,\tau_dM)\cong\Ext_{\mathcal{A}}^d(M,N),
    \]
    whereas if $N$ has injective dimension at most $d$, then there is a
    bifunctorial isomorphism
    \[
      D\Hom_{\mathcal{A}}(\tau_d^-N,M)\cong\Ext_{\mathcal{A}}^d(M,N).
    \]
  \item The additive category $\mathcal{M}$ has $d$-almost split sequences.
    Moreover, if
    \[
      0\to L\to M^1\to \cdots\to M^d\to N\to 0
    \]
    is a $d$-almost split sequence in $\mathcal{M}$, then there are isomorphisms
    $L\cong\tau_d N$ and $N\cong \tau_d^{-}L$.
  \end{enumerate}
\end{theorem}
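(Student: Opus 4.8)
The statement is, up to the change of setting, the combination of Theorems 2.5.1 and 2.5.3 of \cite{Iya07}. My plan is therefore not to reinvent the argument but to observe that Iyama's proofs use only structural features of the ambient abelian category that are available here, and to carry them out in the present generality. Those features are: $\mmod\mathcal{A}$ is a $\Hom$-finite Krull--Schmidt abelian category with enough projectives and enough injectives, it carries the duality $D\colon\mmod\mathcal{A}\to\mmod(\mathcal{A}^\op)$, the translate $\tau=D\Tr$ and almost split sequences exist, and the $d$-Auslander--Reiten translations $\tau_d,\tau_d^-$ already form an adjoint pair by \cite{Iya07a}. All of this was recorded in Subsection \ref{subsec:locally_bounded_categories}, so the task reduces to the higher Auslander--Reiten formulas of part \eqref{it:AR-formulas}, from which parts (1) and (3) are derived, the first formally and the second with a little extra work.

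First I would prove the formulas. The input is the classical Auslander--Reiten formula in $\mmod\mathcal{A}$ (available because almost split sequences exist, see \cite{AR74}) together with the elementary dimension shift $\Ext_{\mathcal{A}}^d(N,-)\cong\Ext_{\mathcal{A}}^1(\Omega^{d-1}N,-)$. Writing $\tau_dM=\tau(\Omega^{d-1}M)$ and substituting into the classical formula, then using that $\tau$ and $\tau^-$ are mutually inverse on the stable categories, rewrites $\Ext_{\mathcal{A}}^d(N,\tau_dM)$ in terms of a stable $\Hom$ between syzygies of $M$ and $N$. The $d$-cluster-tilting hypothesis enters precisely here: the vanishing $\Ext_{\mathcal{A}}^i(\mathcal{M},\mathcal{M})=0$ for $1\le i\le d-1$ is what identifies that stable $\Hom$ with $\underline{\Hom}_{\mathcal{A}}(M,N)$, yielding $D\underline{\Hom}_{\mathcal{A}}(M,N)\cong\Ext_{\mathcal{A}}^d(N,\tau_dM)$; the second formula is dual. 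The two refinements follow from the standard analysis of when the projective-stable quotient $\underline{\Hom}$ (resp.\ the injective-stable $\overline{\Hom}$) coincides with $\Hom$, which the hypotheses $\operatorname{pd}M\le d$ (resp.\ $\operatorname{id}N\le d$) guarantee.

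Granting the formulas, part (1) is essentially formal: once $\tau_d$ is known to preserve $\mathcal{M}$, the adjoint pair $(\tau_d,\tau_d^-)$ of \cite{Iya07a} restricts to functors $\underline{\mathcal{M}}\rightleftarrows\overline{\mathcal{M}}$, and reading the formula of part \eqref{it:AR-formulas} through Yoneda's lemma shows that the unit and counit of the adjunction are isomorphisms there, so the two functors are mutually inverse equivalences. For part (3), given a non-projective indecomposable $N\in\mathcal{M}$, the isomorphism $D\underline{\Hom}_{\mathcal{A}}(N,N)\cong\Ext_{\mathcal{A}}^d(N,\tau_dN)$ singles out a distinguished class, represented by a $d$-extension $0\to\tau_dN\to M^1\to\cdots\to M^d\to N\to 0$; one checks, using the functorial finiteness of $\mathcal{M}$ and the vanishing conditions, that it satisfies the axioms of \th\ref{def:d-AR-seq} with middle terms in $\mathcal{M}$. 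The identifications $L\cong\tau_dN$ and $N\cong\tau_d^-L$ then follow from this construction together with part (1).

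The main obstacle is not any single computation but the two points where the \emph{full} $d$-cluster-tilting axiom \eqref{it:d-CT-property} of \th\ref{def:d-CT} must be used rather than mere Ext-vanishing along a resolution: that $\tau_d$ sends $\mathcal{M}$ into $\mathcal{M}$, and that a $d$-almost split sequence can be chosen with all of its middle terms inside $\mathcal{M}$. A second, purely bookkeeping, obstacle is to make sure that no step of Iyama's argument covertly relies on a finiteness feature absent from an arbitrary locally bounded category; since every structural ingredient listed in the first paragraph is in place, this reduces to a careful transcription.
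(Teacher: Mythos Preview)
Your proposal is correct and matches the paper's approach exactly: the paper does not supply a proof at all, but simply records before the statement that ``it combines Theorems 2.5.1 and 2.5.3 in \cite{Iya07} and adapts them to our setting.'' You have effectively written out the adaptation that the paper leaves implicit, and your identification of the structural ingredients needed (Hom-finiteness, Krull--Schmidt, enough projectives and injectives, duality, existence of $\tau$ and almost split sequences, the adjoint pair $(\tau_d,\tau_d^-)$ from \cite{Iya07a}) is precisely what Subsection~\ref{subsec:locally_bounded_categories} was set up to provide.
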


In parallel to the introduction of higher Auslander--Reiten theory, Iyama
introduced in \cite{Iya07a} the class of weakly $d$-representation-finite
algebras which can be thought of as higher analogues of finite dimensional
algebras of finite representation type from the viewpoint of Auslander's
correspondence (the precise terminology was introduced in Definition 2.2 of
\cite{IO11}). We recall the definition below together with some new terminology
which we use in the sequel.

\begin{definition}
  \th\label{def:d-RF} Let $A$ be a finite dimensional algebra.
  \begin{enumerate}
  \item We say that $A$ is \emph{weakly $d$-representation-finite} if there
    exists a $d$-cluster-tilting $A$-module.
  \item We say that $A$ is \emph{$d$-representation-finite $d$-hereditary} if
    $A$ has global dimension at most $d$ and $A$ is a weakly
    $d$-representation-finite algebra.
  \item We say that $A$ is \emph{$d\mathbb{Z}$-representation-finite} if there
    exists a $d\mathbb{Z}$-cluster-tilting $A$-module.
  \end{enumerate}
\end{definition}

\begin{warning}
  In \cite{IO11} the $d$-representation-finite $d$-hereditary algebras are
  simply called `$d$-representation-finite'. Our choice of terminology is
  partially motivated by Definition 3.2 in \cite{HIO14} where the notion of
  `$d$-hereditary' algebra is introduced. Note that, from a homological point of
  view, $d\mathbbm{Z}$-representation-finite algebras are higher analogues of
  (possibly non-hereditary) algebras of finite representation type.
\end{warning}

The subsequent observation gives a restriction on the global dimension of
$d\mathbb{Z}$-representation-finite algebras.

\begin{proposition}
  \th\label{prop:dZ-CT:gldimdZ} Let $A$ be a $d\mathbb{Z}$-representation-finite
  algebra of finite global dimension. Then, $\gldim A$ is a multiple of $d$. In
  particular, if $A$ is $d$-representation-finite $d$-hereditary, then either
  $A$ has global dimension $d$ or it is semisimple.
\end{proposition}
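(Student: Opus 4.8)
The plan is to reduce the computation of $\gldim A$ to extension groups between objects of the $d\mathbb{Z}$-cluster-tilting subcategory $\mathcal{M}=\add M$ and then to invoke condition \eqref{it:dZ-CT} of \th\ref{def:d-CT}. Write $\delta:=\gldim A$ and assume $\delta\geq1$ (the case $\delta=0$ is trivial, since $0\in d\mathbb{Z}$ and corresponds to $A$ being semisimple). By the definition of global dimension there exist $X,Y\in\mmod A$ with $\Ext_A^\delta(X,Y)\neq0$, while $\Ext_A^{i}(-,-)$ vanishes identically for every $i>\delta$.

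First I would push the second argument into $\mathcal{M}$. Since $\mathcal{M}$ is a generating subcategory there is an epimorphism $M_1\twoheadrightarrow Y$ with $M_1\in\mathcal{M}$; regarding it as a short exact sequence and using $\Ext_A^{\delta+1}=0$, the associated long exact sequence yields a surjection $\Ext_A^\delta(X,M_1)\twoheadrightarrow\Ext_A^\delta(X,Y)$, so that $\Ext_A^\delta(X,M_1)\neq0$. Dually, since $\mathcal{M}$ is cogenerating there is a monomorphism $X\hookrightarrow M_2$ with $M_2\in\mathcal{M}$, and applying $\Ext_A^\bullet(-,M_1)$ together with the same vanishing produces a surjection $\Ext_A^\delta(M_2,M_1)\twoheadrightarrow\Ext_A^\delta(X,M_1)$, whence $\Ext_A^\delta(M_2,M_1)\neq0$ with both $M_1,M_2\in\mathcal{M}$. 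Condition \eqref{it:dZ-CT} of \th\ref{def:d-CT} then forces $\delta\in d\mathbb{Z}$, which is the first assertion. I expect this reduction step---trading the arbitrary modules $X,Y$ for objects of $\mathcal{M}$ by exploiting the vanishing of $\Ext^{\delta+1}$ together with the generating--cogenerating property---to be the only real content of the argument; everything else is formal.

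For the final assertion I would first observe that a $d$-representation-finite $d$-hereditary algebra is automatically $d\mathbb{Z}$-representation-finite. Indeed, its $d$-cluster-tilting subcategory $\mathcal{M}$ satisfies $\Ext_A^i(\mathcal{M},\mathcal{M})=0$ for $i\in\set{1,\dots,d-1}$ by the defining property \eqref{it:d-CT-property} of \th\ref{def:d-CT}, while $\gldim A\leq d$ forces $\Ext_A^i(\mathcal{M},\mathcal{M})=0$ for all $i>d$; hence the only degree $i\geq1$ in which $\Ext_A^i(\mathcal{M},\mathcal{M})$ can be non-zero is $i=d\in d\mathbb{Z}$, so condition \eqref{it:dZ-CT} holds and $\mathcal{M}$ is $d\mathbb{Z}$-cluster-tilting. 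The first part now applies and gives $\gldim A\in d\mathbb{Z}$; combined with $\gldim A\leq d$ this leaves only the possibilities $\gldim A\in\set{0,d}$, and $\gldim A=0$ occurs precisely when $A$ is semisimple.
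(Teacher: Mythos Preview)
Your argument is correct. The reduction via the generating--cogenerating property and the vanishing of $\Ext^{\delta+1}$ is sound, and the verification that $d$-representation-finite $d$-hereditary implies $d\mathbb{Z}$-representation-finite is the right bridge to the second assertion.

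The paper takes a shorter, more specific route: it invokes the standard identity
\[
  \gldim A=\max\setP{i\in\mathbb{Z}}{\Ext_A^i(DA,A)\neq0}
\]
for algebras of finite global dimension. Since both $A$ and $DA$ already lie in any $d$-cluster-tilting subcategory (as the projectives and the injectives), condition \eqref{it:dZ-CT} of \th\ref{def:d-CT} immediately forces this maximum into $d\mathbb{Z}$. The second claim is then dismissed as ``an immediate consequence of the first one''; your explicit check that $d$-representation-finite $d$-hereditary implies $d\mathbb{Z}$-representation-finite is exactly what underlies that remark. Your approach trades the use of the $\Ext^\bullet(DA,A)$ formula for a direct dimension-shifting argument that works from first principles; the paper's approach is one line once the formula is granted. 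Both land on the same application of condition \eqref{it:dZ-CT}.
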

\begin{proof}
  Since $A$ has finite global dimension, the equality
  \[
    \gldim A=\max\setP{i\in\mathbb{Z}}{\Ext_A^i(DA,A)\neq0}
  \]
  holds. Moreover, given that $A$ is $d\mathbb{Z}$-representation-finite,
  $\Ext_A^i(DA,A)\neq0$ implies that $i\in d\mathbb{Z}$. Therefore $\gldim A$ is
  a multiple of $d$. The second claim is an immediate consequence of the first
  one.
\end{proof}

We recall a result which makes manifest the analogy between
$d$-representation-finite $d$-hereditary algebras and hereditary algebras of
finite representation type.

\begin{theorem}[Proposition 1.3 in \cite{Iya11}]
  \th\label{thm:d-RF_d-H:unique_d-CT} Let $A$ be a $d$-representation-finite
  $d$-hereditary algebra. Then,
  \[
    \add\setP{\tau_d^{-i}(A)}{i\geq0}=\add\setP{\tau_d^{i}(DA)}{i\geq0}\subseteq\mmod
    A
  \]
  is the unique $d$-cluster-tilting subcategory of $\mmod A$. In particular,
  each indecomposable object $X\in \mathcal{M}$ is of the form $\tau_d^{-j} P$
  for some indecomposable projective $A$-module $P$ and some integer $0\leq
  j\leq i_P$, where $i_P<\infty$ is the maximal number $i$ such that
  $\tau_d^{-i}P\neq 0$.
\end{theorem}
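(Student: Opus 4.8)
The plan is to show that every $d$-cluster-tilting subcategory $\mathcal{M}\subseteq\mmod A$ coincides with $\mathcal{M}_0:=\add\setP{\tau_d^{-i}(A)}{i\geq0}$; since $\mathcal{M}_0$ is defined without reference to $\mathcal{M}$, this simultaneously yields uniqueness and the explicit description. First I would record three structural facts. (i) By the Ext-characterisation in \th\ref{def:d-CT}, every indecomposable projective satisfies $\Ext_A^i(P,\mathcal{M})=0$ for $1\leq i\leq d-1$ and hence lies in $\mathcal{M}$; dually every indecomposable injective lies in $\mathcal{M}$, so $A,DA\in\mathcal{M}$. (ii) By \th\ref{def:d-RF} we have $\mathcal{M}=\add M$ for a single module $M$, so $\mathcal{M}$ has only finitely many indecomposables. (iii) By \th\ref{thm:d-CT:existence_d-AR_seq} the functors $\tau_d$ and $\tau_d^{-}$ restrict to mutually inverse equivalences $\underline{\mathcal{M}}\cong\overline{\mathcal{M}}$; in particular $\mathcal{M}$ is closed under $\tau_d$ and $\tau_d^{-}$, and on indecomposables these functors give mutually inverse bijections between the non-injective and the non-projective objects.

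Next I would reduce the statement to a combinatorial assertion about $\tau_d$-orbits. Consider the directed graph whose vertices are the finitely many indecomposables of $\mathcal{M}$, with an arrow $X\to\tau_d^{-}X$ whenever $X$ is non-injective. By (iii) every vertex has in-degree and out-degree at most $1$; its sources are exactly the indecomposable projectives and its sinks exactly the indecomposable injectives. Hence the graph is a disjoint union of finite paths and cycles, each path running from a projective to an injective. Closure under $\tau_d^{-}$ together with $A\in\mathcal{M}$ gives $\mathcal{M}_0\subseteq\mathcal{M}$ at once. For the reverse inclusion, an indecomposable $X\in\mathcal{M}$ lies on a path starting at a projective $P$ precisely when its backward $\tau_d$-orbit reaches $P$, in which case $X\cong\tau_d^{-j}P\in\mathcal{M}_0$ with $0\leq j\leq i_P$. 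Thus the whole theorem reduces to showing that the graph has \emph{no cycles}, equivalently that every indecomposable of $\mathcal{M}$ lies on the $\tau_d^{-}$-orbit of some projective.

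The hard part is precisely this absence of cycles, and it is where the hypothesis $\gldim A\leq d$ is indispensable (by \th\ref{prop:dZ-CT:gldimdZ} I may assume $\gldim A=d$, the semisimple case being trivial). Suppose some indecomposable $X$ satisfied $\tau_d^{p}X\cong X$ with $p\geq1$ and no member of its orbit projective or injective. Splicing the $p$ $d$-almost split sequences $0\to\tau_d X_i\to M_i^1\to\cdots\to M_i^d\to X_i\to0$ supplied by \th\ref{thm:d-CT:existence_d-AR_seq} (whose end terms are consecutive orbit members) produces a doubly-infinite $p$-periodic acyclic complex in $\mathcal{M}$, and the stable Auslander--Reiten formula $\Ext_A^d(X_i,\tau_d X_i)\cong D\underline{\Hom}_A(X_i,X_i)\neq0$ shows each splicing class is non-zero. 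For $p\geq2$ a nonvanishing Yoneda product of these classes would land in $\Ext_A^{dp}(X_0,X_0)$ with $dp>d$, contradicting $\gldim A=d$; the fixed-point case $p=1$ is not caught this way. The clean and uniform route, which is Iyama's in \cite{Iya11}, is to pass to $D^b(\mmod A)$, where $\gldim A<\infty$ guarantees a Serre functor $\nu$ and its twist $\nu_d:=\nu\circ[-d]$, to identify $\tau_d^{-i}(A)\cong H^0\bigl(\nu_d^{-i}(A)\bigr)$, and to observe that applying the autoequivalence $\nu_d^{-1}$ eventually moves $A$ out of cohomological degree $0$. This single fact forces $i_P<\infty$ and makes the objects $\nu_d^{-i}(A)$ pairwise non-isomorphic, so no orbit can close up into a cycle.

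Finally, the equality $\add\setP{\tau_d^{-i}(A)}{i\geq0}=\add\setP{\tau_d^{i}(DA)}{i\geq0}$ falls out of the same orbit picture: once cycles are excluded, each path runs from an indecomposable projective to an indecomposable injective, so the union of the forward $\tau_d^{-}$-orbits of the projectives and the union of the backward $\tau_d$-orbits of the injectives both exhaust exactly the indecomposables lying on these paths and therefore coincide. Combining this with the previous paragraphs gives $\mathcal{M}=\mathcal{M}_0$ for an arbitrary $d$-cluster-tilting subcategory, proving uniqueness, the identification of the two descriptions, and the final claim that each indecomposable is of the form $\tau_d^{-j}P$ with $0\leq j\leq i_P$.
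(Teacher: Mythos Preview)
The paper does not give its own proof of this statement: it is recorded as \emph{Proposition~1.3 in \cite{Iya11}} and no argument is supplied in the present article. So there is no ``paper's proof'' to compare your proposal against; the authors simply cite the result and use it as input for their later constructions.

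As for the proposal itself: your structural set-up (every $d$-cluster-tilting subcategory contains $A$ and $DA$, is closed under $\tau_d^{\pm}$, and decomposes into $\tau_d$-orbits which are either finite paths from a projective to an injective or cycles) is sound and is essentially how Iyama organises the argument. The genuine content, as you correctly isolate, is ruling out cycles. Your Yoneda-splicing idea is not a proof: even granting $\Ext_A^d(X_i,\tau_d X_i)\neq 0$ for each $i$, there is no reason the Yoneda product of these classes is nonzero, so you cannot conclude $\Ext_A^{dp}(X_0,X_0)\neq 0$; and you already note that $p=1$ escapes this entirely. The derived-category route you then invoke is the right one, but the crucial assertion ``$\nu_d^{-1}$ eventually moves $A$ out of cohomological degree $0$'' is exactly the nontrivial point in \cite{Iya11} and needs an argument (this is where $\gldim A\leq d$ enters, via control of the cohomology of $\nu_d^{-i}(A)$). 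So your write-up identifies the correct skeleton and the correct hard step, but for the latter you are, in the end, citing Iyama rather than proving it---which is precisely what the paper under review does as well.
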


\begin{notation}
  Let $A$ be a $d$-representation-finite $d$-hereditary algebra. We denote the
  unique $d$-cluster-tilting subcategory of $\mmod A$ by $\mathcal{M}(A)$, see
  \th\ref{thm:d-RF_d-H:unique_d-CT}. We denote a basic additive generator of
  $\mathcal{M}(A)$ by $M(A)$.\nomenclature[15]{$\mathcal{M}(A)$}{the unique
    $d$-cluster-tilting subcategory in $\mmod A$, where $A$ is a
    $d$-re\-pre\-sen\-ta\-tion-finite $d$-hereditary algebra}
\end{notation}

Given a finite dimensional algebra $A$ of finite global dimension we denote the
\emph{derived Nakayama functor} by
\[
  \nu:=-\otimes_A^\mathbb{L} DA\colon\Db(\mmod A)\to\Db(\mmod A).
\]
It is well known that $\nu$ is a Serre functor on $\Db(\mmod A)$ and that the
autoequivalence $\nu_1:=\nu[-1]$ is a derived version of the Auslander--Reiten
translation, see Section 1.4 in \cite{Hap88} and Theorem I.2.4 in \cite{RVdB02}.

\begin{notation}
  \th\label{not:d-RF_d-H:D} Let $A$ be a finite dimensional algebra of finite
  global dimension and define \nomenclature[16]{$\nu_d$}{the $d$-th desuspension
    of the derived Nakayama functor on $\Db(\mmod A)$, where $A$ is a finite
    dimensional algebra of finite global dimension}
  \[
    \nu_d:=\nu[-d]\colon\Db(\mmod A)\to\Db(\mmod A).
  \]
  This autoequivalence is commonly thought of as a higher analogue of the
  derived Auslander--Reiten translation $\nu_1$. Following \cite{Iya11}, we
  define the subcategory \nomenclature[17]{$\mathcal{U}(A)$}{the standard
    $d\mathbb{Z}$-cluster-tilting subcategory in $\Db(\mmod A)$, where $A$ is a
    $d$-representation-finite $d$-hereditary algebra}
  \[
    \mathcal{U}(A):=\add\setP{\nu_d^i(DA)}{i\in\mathbb{Z}}\subseteq\Db(\mmod A).
  \]
\end{notation}

When $A$ is a $d$-representation-finite $d$-hereditary algebra, the category
$\mathcal{U}(A)$ plays the role of the derived category of $\add M(A)$, as
suggested by the following theorem.

\begin{theorem}[Theorem 1.2.1 in \cite{Iya11}]
  \th\label{thm:d-RF-d-H:D} Let $A$ be a $d$-representation-finite
  $d$-heredi\-tary al\-gebra. Then,
  \[
    \mathcal{U}(A)=\add\setP{M\in\mathcal{M}(A)[di]}{i\in\mathbb{Z}}
  \]
  is a $d\mathbb{Z}$-cluster-tilting subcategory of $\Db(\mmod A)$.
\end{theorem}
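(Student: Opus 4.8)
The plan is to split the argument into two parts: first establish the claimed equality of subcategories, and then verify the three conditions of \th\ref{def:d-CT} defining a $d\mathbb{Z}$-cluster-tilting subcategory (condition \eqref{it:d-CT-property} being stated for triangulated categories, and condition (2) being vacuous there). Throughout I would use that $A$ is $d$-hereditary, so $\gldim A\leq d$ and every $A$-module has projective and injective dimension at most $d$ (see \th\ref{prop:dZ-CT:gldimdZ}), together with the description of $\mathcal{M}(A)$ as the unique $d$-cluster-tilting subcategory of $\mmod A$ from \th\ref{thm:d-RF_d-H:unique_d-CT}.

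For the equality I would analyse the $\nu_d$-orbit of $DA$. The key local statement is that $\nu_d$ agrees with the $d$-Auslander--Reiten translation on the relevant modules: for an indecomposable non-projective $M\in\mathcal{M}(A)$ one has $\nu_d M\cong\tau_d M$, concentrated in degree $0$. First I would derive this by combining the higher Auslander--Reiten formula of \th\ref{thm:d-CT:existence_d-AR_seq}\eqref{it:AR-formulas}, applicable since $M$ has projective dimension at most $d$, which gives $D\Hom_A(M,N)\cong\Ext_A^d(N,\tau_d M)$, with the fact that $\nu$ is a Serre functor on $\Db(\mmod A)$, which gives $D\Hom_A(M,N)\cong\Hom_{\Db(\mmod A)}(N,\nu M)$; comparing these natural isomorphisms over all modules $N$ and using that $\mmod A$ generates $\Db(\mmod A)$ forces $\nu M\cong\tau_d M[d]$, hence $\nu_d M\cong\tau_d M$. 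On the other hand, for an indecomposable projective $P$ one has $\nu_d P=\nu(P)[-d]=I_P[-d]$, where $I_P$ is the corresponding indecomposable injective. Using the description $\mathcal{M}(A)=\add\setP{\tau_d^{-i}(A)}{i\geq0}=\add\setP{\tau_d^i(DA)}{i\geq0}$, applying $\nu_d$ repeatedly to $DA$ therefore sweeps out the finite chain of indecomposables of $\mathcal{M}(A)$ in degree $0$ and, upon reaching the projectives, wraps around to $\mathcal{M}(A)[-d]$; iterating in both directions identifies $\add\setP{\nu_d^i(DA)}{i\in\mathbb{Z}}$ with $\add\setP{M[di]}{M\in\mathcal{M}(A),\,i\in\mathbb{Z}}$, which is the asserted equality. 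In particular $\mathcal{U}(A)[d]=\mathcal{U}(A)$.

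For the cluster-tilting conditions, the self-orthogonality and the condition \eqref{it:dZ-CT} both reduce to one computation. For $M,N\in\mathcal{M}(A)$ and $j,k\in\mathbb{Z}$ one has
\[
  \Hom_{\Db(\mmod A)}(M[dj],N[dk+i])\cong\Ext_A^{d(k-j)+i}(M,N),
\]
and since $\gldim A\leq d$ this vanishes unless $0\leq d(k-j)+i\leq d$, while the $d$-cluster-tilting property of $\mathcal{M}(A)$ in $\mmod A$ forces $\Ext_A^m(M,N)=0$ for $1\leq m\leq d-1$. Hence the group above is non-zero only when $d(k-j)+i\in\set{0,d}$, which for $1\leq i\leq d-1$ is impossible; this yields the vanishing needed in condition \eqref{it:d-CT-property} and, more generally, shows that morphisms in $\mathcal{U}(A)$ between shifts occur only in degrees divisible by $d$, which is condition \eqref{it:dZ-CT}. (Alternatively \eqref{it:dZ-CT} follows formally from $\mathcal{U}(A)[d]=\mathcal{U}(A)$ via the remark after \th\ref{def:d-CT}.)

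It remains to verify functorial finiteness and the maximality half of \eqref{it:d-CT-property}, and I expect the latter to be the main obstacle. Functorial finiteness should follow from $A$ being $d$-representation-finite, so that $\mathcal{M}(A)$ has finitely many indecomposables and is functorially finite in $\mmod A$, together with the boundedness of the cohomological support of objects of $\Db(\mmod A)$; one then assembles approximations degree by degree. For maximality one must show that any $X\in\Db(\mmod A)$ with $\Hom_{\Db(\mmod A)}(X,\mathcal{U}(A)[i])=0$ for $1\leq i\leq d-1$ already lies in $\mathcal{U}(A)$, and dually. Here the plan is to induct on the width of the cohomological support of $X$: using the truncation triangles one isolates an extreme cohomology $H^j(X)$, applies $\Hom_{\Db(\mmod A)}(-,\mathcal{U}(A)[i])$ together with the resulting long exact sequences to deduce that $H^j(X)$ satisfies the corresponding orthogonality inside $\mmod A$, hence lies in $\mathcal{M}(A)$ by the maximality built into the $d$-cluster-tilting property of $\mathcal{M}(A)$, and that the truncated complex again satisfies the derived orthogonality. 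Controlling the degree shifts so that they remain multiples of $d$ is the delicate point, and it is precisely here that $\gldim A\leq d$ and the equality $\mathcal{U}(A)[d]=\mathcal{U}(A)$ must be used to close the induction.
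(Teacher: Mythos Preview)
The paper does not give its own proof of this theorem: it is stated with attribution to Iyama (Theorem~1.2.1 in \cite{Iya11}) and then used as a black box. So there is nothing in the paper to compare your argument against.

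On the substance of your sketch: the identification $\nu_d M\cong\tau_d M$ for non-projective indecomposables in $\mathcal{M}(A)$ via Serre duality and the higher Auslander--Reiten formula, and the resulting equality $\mathcal{U}(A)=\add\setP{M[di]}{M\in\mathcal{M}(A),\,i\in\mathbb{Z}}$, are along the standard lines and look correct. The rigidity computation is also fine, and functorial finiteness is indeed straightforward from the finiteness of indecomposables in $\mathcal{M}(A)$. The genuine content, as you anticipate, is the maximality half of condition~\eqref{it:d-CT-property}. Your induction on cohomological amplitude is the right shape, but as written it does not yet explain why the extreme nonzero cohomology of an orthogonal object $X$ must sit in a degree divisible by $d$: a priori the top cohomology $H^j(X)$ could occur at any $j$, and one first has to use the orthogonality against $A\in\mathcal{U}(A)$ and $DA\in\mathcal{U}(A)$ together with $\gldim A\leq d$ to pin down the possible degrees before invoking the $d$-cluster-tilting property of $\mathcal{M}(A)$ in $\mmod A$. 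Once that degree control is in place, the splitting of the truncation triangles (again via the orthogonality hypothesis) lets the induction go through. Your final paragraph gestures at exactly this, but the argument needs one more explicit step to close.
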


\begin{remark}
  Let $A$ be a representation-finite hereditary algebra. Note that
  $\mathcal{U}(A)=\Db(\mmod A)$ in this case.
\end{remark}

\begin{remark}
  Let $A$ be a representation-finite $d$-hereditary algebra. The additive
  category $\mathcal{U}(A)$ is a $(d+2)$-angulated category in the sense of
  \cite{GKO13}. Thus, form the point of view of higher-dimensional
  Auslander--Reiten theory, $\mathcal{U}(A)$ can be thought of as a
  higher-dimensional analogue of $\Db(\mmod A)$ in the same was as $\add M(A)$
  can be thought of as a higher-dimensional analogue of $\mmod A$.
\end{remark}


\section{The higher Nakayama algebras of type $\mathbb{A}$}
\label{sec:An}

Let $d$ be a positive integer. In this section we construct the $d$-Nakayama
algebras of type $\mathbb{A}$. We show that they belong to the class of
$d\mathbb{Z}$-representation-finite algebras and establish their basic
properties.

\subsection{The higher Auslander algebras of type $\mathbb{A}$}
\label{subsec:An}

We begin by recalling Iyama's original construction of the higher Auslander
algebras of type $\mathbb{A}$ from \cite{Iya11}. From our viewpoint, these
algebras are to be thought of as higher dimensional analogues of the hereditary
Nakayama algebras. Oppermann and Thomas gave a combinatorial description of
these algebras and their cluster-tilting modules in \cite{OT12}. Here we give a
complementary but closely related description in terms of ordered sequences in a
finite linear order. Although the results we need concerning these algebras have
all appeared in \cite{OT12}, we give new proofs of most of them using the
language of representations of posets.

\begin{setting}
  We fix positive integers $d$ and $n$ until further notice.
\end{setting}

Consider the linearly ordered set \nomenclature[18]{$A_n$}{the poset
  $\set{0<1<\cdots<n-1}$}
\[
  A_n:=\set{0<1<\dots<n-1}
\]
which we view as a locally finite category (or rather as a finite dimensional
algebra) as explained in Section \ref{subsec:posets}. Therefore we identify
$A_n$ with the path algebra of the quiver $0\to1\to\cdots\to n-1$, see also
\th\ref{ex:poset:An,ex:poset:An:interlacing}.

\begin{definition}
  \th\label{def:An} The \emph{$d$-Auslander algebra of type $\mathbb{A}_n$} is
  the $d$-cone $A_n^{(d)}$.\nomenclature[19]{$A_n^{(d)}$}{the higher Auslander
    algebra of type $\mathbb{A}_n$}
\end{definition}

A presentation of $A_n^{(d)}$ as a quiver with relations was given by Iyama in
Definition 6.5 and Theorem 6.7 in \cite{Iya11}. Using the explicit description
of the $d$-cone of a poset given in \th\ref{prop:poset:d-cone:interlacing} it is
elementary to verify that Iyama's presentation in fact agrees with our definition. In
order to facilitate this comparison we give an explicit description of the
$d$-cone $A_n^{(d)}$ using generators and relations. By definition, the Gabriel
quiver $Q$ of $A_n^{(d)}$ has as vertices the
set\nomenclature[20]{$\mathbf{os}_n^d$}{the poset $\mathbf{os}^d(A_n)$ of
  ordered sequences of length $d$ in $A_n$}
\[
  \mathbf{os}_n^d:=\mathbf{os}^d(A_n)
\]
of ordered sequences of length $d$ in $A_n$, which are nothing but tuples
$\bm{\lambda}=(\lambda_1,\dots,\lambda_d)$ of integers satisfying
\[
  0\leq\lambda_1\leq\cdots\leq\lambda_d\leq n-1.
\]
For each $\bm{\lambda}\in\mathbf{os}_n^d$ and each $i\in\set{1,\dots,d}$ such
that $\bm{\lambda}+e_i$ is again an ordered sequence of length $d$ there is an
arrow in $Q$ of the form
\[
  a_i=a_i(\bm{\lambda})\colon\bm{\lambda}\to\bm{\lambda}+e_i,
\]
where $e_1,\dots,e_d$ is the standard basis of $\mathbb{Z}^d$. With the above
notation, $A_n^{(d)}$ is identified with the finite dimensional algebra
$\mathbbm{k}Q/I$ where $I$ is the two-sided ideal of $\mathbbm{k}Q$ generated by
the relations
\[
  a_j(\bm{\lambda}+e_i)a_i(\bm{\lambda})-a_i(\bm{\lambda}+e_j)a_j(\bm{\lambda})
\]
for each $\bm{\lambda}\in\mathbf{os}_n^d$ and each $i,j\in\set{1,\dots,d}$ such
that $i\neq j$. By convention, $a_i(\bm{\lambda})=0$ whenever $\bm{\lambda}$ or
$\bm{\lambda}+e_i$ are not vertices of $Q$, hence some of the above relations
are in fact zero relations. For example, $A_n^{(1)}$ is just the path algebra of
the quiver $0\to 1\to\cdots\to n-1$. The Gabriel quivers of $A_4^{(2)}$ and
$A_4^{(3)}$ are shown in Figure \ref{fig:A4}.
\begin{figure}
  \begin{center}
    \includegraphics[width=0.65\textwidth]{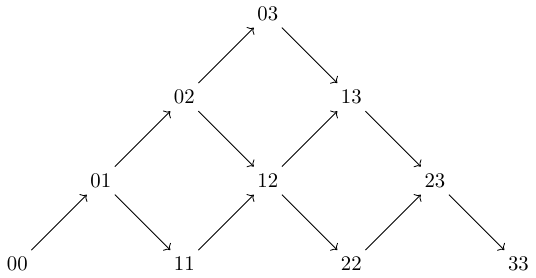}\vskip2em
    \includegraphics[width=\textwidth]{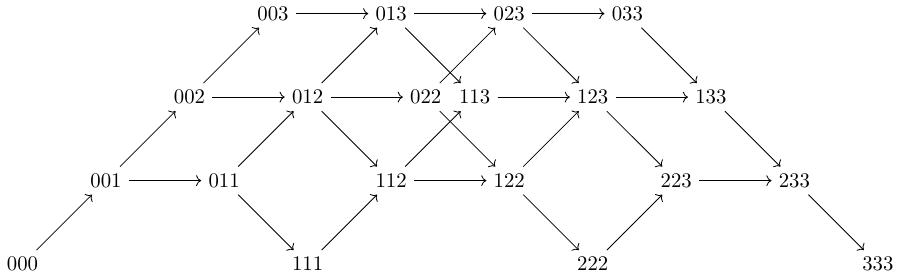}
  \end{center}
  \caption{The Gabriel quivers of $A_4^{(2)}$ (top) and $A_4^{(3)}$ (bottom).}
  \label{fig:A4}
\end{figure}

The choice of terminology in \th\ref{def:An} is justified by the following
theorem, which is a special case of Corollary 1.16 in \cite{Iya11}.

\begin{theorem}
  \th\label{thm:An} The algebra $A_n^{(d)}$ is $d$-representation-finite
  $d$-hereditary. In particular, there exists a unique basic $d$-cluster-tilting
  $A_n^{(d)}$-module $M(A_n^{(d)})$. Moreover, there is an isomorphism of
  algebras
  \[
    \End_{A_n^{(d)}}(M(A_n^{(d)}))\cong A_n^{(d+1)}.
  \]
\end{theorem}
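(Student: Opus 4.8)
The plan is to treat the three assertions in sequence, isolating the genuinely hard input. The second assertion, existence and uniqueness of a basic $d$-cluster-tilting module $M(A_n^{(d)})$, is a formal consequence of the first via \th\ref{thm:d-RF_d-H:unique_d-CT}; that theorem furthermore describes the indecomposable summands of $M(A_n^{(d)})$ as the iterated translates $\tau_d^{-j}P$ of the indecomposable projectives. So the substance lies in the $d$-representation-finite $d$-hereditary claim and in the endomorphism-algebra isomorphism.

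For the endomorphism algebra I would reduce everything to a clean combinatorial identification, namely the claim that the basic $d$-cluster-tilting module is
\[
  M(A_n^{(d)})\cong\bigoplus_{\bm{x}\in\mathbf{os}^{d+1}(A_n)}M(\bm{x}),
\]
the sum of the interval $\cone(A_n^d)$-modules attached to the ordered sequences of length $d+1$. Granting this, the summands $M(\bm{x})$ are pairwise non-isomorphic indecomposables, and \th\ref{prop:d-cone:interval_modules:interlacing} computes every Hom-space between them---it is $\mathbbm{k}$ exactly when $\bm{x}\rightsquigarrow\bm{y}$ and $0$ otherwise---and identifies the full subcategory they span with $\cone(A_n^{d+1})=A_n^{(d+1)}$ compatibly with composition. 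Reading this equivalence as an isomorphism of the associated morphism algebras yields at once $\End_{A_n^{(d)}}(M(A_n^{(d)}))\cong A_n^{(d+1)}$. Thus the two remaining tasks are to prove that $A_n^{(d)}$ is $d$-representation-finite $d$-hereditary and to verify the displayed identification of its cluster-tilting module.

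I would carry out both by induction on $d$. The base case $d=1$ is explicit: $A_n^{(1)}=A_n$ is the hereditary, representation-finite path algebra of linearly oriented $\mathbb{A}_n$, its unique $1$-cluster-tilting subcategory is all of $\mmod A_n$, whose indecomposables are exactly the interval modules $M[i,j]=M(\bm{x})$ with $\bm{x}=(i,j)\in\mathbf{os}^2(A_n)$, and \th\ref{prop:posets:interval_modules:interlacing} gives $\End_{A_n}(M(A_n^{(1)}))\cong\cone(A_n^2)=A_n^{(2)}$. For the inductive step one assumes the full statement for $d$ (both the finiteness and the module identification); the previous paragraph then provides the isomorphism $A_n^{(d+1)}\cong\End_{A_n^{(d)}}(M(A_n^{(d)}))$, which realises $A_n^{(d+1)}$ as the higher Auslander algebra of the $d$-representation-finite $d$-hereditary algebra $A_n^{(d)}$. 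At this point I would invoke Iyama's iterated higher Auslander construction for type $\mathbb{A}$, Corollary 1.16 in \cite{Iya11}, to conclude that $A_n^{(d+1)}$ is $(d+1)$-representation-finite $(d+1)$-hereditary and to match its cluster-tilting module with $\add\setP{M(\bm{y})}{\bm{y}\in\mathbf{os}^{d+2}(A_n)}$, closing the induction. To apply his theorem one must first confirm that the quiver-with-relations presentation of the $d$-cone recorded after \th\ref{def:An} agrees, through \th\ref{prop:poset:d-cone:interlacing}, with Iyama's presentation of the higher Auslander algebra of type $\mathbb{A}_n$; this comparison is elementary but must be done.

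The main obstacle is exactly this inductive step: that forming the endomorphism algebra of the $d$-cluster-tilting module simultaneously raises the homological degree by one, preserves representation-finiteness, keeps the global dimension bounded by $d+1$, and produces the next cluster-tilting module. None of this holds for an arbitrary $d$-representation-finite $d$-hereditary algebra; it depends on the special combinatorics of type $\mathbb{A}$ and is the heart of Iyama's result, which I would cite rather than reprove (see also the combinatorial treatment in \cite{OT12}). By contrast, the bookkeeping around it---identifying which $M(\bm{x})$ give the indecomposable projectives and injectives, and checking pairwise non-isomorphism---is routine from \th\ref{prop:posets:interval_modules:interlacing} and the description of $\mathbf{os}^{d+1}(A_n)$; and once $d$-heredity is established, \th\ref{prop:dZ-CT:gldimdZ} even forces $\gldim A_n^{(d)}$ to equal $d$ unless $A_n^{(d)}$ is semisimple.
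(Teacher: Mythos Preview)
The paper does not give its own proof of \th\ref{thm:An}; it is stated outright as a special case of Corollary~1.16 in \cite{Iya11}. Your proposal is correct as far as it goes, but the inductive scaffolding is redundant: at the crucial point of the induction you invoke the very same Corollary~1.16 to conclude that $A_n^{(d+1)}$ is $(d+1)$-representation-finite $(d+1)$-hereditary, and that corollary already delivers the full statement of \th\ref{thm:An} for every $d$ at once, so there is nothing left for the induction to do.

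What your outline genuinely contributes is an independent, combinatorial derivation of the isomorphism $\End_{A_n^{(d)}}(M(A_n^{(d)}))\cong A_n^{(d+1)}$ from the identification $M(A_n^{(d)})\cong\bigoplus_{\bm{x}\in\mathbf{os}_n^{d+1}}M(\bm{x})$ via \th\ref{prop:d-cone:interval_modules:interlacing}. The paper takes the opposite logical route: it accepts all of \th\ref{thm:An} from \cite{Iya11}, and only \emph{afterwards} establishes the explicit description of $M(A_n^{(d)})$ in \th\ref{thm:M_nd}, using \th\ref{thm:An} (through \th\ref{thm:d-RF_d-H:unique_d-CT}) as input and computing the iterated translates $\tau_d^{-i}(P)$ directly in \th\ref{prop:An:proj_inj} and \th\ref{prop:An:taud}. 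Your approach packages \th\ref{thm:An} and \th\ref{thm:M_nd} together; the paper keeps them separate and cites the first.
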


The aim of this subsection is to give a new proof of the following statement,
which corresponds to Theorem/Construction 3.4 in \cite{OT12}. It explains our
motivation for introducing the inductive construction of the $d$-cone of a
poset, see \th\ref{prop:d-cone:interval_modules:interlacing}.

\begin{theorem}
  \th\label{thm:M_nd} There is an
  isomorphism\nomenclature[22]{$M(A_n^{(d)})$}{the unique basic
    $d$-cluster-tilting $A_n^{(d)}$-module}
  \begin{equation*}
    M(A_n^{(d)})\cong\bigoplus\setP{M(\bm{\lambda})}{\bm{\lambda}\in\mathbf{os}_n^{d+1}}
  \end{equation*}
  where $M(\bm{\lambda})$\nomenclature[23]{$M(\bm{\lambda})$}{the interval
    $A_n^{(d)}$-module
    $M[(\lambda_1,\dots,\lambda_d),(\lambda_2,\dots,\lambda_{d+1})]$ for
    $\bm{\lambda}\in\mathbf{os}_n^d$} denotes the interval module
  \[
    M[(\lambda_1,\dots,\lambda_d),(\lambda_2,\dots,\lambda_{d+1})].
  \]
  Moreover, for each $\bm{\mu}\in\mathbf{os}_n^d$ we have
  \[
    [M(\bm{\lambda}):S_{\bm{\mu}}]=\begin{cases} 1&\text{if
      }\lambda_1\leq\mu_1\leq\cdots\leq\lambda_d\leq\mu_d\leq\lambda_{d+1},\\
      0&\text{otherwise.}
    \end{cases}
  \]
\end{theorem}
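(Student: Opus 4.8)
The plan is to prove the two assertions in turn, disposing first of the composition-multiplicity formula, which depends only on the combinatorics of interval modules, and then identifying the indecomposable summands of the $d$-cluster-tilting module by means of the description of the unique $d$-cluster-tilting subcategory furnished by \th\ref{thm:d-RF_d-H:unique_d-CT}.

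For the multiplicity formula, recall that $M(\bm{\lambda})$ is by definition the interval $A_n^{(d)}$-module $M[(\lambda_1,\dots,\lambda_d),(\lambda_2,\dots,\lambda_{d+1})]$. By \th\ref{def:posets:intervals} its composition factors are exactly the simples $S_{\bm{\mu}}$ indexed by the ordered sequences $\bm{\mu}$ lying in the closed interval $[(\lambda_1,\dots,\lambda_d),(\lambda_2,\dots,\lambda_{d+1})]$, each occurring once since $\dim_{\mathbbm{k}}M(\bm{\lambda})_{\bm{\mu}}\leq1$ for every $\bm{\mu}$. The obvious interlacing $(\lambda_1,\dots,\lambda_d)\rightsquigarrow(\lambda_2,\dots,\lambda_{d+1})$ lets me invoke \th\ref{prop:poset:d-cone:interlacing}: this interval is contained in $\mathbf{os}_n^d$ and consists of those $\bm{\mu}$ with $\lambda_i\leq\mu_i\leq\lambda_{i+1}$ for every $i$. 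Concatenating these inequalities into the single chain $\lambda_1\leq\mu_1\leq\lambda_2\leq\mu_2\leq\cdots\leq\lambda_d\leq\mu_d\leq\lambda_{d+1}$ yields the claimed formula; in particular distinct $\bm{\lambda}$ produce modules $M(\bm{\lambda})$ with distinct composition series, so the proposed direct sum is basic.

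For the isomorphism, set $A:=A_n^{(d)}$. By \th\ref{thm:An} the algebra $A$ is $d$-representation-finite $d$-hereditary, so \th\ref{thm:d-RF_d-H:unique_d-CT} identifies its unique $d$-cluster-tilting subcategory as $\mathcal{M}(A)=\add\setP{\tau_d^{-i}(A)}{i\geq0}$, every indecomposable object of which has the form $\tau_d^{-j}P$ for an indecomposable projective $P$. I would first locate the projectives and injectives among the interval modules: using \th\ref{prop:poset:d-cone:interlacing} to compute $A(-,\bm{\mu})$, one checks that the indecomposable projective at the vertex $\bm{\mu}\in\mathbf{os}_n^d$ is the interval module supported on $\setP{\bm{\nu}}{\nu_1\leq\mu_1\leq\nu_2\leq\cdots\leq\nu_d\leq\mu_d}$, namely $M(0,\mu_1,\dots,\mu_d)$; dually, computing $D(A(\bm{\mu},-))$ shows the indecomposable injective at $\bm{\mu}$ is $M(\mu_1,\dots,\mu_d,n-1)$. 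The decisive step is then to establish that the inverse $d$-Auslander--Reiten translation acts on interval modules by the diagonal shift
\[
  \tau_d^{-}M(\bm{\lambda})\cong\begin{cases} M(\bm{\lambda}+\mathbf{1})&\text{if }\lambda_{d+1}<n-1,\\ 0&\text{otherwise,}\end{cases}
\]
where $\mathbf{1}=(1,\dots,1)$. Granting this, applying $\tau_d^{-}$ repeatedly to the projective $M(0,\bm{\mu})$ sweeps out exactly the interval modules $M(j,\mu_1+j,\dots,\mu_d+j)$ for $0\leq j\leq n-1-\mu_d$, terminating at an injective. Since any $\bm{\lambda}\in\mathbf{os}_n^{d+1}$ satisfies $\lambda_1\geq0$ and has $(\lambda_2-\lambda_1,\dots,\lambda_{d+1}-\lambda_1)\in\mathbf{os}_n^d$, it appears in the orbit of the unique projective determined by this gap vector. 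Hence $\mathcal{M}(A)=\add\setP{M(\bm{\lambda})}{\bm{\lambda}\in\mathbf{os}_n^{d+1}}$, and since $M(A_n^{(d)})$ is by definition the basic additive generator of $\mathcal{M}(A)$ while the $M(\bm{\lambda})$ are pairwise non-isomorphic indecomposables, the desired isomorphism follows.

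The main obstacle is plainly the shift formula for $\tau_d^{-}M(\bm{\lambda})$; everything else is elementary bookkeeping. I expect to establish it in one of two ways. Directly, one writes down the minimal projective resolution of $M(\bm{\lambda})$ over $A$, which is of Koszul type thanks to the commutativity relations defining $A_n^{(d)}$, applies $\Omega^{d-1}$ together with the Auslander--Bridger transpose, and dualises; the order-reversing symmetry $i\mapsto n-1-i$ of $A_n$, which induces a duality on $A_n^{(d)}$ interchanging the projectives $M(0,\bm{\mu})$ with the injectives $M(\bm{\mu},n-1)$, should keep this computation symmetric and manageable. Alternatively, one argues inductively on $d$, using the realisation of $\cone(A_n^{d+1})$ inside $\mmod A_n^{(d)}$ from \th\ref{prop:d-cone:interval_modules:interlacing} together with the compatibility between the higher Auslander--Reiten translation and the higher Auslander algebra construction of \cite{Iya11} to transport the shift from level $d-1$ to level $d$. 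Either route reduces the theorem to the combinatorial sweep carried out above.
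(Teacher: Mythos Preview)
Your proposal is correct and follows essentially the same approach as the paper: the paper first establishes the identification of projectives and injectives (\th\ref{prop:An:proj_inj}) and the shift formula $\tau_d^{\pm}(M(\bm{\lambda}))\cong M(\bm{\lambda}\mp\mathbf{1})$ via an explicit minimal projective resolution (\th\ref{prop:An:taud}), and then argues exactly as you do that every $M(\bm{\lambda})$ arises as $\tau_d^{-\lambda_1}$ applied to the projective indexed by the gap vector. Your first suggested route for the shift formula---computing the minimal projective resolution term by term and then applying $\nu$---is precisely what the paper does; the inductive alternative you sketch is not used there.
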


In view of \th\ref{thm:An} and \th\ref{thm:d-RF_d-H:unique_d-CT}, the proof of
\th\ref{thm:M_nd} amounts to calculating the inverse higher Auslander--Reiten
translates of the indecomposable projective modules over the higher Auslander
algebras of type $\mathbb{A}$. The following result corresponds to parts (1) and
(2) of Theorem 3.6 in \cite{OT12}. It gives an explicit description of the
projective and of the injective modules over the higher Auslander algebras of
type $\mathbb{A}$.

\begin{proposition}
  \th\label{prop:An:proj_inj} Let $\bm{\lambda}\in\mathbf{os}_n^d$. Then, the
  following statements hold.
  \begin{enumerate}
  \item\label{it:A_nd-projectives} The projective $A_n^{(d)}$-module at the
    vertex $\bm{\lambda}$ is precisely
    \[
      P_{\bm{\lambda}}=M(0,\lambda_1,\dots,\lambda_d).
    \]
  \item\label{it:A_nd-injectives} The injective $A_n^{(d)}$-module at the
    vertex $\bm{\lambda}$ is precisely
    \[
      I_{\bm{\lambda}}=M(\lambda_1,\dots,\lambda_d,n-1).
    \]
  \end{enumerate}
\end{proposition}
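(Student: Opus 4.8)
The plan is to evaluate both modules vertex by vertex by combining Yoneda's lemma with the description of the morphism spaces of the $d$-cone in \th\ref{prop:poset:d-cone:interlacing}, and then to recognise the resulting thin modules as the claimed interval modules. I would treat statement \eqref{it:A_nd-projectives} in detail; statement \eqref{it:A_nd-injectives} is entirely dual.

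By Yoneda's lemma the indecomposable projective $A_n^{(d)}$-module at the vertex $\bm\lambda$ is the representable functor $P_{\bm\lambda}=A_n^{(d)}(-,\bm\lambda)$. By \th\ref{prop:poset:d-cone:interlacing} its value at a vertex $\bm\mu\in\mathbf{os}_n^d$ is one-dimensional when $\bm\mu\rightsquigarrow\bm\lambda$ and zero otherwise, so $P_{\bm\lambda}$ is a thin module supported on $\setP{\bm\mu}{\bm\mu\rightsquigarrow\bm\lambda}$. Unwinding the interlacing relation, $\bm\mu\rightsquigarrow\bm\lambda$ reads
\[
  \mu_1\leq\lambda_1\leq\mu_2\leq\lambda_2\leq\cdots\leq\mu_d\leq\lambda_d,
\]
and since $0\leq\mu_1$ holds automatically this is precisely the condition $(0,\lambda_1,\dots,\lambda_{d-1})\preceq\bm\mu\preceq(\lambda_1,\dots,\lambda_d)$. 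Thus the support of $P_{\bm\lambda}$ is the closed interval $[(0,\lambda_1,\dots,\lambda_{d-1}),(\lambda_1,\dots,\lambda_d)]$, which is exactly the support of the interval module $M(0,\lambda_1,\dots,\lambda_d)=M[(0,\lambda_1,\dots,\lambda_{d-1}),(\lambda_1,\dots,\lambda_d)]$.

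It remains to match the transition maps, and this is the only point requiring a genuine argument rather than bookkeeping. Because $(0,\lambda_1,\dots,\lambda_{d-1})\rightsquigarrow(\lambda_1,\dots,\lambda_d)$, the entire interval lies inside $\mathbf{os}_n^d$ by \th\ref{prop:poset:d-cone:interlacing}. Hence for comparable vertices $\bm\mu\preceq\bm\nu$ inside the support one has $\bm\mu\rightsquigarrow\bm\nu$, so the morphism $f_{\bm\nu\bm\mu}$ has nonzero image in $A_n^{(d)}$; the corresponding transition map of $P_{\bm\lambda}$ is precomposition with $f_{\bm\nu\bm\mu}$ and sends the generator of $A_n^{(d)}(\bm\nu,\bm\lambda)$ to the class of $f_{\bm\lambda\bm\nu}\circ f_{\bm\nu\bm\mu}=f_{\bm\lambda\bm\mu}$, which is nonzero in $A_n^{(d)}(\bm\mu,\bm\lambda)$ again by \th\ref{prop:poset:d-cone:interlacing}. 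Therefore every transition map between nonzero components is invertible. A thin module supported on a closed interval all of whose transition maps are invertible is isomorphic to the corresponding interval module (one normalises bases starting from the minimal vertex $(0,\lambda_1,\dots,\lambda_{d-1})$, using convexity of the interval), so $P_{\bm\lambda}\cong M(0,\lambda_1,\dots,\lambda_d)$, as claimed.

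For statement \eqref{it:A_nd-injectives} I would run the dual computation. The indecomposable injective is $I_{\bm\lambda}=D\,A_n^{(d)}(\bm\lambda,-)$, whose value at $\bm\mu$ is one-dimensional exactly when $\bm\lambda\rightsquigarrow\bm\mu$, that is when $\lambda_1\leq\mu_1\leq\lambda_2\leq\cdots\leq\lambda_d\leq\mu_d$; together with $\mu_d\leq n-1$ this identifies the support with the interval $[(\lambda_1,\dots,\lambda_d),(\lambda_2,\dots,\lambda_d,n-1)]$. The same nonvanishing-of-composites argument, now applied to postcomposition, shows the transition maps are invertible, yielding $I_{\bm\lambda}\cong M(\lambda_1,\dots,\lambda_d,n-1)$. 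Alternatively, \eqref{it:A_nd-injectives} can be deduced from \eqref{it:A_nd-projectives} through the duality $D\colon\mmod A_n^{(d)}\to\mmod(A_n^{(d)})^\op$ together with the order-reversing isomorphism $(A_n^{(d)})^\op\cong A_n^{(d)}$ induced by $i\mapsto n-1-i$ on $A_n$, which reverses the interlacing relation.
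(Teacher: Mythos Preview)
Your argument is correct and follows essentially the same route as the paper: identify $P_{\bm\lambda}$ with the representable functor, use \th\ref{prop:poset:d-cone:interlacing} to see its support is exactly $\setP{\bm\mu}{\bm\mu\rightsquigarrow\bm\lambda}=[(0,\lambda_1,\dots,\lambda_{d-1}),(\lambda_1,\dots,\lambda_d)]$, and conclude. The paper leaves the identification of the thin module with the interval module implicit (appealing to the description of projectives via paths), whereas you make the matching of transition maps explicit; this extra care is harmless and arguably clarifies the last step.
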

\begin{proof}
  \eqref{it:A_nd-projectives} We prove the claim by using the explicit
  description of a basis of the projective $A_n^{(d)}$-module $P_{\bm{\lambda}}$
  in terms of paths in the Gabriel quiver, see for example Lemma III.2.4 in
  \cite{ASS06}. \th\ref{prop:poset:d-cone:interlacing} shows that, up to
  scaling, there is at most one non-zero path between any two vertices in
  $A_n^{(d)}$ and that the existence of such a non-zero path is determined by
  the interlacing relation. By the definition of the interval module
  $M(0,\lambda_1,\dots,\lambda_d)$, it is enough to show that
  $\bm{\mu}\in\mathbf{os}_n^d$ interlaces $\bm{\lambda}$ if and only if
  $\bm{\mu}$ belongs to the closed interval
  \[
    [(0,\lambda_1,\dots,\lambda_{d-1}),(\lambda_1,\dots,\lambda_d)]\subset\mathbf{os}_n^d.
  \]
  If $\bm{\mu}$ interlaces $\bm{\lambda}$, then it is clear that
  $(0,\lambda_1,\dots,\lambda_{d-1})$ interlaces $\bm{\mu}$. But this already
  implies that $\bm{\mu}$ belongs to the above closed interval. On the other
  hand, since
  \[
    (0,\lambda_1,\dots,\lambda_{d-1})\rightsquigarrow(\lambda_1,\dots,\lambda_d),
  \]
  \th\ref{prop:poset:d-cone:interlacing} implies that the closed interval is
  contained in $\mathbf{os}_n^d$, therefore every $\bm{\mu}$ in this interval
  interlaces $\bm{\lambda}$. This proves the statement. Statement
  \eqref{it:A_nd-injectives} can be shown similarly.
\end{proof}

Our next result combines Propositions 3.13 and 3.17 in \cite{OT12}. It gives an
explicit description of the higher Auslander translation in $\add M(A_n^{(d)})$.

\begin{notation}
  \th\label{not:taud} Given $\bm{\lambda}\in\mathbb{Z}^{d+1}$ we introduce the
  notation\nomenclature[24]{$\tau_d(\bm{\lambda})$}{the tuple
    $\bm{\lambda}-(1,\dots,1)$ where $\bm{\lambda}\in\mathbf{os}^d$}
  \[
    \tau_d(\bm{\lambda}):=\bm{\lambda}-(1,\dots,1)\qquad\text{and}\qquad
    \tau_d^{-}(\bm{\lambda}):=\bm{\lambda}+(1,\dots,1).
  \]
\end{notation}

\begin{proposition}
  \th\label{prop:An:taud} The following statements hold.
  \begin{enumerate}
  \item\label{it:A_nd-projective_resolutions} Let
    $\bm{\lambda}\in\mathbf{os}_n^{d+1}$ be such that $\lambda_1\neq0$ (that is
    $M(\bm{\lambda})$ is not projective) and
    \[
      0\to P^{-d}\to P^{-d+1}\to\cdots\to P^0\to M(\bm{\lambda})\to0
    \]
    a minimal projective resolution of $M(\bm{\lambda})$. Then,
    \[
      P^0\cong
      P_{\lambda_2,\dots,\lambda_{d+1}}=M(0,\lambda_2,\dots,\lambda_{d+1})
    \]
    and for each $i\in\set{1,\dots,d}$ there is an isomorphism
    \begin{align*}
      P^{-i}&\cong P_{\lambda_1-1,\dots,\lambda_i-1,\lambda_{i+2},\dots,\lambda_{d+1}}\\
            &=M(0,\lambda_1-1,\dots,\lambda_i-1,\lambda_{i+2},\dots,\lambda_{d+1}).
    \end{align*}
  \item\label{it:A_nd-injective_resolutions} Let
    $\bm{\lambda}\in\mathbf{os}_n^{d+1}$ be such that $\lambda_{d+1}\neq n-1$
    (that is $M(\bm{\lambda})$ is not injective) and
    \[
      0\to M(\bm{\lambda})\to I^0\to\cdots\to I^{d-1}\to I^d\to0
    \]
    a minimal injective coresolution of $M(\bm{\lambda})$. Then,
    \[
      I^0\cong I_{\lambda_1,\dots,\lambda_d}=M(\lambda_1,\dots,\lambda_d,n-1)
    \]
    and for each $i\in\set{1,\dots,d}$ there is an isomorphism
    \begin{align*}
      I^i&\cong I_{\lambda_1,\dots,\lambda_{d-i},\lambda_{d-i+2}+1,\dots,\lambda_{d+1}+1}\\
         &=M(\lambda_1,\dots,\lambda_{d-i},\lambda_{d-i+2}+1,\dots,\lambda_{d+1}+1,n-1).
    \end{align*}
  \item\label{it:A_nd-translate} Let $\bm{\lambda}\in\mathbf{os}_n^{d+1}$ be
    such that $\lambda_1\neq0$. Then,
    \[
      \tau_d(M(\bm{\lambda}))=M(\tau_d(\bm{\lambda})).
    \]
  \item\label{it:A_nd-inverse_translate} Let
    $\bm{\lambda}\in\mathbf{os}_n^{d+1}$ be such that $\lambda_{d+1}\neq
    n-1$. Then,
    \[
      \tau_d^-(M(\bm{\lambda}))=M(\tau_d^-(\bm{\lambda})).
    \]
  \end{enumerate}
\end{proposition}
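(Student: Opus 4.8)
The plan is to prove the four assertions in the order stated, treating (1) as the only genuine computation and obtaining (2)--(4) formally. Throughout I will use that, by \th\ref{def:posets:intervals} and \th\ref{prop:poset:d-cone:interlacing}, an interval module $M[\bm{a},\bm{b}]$ over $A_n^{(d)}=\cone(A_n^d)$ is supported exactly on the interval $[\bm{a},\bm{b}]$, has top $S_{\bm{b}}$, and has submodules corresponding to the order ideals of $[\bm{a},\bm{b}]$; in particular its projective cover is $P_{\bm{b}}=M[(0,b_1,\dots,b_{d-1}),\bm{b}]$ by \th\ref{prop:An:proj_inj}.

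For (1) I would compute the syzygies of $M(\bm{\lambda})=M[\bm{a}^0,\bm{b}^0]$, with $\bm{a}^0=(\lambda_1,\dots,\lambda_d)$ and $\bm{b}^0=(\lambda_2,\dots,\lambda_{d+1})$, by induction on $i$. The claim is that
\[
  \Omega^iM(\bm{\lambda})\cong M[\bm{a}^i,\bm{b}^i],\quad \bm{b}^i=(\lambda_1-1,\dots,\lambda_i-1,\lambda_{i+2},\dots,\lambda_{d+1}),
\]
with $\bm{a}^i=(0,\lambda_1-1,\dots,\lambda_{i-1}-1,\lambda_{i+1},\dots,\lambda_d)$ for $i\geq1$; then $P^{-i}=P_{\bm{b}^i}$ is the asserted projective, and since each step uses a projective cover the resulting resolution is automatically minimal. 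The inductive step identifies $\Omega^{i+1}M(\bm{\lambda})$ with the kernel of the projective cover $P_{\bm{b}^i}=M[\bm{c}^i,\bm{b}^i]\twoheadrightarrow M[\bm{a}^i,\bm{b}^i]$, where $\bm{c}^i=(0,b^i_1,\dots,b^i_{d-1})$.

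The point where care is required---and what I expect to be the main obstacle---is that a syzygy of an interval module need not be an interval module, and could a priori decompose. Here it does not, because of the combinatorial fact that $\bm{a}^i$ and $\bm{c}^i$ differ in exactly one coordinate, the $(i+1)$-st, where they equal $\lambda_{i+1}$ and $\lambda_i-1$ respectively. Granting this, the condition $\lnot(\bm{a}^i\preceq\bm{\mu})$ defining the kernel's support collapses to the single inequality $\mu_{i+1}\leq\lambda_{i+1}-1$, so the kernel is the interval module on $[\bm{c}^i,\bm{b}^{i+1}]$, where $\bm{b}^{i+1}$ is $\bm{b}^i$ with its $(i+1)$-st coordinate lowered to $\lambda_{i+1}-1$; a direct check then shows that $\bm{b}^{i+1}$ and $\bm{a}^{i+1}=\bm{c}^i$ have the predicted form, closing the induction. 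At $i=d$ one gets $\Omega^dM(\bm{\lambda})=P_{\bm{b}^d}$, so the resolution terminates in length $d$, as it must since $A_n^{(d)}$ is $d$-hereditary.

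Statement (2) follows from (1) by the duality $\mmod A_n^{(d)}\to\mmod(A_n^{(d)})^{\op}$ induced by the order-reversing complementation $\bm{\lambda}\mapsto(n-1-\lambda_{d+2-j})_j$ of $A_n$, which swaps projectives with injectives and permutes the modules $M(\bm{\lambda})$ among themselves, thereby converting the projective resolution into the required injective coresolution. For (3), note that $M(\bm{\lambda})$ is non-projective of projective dimension $d$, so the definition $\tau_d=D\Tr\Omega^{d-1}$ together with the projective presentation $P^{-d}\to P^{-(d-1)}$ of $\Omega^{d-1}M(\bm{\lambda})$ yields $\tau_dM(\bm{\lambda})=\ker(\nu P^{-d}\to\nu P^{-(d-1)})$, where $\nu$ is the Nakayama functor. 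Since $\nu P_{\bm{\mu}}=I_{\bm{\mu}}$, the modules $\nu P^{-d}$ and $\nu P^{-(d-1)}$ are precisely the first two terms $I^0,I^1$ of the injective coresolution of $M(\tau_d\bm{\lambda})$ provided by (2), and the connecting map is nonzero (as $\nu$ is faithful on projectives), hence a scalar multiple of the coresolution differential because $\Hom$-spaces between indecomposable injectives are at most one-dimensional; therefore $\tau_dM(\bm{\lambda})=\ker(I^0\to I^1)=M(\tau_d\bm{\lambda})$. Finally, (4) is formal: the hypothesis $\lambda_{d+1}\neq n-1$ is exactly the condition $\tau_d^-\bm{\lambda}=\bm{\lambda}+(1,\dots,1)\in\mathbf{os}_n^{d+1}$, so (3) applied to $\tau_d^-\bm{\lambda}$ gives $\tau_dM(\tau_d^-\bm{\lambda})=M(\bm{\lambda})$, and since $\tau_d$ and $\tau_d^-$ are mutually inverse on $\mathcal{M}(A_n^{(d)})$ by \th\ref{thm:d-CT:existence_d-AR_seq} we conclude $\tau_d^-M(\bm{\lambda})=M(\tau_d^-\bm{\lambda})$.
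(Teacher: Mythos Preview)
Your argument for parts (1)--(3) is correct and matches the paper's approach closely. The paper likewise computes the syzygies of $M(\bm{\lambda})$ as interval modules step by step (your observation that $\bm{a}^i$ and $\bm{c}^i$ differ in a single coordinate is exactly what makes the paper's kernel computations work), declares (2) dual, and obtains $\tau_d M(\bm{\lambda})$ as $\ker(\nu P^{-d}\to\nu P^{-d+1})$. Your identification of this map with the start of the injective coresolution of $M(\tau_d\bm{\lambda})$ is a pleasant shortcut; the paper instead computes the kernel by hand, but the content is the same.

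Your argument for (4), however, diverges from the paper and has a gap. You invoke \th\ref{thm:d-CT:existence_d-AR_seq} to say that $\tau_d$ and $\tau_d^-$ are mutually inverse on $\mathcal{M}(A_n^{(d)})$, and then cancel $\tau_d$ in the identity $\tau_d M(\tau_d^-\bm{\lambda})=M(\bm{\lambda})$. But that cancellation is only valid once you know $M(\tau_d^-\bm{\lambda})\in\mathcal{M}(A_n^{(d)})$, and at this point in the paper that membership has not been established---indeed, \th\ref{thm:M_nd}, which identifies $\mathcal{M}(A_n^{(d)})$ with the $M(\bm{\mu})$'s, is proved \emph{using} part (4) of this very proposition. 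The gap is repairable: using only (3) and \th\ref{thm:d-RF_d-H:unique_d-CT}, one can write $M(\tau_d^-\bm{\lambda})=\tau_d^k(I)$ for a suitable injective $I$ and $k=n-2-\lambda_{d+1}\ge 0$, placing it in $\mathcal{M}$. The paper sidesteps the issue entirely by proving (4) dually to (3), via the same order-reversing self-duality of $A_n^{(d)}$ you already used for (2); this is both cleaner and logically safer.
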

\begin{proof}
  \eqref{it:A_nd-projective_resolutions} By
  \th\ref{prop:d-cone:interval_modules:interlacing} the non-zero morphism
  \[
    M(0,\lambda_2,\dots,\lambda_{d+1})\to
    M(\lambda_1,\dots,\lambda_d,\lambda_{d+1})
  \]
  is surjective; its kernel is the (interval) submodule of
  $M(0,\lambda_2,\dots,\lambda_{d+1})$ whose composition factors are the simple
  $A_n^{(d)}$-modules $S_{\bm{\mu}}$ such that
  \[
    (0,\lambda_2,\dots,\lambda_d)\preceq(\mu_1,\dots,\mu_d)\preceq(\lambda_2,\dots,\lambda_{d+1})
  \]
  but
  \[
    (\lambda_1,\dots,\lambda_d)\not\preceq(\mu_1,\dots,\mu_d).
  \]
  This is precisely the interval module
  \[
    M[(0,\lambda_2,\dots,\lambda_d),(\lambda_1-1,\lambda_3,\dots,\lambda_{d+1})].
  \]
  Similarly, for each $i\in\set{1,\dots,d}$ one can verify that the kernel of the
  non-zero morphism
  \[
    M(0,\lambda_1-1,\dots,\lambda_{i-1}-1,\lambda_{i+1},\dots,\lambda_{d+1})\to
    M(0,\lambda_1-1,\dots,\lambda_{i-2}-1,\lambda_{i},\dots,\lambda_{d+1})
  \]
  is precisely the interval module
  \[
    M[(0,\lambda_1-1,\dots,\lambda_{i-1}-1,\lambda_{i+1},\dots,\lambda_d),(\lambda_1-1,\dots,\lambda_i-1,\lambda_{i+2},\dots,\lambda_{d+1})]
  \]
  which is also the image of the morphism
  \[
    M(0,\lambda_1-1,\dots,\lambda_i-1,\lambda_{i+2},\dots,\lambda_{d+1})\to
    M(0,\lambda_1-1,\dots,\lambda_{i-1}-1,\lambda_{i+1},\dots,\lambda_{d+1}).
  \]
  This proves the claim. The proof of statement
  \eqref{it:A_nd-injective_resolutions} is dual.

  \eqref{it:A_nd-translate} Let
  \[
    0\to P^{-d}\to P^{-d+1}\to\cdots\to P^0\to M(\bm{\lambda})\to0
  \]
  be the minimal projective resolution of $M(\bm{\lambda})$ constructed in the
  previous step. By definition, $\tau_d(M(\bm{\lambda}))$ is isomorphic to the
  kernel of the morphism $\nu(P^{-d})\to\nu(P^{-d+1})$, that is the morphism
  \[
    M(\lambda_1-1,\dots,\lambda_d-1,n-1)\to
    M(\lambda_1-1,\dots,\lambda_{d-1}-1,\lambda_{d+1},n-1).
  \]
  By \th\ref{prop:d-cone:interval_modules:interlacing} this is precisely the
  interval module
  \[
    M(\tau_d(\bm{\lambda}))=M(\lambda_1-1,\dots,\lambda_d-1,\lambda_{d+1}-1).
  \]
  The proof of statement \eqref{it:A_nd-inverse_translate} is dual.
\end{proof}

We are ready to give our proof of \th\ref{thm:M_nd}.

\begin{proof}[Proof of \th\ref{thm:M_nd}]
  According to \th\ref{thm:d-RF_d-H:unique_d-CT} it is enough to show that
  \[
    \bigoplus\setP{\tau_d^{-i}(P_{\bm{\mu}})}{\bm{\mu}\in\mathbf{os}_n^d\text{
        and
      }i\geq0}=\bigoplus\setP{M(\bm{\lambda})}{{\bm{\lambda}\in\mathbf{os}_n^{d+1}}}.
  \]
  For this, observe that \th\ref{prop:An:taud} implies that for every
  $\bm{\lambda}\in\mathbf{os}_n^{d+1}$ there is an isomorphism
  \[
    M(\bm{\lambda})\cong\tau_d^{-\lambda_1}(P_{\lambda_2-\lambda_1,\dots,\lambda_{d+1}-\lambda_1}).
  \]
  Moreover, \th\ref{prop:An:taud} also shows that the iterated
  $\tau_d^-$-translates of the indecomposable projective $A_n^{(d)}$-modules are
  always modules of the form $M(\bm{\lambda})$ for some
  $\bm{\lambda}\in\mathbf{os}_n^{d+1}$. This proves the claim.
\end{proof}

The following result corresponds to parts (3) and (4) in Theorem 3.6
\cite{OT12}. It shows that $A_n^{(d)}$ and $\add M(A_n^{(d)})$ are, also from a
combinatorial perspective, higher dimensional analogues of $A_n$ and of its
module category.

\begin{proposition}
  \th\label{prop:An:Hom_Ext} Let $\bm{\lambda},\bm{\mu}\in\mathbf{os}_n^{d+1}$.
  Then, there are isomorphisms
  \begin{align*}
    \Hom_{A_n^{(d)}}(M(\bm{\lambda}),M(\bm{\mu}))&\cong\begin{cases}
      \mathbbm{k}&\text{if }\bm{\lambda}\rightsquigarrow\bm{\mu},\\
      0&\text{otherwise;}
    \end{cases}\intertext{and}
         \Ext_{A_n^{(d)}}^d(M(\bm{\lambda}),M(\bm{\mu}))&\cong\begin{cases}
           \mathbbm{k}&\text{if }\bm{\mu}\rightsquigarrow\tau_d(\bm{\lambda}),\\
           0&\text{otherwise.}
         \end{cases}
  \end{align*}
\end{proposition}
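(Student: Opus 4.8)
The plan is to deduce both isomorphisms from results already available in the excerpt, with essentially no new computation.

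The first isomorphism, for $\Hom$, is nothing but \th\ref{prop:d-cone:interval_modules:interlacing} specialised to the poset $\mathcal{P}=A_n$: indeed $A_n^{(d)}=\cone((A_n)^d)$ by \th\ref{def:An}, and the modules $M(\bm{\lambda})$ with $\bm{\lambda}\in\mathbf{os}_n^{d+1}$ are precisely the interval modules $M(\bm{x})$ featuring there. Hence $\Hom_{A_n^{(d)}}(M(\bm{\lambda}),M(\bm{\mu}))\cong\mathbbm{k}$ when $\bm{\lambda}\rightsquigarrow\bm{\mu}$ and $0$ otherwise, requiring no further argument.

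For the $\Ext$ isomorphism I would invoke the higher Auslander--Reiten formula of \th\ref{thm:d-CT:existence_d-AR_seq}\eqref{it:AR-formulas}. Since $A_n^{(d)}$ is $d$-hereditary by \th\ref{thm:An}, its global dimension is at most $d$ (see \th\ref{prop:dZ-CT:gldimdZ}), so every module --- in particular $M(\tau_d(\bm{\lambda}))$ --- has injective dimension at most $d$, and the full-$\Hom$ version $D\Hom_{A_n^{(d)}}(M,N)\cong\Ext_{A_n^{(d)}}^d(\tau_d^-N,M)$ applies. Assume first that $M(\bm{\lambda})$ is non-projective, i.e. $\lambda_1\neq0$. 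Then $\tau_d(\bm{\lambda})=\bm{\lambda}-(1,\dots,1)$ lies in $\mathbf{os}_n^{d+1}$, and \th\ref{prop:An:taud}\eqref{it:A_nd-translate} gives $\tau_d M(\bm{\lambda})=M(\tau_d(\bm{\lambda}))$, whence \th\ref{prop:An:taud}\eqref{it:A_nd-inverse_translate} yields $\tau_d^-M(\tau_d(\bm{\lambda}))=M(\bm{\lambda})$ (its hypothesis $\lambda_{d+1}\neq n$ being automatic since $\lambda_{d+1}\leq n-1$). Setting $M=M(\bm{\mu})$ and $N=M(\tau_d(\bm{\lambda}))$ in the formula then produces
\[
  \Ext_{A_n^{(d)}}^d(M(\bm{\lambda}),M(\bm{\mu}))\cong D\Hom_{A_n^{(d)}}(M(\bm{\mu}),M(\tau_d(\bm{\lambda}))).
\]
Feeding the already-proven $\Hom$ formula into the right-hand side, and using that $D$ preserves dimension, gives $\mathbbm{k}$ precisely when $\bm{\mu}\rightsquigarrow\tau_d(\bm{\lambda})$ and $0$ otherwise, as claimed.

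It remains to treat the degenerate case $\lambda_1=0$, where $M(\bm{\lambda})$ is projective by \th\ref{prop:An:proj_inj}\eqref{it:A_nd-projectives}, so that $\Ext_{A_n^{(d)}}^d(M(\bm{\lambda}),M(\bm{\mu}))=0$. On the combinatorial side the interlacing $\bm{\mu}\rightsquigarrow\tau_d(\bm{\lambda})$ would force $\mu_1\leq\lambda_1-1=-1$, which is impossible, so the right-hand side of the claimed formula vanishes as well; the two descriptions therefore agree. I expect the only genuine care to be in tracking the variance in the Auslander--Reiten formula and in verifying that the two boundary hypotheses ($\lambda_1\neq0$ for the translate to exist, and injective dimension at most $d$ for the full-$\Hom$ form) are met; neither constitutes a real obstacle given the results at hand.
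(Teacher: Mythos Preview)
Your proposal is correct and follows essentially the same strategy as the paper: the $\Hom$ formula is read off directly from \th\ref{prop:d-cone:interval_modules:interlacing}, and the $\Ext$ formula is obtained from the higher Auslander--Reiten duality of \th\ref{thm:d-CT:existence_d-AR_seq}\eqref{it:AR-formulas} together with the translate computations in \th\ref{prop:An:taud}. The only cosmetic differences are that the paper invokes the projective-dimension variant of the duality (citing that $M(\bm{\lambda})$ has projective dimension $d$ via \th\ref{prop:An:taud}\eqref{it:A_nd-projective_resolutions}) rather than your injective-dimension variant, and it does not spell out the boundary case $\lambda_1=0$ separately.
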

\begin{proof}
  The first isomorphism is proven in
  \th\ref{prop:d-cone:interval_modules:interlacing}. The second isomorphism
  follows from the higher dimensional Auslander--Reiten formulas given in
  statement \eqref{it:AR-formulas} in \th\ref{thm:d-CT:existence_d-AR_seq},
  taking into account the formulas for the higher Auslander--Reiten translate
  proven in statements \eqref{it:A_nd-translate} and
  \eqref{it:A_nd-inverse_translate} in \th\ref{prop:An:taud} and the fact that
  $M(\bm{\lambda})$ has projective dimension $d$, see
  \th\ref{prop:An:taud}\eqref{it:A_nd-projective_resolutions}.
\end{proof}

We conclude our study of the higher Auslander algebras of type $\mathbb{A}$ with
a simple observation.

\begin{lemma}
  \th\label{lemma:An:len} Let $\bm{\lambda}\in\mathbf{os}_n^{d+1}$. Then, the
  interval module $M(\bm{\lambda})$ has Loewy length
  $\lambda_{d+1}-\lambda_1+1$.
\end{lemma}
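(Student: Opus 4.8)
The plan is to read off the Loewy length from a natural grading on the $d$-cone $A_n^{(d)}$. Recall from the presentation given after \th\ref{def:An} that every arrow of the Gabriel quiver of $A_n^{(d)}$ has the form $a_i(\bm{\mu})\colon\bm{\mu}\to\bm{\mu}+e_i$ and that the defining relations are the commutativity relations $a_j(\bm{\mu}+e_i)a_i(\bm{\mu})-a_i(\bm{\mu}+e_j)a_j(\bm{\mu})$. Setting $|\bm{\mu}|:=\mu_1+\cdots+\mu_d$ for $\bm{\mu}\in\mathbf{os}_n^d$ therefore defines a $\mathbb{Z}$-grading of $A_n^{(d)}$ in which every arrow sits in degree one and every relation is homogeneous. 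Since $M(\bm{\lambda})$ is a right module, the arrow $a_i(\bm{\mu})$ induces a structure map $M(\bm{\lambda})_{\bm{\mu}+e_i}\to M(\bm{\lambda})_{\bm{\mu}}$ that lowers the degree by one.

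First I would pin down the support. By \th\ref{thm:M_nd} the composition factors of $M(\bm{\lambda})$ are exactly the simples $S_{\bm{\mu}}$ with $\bm{\mu}$ in the closed interval $[(\lambda_1,\dots,\lambda_d),(\lambda_2,\dots,\lambda_{d+1})]$ of $\mathbf{os}_n^d$, each with multiplicity one; this interval is indeed contained in $\mathbf{os}_n^d$ because $(\lambda_1,\dots,\lambda_d)\rightsquigarrow(\lambda_2,\dots,\lambda_{d+1})$, see \th\ref{prop:poset:d-cone:interlacing}. In particular $M(\bm{\lambda})$ is thin, its unique composition factor of maximal degree is the top $S_{(\lambda_2,\dots,\lambda_{d+1})}$, of degree $\lambda_2+\cdots+\lambda_{d+1}$, and its unique composition factor of minimal degree is the socle $S_{(\lambda_1,\dots,\lambda_d)}$, of degree $\lambda_1+\cdots+\lambda_d$.

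The core of the argument is to identify the radical filtration of $M(\bm{\lambda})$ with the filtration by degree. Because the structure maps lower the degree, the subspace of $M(\bm{\lambda})$ supported in degrees at most $m$ is a submodule for every integer $m$, and the successive quotients of this decreasing filtration are semisimple. I claim that passing to the radical removes precisely the top graded layer, so that $\operatorname{rad}^kM(\bm{\lambda})$ is the submodule supported in degrees at most $(\lambda_2+\cdots+\lambda_{d+1})-k$. Granting this, the filtration becomes zero exactly when the bound drops below the minimal degree $\lambda_1+\cdots+\lambda_d$, whence
\[
  \len(M(\bm{\lambda}))=1+\big((\lambda_2+\cdots+\lambda_{d+1})-(\lambda_1+\cdots+\lambda_d)\big)=\lambda_{d+1}-\lambda_1+1.
\]

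The step that requires care---and which I expect to be the only real obstacle---is the claim that the radical strips off exactly one graded layer at each stage. Concretely, one must show that every support vertex $\bm{\mu}$ of non-maximal degree is the target of a nonzero structure map from a support vertex one degree higher. Since $\bm{\mu}\prec(\lambda_2,\dots,\lambda_{d+1})$ there is a coordinate $i$ with $\mu_i<\lambda_{i+1}$, so $\bm{\mu}+e_i$ still lies in the interval and hence in the support; as $M(\bm{\lambda})$ is a thin interval module, the corresponding structure map $M(\bm{\lambda})_{\bm{\mu}+e_i}\to M(\bm{\lambda})_{\bm{\mu}}$ is an isomorphism, see \th\ref{def:posets:intervals}. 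This connectivity of the interval, guaranteed by \th\ref{prop:poset:d-cone:interlacing}, forces $M(\bm{\lambda})_{\bm{\mu}}\subseteq\operatorname{rad}M(\bm{\lambda})$ for every $\bm{\mu}$ of non-maximal degree and, applied to each submodule in turn, gives $\operatorname{rad}(M(\bm{\lambda})_{\leq m})=M(\bm{\lambda})_{\leq m-1}$. An induction on $k$ then yields the description of $\operatorname{rad}^kM(\bm{\lambda})$ above, and everything else is direct bookkeeping with the combinatorics of $M(\bm{\lambda})$ already recorded.
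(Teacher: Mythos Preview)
Your proof is correct and follows essentially the same idea as the paper's: both compute the Loewy length as one plus the path-length distance from the socle vertex $(\lambda_1,\dots,\lambda_d)$ to the top vertex $(\lambda_2,\dots,\lambda_{d+1})$, which telescopes to $\lambda_{d+1}-\lambda_1$. The paper simply asserts that the Loewy length equals one plus the length of the longest such path, whereas you make this explicit by introducing the degree grading and verifying that the radical filtration coincides with the grading filtration; your version is more detailed but not a different approach.
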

\begin{proof}
  Recall that the interval module $M(\bm{\lambda})$ has simple top
  $S_{\lambda_2,\dots,\lambda_{d+1}}$ and simple socle
  $S_{\lambda_1,\dots,\lambda_d}$. The Loewy length of $M(\bm{\lambda})$ is then
  1 plus the length of the longest path from $(\lambda_1,\dots,\lambda_d)$ to
  $(\lambda_2,\dots,\lambda_{d+1})$ in $\mathbf{os}_n^d$ which can easily seen
  to have length
  \[
    (\lambda_{d+1}-\lambda_d)+(\lambda_d-\lambda_{d-1})+\cdots+(\lambda_2-\lambda_1)=\lambda_{d+1}-\lambda_1.\qedhere
  \]
\end{proof}

Motivated by \th\ref{lemma:An:len}, we make the following definition.

\begin{definition}
  \th\label{def:An:len} Let $\bm{\lambda}\in\mathbf{os}_n^{d+1}$. We define the
  \emph{Loewy length of $\bm{\lambda}$} to
  be\nomenclature[25]{$\len(\bm{\lambda})$}{the Loewy length
    $\lambda_{d+1}-\lambda_1+1$ of $\bm{\lambda}\in\mathbf{os}^{d+1}$}
  \[
    \len(\bm{\lambda}):=\len(M(\bm{\lambda}))=\lambda_{d+1}-\lambda_1+1\in\set{1,\dots,n}.
  \]
\end{definition}

\subsection{The higher Nakayama algebras of type $\mathbb{A}$}

We are ready to introduce the higher Nakayama algebras of type $\mathbb{A}$. In
the classical case, the Nakayama algebras of type $\mathbb{A}$ are
\emph{admissible quotients} of path algebras of the linearly oriented quivers of
type $\mathbb{A}$. In contrast, the higher Nakayama algebras of type
$\mathbb{A}$ are \emph{idempotent quotients} of the higher Auslander algebras of
type $\mathbb{A}$.

\begin{setting}
  We fix positive integers $d$ and $n$ until further notice.
\end{setting}

We begin by recalling the notion of a Kupisch series of type $\mathbb{A}$,
originally introduced in \cite{Kup59}.

\begin{definition}
  \th\label{def:KS:An} Let
  $\underline{\ell}=(\ell_0,\ell_1,\dots,\ell_{n-1})$\nomenclature[26]{$\underline{\ell}$}{a
    Kupisch series} be a tuple of positive integers. We say that
  $\underline{\ell}$ is a \emph{(connected) Kupisch series of type
    $\mathbb{A}_n$} if $\ell_0=1$ and for all $i\neq 0$ there are inequalities
  \[
    2\leq\ell_{i}\leq \ell_{i-1}+1.
  \]
  We denote the set of Kupisch series of type $\mathbb{A}_n$ by $\KSA{n}$.
\end{definition}

\begin{remark}
  Suppose that the ground field is algebraically closed. In this case, it is
  well known that there is a bijective correspondence between Morita equivalence
  classes of (connected) Nakayama algebras of type $\mathbb{A}$ and Kupisch
  series of type $\mathbb{A}$. In one direction the correspondence is given by
  associating the tuple
  \[
    \tuple{\len(e_0A),\len(e_1A),\dots,\len(e_{n-1}A)}
  \]
  to a Nakayama algebra $A$ with Gabriel quiver $0\to1\to\cdots\to n-1$.
\end{remark}

\begin{notation}
  Let $\underline{\ell}$ be a Kupisch series of type $\mathbb{A}_n$. We denote
  the basic Nakayama algebra with Kupisch series $\underline{\ell}$ by
  $A_{\underline{\ell}}^{(1)}$. Note that, by definition, $A_{\underline{\ell}}^{(1)}$ is an
  admissible quotient of $A_n$.
\end{notation}

We need some elementary observations concerning Kupisch series which follow by
an easy induction from the inequalities $\ell_i\leq\ell_{i-1}+1$.

\begin{lemma}
  \th\label{lemma:KS} Let $\underline{\ell}$ be a Kupisch series of type
  $\mathbb{A}_n$. Then, the following statements hold.
  \begin{enumerate}
  \item\label{lemma:KS:An:it:2} For all $i\leq j$ the inequality
    $i-j\leq\ell_i-\ell_j$ is satisfied.
  \item\label{lemma:KS:An:it:1} For all $i\in\set{0,1,\dots,n-1}$ the inequality
    $\ell_i\leq i+1$ is satisfied.
  \end{enumerate}
\end{lemma}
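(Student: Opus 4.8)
The plan is to deduce both inequalities from the single structural constraint recorded in \th\ref{def:KS:An}, namely that $\ell_k \le \ell_{k-1}+1$ for every index $k \in \{1,\dots,n-1\}$; equivalently, consecutive differences satisfy $\ell_k - \ell_{k-1} \le 1$. Once this is isolated, both claims become immediate telescoping (equivalently, inductive) arguments, so I expect no genuine obstacle beyond keeping track of the range of indices on which the defining inequality is available.

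For statement \eqref{lemma:KS:An:it:2}, I would fix $i \le j$ and write the difference as the telescoping sum $\ell_j - \ell_i = \sum_{k=i+1}^{j}(\ell_k - \ell_{k-1})$. Each summand has index $k \ge i+1 \ge 1$, so the defining inequality applies and bounds that term by $1$; since there are exactly $j-i$ terms, this gives $\ell_j - \ell_i \le j - i$. Rearranging yields $i-j \le \ell_i - \ell_j$, which is the assertion. (If one prefers, the same estimate follows by induction on $j-i$: the base case $j=i$ is the trivial equality $0 \le 0$, and the inductive step contributes a single instance of $\ell_{j} \le \ell_{j-1}+1$.)

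Statement \eqref{lemma:KS:An:it:1} then follows as the special case $i=0$ of \eqref{lemma:KS:An:it:2}: taking $i=0$ gives $-j \le \ell_0 - \ell_j$, and substituting the prescribed value $\ell_0 = 1$ from \th\ref{def:KS:An} yields $\ell_j \le j+1$, as required. A self-contained induction on $i$ works equally well: the base case $\ell_0 = 1 = 0+1$ holds by definition, and the inductive step combines $\ell_i \le \ell_{i-1}+1$ with the hypothesis $\ell_{i-1} \le i$ to produce $\ell_i \le i+1$.

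The only point requiring a moment's care is that the inequality $\ell_k \le \ell_{k-1}+1$ is part of the hypothesis only for $k \ne 0$. This is precisely why the telescoping sum is indexed from $k=i+1$, so that every index occurring is at least $1$, and why the value $\ell_0$ enters the second statement solely through its prescribed value $\ell_0 = 1$ rather than through any difference inequality.
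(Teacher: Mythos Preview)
Your proof is correct and matches the paper's approach: the paper simply remarks that both statements ``follow by an easy induction from the inequalities $\ell_i\leq\ell_{i-1}+1$,'' which is exactly the telescoping/inductive argument you have written out in full. Your additional care about the index range $k\geq 1$ and the role of the initial value $\ell_0=1$ is appropriate and makes explicit what the paper leaves to the reader.
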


The product order on $\mathbb{Z}^n$ endows the set $\KSA{n}$ of Kupisch series
of type $\mathbb{A}_n$ with the structure of a poset. We need the following
simple observations, the proofs of which are also left to the reader.

\begin{lemma}
  \th\label{lemma:KS:An:poset} The following statements hold.
  \begin{enumerate}
  \item The Kupisch series $(1,2,\dots,n)$ is the unique maximal element in
    $\KSA{n}$.
  \item The Kupisch series $(1,2,\dots,2)$ is the unique minimal element in
    $\KSA{n}$.
  \item The Hasse quiver of $\KSA{n}$ is connected.
  \item Two Kupisch series $\underline{\ell}\leq\underline{\ell}'$ are
    neighbours in the Hasse quiver of $\KSA{n}$ then they are also neighbours in
    $\mathbb{Z}^n$.
  \end{enumerate}
\end{lemma}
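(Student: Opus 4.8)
The plan is to dispatch the first three statements quickly from the defining inequalities together with \th\ref{lemma:KS}, and then to concentrate on the fourth, which carries all the real content. For the maximal element I would first verify that $(1,2,\dots,n)$ lies in $\KSA{n}$: it has $\ell_0=1$ and satisfies $\ell_i=i+1=\ell_{i-1}+1$ for $i\geq1$, so the conditions of \th\ref{def:KS:An} hold. By \th\ref{lemma:KS} every Kupisch series satisfies $\ell_i\leq i+1$, so $(1,2,\dots,n)$ dominates every element of $\KSA{n}$ coordinatewise; being a greatest element, it is in particular the unique maximal element. Dually, $(1,2,\dots,2)$ is readily checked to be a Kupisch series, and since every Kupisch series satisfies $\ell_0=1$ and $\ell_i\geq2$ for $i\geq1$ by definition, it is dominated by every element of $\KSA{n}$, hence is the unique minimal element.

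Statement (3) then follows formally. The poset $\KSA{n}$ is finite, being contained in $\set{1,\dots,n}^n$ by the bound $\ell_i\leq i+1$, and it has a greatest element $\widehat{1}=(1,2,\dots,n)$ by statement (1). In a finite poset every strict relation $x<y$ refines to a saturated chain whose consecutive steps are edges of the Hasse quiver; applying this to $x\leq\widehat{1}$ exhibits a path in the Hasse quiver from any vertex $x$ to $\widehat{1}$, so the Hasse quiver is connected.

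The heart of the matter is statement (4). Here $e_i\in\mathbb{Z}^n$ denotes the standard basis vector supported in coordinate $i$, and being neighbours in $\mathbb{Z}^n$ means $\underline{\ell}'=\underline{\ell}+e_i$ for some $i$. The key tool I would establish is a one-step membership criterion: for $i\geq1$ the tuple $\underline{\ell}+e_i$ again lies in $\KSA{n}$ if and only if $\ell_i\leq\ell_{i-1}$. Indeed, raising $\ell_i$ by one leaves $\ell_0=1$ untouched, only relaxes the constraint $\ell_{i+1}\leq\ell_i+1$ at position $i+1$, and imposes the single new requirement $\ell_i+1\leq\ell_{i-1}+1$ at position $i$. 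Granting this, suppose $\underline{\ell}\leq\underline{\ell}'$ is a covering relation in $\KSA{n}$ and let $i_0$ be the smallest index at which the two series differ; note $i_0\geq1$ since both begin with $1$. As $\underline{\ell}$ and $\underline{\ell}'$ agree in all coordinates below $i_0$, the defining inequality for $\underline{\ell}'$ gives $\ell'_{i_0}\leq\ell'_{i_0-1}+1=\ell_{i_0-1}+1$, while $\ell'_{i_0}\geq\ell_{i_0}+1$ because $\underline{\ell}\leq\underline{\ell}'$ and the two differ at $i_0$; together these force $\ell_{i_0}\leq\ell_{i_0-1}$. The criterion now yields $\underline{\ell}+e_{i_0}\in\KSA{n}$, and one checks directly that $\underline{\ell}<\underline{\ell}+e_{i_0}\leq\underline{\ell}'$. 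Since the relation is a covering relation, no Kupisch series lies strictly between, so $\underline{\ell}+e_{i_0}=\underline{\ell}'$, which is precisely the assertion that the two are neighbours in $\mathbb{Z}^n$.

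The only genuine obstacle is the choice of index at which to add a unit: a priori one fears that at every differing coordinate the inequality $\ell_i=\ell_{i-1}+1$ is saturated, blocking the one-step move. The observation that resolves this is that the smallest differing index $i_0$ is always unblocked, forced simply by comparing the defining inequality for $\underline{\ell}'$ at $i_0$ with the agreement of the two series below $i_0$. Everything else is routine verification against the defining inequalities of \th\ref{def:KS:An}.
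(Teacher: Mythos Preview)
Your argument is correct in all four parts. The paper does not supply a proof of this lemma at all---it explicitly leaves the verification to the reader---so there is nothing to compare against; your write-up fills in precisely the routine checks the authors intended, and your handling of part~(4) via the smallest differing index is clean and complete.
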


Let $\underline{\ell}$ be a Kupisch series of type $\mathbb{A}_n$ and
$\bm{\lambda}=(\lambda_1,\lambda_2)\in\mathbf{os}_n^2$. It is well known and
elementary to verify that the $A_n^{(1)}$-module $M(\bm{\lambda})$ is an
$A_{\underline{\ell}}^{(1)}$-module if and only if
\[
  \len(\bm{\lambda})=\len(M(\bm{\lambda}))\leq
  \len(P_{\lambda_2})=\ell_{\lambda_2}
\]
where $P_{\lambda_2}$ is the indecomposable projective
$A_{\underline{\ell}}^{(1)}$-module with top $S_{\lambda_2}$. This observation
motivates our definition of the higher Nakayama algebras of type $\mathbb{A}$.

\begin{definition}
  \th\label{def:An:l} Let $\underline{\ell}$ be a Kupisch series of type
  $\mathbb{A}_n$.
  \begin{enumerate}
  \item\label{def:An:l:it:l-restriction} The
    \emph{$\underline{\ell}$-restriction of
      $\mathbf{os}_n^{d+1}$}\nomenclature[27]{$\mathbf{os}_{\underline{\ell}}^d$}{the
      $\underline{\ell}$-restriction of $\mathbf{os}_n^d$} is the subset
    \begin{equation*}
      \mathbf{os}_{\underline{\ell}}^{d+1}:=\setP{\bm{\lambda}\in\mathbf{os}_n^{d+1}}{\len(\bm{\lambda})\leq\ell_{\lambda_{d+1}}}.
    \end{equation*}
  \item The \emph{$(d+1)$-Nakayama algebra with Kupisch series
      $\underline{\ell}$}\nomenclature[28]{$A_{\underline{\ell}}^{(d)}$}{the
      $d$-Nakayama algebra with Kupisch series $\underline{\ell}$ of type
      $\mathbb{A}_n$} is the finite dimensional algebra
    \[
      A_{\underline{\ell}}^{(d+1)}:=A_n^{(d+1)}/[\mathbf{os}_n^{d+1}\setminus\mathbf{os}_{\underline{\ell}}^{d+1}].
    \]
  \item The $A_n^{(d)}$-module $M_{\underline{\ell}}^{(d)}$ is by
    definition\nomenclature[29]{$M_{\underline{\ell}}^{(d)}$}{the distinguished
      $d\mathbb{Z}$-cluster-tilting $A_{\underline{\ell}}^{(d)}$-module}
    \begin{equation*}
      M_{\underline{\ell}}^{(d)}:=\bigoplus\setP{M(\bm{\lambda})}{\bm{\lambda}\in\mathbf{os}_{\underline{\ell}}^{d+1}}.
    \end{equation*}
  \end{enumerate}
\end{definition}

\begin{figure}
  \begin{center}
    \includegraphics[width=0.65\textwidth]{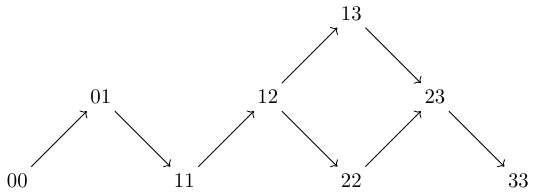}\vskip2em
    \includegraphics[width=\textwidth]{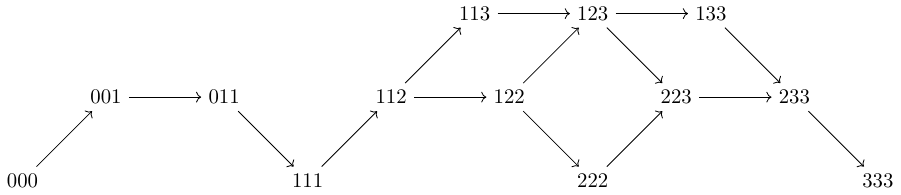}
  \end{center}
  \caption{The Gabriel quivers of $A_{\underline{\ell}}^{(2)}$ (top) and
    $A_{\underline{\ell}}^{(3)}$ (bottom) for $\underline{\ell}=(1,2,2,3)$.}
  \label{fig:An:l:1223}
\end{figure}

Let $\underline{\ell}$ be a Kupisch series of type $\mathbb{A}_n$. Note that the
condition for $\bm{\lambda}\in\mathbf{os}_n^{d+1}$ to belong to
$\mathbf{os}_{\underline{\ell}}^{d+1}$ only makes use of the pair
$(\lambda_1,\lambda_{d+1})$.

The purpose of this section is to prove the following theorem, which establishes
the higher Auslander--Reiten theoretical nature of the higher Nakayama algebras
of type $\mathbb{A}$.

\begin{theorem}
  \th\label{thm:An:l} Let $\underline{\ell}$ be a Kupisch series of type
  $\mathbb{A}_n$.
  \begin{enumerate}
  \item\label{thm:An:l:it:KS} For each $i\in\set{0,1,\dots,n-1}$ the
    indecomposable projective $A_{\underline{\ell}}^{(d)}$-module at the vertex
    $(i,\dots,i)$ has length $\ell_i$.
  \item\label{thm:An:l:it:dZ-RF} $M_{\underline{\ell}}^{(d)}$ is a
    $d\mathbb{Z}$-cluster-tilting $A_{\underline{\ell}}^{(d)}$-module. In
    particular, $A_{\underline{\ell}}^{(d)}$ is a
    $d\mathbb{Z}$-representation-finite algebra.
  \item\label{thm:An:l:it:taud} For every simple
    $A_{\underline{\ell}}^{(d)}$-module $S$ which is a direct summand of
    $M_{\underline{\ell}}^{(d)}$ the $A_{\underline{\ell}}^{(d)}$-module
    $\tau_d^i(S)$ is either simple or zero for all $i\in\mathbb{Z}$.
  \end{enumerate}
\end{theorem}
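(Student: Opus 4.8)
Throughout I regard the modules $M(\bm{\lambda})$, $\bm{\lambda}\in\mathbf{os}_n^{d+1}$, as the indecomposable summands of $M(A_n^{(d)})$ via \th\ref{thm:M_nd}, and I treat $A_{\underline{\ell}}^{(d)}$ as the idempotent quotient $A_n^{(d)}/[\mathbf{os}_n^{d}\setminus\mathbf{os}_{\underline{\ell}}^{d}]$ (the case $d=1$ being the classical one). The first task is to pin down $\mathcal{M}_{\underline{\ell}}:=\mathcal{M}(A_n^{(d)})\cap\mmod A_{\underline{\ell}}^{(d)}$. By \th\ref{thm:M_nd} the composition factors of $M(\bm{\lambda})$ are the $S_{\bm{\mu}}$ with $\lambda_1\leq\mu_1\leq\cdots\leq\lambda_d\leq\mu_d\leq\lambda_{d+1}$, and such a factor is killed in $A_{\underline{\ell}}^{(d)}$ exactly when $\len(\bm{\mu})>\ell_{\mu_d}$. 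The monotonicity $\ell_{\mu_d}\geq\ell_{\lambda_{d+1}}-(\lambda_{d+1}-\mu_d)$ of \th\ref{lemma:KS}\eqref{lemma:KS:An:it:2} shows that every factor survives once $\len(\bm{\lambda})\leq\ell_{\lambda_{d+1}}$, while the factor $\bm{\mu}=(\lambda_1,\dots,\lambda_{d-1},\lambda_{d+1})$ has $\len(\bm{\mu})=\len(\bm{\lambda})$ and is killed otherwise. Hence $M(\bm{\lambda})\in\mmod A_{\underline{\ell}}^{(d)}$ iff $\bm{\lambda}\in\mathbf{os}_{\underline{\ell}}^{d+1}$, so $\mathcal{M}_{\underline{\ell}}=\add M_{\underline{\ell}}^{(d)}$. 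The same counting identifies the projectives: the projective cover of $S_{\bm{\nu}}$ over $A_{\underline{\ell}}^{(d)}$ is the largest quotient of $P_{\bm{\nu}}=M(0,\nu_1,\dots,\nu_d)$ lying in $\mmod A_{\underline{\ell}}^{(d)}$, and one checks this is $M(\bm{\lambda})$ with $(\lambda_2,\dots,\lambda_{d+1})=\bm{\nu}$ and $\lambda_1=\max(0,\nu_d-\ell_{\nu_d}+1)$, of Loewy length $\ell_{\nu_d}$; taking $\bm{\nu}=(i,\dots,i)$ proves \eqref{thm:An:l:it:KS}. The injectives are described dually.

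For \eqref{thm:An:l:it:dZ-RF} I would induct along the Hasse quiver of $\KSA{n}$, which by \th\ref{lemma:KS:An:poset} is connected with greatest element $(1,2,\dots,n)$ and whose covering relations lower a single coordinate by one. The base case $\underline{\ell}=(1,2,\dots,n)$ gives $A_{\underline{\ell}}^{(d)}=A_n^{(d)}$ and is \th\ref{thm:An}, \th\ref{thm:M_nd}. For a covering $\underline{\ell}'>\underline{\ell}=\underline{\ell}'-e_k$ I apply the idempotent reduction \th\ref{lemma:d-CT:idempotent_reduction:algebras} to $A_{\underline{\ell}'}^{(d)}$, the $d$-cluster-tilting subcategory $\mathcal{M}_{\underline{\ell}'}$ (available by induction), and the idempotent at the removed vertices $\mathbf{os}_{\underline{\ell}'}^{d}\setminus\mathbf{os}_{\underline{\ell}}^{d}$, whose quotient is $A_{\underline{\ell}}^{(d)}$; the condition on projectives and injectives is immediate from the previous paragraph. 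The crucial point is the second hypothesis: the indecomposables of $\mathcal{M}_{\underline{\ell}'}$ that leave $\mmod A_{\underline{\ell}}^{(d)}$ must be projective-injective over $A_{\underline{\ell}'}^{(d)}$. Here the validity of $\underline{\ell}'-e_k$ as a Kupisch series forces $\ell'_{k+1}\leq\ell'_k$, and this is precisely what guarantees that every such module $M(\bm{\lambda})$ has $\lambda_{d+1}=k$, $\lambda_1=k-\ell_k$ and maximal Loewy length $\ell'_k$, so that it is simultaneously the projective cover of its top and the injective envelope of its socle. (A single idempotent reduction straight from $A_n^{(d)}$ would fail: over $A_n^{(d)}$ these modules are not projective-injective, so it is essential that the supply of projective-injectives grows as one descends.) The lemma then yields that $\mathcal{M}_{\underline{\ell}'}\cap\mmod A_{\underline{\ell}}^{(d)}=\mathcal{M}_{\underline{\ell}}=\add M_{\underline{\ell}}^{(d)}$ is $d$-cluster-tilting over $A_{\underline{\ell}}^{(d)}$.

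The main obstacle is upgrading $d$-cluster-tilting to $d\mathbb{Z}$-cluster-tilting, since the relative homological embedding of \th\ref{lemma:d-CT:idempotent_reduction:algebras}\eqref{it:MX-relative-algebras} only identifies $\Ext^i$ for $i\leq d-1$ and thus does not control the higher extension groups. My plan is to compute the minimal projective resolution of each summand $M(\bm{\lambda})$ over $A_{\underline{\ell}}^{(d)}$ directly—extending the hereditary computation of \th\ref{prop:An:taud} by replacing each $A_n^{(d)}$-projective with its truncation described above—and to prove that the $d$-th syzygy $\Omega^d M(\bm{\lambda})$ is again a module of the form $M(\bm{\mu})$, hence lies in $\mathcal{M}_{\underline{\ell}}$, with $\bm{\mu}$ a downward translate of $\bm{\lambda}$. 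Granting this $\Omega^d$-stability, the dimension shift $\Ext_{A_{\underline{\ell}}^{(d)}}^{qd+r}(M(\bm{\lambda}),N)\cong\Ext_{A_{\underline{\ell}}^{(d)}}^{r}(\Omega^{qd}M(\bm{\lambda}),N)$ together with the vanishing of $\Ext^r(\mathcal{M}_{\underline{\ell}},\mathcal{M}_{\underline{\ell}})$ for $1\leq r\leq d-1$ forces $\Ext^i(\mathcal{M}_{\underline{\ell}},\mathcal{M}_{\underline{\ell}})=0$ for $i\notin d\mathbb{Z}$, which is condition \eqref{it:dZ-CT}; since the downward translates reach a projective in finitely many steps, the resolutions terminate and $\gldim A_{\underline{\ell}}^{(d)}$ is a multiple of $d$, in agreement with \th\ref{prop:dZ-CT:gldimdZ}.

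Finally, the simple summands of $M_{\underline{\ell}}^{(d)}$ are exactly the $S_{(i,\dots,i)}=M((i,\dots,i))$ with $0\leq i\leq n-1$, so \eqref{thm:An:l:it:taud} asks that their $\tau_d$-orbits consist of simples and zero. By \th\ref{thm:d-CT:existence_d-AR_seq} each $\tau_d^{j}(S_{(i,\dots,i)})$ lies in $\mathcal{M}_{\underline{\ell}}$, hence equals some $M(\bm{\mu})$ or $0$; the resolution analysis of the previous paragraph, applied now to the simple modules, should show that $\tau_d(S_{(i,\dots,i)})$ is $S_{(i-1,\dots,i-1)}$ or $0$, so that the entire orbit stays among the simples and zero.
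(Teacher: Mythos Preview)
Your proposal is correct and follows essentially the same route as the paper: identify the projectives and injectives of $A_{\underline{\ell}}^{(d)}$ explicitly, induct down the Hasse quiver of $\KSA{n}$ applying the idempotent reduction \th\ref{lemma:d-CT:idempotent_reduction:algebras} at each covering relation (with exactly the projective-injectivity check you describe), and then upgrade to $d\mathbb{Z}$-cluster-tilting by computing $\Omega^d M(\bm{\lambda})$ directly and showing it again has the form $M(\bm{\mu})$. The only minor divergence is in part~\eqref{thm:An:l:it:taud}: the paper obtains the formula $\tau_d(M(\bm{\lambda}))\cong M(\tau_d(\bm{\lambda}))$ by checking that the $d$-almost split sequence already known over $A_n^{(d)}$ has all its terms in $\add M_{\underline{\ell}}^{(d)}$, rather than reading $\tau_d$ off the explicit projective resolution as you suggest, but both arguments work and give the same answer.
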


\begin{remark}
  Statement \eqref{thm:An:l:it:KS} in \th\ref{thm:An:l} can be interpreted as
  saying that $A_{\underline{\ell}}^{(d)}$ `has Kupisch series
  $\underline{\ell}$'. Statement \eqref{thm:An:l:it:dZ-RF} in \th\ref{thm:An:l}
  shows that $M_{\underline{\ell}}^{(d)}$ is a higher dimensional analogue of
  $M_{\underline{\ell}}^{(1)}$, the standard additive generator of $\mmod
  A_{\underline{\ell}}^{(1)}$. Finally, statement \eqref{thm:An:l:it:taud} in
  \th\ref{thm:An:l} should be compared with Theorem IV.2.10 in \cite{ARS97}
  which states that an Artin algebra $A$ is a Nakayama algebra if and only if
  the $\tau$-orbit of each simple $A$-module consists entirely of simple
  $A$-modules.
\end{remark}

We divide the proof of \th\ref{thm:An:l} into several parts. Since the case
$d=1$ is classical we restrict our attention to the case $d\geq2$.

\begin{setting}
  We fix integers $d\geq2$ and $n\geq1$ and a Kupisch series $\underline{\ell}$
  of type $\mathbb{A}_n$ until the end of this section.
\end{setting}

\begin{proposition}
  \th\label{prop:An:l:M} The $A_n^{(d)}$-module $M_{\underline{\ell}}^{(d)}$ is
  in fact an $A_{\underline{\ell}}^{(d)}$-module.
\end{proposition}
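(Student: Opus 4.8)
The plan is to unwind the definition of $A_{\underline{\ell}}^{(d)}$ as an idempotent quotient, reduce the claim to a single numerical inequality on ordered sequences, and then settle that inequality using the defining inequalities of a Kupisch series. Recall from \th\ref{def:An:l} that
$A_{\underline{\ell}}^{(d)}=A_n^{(d)}/[\mathbf{os}_n^{d}\setminus\mathbf{os}_{\underline{\ell}}^{d}]$, and that, under the identification of $\Mod\underline{\mathcal{A}}_{\mathcal{X}}$ with the modules supported off $\mathcal{X}$ recorded in Subsection~\ref{subsec:locally_bounded_categories}, an $A_n^{(d)}$-module is an $A_{\underline{\ell}}^{(d)}$-module precisely when its value at every vertex $\bm{\mu}\in\mathbf{os}_n^{d}\setminus\mathbf{os}_{\underline{\ell}}^{d}$ vanishes; equivalently, when no simple module $S_{\bm{\mu}}$ with $\bm{\mu}\notin\mathbf{os}_{\underline{\ell}}^{d}$ occurs as a composition factor. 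Since $M_{\underline{\ell}}^{(d)}$ is by definition a direct sum of interval modules $M(\bm{\lambda})$ with $\bm{\lambda}\in\mathbf{os}_{\underline{\ell}}^{d+1}$, it therefore suffices to prove the following: for every $\bm{\lambda}\in\mathbf{os}_{\underline{\ell}}^{d+1}$ and every $\bm{\mu}\in\mathbf{os}_n^{d}$ with $[M(\bm{\lambda}):S_{\bm{\mu}}]\neq0$, one has $\bm{\mu}\in\mathbf{os}_{\underline{\ell}}^{d}$.

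Next I would invoke the composition-factor formula from \th\ref{thm:M_nd}, which tells us that $[M(\bm{\lambda}):S_{\bm{\mu}}]\neq0$ forces the chain $\lambda_1\leq\mu_1\leq\cdots\leq\lambda_d\leq\mu_d\leq\lambda_{d+1}$; in particular $\lambda_1\leq\mu_1$ and $\mu_d\leq\lambda_{d+1}$. Thus the task becomes purely numerical: assuming $\len(\bm{\lambda})=\lambda_{d+1}-\lambda_1+1\leq\ell_{\lambda_{d+1}}$ (membership in $\mathbf{os}_{\underline{\ell}}^{d+1}$) together with $\lambda_1\leq\mu_1$ and $\mu_d\leq\lambda_{d+1}$, I must deduce $\len(\bm{\mu})=\mu_d-\mu_1+1\leq\ell_{\mu_d}$.

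The heart of the argument, and the only place where the Kupisch hypothesis enters, is the following chain of inequalities. Applying \th\ref{lemma:KS}\eqref{lemma:KS:An:it:2} to the pair $\mu_d\leq\lambda_{d+1}$ gives $\mu_d-\lambda_{d+1}\leq\ell_{\mu_d}-\ell_{\lambda_{d+1}}$, hence $\ell_{\mu_d}\geq\ell_{\lambda_{d+1}}-(\lambda_{d+1}-\mu_d)$. Combining this with the bound $\ell_{\lambda_{d+1}}\geq\lambda_{d+1}-\lambda_1+1$ coming from $\bm{\lambda}\in\mathbf{os}_{\underline{\ell}}^{d+1}$ yields
\[
  \ell_{\mu_d}\geq(\lambda_{d+1}-\lambda_1+1)-(\lambda_{d+1}-\mu_d)=\mu_d-\lambda_1+1\geq\mu_d-\mu_1+1=\len(\bm{\mu}),
\]
where the last inequality uses $\mu_1\geq\lambda_1$. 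This is exactly the condition $\bm{\mu}\in\mathbf{os}_{\underline{\ell}}^{d}$, completing the reduction.

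I do not anticipate any genuine obstacle once the problem is translated into the numerical statement above, since the Kupisch property \th\ref{lemma:KS}\eqref{lemma:KS:An:it:2} is precisely tailored to bridge the gap between $\ell_{\lambda_{d+1}}$ and $\ell_{\mu_d}$. The only real care needed is in setting up the correct identification of $A_{\underline{\ell}}^{(d)}$-modules with modules supported on $\mathbf{os}_{\underline{\ell}}^{d}$ and in tracking the index shift between length-$(d+1)$ tuples labelling $M_{\underline{\ell}}^{(d)}$ and the length-$d$ tuples labelling the simple $A_n^{(d)}$-modules.
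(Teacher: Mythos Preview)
Your proposal is correct and follows essentially the same approach as the paper: both reduce the claim to showing that every composition factor $S_{\bm{\mu}}$ of $M(\bm{\lambda})$ with $\bm{\lambda}\in\mathbf{os}_{\underline{\ell}}^{d+1}$ satisfies $\bm{\mu}\in\mathbf{os}_{\underline{\ell}}^{d}$, and both settle the resulting numerical inequality via \th\ref{lemma:KS}\eqref{lemma:KS:An:it:2} applied to the pair $\mu_d\leq\lambda_{d+1}$. The only cosmetic difference is that the paper packages the computation by introducing the auxiliary quantity $x:=\lambda_{d+1}+1-\ell_{\lambda_{d+1}}$, whereas you work directly with the chain of inequalities.
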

\begin{proof}
  Let $\bm{\lambda}\in\mathbf{os}_{\underline{\ell}}^{d+1}$ and
  $\bm{\mu}\in\mathbf{os}_n^d$ be such that
  $[M(\bm{\lambda}):S_{\bm{\mu}}]\neq0$; we need to prove that
  $\bm{\mu}\in\mathbf{os}_{\underline{\ell}}^d$. Let
  \[
    x:=\lambda_{d+1}+1-\ell_{\lambda_{d+1}}.
  \]
  Note that the inequality $\len(\bm{\lambda})\leq\ell_{\lambda_{d+1}}$, which
  holds by assumption, is equivalent to
  \[
    \lambda_1-x=\lambda_1-(\lambda_{d+1}+1-\ell_{\lambda_{d+1}})\geq0.
  \]
  Therefore $x\leq\lambda_1\leq\mu_1\leq\mu_d\leq\lambda_{d+1}$. We claim that
  there are inequalities
  \[
    \len(\bm{\mu})=\mu_d-\mu_1+1\leq\mu_d-x+1\leq\ell_{\mu_d}.
  \]
  The inequality on the left is clear. The inequality on the right is equivalent
  to
  \[
    \mu_d-\lambda_{d+1}\leq\ell_{\mu_d}-\ell_{\lambda_{d+1}}
  \]
  and therefore follows from \th\ref{lemma:KS}\eqref{lemma:KS:An:it:2}. This
  shows that $\bm{\mu}\in\mathbf{os}_{\underline{\ell}}^d$.
\end{proof}

The key ingredient in the proof of \th\ref{thm:An:l} is
\th\ref{lemma:d-CT:idempotent_reduction:algebras}. To use it we need detailed
knowledge of the projective and of the injective modules over the higher
Nakayama algebras of type $\mathbb{A}$.

\begin{proposition}
  \th\label{prop:An:l:proj_inj} Let
  $\bm{\lambda}\in\mathbf{os}_{\underline{\ell}}^d$. Then, the following
  statements hold.
  \begin{enumerate}
  \item\label{prop:An:l:proj_inj:it:P} The indecomposable projective
    $A_{\underline{\ell}}^{(d)}$-module with top $S_{\bm{\lambda}}$ is precisely
    \[
      P_{\bm{\lambda}}=M(x,\lambda_1,\dots,\lambda_d)\in\add
      M_{\underline{\ell}}^{(d)}
    \]
    where $x=\lambda_d+1-\ell_{\lambda_d}$. Moreover,
    \[
      x=\min\setP{0\leq{x'}\leq\lambda_1}{(x',\lambda_1,\dots,\lambda_d)\in\mathbf{os}_{\underline{\ell}}^{d+1}}.
    \]
  \item\label{prop:An:l:proj_inj:it:I} Let
    $\bm{\lambda}\in\mathbf{os}_{\underline{\ell}}^d$. The indecomposable
    injective $A_{\underline{\ell}}^{(d)}$-module with socle $S_{\bm{\lambda}}$
    is precisely
    \[
      I_{\bm{\lambda}}=M(\lambda_1,\dots,\lambda_d,y)\in\add
      M_{\underline{\ell}}^{(d)},
    \]
    where
    \[
      y:=\max\setP{\lambda_d\leq y'\leq
        n-1}{(\lambda_1,\dots,\lambda_d,y')\in\mathbf{os}_{\underline{\ell}}^{d+1}}.
    \]
  \end{enumerate}
\end{proposition}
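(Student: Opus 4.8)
The plan is to realise $P_{\bm\lambda}$ as the largest quotient of the $A_n^{(d)}$-projective $P_{\bm\lambda}^{A_n^{(d)}}=M(0,\lambda_1,\dots,\lambda_d)$ (see \th\ref{prop:An:proj_inj}) that still lies in the subcategory $\mmod A_{\underline\ell}^{(d)}\subseteq\mmod A_n^{(d)}$ of modules supported on $\mathbf{os}_{\underline\ell}^d$. First I would dispose of the bookkeeping. Writing $x:=\lambda_d+1-\ell_{\lambda_d}$, the inequality $\ell_{\lambda_d}\le\lambda_d+1$ of \th\ref{lemma:KS}\eqref{lemma:KS:An:it:1} gives $x\ge0$, while $\len(\bm\lambda)=\lambda_d-\lambda_1+1\le\ell_{\lambda_d}$ (valid since $\bm\lambda\in\mathbf{os}_{\underline\ell}^d$) gives $x\le\lambda_1$. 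Since $\len(x,\lambda_1,\dots,\lambda_d)=\lambda_d-x+1=\ell_{\lambda_d}$, the tuple $\bm\nu:=(x,\lambda_1,\dots,\lambda_d)$ lies in $\mathbf{os}_{\underline\ell}^{d+1}$; hence $M(\bm\nu)\in\add M_{\underline\ell}^{(d)}$ and it has top $S_{(\lambda_1,\dots,\lambda_d)}=S_{\bm\lambda}$. The description of $x$ as a minimum then follows at once, because $(x',\lambda_1,\dots,\lambda_d)\in\mathbf{os}_{\underline\ell}^{d+1}$ is equivalent to $x'\ge x$.

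The heart of the argument is a support computation. By \th\ref{thm:M_nd} the composition factors of $P_{\bm\lambda}^{A_n^{(d)}}$ are the $S_{\bm\mu}$ with $\bm\mu\rightsquigarrow\bm\lambda$, and the largest quotient lying in $\mmod A_{\underline\ell}^{(d)}$ is obtained by killing the submodule generated by the components at the vertices $\bm\kappa\notin\mathbf{os}_{\underline\ell}^d$; concretely, a factor $S_{\bm\mu}$ survives precisely when there is no $\bm\kappa\notin\mathbf{os}_{\underline\ell}^d$ with $\bm\mu\rightsquigarrow\bm\kappa\rightsquigarrow\bm\lambda$. I would show this surviving set equals $\setP{\bm\mu}{\bm\mu\rightsquigarrow\bm\lambda,\ \mu_1\ge x}$, i.e. the closed interval $[(x,\lambda_1,\dots,\lambda_{d-1}),(\lambda_1,\dots,\lambda_d)]$ supporting $M(\bm\nu)$. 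For ``$\mu_1\ge x$ survives'': any $\bm\kappa$ with $\bm\mu\rightsquigarrow\bm\kappa\rightsquigarrow\bm\lambda$ satisfies $\kappa_1\ge\mu_1\ge x$ and $\kappa_d\le\lambda_d$, so $\len(\bm\kappa)\le\kappa_d-x+1=\kappa_d-\lambda_d+\ell_{\lambda_d}\le\ell_{\kappa_d}$, the last step by \th\ref{lemma:KS}\eqref{lemma:KS:An:it:2}; thus $\bm\kappa\in\mathbf{os}_{\underline\ell}^d$ and cannot witness death. Conversely, given $\bm\mu\rightsquigarrow\bm\lambda$ with $\mu_1<x$, the tuple $\bm\kappa:=(\mu_1,\lambda_2,\dots,\lambda_d)$ satisfies $\bm\mu\rightsquigarrow\bm\kappa\rightsquigarrow\bm\lambda$ and $\len(\bm\kappa)=\lambda_d-\mu_1+1>\lambda_d-x+1=\ell_{\lambda_d}=\ell_{\kappa_d}$, so $\bm\kappa\notin\mathbf{os}_{\underline\ell}^d$ witnesses that $S_{\bm\mu}$ dies. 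Since $P_{\bm\lambda}^{A_n^{(d)}}$ is thin (each composition factor has multiplicity one), a submodule is determined by its support, so this quotient is the interval module $M(\bm\nu)$, proving \eqref{prop:An:l:proj_inj:it:P}.

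Statement \eqref{prop:An:l:proj_inj:it:I} I would prove by the dual argument, starting from $I_{\bm\lambda}^{A_n^{(d)}}=M(\lambda_1,\dots,\lambda_d,n-1)$ and cutting the \emph{top} of this interval module. The one extra point compared with the projective case is that $y$ is not given by a closed formula, so before running the two-sided support computation I would record the monotonicity fact that $(\lambda_1,\dots,\lambda_d,z)\in\mathbf{os}_{\underline\ell}^{d+1}$ for every $z$ with $\lambda_d\le z\le y$; this follows from $\ell_y\ge y-\lambda_1+1$ together with \th\ref{lemma:KS}\eqref{lemma:KS:An:it:2}. With this in hand the surviving factors are those $S_{\bm\mu}$ with $\bm\lambda\rightsquigarrow\bm\mu$ and $\mu_d\le y$, witnessed on the dying side by $\bm\kappa:=(\lambda_1,\dots,\lambda_{d-1},\mu_d)$, exactly mirroring the computation above.

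I expect the main obstacle to be the support computation, and within it the explicit choice of the witnessing vertex $\bm\kappa$: one must select the correct $\bm\kappa$ (with extremal first, resp. last, coordinate) and verify both interlacings $\bm\mu\rightsquigarrow\bm\kappa\rightsquigarrow\bm\lambda$ and the length inequality against the Kupisch bound simultaneously. The delicate bookkeeping is keeping track of the coordinate at which the Kupisch condition is anchored, which is precisely what breaks the symmetry between the clean closed form for $x$ and the implicit description of $y$.
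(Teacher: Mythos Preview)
Your proposal is correct and follows essentially the same route as the paper: both arguments identify the support of $P_{\bm\lambda}$ as the set of $\bm\mu$ with $\bm\mu\rightsquigarrow\bm\lambda$ and $\mu_1\ge x$, and both use the witness $\bm\kappa=(\mu_1,\lambda_2,\dots,\lambda_d)$ (and its dual $(\lambda_1,\dots,\lambda_{d-1},\mu_d)$ for the injective case). The only difference is cosmetic: you phrase it as passing to the largest $A_{\underline\ell}^{(d)}$-quotient of the $A_n^{(d)}$-projective, whereas the paper computes $A_{\underline\ell}^{(d)}(\bm\mu,\bm\lambda)$ directly via the condition $[\bm\mu,\bm\lambda]\subset\mathbf{os}_{\underline\ell}^d$ and handles your ``$\mu_1\ge x$ survives'' direction implicitly by observing that $M(x,\lambda_1,\dots,\lambda_d)$, having simple top $S_{\bm\lambda}$, is already a quotient of $P_{\bm\lambda}$.
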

\begin{proof}
  \eqref{prop:An:l:proj_inj:it:P} Note that statement \eqref{lemma:KS:An:it:1}
  in \th\ref{lemma:KS} implies that $x\geq0$. Moreover, the inequality
  \[
    \lambda_1-x=\lambda_1-\lambda_d-1+\ell_{\lambda_d}=-\len(\bm{\lambda})+\ell_{\lambda_d}\geq0
  \]
  holds by assumption. It follows that
  $(x,\lambda_1,\dots,\lambda_d)\in\mathbf{os}_{\underline{\ell}}^{d+1}$ and
  therefore
  \[
    M(x,\lambda_1,\dots,\lambda_d)\in\add M_{\underline{\ell}}^{(d)}.
  \]
  Suppose now that $\bm{\mu}\in\mathbf{os}_{\underline{\ell}}^d$ is such that
  there exists a non-zero path $\bm{\mu}\rightsquigarrow\bm{\lambda}$. By
  \th\ref{prop:poset:d-cone:interlacing} and the definition of
  $A_{\underline{\ell}}^{(d)}$ this is equivalent to the inequalities
  \[
    \mu_1\leq\lambda_1\leq\dots\leq\mu_d\leq\lambda_d
  \]
  being satisfied together with
  $[\bm{\mu},\bm{\lambda}]\subset\mathbf{os}_{\underline{\ell}}^d$. We need to
  prove that $x\leq \mu_1$ or, equivalently
  \[
    \len(\mu_1,\lambda_2,\dots,\lambda_d)=\lambda_d-\mu_1+1\leq\ell_{\lambda_d}.
  \]
  But this follows from the fact that
  $(\mu_1,\lambda_2,\dots,\lambda_d)\in[\bm{\mu},\bm{\lambda}]\subset\mathbf{os}_{\underline{\ell}}^d$.
  This shows that $P_{\bm{\lambda}}=M(x,\lambda_1,\dots,\lambda_d)$.

  \eqref{prop:An:l:proj_inj:it:I} By construction
  $(\lambda_1,\dots,\lambda_d,y)\in\mathbf{os}_{\underline{\ell}}^{d+1}$ whence
  \[
    M(\lambda_1,\dots,\lambda_d,y)\in\add M_{\underline{\ell}}^{(d)}.
  \]
  Suppose now that $\bm{\mu}\in\mathbf{os}_{\underline{\ell}}^d$ is such that
  there exists a non-zero path $\bm{\lambda}\rightsquigarrow\bm{\mu}$. By
  \th\ref{prop:poset:d-cone:interlacing} and the definition of
  $A_{\underline{\ell}}^{(d)}$ this is equivalent to the inequalities
  \[
    \lambda_1\leq\mu_1\leq\cdots\leq\lambda_d\leq\mu_d
  \]
  being satisfied together with
  $[\bm{\lambda},\bm{\mu}]\subset\mathbf{os}_{\underline{\ell}}^d$. We need to
  prove that $\mu_d\leq y$. For this, note that
  $(\lambda_1,\dots,\lambda_{d-1},\mu_d)\in[\bm{\lambda},\bm{\mu}]\subset\mathbf{os}_{\underline{\ell}}^d$
  which immediately implies that
  $(\lambda_1,\mu_d)\in\mathbf{os}_{\underline{\ell}}^2$. The claim now follows
  from the maximality of $y$.
\end{proof}

The following statement settles part \eqref{thm:An:l:it:KS} in
\th\ref{thm:An:l}.

\begin{corollary}
  Let $\bm{\lambda}\in\mathbf{os}_{\underline{\ell}}^{d}$. Then,
  $\len(P_{\bm{\lambda}})=\ell_{\lambda_d}$. In particular, for every
  $i\in\set{0,1,\dots,n-1}$ the projective cover of the simple
  $A_{\underline{\ell}}^{(d)}$-module at the vertex $(i,\dots,i)$ has Loewy
  length $\ell_i$.
\end{corollary}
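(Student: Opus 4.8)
The plan is to read $P_{\bm{\lambda}}$ off the preceding proposition and then compute its Loewy length as an interval module. By \th\ref{prop:An:l:proj_inj}\eqref{prop:An:l:proj_inj:it:P}, the indecomposable projective $A_{\underline{\ell}}^{(d)}$-module with top $S_{\bm{\lambda}}$ is
\[
  P_{\bm{\lambda}}=M(x,\lambda_1,\dots,\lambda_d),\qquad x=\lambda_d+1-\ell_{\lambda_d}.
\]
As recorded in the proof of that proposition, $x\geq0$ and $\lambda_1-x\geq0$, so $(x,\lambda_1,\dots,\lambda_d)$ is a genuine ordered sequence of length $d+1$ whose first and last entries are $x$ and $\lambda_d$. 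The first step is then to apply \th\ref{lemma:An:len} (equivalently \th\ref{def:An:len}) to this tuple, which shows that, viewed as an $A_n^{(d)}$-module, $M(x,\lambda_1,\dots,\lambda_d)$ has Loewy length $\lambda_d-x+1=\lambda_d-(\lambda_d+1-\ell_{\lambda_d})+1=\ell_{\lambda_d}$.

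The one point demanding care is that $\len(P_{\bm{\lambda}})$ in the statement is the Loewy length computed in $\mmod A_{\underline{\ell}}^{(d)}$, whereas \th\ref{lemma:An:len} computes it in $\mmod A_n^{(d)}$; the remaining work is to see that these two numbers agree. By \th\ref{prop:An:l:M} the module $M(x,\lambda_1,\dots,\lambda_d)$ is a genuine $A_{\underline{\ell}}^{(d)}$-module, so all of its composition factors $S_{\bm{\mu}}$ satisfy $\bm{\mu}\in\mathbf{os}_{\underline{\ell}}^d$. Its socle and top are joined, inside the support, by a chain of $\lambda_d-x$ unit-step arrows $\bm{\mu}\to\bm{\mu}+e_i$ between vertices of $\mathbf{os}_{\underline{\ell}}^d$; since each interval $[\bm{\mu},\bm{\mu}+e_i]=\{\bm{\mu},\bm{\mu}+e_i\}$ contains no deleted vertex, \th\ref{lemma:posets:idempotent_quotient} guarantees that every such arrow survives in $A_{\underline{\ell}}^{(d)}$. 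Hence the longest chain realising the radical filtration is unaffected by the passage to the idempotent quotient, the fully faithful exact embedding $\mmod A_{\underline{\ell}}^{(d)}\hookrightarrow\mmod A_n^{(d)}$ preserves the radical series, and the two Loewy lengths coincide and equal $\ell_{\lambda_d}$.

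Finally, the `in particular' clause follows by specialising to the diagonal tuple $\bm{\lambda}=(i,\dots,i)$: here $\len(\bm{\lambda})=1\leq\ell_i$ ensures $(i,\dots,i)\in\mathbf{os}_{\underline{\ell}}^d$ while $\lambda_d=i$, so the projective cover $P_{(i,\dots,i)}$ of the simple at $(i,\dots,i)$ has Loewy length $\ell_i$. I expect the only genuine obstacle to be the verification in the second paragraph that passing to the idempotent quotient neither kills a composition factor nor severs the chain from socle to top; once that is settled, the result is a direct substitution into the formulas already established.
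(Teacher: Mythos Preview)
Your proof is correct and follows essentially the same approach as the paper: identify $P_{\bm{\lambda}}$ via \th\ref{prop:An:l:proj_inj}\eqref{prop:An:l:proj_inj:it:P} and then read off the Loewy length as $\lambda_d-x+1=\ell_{\lambda_d}$ using \th\ref{lemma:An:len}. The paper's own proof in fact only writes out the special case $\bm{\lambda}=(i,\dots,i)$ and omits any discussion of whether the Loewy length could change under the embedding $\mmod A_{\underline{\ell}}^{(d)}\hookrightarrow\mmod A_n^{(d)}$; your second paragraph is extra care rather than a different idea, and could be replaced by the one-line observation that a fully faithful exact embedding preserves the submodule lattice and hence the radical filtration.
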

\begin{proof}
  By \th\ref{prop:An:l:proj_inj}\eqref{prop:An:l:proj_inj:it:P}, the projective
  cover of the simple $A_{\underline{\ell}}^{(d)}$-module at the vertex
  $\bm{\lambda}$ is
  \[
    P_{\bm{\lambda}}=M(\lambda_d+1-\ell_{\lambda_d},\lambda_1,\dots,\lambda_d)
  \]
  whence
  \[
    \len(P_{\bm{\lambda}})=\lambda_d-(\lambda_d+1-\ell_{\lambda_d})+1=\ell_{\lambda_d}
  \]
  as required.
\end{proof}

We continue the proof of \th\ref{thm:An:l} by showing that the higher Nakayama
algebras are weakly $d$-representation-finite.

\begin{proposition}
  \th\label{prop:An:l:M:d-CT} The $A_{\underline{\ell}}^{(d)}$-module
  $M_{\underline{\ell}}^{(d)}$ is $d$-cluster-tilting.
\end{proposition}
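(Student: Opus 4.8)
The plan is to deduce \th\ref{prop:An:l:M:d-CT} from the idempotent reduction lemma \th\ref{lemma:d-CT:idempotent_reduction:algebras} by a downward induction over the poset $\KSA{n}$, starting from its unique maximal element. For the maximal Kupisch series $(1,2,\dots,n)$, which by \th\ref{lemma:KS:An:poset} is the top of $\KSA{n}$, one has $\ell_i=i+1$, so the defining inequality $\len(\bm{\lambda})=\lambda_{d+1}-\lambda_1+1\leq\lambda_{d+1}+1=\ell_{\lambda_{d+1}}$ holds automatically; thus $\mathbf{os}_{\underline{\ell}}^{d+1}=\mathbf{os}_n^{d+1}$, $A_{\underline{\ell}}^{(d)}=A_n^{(d)}$ and $M_{\underline{\ell}}^{(d)}=M(A_n^{(d)})$, and the assertion becomes exactly \th\ref{thm:An}. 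I would organise the induction on the statistic $\sum_i(i+1-\ell_i)\geq0$, which vanishes only at the top and strictly decreases when one passes to a cover.

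For the inductive step let $\underline{\ell}$ be non-maximal and fix a cover $\underline{\ell}<\underline{\ell}'$ in $\KSA{n}$; by \th\ref{lemma:KS:An:poset} we have $\underline{\ell}'=\underline{\ell}+e_k$ for a unique $k$, and by the inductive hypothesis $M_{\underline{\ell}'}^{(d)}$ is a $d$-cluster-tilting $A_{\underline{\ell}'}^{(d)}$-module. Since $\mathbf{os}_{\underline{\ell}}^{d}\subseteq\mathbf{os}_{\underline{\ell}'}^{d}$, the algebra $A_{\underline{\ell}}^{(d)}$ is the idempotent quotient of $A_{\underline{\ell}'}^{(d)}$ by the idempotent $e$ supported on the vertices $\mathbf{os}_{\underline{\ell}'}^{d}\setminus\mathbf{os}_{\underline{\ell}}^{d}$, and I would apply \th\ref{lemma:d-CT:idempotent_reduction:algebras} with $A=A_{\underline{\ell}'}^{(d)}$ and $\mathcal{M}=\add M_{\underline{\ell}'}^{(d)}$. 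Its output $\mathcal{M}_e=\mathcal{M}\cap\mmod A_{\underline{\ell}}^{(d)}$ is then identified with $\add M_{\underline{\ell}}^{(d)}$ through the equivalence $M(\bm{\lambda})\in\mmod A_{\underline{\ell}}^{(d)}\iff\bm{\lambda}\in\mathbf{os}_{\underline{\ell}}^{d+1}$: the forward implication for the summands of $M_{\underline{\ell}'}^{(d)}$ is \th\ref{prop:An:l:M}, and for the converse one tests the composition factor $S_{\bm{\mu}}$ of $M(\bm{\lambda})$ with $\bm{\mu}=(\lambda_1,\dots,\lambda_{d-1},\lambda_{d+1})$ furnished by \th\ref{thm:M_nd}, which satisfies $\mu_d=\lambda_{d+1}$ and $\mu_d-\mu_1+1=\len(\bm{\lambda})$, so that $\bm{\mu}\in\mathbf{os}_{\underline{\ell}}^{d}$ forces $\bm{\lambda}\in\mathbf{os}_{\underline{\ell}}^{d+1}$.

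It then remains to verify the two hypotheses of the lemma. The first, that every indecomposable projective and injective $A_{\underline{\ell}}^{(d)}$-module lies in $\mathcal{M}$, is immediate from \th\ref{prop:An:l:proj_inj}, which realises them as interval modules $M(\bm{\nu})$ with $\bm{\nu}\in\mathbf{os}_{\underline{\ell}}^{d+1}\subseteq\mathbf{os}_{\underline{\ell}'}^{d+1}$. The second hypothesis is the heart of the matter: the indecomposable summands of $M_{\underline{\ell}'}^{(d)}$ not lying in $\mmod A_{\underline{\ell}}^{(d)}$ are precisely the $M(\bm{\lambda})$ with $\bm{\lambda}\in\mathbf{os}_{\underline{\ell}'}^{d+1}\setminus\mathbf{os}_{\underline{\ell}}^{d+1}$, that is with $\len(\bm{\lambda})\leq\ell'_{\lambda_{d+1}}$ but $\len(\bm{\lambda})>\ell_{\lambda_{d+1}}$. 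As $\underline{\ell}'$ and $\underline{\ell}$ differ only in entry $k$, this forces $\lambda_{d+1}=k$ and $\len(\bm{\lambda})=\ell_k+1=\ell'_{\lambda_{d+1}}$. The equality $\len(\bm{\lambda})=\ell'_{\lambda_{d+1}}$ is exactly the relation $\lambda_1=\lambda_{d+1}+1-\ell'_{\lambda_{d+1}}$, so by \th\ref{prop:An:l:proj_inj}\eqref{prop:An:l:proj_inj:it:P} the module $M(\bm{\lambda})$ is the projective $A_{\underline{\ell}'}^{(d)}$-module with top $S_{(\lambda_2,\dots,\lambda_{d+1})}$. Dually, by \th\ref{prop:An:l:proj_inj}\eqref{prop:An:l:proj_inj:it:I} it is injective as soon as $\lambda_{d+1}$ is the largest $y'$ with $(\lambda_1,\dots,\lambda_d,y')\in\mathbf{os}_{\underline{\ell}'}^{d+1}$; and indeed $(\lambda_1,\dots,\lambda_d,\lambda_{d+1}+1)$ has length $\ell_k+2$, whereas its membership would require $\ell_k+2\leq\ell'_{k+1}=\ell_{k+1}$, contradicting the Kupisch inequality $\ell_{k+1}\leq\ell_k+1$ (the case $\lambda_{d+1}=n-1$ being vacuous). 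Hence $M(\bm{\lambda})$ is projective-injective, and \th\ref{lemma:d-CT:idempotent_reduction:algebras}\eqref{it:MX-d-CT-algebras} yields that $\mathcal{M}_e=\add M_{\underline{\ell}}^{(d)}$ is $d$-cluster-tilting, completing the induction.

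The step I expect to be the main obstacle is this verification of the second hypothesis, which is also the reason the induction must advance one Hasse edge at a time. Relative to $A_n^{(d)}$ the lost modules $M(\bm{\lambda})$ are in general neither projective nor injective; they become projective-injective only relative to the immediate predecessor $A_{\underline{\ell}'}^{(d)}$, where projectivity is detected by the length equality $\len(\bm{\lambda})=\ell'_{\lambda_{d+1}}$ rather than by $\lambda_1=0$, and this equality is produced precisely by the single-coordinate change $\underline{\ell}'=\underline{\ell}+e_k$. A reduction performed in one step directly from $A_n^{(d)}$ would violate this hypothesis.
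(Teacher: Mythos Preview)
Your proof is correct and follows essentially the same strategy as the paper's: downward induction along the Hasse quiver of $\KSA{n}$ from the maximal Kupisch series $(1,2,\dots,n)$, verifying at each step the hypotheses of \th\ref{lemma:d-CT:idempotent_reduction:algebras}. Your argument is in fact slightly more explicit than the paper's in two respects: you spell out the identification $\mathcal{M}_e=\add M_{\underline{\ell}}^{(d)}$ (via the composition factor at $(\lambda_1,\dots,\lambda_{d-1},\lambda_{d+1})$), and you organise the induction via the statistic $\sum_i(i+1-\ell_i)$. One small point to make watertight: in the injectivity check you only exclude $y'=\lambda_{d+1}+1$, whereas \th\ref{prop:An:l:proj_inj}\eqref{prop:An:l:proj_inj:it:I} asks for $\lambda_{d+1}$ to be the \emph{maximum}; this follows because the set $\{y'\geq\lambda_d:(\lambda_1,\dots,\lambda_d,y')\in\mathbf{os}_{\underline{\ell}'}^{d+1}\}$ is an interval (an immediate consequence of the Kupisch inequality $\ell'_{y'}\leq\ell'_{y'-1}+1$), so excluding $\lambda_{d+1}+1$ suffices. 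The paper handles this same point slightly differently, by assuming $y\geq\lambda_{d+1}+1$ and locating a bad composition factor of $M(\lambda_1,\dots,\lambda_d,y)$.
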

\begin{proof}
  We use the partial order on the set of Kupisch series of type $\mathbb{A}_n$.
  Suppose that the claim holds for some Kupisch series
  $\underline{\ell}'\geq\underline{\ell}$ which is a neighbour of
  $\underline{\ell}$ in the Hasse quiver of the poset of Kupisch series of type
  $\mathbb{A}_n$. We claim that the statement holds for $\underline{\ell}$ in
  this case. To prove this, it is enough to verify the hypotheses of
  \th\ref{lemma:d-CT:idempotent_reduction:algebras}.

  \emph{Step 1:
    $\mathbf{os}_{\underline{\ell}}^{d+1}\subset\mathbf{os}_{\underline{\ell}'}^{d+1}$.}
  Let $\bm{\lambda}\in\mathbf{os}_{\underline{\ell}}^{d+1}$. Then
  \[
    \lambda_{d+1}-\lambda_1+1\leq\ell_{\lambda_{d+1}}\leq\ell_{\lambda_{d+1}}'
  \]
  since $\underline{\ell}\leq\underline{\ell}'$. Note that this fact, together
  with \th\ref{prop:An:l:proj_inj}, implies that the projective and the
  injective $A_{\underline{\ell}}^{(d)}$-modules are direct summands of
  $M_{\underline{\ell}'}^{(d)}$.
  
  \emph{Step 2: If
    $\bm{\lambda}\in\mathbf{os}_{\underline{\ell}'}^{d+1}\setminus\mathbf{os}_{\underline{\ell}}^{d+1}$,
    then $M(\bm{\lambda})$ is projective-injective as an
    $A_{\underline{\ell}'}^{(d)}$-module.} By assumption,
  \[
    \ell_{\lambda_{d+1}}<\len({\bm{\lambda}})\leq\ell_{\lambda_{d+1}}'.
  \]
  Since $\underline{\ell}$ and $\underline{\ell}'$ are neighbours, we conclude
  that $\ell_{\lambda_{d+1}}+1=\ell_{\lambda_{d+1}}'$ and, since
  $\len(\bm{\lambda})$ is an integer, that
  $\len(\bm{\lambda})=\ell_{\lambda_{d+1}}'$. The last equality is equivalent to
  \[
    \lambda_1=\lambda_{d+1}+1-\ell_{\lambda_{d+1}}'=\lambda_{d+1}+1-(\ell_{\lambda_{d+1}}+1)=\lambda_{d+1}-\ell_{\lambda_{d+1}}.
  \]
  In view of the leftmost equality, the module $M(\bm{\lambda})$ is a
  projective $A_{\underline{\ell}'}^{(d)}$-module by
  \th\ref{prop:An:l:proj_inj}.

  Now we prove that $M(\bm{\lambda})$ is an injective
  $A_{\underline{\ell}'}^{(d)}$-module. By
  \th\ref{prop:An:l:proj_inj}\eqref{prop:An:l:proj_inj:it:I} it is enough to
  prove that
  \[
    \lambda_{d+1}=y:=\max\setP{\lambda_d\leq y'\leq
      n-1}{(\lambda_1,y')\in\mathbf{os}_{\underline{\ell}'}^{2}}.
  \]
  Suppose that $\lambda_{d+1}+1\leq y$. In view of \th\ref{thm:M_nd} and the
  inequalities
  \[
    \lambda_1=\lambda_1\leq\lambda_2=\lambda_2\leq\cdots\leq\lambda_{d-1}=\lambda_{d-1}\leq\lambda_d\leq\lambda_{d+1}+1\leq
    y
  \]
  the simple $A_n^{(d)}$-module at the vertex
  $(\lambda_1,\dots,\lambda_{d-1},\lambda_{d+1}+1)$ is a composition factor of
  the $A_{\underline{\ell}'}^{(d)}$-module $M(\lambda_1,\dots,\lambda_d,y)$. In
  particular
  $(\lambda_1,\dots,\lambda_{d-1},\lambda_{d+1}+1)\in\mathbf{os}_{\underline{\ell}'}^d$.
  We claim that this is impossible. To show this, we observe that the fact that
  $\underline{\ell}$ is a Kupisch series implies
  \[
    \ell_{\lambda_{d+1}+1}'=\ell_{\lambda_{d+1}+1}\leq\ell_{\lambda_{d+1}}+1=\ell_{\lambda_{d+1}}'=\len(\bm{\lambda}).
  \]
  But this inequality already implies
  $(\lambda_1,\dots,\lambda_{d-1},\lambda_{d+1}+1)\notin\mathbf{os}_{\underline{\ell}'}^d$.
  Indeed,
  \[
    \ell_{\lambda_{d+1}+1}'\leq\len(\bm{\lambda})<\len(\bm{\lambda})+1=\lambda_{d+1}+2-\lambda_1.
  \]
  This proves our claim. We conclude that $\lambda_{d+1}=y$, as required.

  Since we have verified the hypotheses in
  \th\ref{lemma:d-CT:idempotent_reduction:algebras}, we have proven that
  $M_{\underline{\ell}}^{(d)}$ is a $d$-cluster-tilting
  $A_{\underline{\ell}}^{(d)}$-module in this case. The general case now follows
  since the claim holds for the Kupisch series $(1,2,\dots,n)$ by
  \th\ref{thm:An,thm:M_nd} and the Hasse quiver of the (finite) poset of Kupisch series
  of type $\mathbb{A}_n$ is connected, since it has $(1,2,\dots,n)$ as a maximum element.
\end{proof}

The following result implies that the higher Nakayama algebras of type
$\mathbb{A}$ are moreover $d\mathbb{Z}$-representation-finite. It settles
\th\ref{thm:An:l}\eqref{thm:An:l:it:dZ-RF}.

\begin{proposition}
  \th\label{prop:An:l:M:dZ-CT} Suppose that $d\geq2$. Then, if $M(\bm{\lambda})$ is not projective as an $A_{\underline{\ell}}^{(d)}$-module, then $\Omega^d(M(\bm{\lambda}))\cong M(\lambda_{d+1}+1-\ell_{\lambda_{d+1}}, \lambda_1-1,\dots,\lambda_d-1)$.  
In particular, $\add
  M_{\underline{\ell}}^{(d)}$ is closed under taking $d$-th syzygies. In
  particular, $M_{\underline{\ell}}^{(d)}$ is a $d\mathbb{Z}$-cluster-tilting
  $A_{\underline{\ell}}^{(d)}$-module.
\end{proposition}
\begin{proof}
  Let $\bm{\lambda}\in\mathbf{os}_{\underline{\ell}}^{d+1}$ be such that
  $M(\bm{\lambda})$ is not projective as an $A_{\underline{\ell}}^{(d)}$-module;
  by \th\ref{prop:An:l:proj_inj}\eqref{prop:An:l:proj_inj:it:P} this means that
  $x<\lambda_1$ where $x=\lambda_{d+1}+1-\ell_{\lambda_{d+1}}$. Let
  \[
    0\to\Omega^d(M(\bm{\lambda}))\to P^{-d+1}\to\cdots\to P^0\to
    M(\bm{\lambda})\to0
  \]
  be part of a minimal projective resolution of $M(\bm{\lambda})$. A
  straightforward calculation analogous to that in the proof of
  \th\ref{prop:An:taud}\eqref{it:A_nd-projective_resolutions} shows that
  \[
    P^0\cong
    P_{\lambda_2,\dots,\lambda_{d+1}}=M(x,\lambda_2,\dots,\lambda_{d+1}),
  \]
  and that for each $i\in\set{1,\dots,d-1}$ there is an isomorphism
  \begin{align*}
    P^{-i}&\cong P_{\lambda_1-1,\dots,\lambda_i-1,\lambda_{i+2},\dots,\lambda_{d+1}}\\
          &=M(x,\lambda_1-1,\dots,\lambda_i-1,\lambda_{i+2},\dots,\lambda_{d+1}),
  \end{align*}
  and finally that
  \[
    \Omega^d(M(\bm{\lambda}))\cong M(x,\lambda_1-1,\dots,\lambda_d-1).
  \]
  Note that we use \th\ref{prop:An:l:proj_inj}\eqref{prop:An:l:proj_inj:it:P} to
  identify the indecomposable projective $A_{\underline{\ell}}^{(d)}$-modules
  appearing in the exact sequence above. We need to show that
  \[
    (x,\lambda_1-1,\dots,\lambda_d-1)\in\mathbf{os}_{\underline{\ell}}^{d+1}.
  \]
  Indeed, by \th\ref{lemma:KS}\eqref{lemma:KS:An:it:2} there is an inequality
  \[
    (\lambda_d-1)-\lambda_{d+1}\leq\ell_{\lambda_d-1}-\ell_{\lambda_{d+1}}.
  \]
  This inequality is equivalent to
  \[
    \len(x,\lambda_1-1,\dots,\lambda_d-1)=\lambda_d-x=\lambda_d-(\lambda_{d+1}+1-\ell_{\lambda_{d+1}})\leq\ell_{\lambda_d-1},
  \]
  which is what we needed to prove. The second claim in the proposition follows
  from De\-fi\-ni\-tion-Proposition 2.15 in \cite{IJ17} which states that a
  $d$-cluster-tilting subcategory of a module category is
  $d\mathbb{Z}$-cluster-tilting if and only if it is closed under taking $d$-th
  syzygies.
\end{proof}

Finally, \th\ref{thm:An:l}\eqref{thm:An:l:it:taud} follows from the following
more general result.

\begin{proposition}
  \th\label{prop:An:l:taud} Let
  $\bm{\lambda}\in\mathbf{os}_{\underline{\ell}}^{d+1}$ be such that
  $M(\bm{\lambda})$ is not projective as an $A_{\underline{\ell}}^{(d)}$-module.
  Then, there is an isomorphism $\tau_d(M(\bm{\lambda}))\cong
  M(\tau_d(\bm{\lambda}))$.
\end{proposition}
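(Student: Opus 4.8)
The plan is to compute $\tau_d(M(\bm{\lambda}))$ directly from its definition $\tau_d=D\Tr\Omega^{d-1}$, exactly as in the proof of \th\ref{prop:An:taud}\eqref{it:A_nd-translate} but now inside the idempotent quotient $A_{\underline{\ell}}^{(d)}$. Since $\tau_d$ only depends on the minimal projective presentation $P^{-d}\to P^{-d+1}\to\Omega^{d-1}(M(\bm{\lambda}))\to0$, we have $\tau_d(M(\bm{\lambda}))\cong\ker(\nu(P^{-d})\to\nu(P^{-d+1}))$, where $\nu=D\Hom_{A_{\underline{\ell}}^{(d)}}(-,A_{\underline{\ell}}^{(d)})$ is the Nakayama functor; crucially this uses only the first $d+1$ terms of the resolution, so it is insensitive to the fact that $M(\bm{\lambda})$ may have infinite projective dimension over the (non-hereditary) quotient. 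The term $P^{-d+1}\cong P_{(\lambda_1-1,\dots,\lambda_{d-1}-1,\lambda_{d+1})}$ and the $d$-th syzygy $\Omega^d(M(\bm{\lambda}))\cong M(x,\lambda_1-1,\dots,\lambda_d-1)$, with $x=\lambda_{d+1}+1-\ell_{\lambda_{d+1}}$, are already recorded in the proof of \th\ref{prop:An:l:M:dZ-CT}, and $P^{-d}$ is the projective cover of $\Omega^d(M(\bm{\lambda}))$, which \th\ref{prop:An:l:proj_inj}\eqref{prop:An:l:proj_inj:it:P} identifies as $P_{(\lambda_1-1,\dots,\lambda_d-1)}$ (its precise socle coordinate is irrelevant, only its index matters).

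First I would apply $\nu$, using that $\nu$ carries $P_{\bm{\mu}}$ to the indecomposable injective $I_{\bm{\mu}}$ described in \th\ref{prop:An:l:proj_inj}\eqref{prop:An:l:proj_inj:it:I}. This yields $\nu(P^{-d})\cong M(\lambda_1-1,\dots,\lambda_d-1,y_1)$ and $\nu(P^{-d+1})\cong M(\lambda_1-1,\dots,\lambda_{d-1}-1,\lambda_{d+1},y_2)$, where $y_1,y_2$ are the respective maximal admissible top coordinates. The key preliminary observation is that, for fixed first coordinate $\lambda_1-1$, the set of $y'$ with $(\lambda_1-1,y')\in\mathbf{os}_{\underline{\ell}}^{2}$ is a full interval $[\lambda_1-1,y]$; this is a direct consequence of \th\ref{lemma:KS}\eqref{lemma:KS:An:it:2}. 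Consequently $y_1=y_2=y$, so both injectives share the same top coordinate. The second key point is that $\lambda_{d+1}\le y$: this is where non-projectivity of $M(\bm{\lambda})$ enters, since by \th\ref{prop:An:l:proj_inj}\eqref{prop:An:l:proj_inj:it:P} non-projectivity means $x<\lambda_1$, equivalently $\lambda_{d+1}-\lambda_1+2\le\ell_{\lambda_{d+1}}$, which is exactly the statement that $\lambda_{d+1}$ is an admissible top coordinate.

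With both injectives sharing the top coordinate $y$, I would identify the kernel using that the canonical inclusion $\mmod A_{\underline{\ell}}^{(d)}\to\mmod A_n^{(d)}$ is fully faithful and exact, so Hom-spaces, images and kernels of maps between the interval modules $M(\bm{\nu})$ are governed by \th\ref{prop:d-cone:interval_modules:interlacing}. That proposition gives that the image of the nonzero map $\nu(P^{-d})\to\nu(P^{-d+1})$ is $M[(\lambda_1-1,\dots,\lambda_{d-1}-1,\lambda_{d+1}),(\lambda_2-1,\dots,\lambda_d-1,y)]$, whose composition factors are the $S_{\bm{\mu}}$ with last coordinate $\mu_d\in[\lambda_{d+1},y]$ and first $d-1$ coordinates ranging as in $\nu(P^{-d})$. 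On the other hand $M(\tau_d(\bm{\lambda}))=M[(\lambda_1-1,\dots,\lambda_d-1),(\lambda_2-1,\dots,\lambda_{d+1}-1)]$ embeds into $\nu(P^{-d})$, its composite to $\nu(P^{-d+1})$ vanishes because the interlacing required by \th\ref{prop:d-cone:interval_modules:interlacing} fails at the $d$-th coordinate, and its composition factors are the $S_{\bm{\mu}}$ with $\mu_d\in[\lambda_d-1,\lambda_{d+1}-1]$. Since $[\lambda_d-1,y]=[\lambda_d-1,\lambda_{d+1}-1]\sqcup[\lambda_{d+1},y]$ partitions the composition factors of $\nu(P^{-d})$ into those of $M(\tau_d(\bm{\lambda}))$ and those of the image, the containment $M(\tau_d(\bm{\lambda}))\subseteq\ker$ together with a dimension count forces equality. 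Finally I would check that $M(\tau_d(\bm{\lambda}))$ is genuinely an $A_{\underline{\ell}}^{(d)}$-module, i.e. $\tau_d(\bm{\lambda})\in\mathbf{os}_{\underline{\ell}}^{d+1}$, which follows from $\len(\tau_d(\bm{\lambda}))=\len(\bm{\lambda})$ and the inequality $\ell_{\lambda_{d+1}}-1\le\ell_{\lambda_{d+1}-1}$ coming from \th\ref{lemma:KS}\eqref{lemma:KS:An:it:2}.

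The main obstacle, and the only genuine departure from the hereditary computation of \th\ref{prop:An:taud}, is the truncation of the injective modules at $y$ rather than at $n-1$: one must verify that both relevant injectives are cut off at the \emph{same} value $y$ and, above all, that $\lambda_{d+1}\le y$, so that the interval $[\lambda_d-1,y]$ is long enough to split off $M(\tau_d(\bm{\lambda}))$ cleanly from the image. Both facts rest on the Kupisch inequalities and on translating the hypothesis that $M(\bm{\lambda})$ is non-projective into the single numerical condition $\lambda_{d+1}-\lambda_1+2\le\ell_{\lambda_{d+1}}$.
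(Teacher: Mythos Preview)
Your argument is correct, but it follows a different route from the paper's. The paper does not recompute $\tau_d$ from the Nakayama functor inside $A_{\underline{\ell}}^{(d)}$; instead it transports the entire $d$-almost split sequence
\[
  0\to M(\tau_d(\bm{\lambda}))\to M^1\to\cdots\to M^d\to M(\bm{\lambda})\to0
\]
already known in $\add M_n^{(d)}$ (via \th\ref{prop:An:taud}) into $\add M_{\underline{\ell}}^{(d)}$. Non-projectivity over $A_{\underline{\ell}}^{(d)}$ is translated into $\len(\bm{\lambda})\le\ell_{\lambda_{d+1}}-1$, hence $\lambda_1\neq0$, so $M(\bm{\lambda})$ is non-projective over $A_n^{(d)}$ as well and the sequence exists. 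Using the explicit description of the middle terms from \cite[Theorem~3.8(4)]{OT12}, one checks that every $\bm{\mu}\in[\tau_d(\bm{\lambda}),\bm{\lambda}]$ lies in $\mathbf{os}_{\underline{\ell}}^{d+1}$ by a short case analysis on $(\mu_1,\mu_{d+1})$. Since $d$-almost split sequences determine $\tau_d$ (\th\ref{thm:d-CT:existence_d-AR_seq}), this finishes the proof.

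Your approach trades this structural transfer for a direct kernel computation; it is more hands-on and avoids invoking the explicit middle terms of the $d$-almost split sequence from \cite{OT12}, at the cost of the extra work of identifying the injectives $\nu(P^{-d})$ and $\nu(P^{-d+1})$ and verifying $y_1=y_2$. The paper's approach has the bonus of showing that the whole $d$-almost split sequence in $\add M_{\underline{\ell}}^{(d)}$ coincides with the one in $\add M_n^{(d)}$, not just its leftmost term.
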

\begin{proof}
  Since by assumption $M(\bm{\lambda})$ is not projective as an
  $A_{\underline{\ell}}^{(d)}$-module,
  \th\ref{prop:An:l:proj_inj}\eqref{prop:An:l:proj_inj:it:P} shows that
  \[
    \len(\bm{\lambda})=\lambda_{d+1}-\lambda_1+1\leq\ell_{\lambda_{d+1}}-1.
  \]
  In particular $\lambda_1\neq0$ (see \th\ref{lemma:KS}). By
  \th\ref{prop:An:proj_inj}\eqref{it:A_nd-projectives} $M(\bm{\lambda})$ is not
  projective as an $A_n^{(d)}$-module either. Therefore, since the claim holds
  for $\underline{\ell}=(1,2,\dots,n)$, by
  \th\ref{prop:An:taud}\eqref{it:A_nd-translate} there is a $d$-almost split
  sequence
  \[
    0\to M(\tau_d(\bm{\lambda}))\to M^1\to\cdots\to M^d\to M(\bm{\lambda})\to0
  \]
  in $\add M_n^{(d)}$. In view of \th\ref{thm:d-CT:existence_d-AR_seq} it is enough
  to show that this sequence is contained in $\add M_{\underline{\ell}}^{(d)}$.
  We recall from Proposition 3.19 in \cite{OT12} that
  \[
    M(\bm{\lambda})\oplus M^1\oplus\cdots\oplus M^d\oplus
    M(\tau_d(\bm{\lambda}))=\bigoplus\setP{M(\bm{\mu})}{\bm{\mu}\in[\tau_d(\bm{\lambda}),\bm{\lambda}]\subset\mathbf{os}_n^{d+1}}.
  \]
  Hence we need to show that
  $[\tau_d(\bm{\lambda}),\bm{\lambda}]\subset\mathbf{os}_{\underline{\ell}}^{d+1}$.
  Note that $\bm{\mu}\in[\tau_d(\bm{\lambda}),\bm{\lambda}]$ must satisfy
  \[
    (\mu_1,\mu_{d+1})\in\set{(\lambda_1-1,\lambda_{d+1}-1),\
      (\lambda_1,\lambda_{d+1}-1),\ (\lambda_1-1,\lambda_{d+1}),\
      (\lambda_1,\lambda_{d+1})}.
  \]
  Let $(\mu_1,\mu_{d+1})=(\lambda_1-1,\lambda_{d+1}-1)$. Then
  \[
    \len(\tau_d(\bm{\lambda}))=\lambda_{d+1}-1-(\lambda_1-1)+1=\len(\bm{\lambda})\leq\ell_{\lambda_{d+1}}-1\leq\ell_{\lambda_{d+1}-1}
  \]
  where the rightmost inequality holds since $\underline{\ell}$ is a Kupisch
  series. This shows that $\mu\in\mathbf{os}_{\underline{\ell}}^{d+1}$ in this
  case. It is straightforward to verify that
  $\bm{\mu}\in\mathbf{os}_{\underline{\ell}}^{d+1}$ in the remaining cases.
\end{proof}

We conclude this section with an immediate consequence of
\th\ref{prop:An:l:taud}, which should be compared with Corollary 2.9 in
\cite{ARS97}.

\begin{corollary}
  \th\label{coro:Al:Loewy_length} Let $M$ be an indecomposable direct summand of
  $M_{\underline{\ell}}^{(d)}$ of Loewy length $\ell$. Then, every non-zero
  $A_{\underline{\ell}}^{(d)}$-module in the $\tau_d$-orbit of $M$ also has
  Loewy length $\ell$.
\end{corollary}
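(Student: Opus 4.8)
The plan is to reduce everything to the purely combinatorial observation that the action of $\tau_d$ on ordered sequences preserves Loewy length, and then to transport this to modules via \th\ref{prop:An:l:taud}.

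First I would record that every indecomposable direct summand of $M_{\underline{\ell}}^{(d)}$ is of the form $M(\bm{\lambda})$ for a unique $\bm{\lambda}\in\mathbf{os}_{\underline{\ell}}^{d+1}$, so that I may write $M=M(\bm{\lambda})$ with $\ell=\len(\bm{\lambda})=\lambda_{d+1}-\lambda_1+1$ by \th\ref{def:An:len}. The key arithmetical fact is that the quantity $\lambda_{d+1}-\lambda_1+1$ is invariant under the translations $\bm{\lambda}\mapsto\tau_d(\bm{\lambda})=\bm{\lambda}-(1,\dots,1)$ and $\bm{\lambda}\mapsto\tau_d^-(\bm{\lambda})=\bm{\lambda}+(1,\dots,1)$; that is, $\len(\tau_d(\bm{\lambda}))=\len(\bm{\lambda})=\len(\tau_d^-(\bm{\lambda}))$.

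Next I would walk through the $\tau_d$-orbit one step at a time. For the forward direction, recall that $\tau_d(M(\bm{\lambda}))$ is non-zero precisely when $M(\bm{\lambda})$ is non-projective, in which case \th\ref{prop:An:l:taud} gives $\tau_d(M(\bm{\lambda}))\cong M(\tau_d(\bm{\lambda}))$; combined with the invariance above, this has Loewy length $\ell$. Iterating, every non-zero module $\tau_d^k(M)$ with $k\geq0$ equals $M(\bm{\lambda}-k(1,\dots,1))$ and has Loewy length $\ell$. For the backward direction I would use that $\tau_d$ and $\tau_d^-$ are mutually inverse equivalences on $\add M_{\underline{\ell}}^{(d)}$ by \th\ref{prop:An:l:M:d-CT} and \th\ref{thm:d-CT:existence_d-AR_seq}: if $N=\tau_d^-(M)$ is non-zero, then $N=M(\bm{\mu})$ for some $\bm{\mu}\in\mathbf{os}_{\underline{\ell}}^{d+1}$ and $M=\tau_d(N)$, so $M(\bm{\mu})$ is non-projective and \th\ref{prop:An:l:taud} forces $\bm{\lambda}=\tau_d(\bm{\mu})$, whence $\bm{\mu}=\tau_d^-(\bm{\lambda})$ and $\len(\bm{\mu})=\ell$. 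An induction in both directions then shows that every non-zero module in the $\tau_d$-orbit of $M$ has Loewy length $\ell$.

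Since the main input \th\ref{prop:An:l:taud} is already in hand, I do not expect a serious obstacle. The only point requiring a little care is the backward ($\tau_d^-$) direction: one must first know that $\tau_d^-(M)$, when non-zero, again lies in $\add M_{\underline{\ell}}^{(d)}$ and is therefore of the form $M(\bm{\mu})$ before \th\ref{prop:An:l:taud} can be applied, and this is exactly what the equivalence in \th\ref{thm:d-CT:existence_d-AR_seq} provides.
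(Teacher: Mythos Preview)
Your argument is correct and follows essentially the same route as the paper: both rely on \th\ref{prop:An:l:taud} together with the combinatorial identity $\len(\tau_d(\bm{\lambda}))=\len(\bm{\lambda})$ coming from \th\ref{lemma:An:len}. The paper's proof is terser, recording only the single-step computation $\len(\tau_d(M(\bm{\lambda})))=\len(M(\tau_d(\bm{\lambda})))=\lambda_{d+1}-1-(\lambda_1-1)+1=\len(M(\bm{\lambda}))$ and leaving the iteration and the $\tau_d^-$-direction implicit, whereas you spell these out explicitly.
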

\begin{proof}
  Let $\bm{\lambda}\in\mathbf{os}_{\underline{\ell}}^{(d)}$ be such that
  $M(\bm{\lambda})$ is non-projective as an $A_{\underline{\ell}}^{(d)}$-module.
  By \th\ref{lemma:An:len,prop:An:l:taud} we have
  \[
    \len(\tau_d(M(\bm{\lambda})))=\len(M(\tau_d(\bm{\lambda})))=\lambda_{d+1}-1-(\lambda_1-1)+1=\len(M(\bm{\lambda})),
  \]
  as required.
\end{proof}


\section{The higher Nakayama algebras of type $\mathbb{A}_\infty^\infty$}
\label{sec:A-infty}

In this section we introduce a family of categories which are to be thought of
as higher dimensional analogues of the mesh category of type
$\mathbb{ZA}_\infty$. We also introduce infinite analogues of the Nakayama
algebras of type $\mathbb{A}$. The latter categories are essential for our
construction of the higher Nakayama algebras of type $\widetilde{\mathbb{A}}$
and the higher dimensional analogues of the tubes in Section \ref{sec:A-tilde}.

\subsection{The mesh category of type $\mathbb{ZA}_\infty^{(d-1)}$}

We begin this section by introducing the higher dimensional analogues of the
mesh categories of type $\mathbb{ZA}_\infty$.

\begin{setting}
  We fix a positive integer $d$ until further notice.
\end{setting}

Consider the poset of integer numbers\nomenclature[30]{$A_\infty$}{the poset of
  intger numbers $\set{\cdots<-1<0<1<\cdots}$}
\[
  A_\infty=\set{\cdots<-1<0<1<\cdots}
\]
and denote the set of ordered sequences of length $d$ in $A_\infty$
by\nomenclature[31]{$\mathbf{os}^d$}{the poset of ordered sequences of length
  $d$ in the poset $A_\infty$}
\[
  \mathbf{os}^d:=\mathbf{os}^d(A_\infty).
\]

\begin{definition}
  \th\label{def:mesh} The \emph{mesh category of type
    $\mathbb{ZA}_\infty^{(d-1)}$} is the $d$-cone
  $A_\infty^{(d)}$.\nomenclature[32]{$A_\infty^{(d)}$}{the mesh category of type
    $\mathbb{ZA}_\infty^{(d-1)}$} We also define the
  subcategory\nomenclature[33]{$\mathcal{M}_\infty^{(d)}$}{the distinguished
    $d\mathbb{Z}$-cluster-tilting subcategory of $\mmod A_\infty^{(d)}$}
  \begin{equation*}
    \mathcal{M}_\infty^{(d)}:=\add\setP{M(\bm{\lambda})\in\mmod A_\infty^{(d)}}{\bm{\lambda}\in\mathbf{os}^{d+1}}\subseteq\mmod A_\infty^{(d)}.
  \end{equation*}
\end{definition}

A presentation of $A_\infty^{(d)}$ by generators and relations can be given as
follows. By definition, the Gabriel quiver $Q$ of $A_\infty^{(d)}$ has as vertices
the set $\mathbf{os}^d$, that is the set of tuples
$\bm{\lambda}=(\lambda_1,\dots,\lambda_d)$ of integers satisfying
\[
  \lambda_1\leq\cdots\leq\lambda_d.
\]
For each $\bm{\lambda}\in\mathbf{os}^d$ and each $i\in\set{1,\dots,d}$ such that
$\bm{\lambda}+e_i$ is again an ordered sequence there is an arrow in $Q$ of the
form
\[
  a_i=a_i(\bm{\lambda})\colon\bm{\lambda}\to\bm{\lambda}+e_i.
\]
With the above notation, $A_\infty^{(d)}$ is identified with the (non-unital)
algebra $\mathbbm{k}Q/I$ where $I$ is the two-sided ideal of $\mathbbm{k}Q$
generated by the relations
\[
  a_j(\bm{\lambda}+e_i)a_i(\bm{\lambda})-a_i(\bm{\lambda}+e_j)a_j(\bm{\lambda})
\]
for each $\bm{\lambda}\in\mathbf{os}^d$ and each $i,j\in\set{1,\dots,d}$ such
that $i\neq j$. By convention, $a_i(\bm{\lambda})=0$ whenever $\bm{\lambda}$ or
$\bm{\lambda}+e_i$ are not vertices of $Q$, hence some of the above relations
are in fact zero relations. For example, $A_\infty^{(1)}$ is just the path
category of the infinite quiver $\cdots\to-1\to0\to 1\to\cdots$. The Gabriel
quivers of $A_\infty^{(2)}$ and $A_\infty^{(3)}$ are shown in Figure
\ref{fig:A-infty} (note that the Gabriel quiver of $A_{\infty}^{(3)}$
is canonically embedded in 3-dimensional space). In particular, note that
$A_\infty^{(2)}$ is isomorphic to the mesh category of type
$\mathbb{ZA}_\infty$.
\begin{figure}
  \begin{center}
  \includegraphics[width=0.65\textwidth]{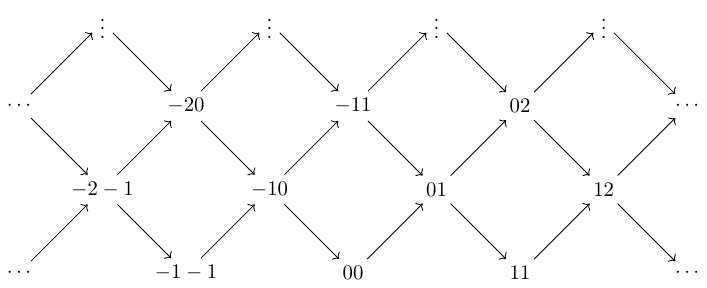}\vskip2em
  \includegraphics[width=\textwidth]{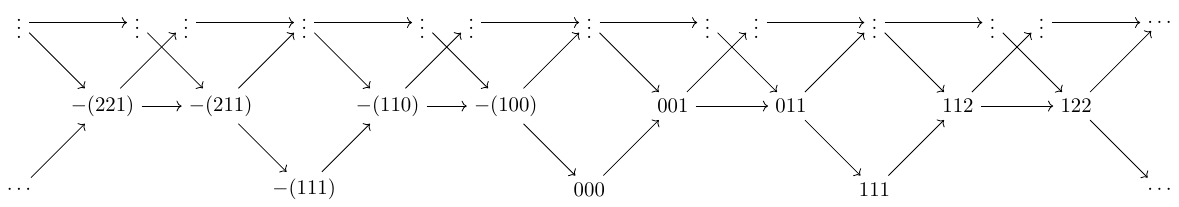}
  \end{center}
  \caption{The Gabriel quivers of $A_\infty^{(2)}$ (top) and $A_\infty^{(3)}$
    (bottom).}
  \label{fig:A-infty}
\end{figure}

\begin{remark}
  \th\label{rmk:A-infty:lZ:mesh} The choice of terminology in \th\ref{def:mesh}
  can be justified as follows. Let
  \[
    \mathbf{os}_{[0,\infty)}^{d}:=\setP{\bm{\lambda}\in\mathbf{os}^{d+1}}{0\leq \lambda_1}.
  \]
  Moreover, for $s\in\mathbb{Z}$ define
  \[
    \mathbf{os}^{d+1}(s):=\setP{\bm{\lambda}\in\mathbf{os}^{d+1}}{\lambda_1=s}.
  \]
  It is clear that there is a bijection
  \[
    \textstyle\dot\bigcup_{s\in\mathbb{Z}}\mathbf{os}^{d+1}(s)=\mathbf{os}^{d+1}\to\mathbf{os}_{[0,\infty)}^d\times\mathbb{Z}
  \]
  given by $
  \bm{\lambda}\mapsto((\lambda_2-\lambda_1,\dots,\lambda_{d+1}-\lambda_1),\lambda_1)$.
  Note that the subcategory of $A_\infty^{(d+1)}$ spanned by
  $\mathbf{os}^{d+1}(s)$ can be thought of as a higher dimensional analogue of
  the path category of the quiver
  \[
    (s,s)\to (s,s+1)\to(s,s+2)\to\cdots,
  \]
  which corresponds to the case $d=1$. Also, note that for each
  $\bm{\lambda}\in\mathbf{os}^{d+1}$ there is a unique arrow
  \[
    \bm{\lambda}\to\bm{\lambda}+e_1
  \]
  precisely when $\lambda_1+1\leq\lambda_2$. Therefore the Gabriel quiver of
  $A_\infty^{(d+1)}$ can equivalently be described as the quiver with vertex set
  $\mathbf{os}_{[0,\infty)}^d\times\mathbb{Z}$ and arrows
  \[
    b_i=b_i(\bm{\lambda},s)\colon(\bm{\lambda},s)\to(\bm{\lambda}+e_i,s)
  \]
  for each $i\in\set{1,\dots,d}$, whenever $\bm{\lambda}+e_i$ belongs to
  $\mathbf{os}_{[0,\infty)}^d$, (these arrows correspond to arrows
  \[
    a_{i+1}=a_{i+1}(\bm{\lambda})\colon\bm{\lambda}\to\bm{\lambda}+e_{i+1}
  \]
  for $i=1,\dots,d$ in our original description of $A_\infty^{(d+1)}$) as well
  as `connecting arrows'
  \[
    b_0=b_0(\bm{\lambda},s)\colon(\bm{\lambda},s)\to(\tau_d(\bm{\lambda}),s+1)
  \]
  whenever $\lambda_1>0$, where $\tau_d(\bm{\lambda})=\bm{\lambda}-(1,\dots,1)$
  as in the case of $\mathbf{os}^d_n$ (these arrows correspond to arrows
  \[
    a_1=a_1(\bm{\lambda})\colon\bm{\lambda}\to\bm{\lambda}+e_1
  \]
  in our original description of $A_\infty^{(d+1)}$). Under this bijection, the
  defining relations of $A_\infty^{(d+1)}$ are naturally divided into two
  types:
  \begin{itemize}
  \item[(I)] For each $(\bm{\lambda},s)\in\mathbf{os}_{[0,\infty)}^d\times\mathbb{Z}$
    and each $1\leq i<j\leq d$ there is a relation
    \[
      b_j(\bm{\lambda}+e_i,s)b_i(\bm{\lambda},s)-b_i(\bm{\lambda}+e_j,s)b_j(\bm{\lambda},s).
    \]
  \item[(II)] For each $(\bm{\lambda},s)\in\mathbf{os}_{[0,\infty)}^d\times\mathbb{Z}$
    and each $1\leq j\leq d$ there is a relation
    \[
      b_j(\tau_d(\bm{\lambda}),s+1)b_0(\bm{\lambda},s)-b_0(\bm{\lambda}+e_j,s)b_j(\bm{\lambda},s).
    \]
  \end{itemize}
  As usual, some of the above commutativity relations are in fact zero
  relations, depending on whether the arrows involved are present in the quiver
  or not. Note that the Gabriel quiver of $A_\infty^{(2)}$ is just the
  repetitive quiver $\mathbb{ZA}_\infty$ and the above relations reduce to the
  mesh relations (there are no relations of type (I)).
\end{remark}

\begin{remark}
  \th\label{rmk:A-infty:locally_bounded} The category $A_\infty^{(d)}$ is
  \emph{not} locally bounded. Indeed, given $\bm{\lambda}\in\mathbf{os}^d$, for
  each $i>0$ there are obvious interlacings
  $\bm{\lambda}\rightsquigarrow(\bm{\lambda}+ie_d)$ and
  $(\bm{\lambda}-ie_1)\rightsquigarrow\bm{\lambda}$. In view of
  \th\ref{prop:poset:d-cone:interlacing}, these interlacings correspond to
  non-zero morphisms $\bm{\lambda}\to(\bm{\lambda}+ie_{d})$ and
  $(\bm{\lambda}-ie_{1})\to\bm{\lambda}$ in $A_\infty^{(d)}$.
\end{remark}

Although the category $A_\infty^{(d)}$ is not locally bounded, it can be
`approximated' by locally bounded categories. Let us make this statement
precise.

\begin{notation}
  \th\label{not:A-infty:ab} Let $a\leq b$ be integers and define
  \[
    \mathbf{os}_{[a,b]}^{d}:=\setP{\bm{\lambda}\in\mathbf{os}^{d}}{a\leq\lambda_1\leq\cdots\leq\lambda_d\leq
      b}.
  \]
  By construction, the idempotent quotient
  \[
    A_{[a,b]}^{(d)}:=A_\infty^{(d)}/[\mathbf{os}^d\setminus\mathbf{os}_{[a,b]}^d]
  \]
  is isomorphic to $A_n^{(d)}$ where $n=b-a+1$. The unique $d$-cluster-tilting
  subcategory of $\mmod A_{[a,b]}^{(d)}$ is precisely
  \[
    \mathcal{M}_{[a,b]}^{(d)}:=\add\setP{M(\bm{\lambda})\in\mmod
      A_{[a,b]}^{(d)}}{\bm{\lambda}\in\mathbf{os}_{[a,b]}^{d+1}}
  \]
  (compare with \th\ref{thm:M_nd}). As explained in Subsection
  \ref{subsec:locally_bounded_categories}, the canonical functor
  $A_\infty^{(d)}\to A_{[a,b]}^{(d)}$ induces a fully faithful exact functor
  \[
    \mmod A_{[a,b]}^{(d)}\hookrightarrow\mmod A_\infty^{(d)}
  \]
  which clearly restricts to a fully faithful functor
  \[
    \mathcal{M}_{[a,b]}^{(d)}\hookrightarrow\mathcal{M}_\infty^{(d)}.
  \]
  Finally, since we deal with finite dimensional modules and the difference
  $b-a$ can be arbitrarily large, we conclude that
  \[
    \textstyle\mmod A_\infty^{(d)}=\bigcup_{a\leq b}\mmod
    A_{[a,b]}^{(d)}\qquad\text{and}\qquad\mathcal{M}_\infty^{(d)}=\bigcup_{a\leq
      b}\mathcal{M}_{[a,b]}^{(d)}.
  \]
\end{notation}

We begin our study of the category $A_\infty^{(d)}$ with a simple observation.

\begin{proposition}
  \th\label{prop:A-infty:NP} The abelian category $\mmod A_\infty^{(d)}$
  contains no non-zero projective objects and no non-zero injective objects.
\end{proposition}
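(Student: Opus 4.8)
The plan is to show that no nonzero finite-dimensional $A_\infty^{(d)}$-module can be projective by exhibiting, for a fixed vertex $\bm{\mu}\in\mathbf{os}^d$, an unbounded family of indecomposable modules all having simple top $S_{\bm{\mu}}$; projectivity would then force any module covering $S_{\bm{\mu}}$ to surject onto every member of the family, contradicting finite-dimensionality. The injective statement is handled by the dual construction.

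First I would fix $\bm{\mu}=(\mu_1,\dots,\mu_d)\in\mathbf{os}^d$ and, for each $k\geq 0$, set $\bm{\lambda}^{(k)}:=(\mu_1-k,\mu_1,\dots,\mu_d)\in\mathbf{os}^{d+1}$. Each interval module $M(\bm{\lambda}^{(k)})$ is supported on the finite interval $[(\mu_1-k,\mu_1,\dots,\mu_{d-1}),(\mu_1,\dots,\mu_d)]$, hence lies in $\mmod A_{[\mu_1-k,\mu_d]}^{(d)}\subseteq\mmod A_\infty^{(d)}$ (see \th\ref{not:A-infty:ab}). By the description of its composition factors (as in the proof of \th\ref{lemma:An:len}) it has simple top $S_{\bm{\mu}}$ and Loewy length $\mu_d-\mu_1+1+k$, so $\dim_{\mathbbm{k}}M(\bm{\lambda}^{(k)})\to\infty$ as $k\to\infty$.

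Now suppose $P\in\mmod A_\infty^{(d)}$ is projective and nonzero. Being finite-dimensional it has nonzero top, so I may choose a simple summand $S_{\bm{\mu}}$ of $P/\operatorname{rad}P$ together with the resulting epimorphism $p\colon P\to S_{\bm{\mu}}$. For each $k$ let $q_k\colon M(\bm{\lambda}^{(k)})\to S_{\bm{\mu}}$ be the canonical quotient onto the simple top. Projectivity of $P$ yields $f_k\colon P\to M(\bm{\lambda}^{(k)})$ with $q_k\circ f_k=p$. Since $q_k$ is the projection onto the top and $q_k\circ f_k=p$ is surjective, the image $\operatorname{im}f_k$ maps onto $\operatorname{top}M(\bm{\lambda}^{(k)})$; as $M(\bm{\lambda}^{(k)})$ is finite-dimensional, Nakayama's lemma forces $f_k$ to be surjective. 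Hence $\dim_{\mathbbm{k}}P\geq\dim_{\mathbbm{k}}M(\bm{\lambda}^{(k)})$ for all $k$, which is absurd. Therefore $\mmod A_\infty^{(d)}$ has no nonzero projective objects.

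For injectives I would argue dually: given a nonzero injective $I$, pick a simple summand $S_{\bm{\mu}}$ of its (nonzero) socle and the corresponding monomorphism $\iota\colon S_{\bm{\mu}}\to I$, and use the modules $M(\bm{\rho}^{(k)})$ with $\bm{\rho}^{(k)}:=(\mu_1,\dots,\mu_d,\mu_d+k)$, each of which has simple socle $S_{\bm{\mu}}$ and Loewy length $\mu_d+k-\mu_1+1$. Injectivity extends $\iota$ along the socle inclusion $j_k\colon S_{\bm{\mu}}\to M(\bm{\rho}^{(k)})$ to a map $g_k\colon M(\bm{\rho}^{(k)})\to I$ with $g_k\circ j_k=\iota$; since $g_k$ is injective on the (essential) socle of $M(\bm{\rho}^{(k)})$ it is a monomorphism, forcing $\dim_{\mathbbm{k}}I\to\infty$, again absurd. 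The step requiring the most care is exactly the passage from the lifted map $f_k$ (resp.\ the extension $g_k$) to an honest surjection (resp.\ monomorphism); this is where the simplicity of the top of $M(\bm{\lambda}^{(k)})$ (resp.\ of the socle of $M(\bm{\rho}^{(k)})$) is used, via Nakayama's lemma (resp.\ essentiality of the socle). Alternatively, the injective case follows formally from the projective one upon noting the self-duality $A_\infty^{\op}\cong A_\infty$ of posets, which induces $(A_\infty^{(d)})^{\op}\cong A_\infty^{(d)}$ and hence exchanges projectives and injectives under $D$.
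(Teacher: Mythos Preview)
Your argument is correct. The key steps---constructing the family $M(\bm\lambda^{(k)})$ with simple top $S_{\bm\mu}$ and unbounded Loewy length, lifting through the projective $P$, and then invoking Nakayama's lemma (legitimately, since each $M(\bm\lambda^{(k)})$ is a finite-dimensional module over the finite-dimensional algebra $A_{[\mu_1-k,\mu_d]}^{(d)}$)---all go through. The dual argument for injectives is likewise fine, and your remark on the self-duality of $A_\infty$ is a clean alternative.

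The paper takes a shorter, more abstract route: given a non-zero $M\in\mmod A_\infty^{(d)}$, it chooses $a\leq b$ large enough so that $M$ is an $A_{[a,b]}^{(d)}$-module which is \emph{not} projective there (always possible by enlarging the interval, since the indecomposable projective at a vertex $\bm\lambda$ grows strictly when $a$ decreases), and then the non-split projective cover $P\twoheadrightarrow M$ in $\mmod A_{[a,b]}^{(d)}$ remains a non-split epimorphism in $\mmod A_\infty^{(d)}$, so $M$ is not projective. Your approach is more hands-on: rather than exhibiting a single non-split epimorphism onto $M$, you show that a putative projective would have to surject onto an unbounded family. The paper's argument is quicker and avoids Nakayama, while yours makes the obstruction completely explicit and does not require the reader to check that projectivity over $A_{[a,b]}^{(d)}$ can always be destroyed by enlarging the interval.
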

\begin{proof}
  Let $M\in\mmod A_\infty^{(d)}$ be non-zero. Then, there exist integers $a\leq
  b$ such that $M$ is non-projective as an $A_{[a,b]}^{(d)}$-module. Therefore
  there exists a projective $A_{[a,b]}^{(d)}$-module $P$ and a non-split
  epimorphism $f\colon P\to M$ in $\mmod A_{[a,b]}^{(d)}$. Since the canonical
  inclusion $\mmod A_{[a,b]}^{(d)}\hookrightarrow\mmod A_\infty^{(d)}$ is exact,
  $f$ is also an epimorphism in $\mmod A_\infty^{(d)}$ whence $M$ is not
  projective. A dual argument shows that $\mmod A_\infty^{(d)}$ contains no
  non-zero injective objects.
\end{proof}

Let $\mathcal{A}$ and $\mathcal{B}$ be abelian categories. Following
\cite{Psa14}, a fully faithful functor
$F\colon\mathcal{A}\hookrightarrow\mathcal{B}$ is called a \emph{homological
  embedding} if for all $X,Y\in\mathcal{A}$ and for all $i\geq1$ the induced
homomorphism
\[
  F\colon\Ext_{\mathcal{A}}^i(X,Y)\to\Ext_{\mathcal{B}}^i(FX,FY)
\]
is an isomorphism, see \cite{Psa14} and \cite{GL91} for further information on
this notion.

\begin{proposition}
  \th\label{prop:A-infty:Ext} The following statements hold.
  \begin{enumerate}
  \item\label{prop:A-infty:Ext:it:ab-cd} Let $\alpha\leq a\leq b\leq \beta$ be integers.
    Then, the canonical inclusion $\mmod A_{[a,b]}^{(d)}\hookrightarrow\mmod
    A_{[\alpha,\beta]}^{(d)}$ is a homological embedding. In particular, the canonical
    exact functor
    \[
      \Db(\mmod A_{[a,b]}^{(d)})\to\Db(\mmod A_{[\alpha,\beta]}^{(d)})
    \]
    is fully faithful.
  \item\label{prop:A-infty:Ext:it:ab-infty} Let $a\leq b$ be integers. Then, the
    canonical inclusion $\mmod A_{[a,b]}^{(d)}\hookrightarrow\mmod
    A_\infty^{(d)}$ is a homological embedding. In particular, the canonical
    exact functor
    \[
      \Db(\mmod A_{[a,b]}^{(d)})\to\Db(\mmod A_\infty^{(d)})
    \]
    is fully faithful.
  \end{enumerate}
\end{proposition}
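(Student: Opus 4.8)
The plan is to prove part \eqref{prop:A-infty:Ext:it:ab-cd} by a direct computation with projective resolutions and then deduce part \eqref{prop:A-infty:Ext:it:ab-infty} by a colimit argument, since by \th\ref{prop:A-infty:NP} the category $\mmod A_\infty^{(d)}$ has no projectives and all $\Ext$-groups there must be understood in the Yoneda sense. For \eqref{prop:A-infty:Ext:it:ab-cd}, set $B:=A_{[c,d]}^{(d)}$ and $A:=A_{[a,b]}^{(d)}$. Since $[a,b]\subseteq[c,d]$, the algebra $A$ is the idempotent quotient of $B$ killing the vertices in $\mathbf{os}_{[c,d]}^d\setminus\mathbf{os}_{[a,b]}^d$, and the inclusion $\mmod A\hookrightarrow\mmod B$ is the fully faithful exact functor $\pi^*$ of Subsection~\ref{subsec:locally_bounded_categories}, identifying $\mmod A$ with the $B$-modules supported on $\mathbf{os}_{[a,b]}^d$. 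Both $A$ and $B$ are higher Auslander algebras of type $\mathbb{A}$, hence $d$-hereditary by \th\ref{thm:An}, so every module admits a finite projective resolution. The crux is the following acyclicity claim: if $Q$ is an indecomposable projective $A$-module and $Y$ is an arbitrary $A$-module, then, viewing both as $B$-modules,
\[
  \Ext_B^i(Q,Y)=0\qquad\text{for all }i\geq1.
\]

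I expect this claim to be the main obstacle; once it is set up, the rest is formal. To prove it I would use the explicit minimal projective resolutions of \th\ref{prop:An:taud}\eqref{it:A_nd-projective_resolutions}, transported to $B\cong A_{d-c+1}^{(d)}$ by the obvious coordinate shift. By \th\ref{prop:An:proj_inj}\eqref{it:A_nd-projectives} the module $Q$ is the interval module $M(a,\mu_1,\dots,\mu_d)$ for some $\bm{\mu}\in\mathbf{os}_{[a,b]}^d$. If $a=c$ then $Q$ is already $B$-projective and there is nothing to prove; otherwise $Q$ is non-projective over $B$, and its minimal $B$-projective resolution has top term $M(c,\mu_1,\dots,\mu_d)$ while \emph{every} higher term is of the form $M(c,a-1,\dots)$, whose second coordinate equals $a-1$. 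Since $Y$ is supported on $\mathbf{os}_{[a,b]}^d$, every vertex $\bm{\nu}$ of $Y$ satisfies $\nu_1\geq a>a-1$, so the interlacing criterion of \th\ref{prop:d-cone:interval_modules:interlacing} forces $\Hom_B$ from any interval module with second coordinate $a-1$ into $Y$ to vanish. Thus applying $\Hom_B(-,Y)$ to the resolution leaves only the degree-zero term, which proves the claim.

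Granting the claim, let $Q^\bullet\to X$ be a minimal $A$-projective resolution of an arbitrary $A$-module $X$ (finite, as $A$ is $d$-hereditary). Viewed over $B$ it is a resolution of $X$ by objects that are acyclic for the contravariant functor $\Ext_B^\ast(-,Y)$, so the standard fact that acyclic resolutions compute derived functors yields
\[
  \Ext_B^\ast(X,Y)\cong H^\ast(\Hom_B(Q^\bullet,Y))\cong H^\ast(\Hom_A(Q^\bullet,Y))\cong\Ext_A^\ast(X,Y),
\]
where the middle isomorphism holds because $\pi^*$ is fully faithful, so the two Hom-complexes literally coincide. This shows the inclusion is a homological embedding, and the asserted full faithfulness of the induced functor $\Db(\mmod A)\to\Db(\mmod B)$ is the standard consequence of being a homological embedding, see \cite{Psa14}.

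Finally, for part \eqref{prop:A-infty:Ext:it:ab-infty} I would exploit that every finite-dimensional module, every Yoneda extension, and every equivalence between extensions involves only finitely many vertices. Using \th\ref{not:A-infty:ab} this gives, for $X,Y$ supported on $\mathbf{os}_{[a,b]}^d$,
\[
  \Ext_{A_\infty^{(d)}}^i(X,Y)\cong\varinjlim_{[c,d]\supseteq[a,b]}\Ext_{A_{[c,d]}^{(d)}}^i(X,Y),
\]
the colimit running over enlarging intervals with the inclusion-induced transition maps. By part \eqref{prop:A-infty:Ext:it:ab-cd} applied to the pairs $[a,b]\subseteq[c,d]$, every map in this directed system starting from $[a,b]$ is an isomorphism, so the colimit equals $\Ext_{A_{[a,b]}^{(d)}}^i(X,Y)$. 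Hence the inclusion $\mmod A_{[a,b]}^{(d)}\hookrightarrow\mmod A_\infty^{(d)}$ is a homological embedding, and full faithfulness on bounded derived categories follows exactly as before.
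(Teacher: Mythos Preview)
Your proof is correct, but your route for part \eqref{prop:A-infty:Ext:it:ab-cd} differs from the paper's. The paper invokes a general criterion (Proposition~4.9 in \cite{GL91}): for the idempotent-quotient embedding to be homological it suffices that $\Ext_{A_{[c,d]}^{(d)}}^i(A_{[a,b]}^{(d)},A_{[a,b]}^{(d)})=0$ for $i\geq1$, and this follows immediately from the fact that $A_{[a,b]}^{(d)}$ lies in the $d$-cluster-tilting subcategory $\mathcal{M}_{[c,d]}^{(d)}$ together with \th\ref{prop:An:Hom_Ext}. Your approach instead computes directly with the explicit minimal projective resolution from \th\ref{prop:An:taud}\eqref{it:A_nd-projective_resolutions} and a support argument. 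Both are valid; the paper's argument is shorter and more conceptual (it uses the cluster-tilting structure), while yours is more self-contained and avoids the external criterion. A small wording point: your key vanishing is really a support argument---the interval modules $M(c,a-1,\dots)$ are supported on vertices with first coordinate $\leq a-1$, disjoint from the support of any $Y\in\mmod A_{[a,b]}^{(d)}$---rather than an application of the interlacing criterion for maps between interval modules.

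For part \eqref{prop:A-infty:Ext:it:ab-infty} your colimit argument is essentially the paper's, just phrased more compactly. One caution: for the derived-category statement here the paper deliberately cites Theorem~2.1 in \cite{Yao96} rather than \cite{GL91} or \cite{Psa14}, because $\mmod A_\infty^{(d)}$ has neither enough projectives nor coincides with the finitely presented modules; you should make sure whatever reference you invoke covers this generality.
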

\begin{proof}
  \eqref{prop:A-infty:Ext:it:ab-cd} By Proposition 4.9 in \cite{GL91}, it is
  enough to prove that
  \[
    \Ext_{A_{[\alpha,\beta]}^{(d)}}^i(A_{[a,b]}^{(d)},A_{[a,b]}^{(d)})=0
  \]
  for each $i>0$. But this follows immediately from the fact that
  $A_{[a,b]}^{(d)}\in\mathcal{M}_{[\alpha,\beta]}^{(d)}$ and the formulae for the
  extension spaces given in \th\ref{prop:An:Hom_Ext}.

  \eqref{prop:A-infty:Ext:it:ab-infty} Let $M$ and $N$ be finite dimensional
  $A_{[a,b]}^{(d)}$-modules and $i\geq0$. Suppose that an exact sequence
  $\delta\in\Ext_{A_{[a,b]}^{(d)}}^i(M,N)$ is trivial in $\mmod A_\infty^{(d)}$
  in the sense of Yoneda. Thus, there exists a finite zig-zag of equivalences
  from $\delta$ to the trivial exact sequence. Let $\alpha\leq a\leq b\leq\beta$ be integers such that all of the exact sequences appearing in this zig-zag
  are contained in $\mmod A_{[\alpha,\beta]}^{(d)}$. Thus, $\delta$ is trivial as an
  element of $\Ext_{A_{[\alpha,\beta]}^{(d)}}^i(M,N)$. Since, by the previous statement,
  the canonical embedding $\mmod A_{[a,b]}^{(d)}\hookrightarrow\mmod
  A_{[\alpha,\beta]}^{(d)}$ is homological, $\delta$ is also trivial as an element of
  $\Ext_{A_{[a,b]}^{(d)}}^i(M,N)$. This shows that the induced homomorphism
  \[
    \Ext_{A_{[a,b]}^{(d)}}^i(M,N)\to\Ext_{A_\infty^{(d)}}^i(M,N)
  \]
  is injective. A similar argument can be used to show that it is also
  surjective. We leave the details to the reader. The fact that the canonical
  exact functor
  \[
    \Db(\mmod A_{[a,b]}^{(d)})\to\Db(\mmod A_\infty^{(d)})
  \]
  is fully faithful follows from Theorem 2.1 in \cite{Yao96} (note that in this
  case we cannot use Proposition 4.9 in \cite{GL91} since $\mmod A_\infty^{(d)}$
  does not coincide with the category of finitely presented
  $A_\infty^{(d)}$-modules).
\end{proof}

As a consequence of \th\ref{prop:A-infty:Ext} we show that the higher
Auslander--Reiten formulae hold in $\mathcal{M}_\infty^{(d)}$ and thus obtain a
combinatorial description of the spaces of degree $d$ extensions in
$\mathcal{M}_\infty^{(d)}$ analogous to that in \th\ref{prop:An:Hom_Ext}.

\begin{proposition}
  \th\label{prop:A-infty:Hom_Ext} Let
  $\bm{\lambda},\bm{\mu}\in\mathbf{os}^{d+1}$. The following statements hold.
  \begin{enumerate}
  \item\label{prop:A-infty:Hom_Ext:it:AR_formulas} There are bifunctorial
    isomorphisms
    \begin{align*}
      D\Hom_{A_\infty^{(d)}}(M(\bm{\mu}),M(\tau_d(\bm{\lambda})))\cong\Ext_{A_\infty^{(d)}}^d(M(\bm{\lambda}),M(\bm{\mu}))\intertext{and}D\Hom_{A_\infty^{(d)}}(M(\tau_d^-(\bm{\mu})),M(\bm{\lambda}))\cong\Ext_{A_\infty^{(d)}}^d(M(\bm{\lambda}),M(\bm{\mu})).
    \end{align*}
  \item\label{prop:A-infty:Hom_Ext:it:Hom_Ext} There are isomorphisms
    \begin{align*}
      \Hom_{A_\infty^{(d)}}(M(\bm{\lambda}),M(\bm{\mu}))&\cong\begin{cases}
        \mathbbm{k}&\text{if }\bm{\lambda}\rightsquigarrow\bm{\mu},\\
        0&\text{otherwise;}
      \end{cases}\intertext{and}
           \Ext_{A_\infty^{(d)}}^d(M(\bm{\lambda}),M(\bm{\mu}))&\cong\begin{cases}
             \mathbbm{k}&\text{if }\bm{\mu}\rightsquigarrow\tau_d(\bm{\lambda}),\\
             0&\text{otherwise.}
           \end{cases}
    \end{align*}
  \end{enumerate}
\end{proposition}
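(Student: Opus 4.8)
The plan is to bootstrap both statements from the corresponding facts over the finite truncations $A_{[a,b]}^{(d)}\cong A_n^{(d)}$ of \th\ref{not:A-infty:ab}, transporting them to $A_\infty^{(d)}$ through the homological embeddings of \th\ref{prop:A-infty:Ext}. This detour is unavoidable: since $A_\infty^{(d)}$ is not locally bounded (\th\ref{rmk:A-infty:locally_bounded}) and $\mmod A_\infty^{(d)}$ has neither non-zero projectives nor non-zero injectives (\th\ref{prop:A-infty:NP}), \th\ref{thm:d-CT:existence_d-AR_seq} cannot be applied directly. I would first dispose of the $\Hom$ isomorphism in \eqref{prop:A-infty:Hom_Ext:it:Hom_Ext}, which is immediate: as $A_\infty^{(d)}=\cone(A_\infty^{d})$ and the modules $M(\bm{\lambda})$ with $\bm{\lambda}\in\mathbf{os}^{d+1}$ are exactly those covered by \th\ref{prop:d-cone:interval_modules:interlacing} applied to the poset $A_\infty$, the formula is a special case of that proposition.

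For the Auslander--Reiten formulas \eqref{prop:A-infty:Hom_Ext:it:AR_formulas}, I would fix $\bm{\lambda},\bm{\mu}\in\mathbf{os}^{d+1}$ and choose integers $a\leq b$ with $a\leq\min(\lambda_1,\mu_1)-1$ and $b\geq\max(\lambda_{d+1},\mu_{d+1})+1$. Then $M(\bm{\lambda})$, $M(\bm{\mu})$, $M(\tau_d(\bm{\lambda}))$ and $M(\tau_d^-(\bm{\mu}))$ all lie in $\mmod A_{[a,b]}^{(d)}$, and moreover $M(\bm{\lambda})$ is non-projective while $M(\bm{\mu})$ is non-injective over $A_{[a,b]}^{(d)}$. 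Over this $d$-representation-finite $d$-hereditary algebra every module has projective and injective dimension at most $d$, so \th\ref{thm:d-CT:existence_d-AR_seq}\eqref{it:AR-formulas} supplies the bifunctorial isomorphism
\[
  D\Hom_{A_{[a,b]}^{(d)}}(M(\bm{\lambda}),M(\bm{\mu}))\cong\Ext_{A_{[a,b]}^{(d)}}^d(M(\bm{\mu}),\tau_d M(\bm{\lambda}))
\]
together with its $\tau_d^-$-variant, in which the translates are identified combinatorially as $\tau_d M(\bm{\lambda})=M(\tau_d(\bm{\lambda}))$ and $\tau_d^- M(\bm{\mu})=M(\tau_d^-(\bm{\mu}))$ by \th\ref{prop:An:taud}\eqref{it:A_nd-translate},\eqref{it:A_nd-inverse_translate}. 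Since the inclusion $\mmod A_{[a,b]}^{(d)}\hookrightarrow\mmod A_\infty^{(d)}$ is fully faithful and, by \th\ref{prop:A-infty:Ext}\eqref{prop:A-infty:Ext:it:ab-infty}, a homological embedding, applying it together with the duality $D$ transports these isomorphisms verbatim to $A_\infty^{(d)}$; enlarging the window $[a,b]$ to contain any prescribed finite naturality diagram shows the resulting isomorphisms are bifunctorial.

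The $\Ext$ isomorphism in \eqref{prop:A-infty:Hom_Ext:it:Hom_Ext} would then follow by combining the two parts just established. Substituting $\tau_d^-(\bm{\mu})$ for $\bm{\lambda}$ in the first Auslander--Reiten isomorphism gives
\[
  \Ext_{A_\infty^{(d)}}^d(M(\bm{\lambda}),M(\bm{\mu}))\cong D\Hom_{A_\infty^{(d)}}(M(\tau_d^-(\bm{\mu})),M(\bm{\lambda})),
\]
and the $\Hom$ formula identifies this space with $\mathbbm{k}$ precisely when $\tau_d^-(\bm{\mu})\rightsquigarrow\bm{\lambda}$. After shifting both tuples by $(1,\dots,1)$ this interlacing is equivalent to $\bm{\mu}\rightsquigarrow\tau_d(\bm{\lambda})$, which is the stated condition.

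The step I expect to demand the most care is the second one, and specifically the choice of truncation window. One must ensure that $[a,b]$ is wide enough that every module in play---including the translates $M(\tau_d(\bm{\lambda}))$ and $M(\tau_d^-(\bm{\mu}))$---is supported inside it, and that $M(\bm{\lambda})$ and $M(\bm{\mu})$ stay clear of the projective/injective boundary, so that the finite higher Auslander--Reiten translate genuinely agrees with the combinatorial shift of \th\ref{not:taud} rather than collapsing to zero. The homological embedding property of \th\ref{prop:A-infty:Ext} is exactly what certifies that the extension groups, and hence the transported isomorphisms, are independent of this choice.
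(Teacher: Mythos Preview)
Your proposal is correct and follows essentially the same route as the paper: reduce to a finite truncation $A_{[a,b]}^{(d)}$, invoke the higher Auslander--Reiten duality of \th\ref{thm:d-CT:existence_d-AR_seq}\eqref{it:AR-formulas} there, identify the translates via \th\ref{prop:An:taud}, and transport everything back along the homological embedding of \th\ref{prop:A-infty:Ext}; then read off the $\Ext$ formula in \eqref{prop:A-infty:Hom_Ext:it:Hom_Ext} from \eqref{prop:A-infty:Hom_Ext:it:AR_formulas} and the $\Hom$ formula from \th\ref{prop:d-cone:interval_modules:interlacing}. Your explicit margin $a\leq\min(\lambda_1,\mu_1)-1$, $b\geq\max(\lambda_{d+1},\mu_{d+1})+1$ is in fact slightly more careful than the paper's phrasing, since it guarantees both that $M(\tau_d(\bm{\lambda}))$ and $M(\tau_d^-(\bm{\mu}))$ lie in the truncation and that $M(\bm{\lambda})$, $M(\bm{\mu})$ avoid the projective/injective boundary, so that the finite $\tau_d$ really matches the combinatorial shift.
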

\begin{proof}
  \eqref{prop:A-infty:Hom_Ext:it:AR_formulas} Let $a\leq b$ be integers such
  that $M(\bm{\lambda})$ and $M(\bm{\mu})$ are $\mmod A_{[a,b]}^{(d)}$-modules.
  By \th\ref{prop:A-infty:Ext} there is an isomorphism of vector spaces
  \[
    \Ext_{A_\infty^{(d)}}^d(M(\bm{\lambda}),M(\bm{\mu}))\cong\Ext_{A_{[a,b]}^{(d)}}^d(M(\bm{\lambda}),M(\bm{\mu})).
  \]
  Moreover, since the canonical inclusion $\mmod
  A_{[a,b]}^{(d)}\hookrightarrow\mmod A_\infty^{(d)}$ is fully faithful there is
  an isomorphism
  \[
    \Hom_{A_\infty^{(d)}}(M(\bm{\lambda}),M(\bm{\mu}))\cong\Hom_{A_{[a,b]}^{(d)}}(M(\bm{\lambda}),M(\bm{\mu}))
  \]
  The required isomorphisms follow from
  \th\ref{thm:d-CT:existence_d-AR_seq}\eqref{it:AR-formulas} applied to the
  finite dimensional algebra $A_{[a,b]}^{(d)}$, taking into account that every
  non-zero module in $\mathcal{M}_{[a,b]}^{(d)}$ has projective dimension either
  $0$ or $d$, see \th\ref{prop:An:taud}\eqref{it:A_nd-projective_resolutions}.
  
  \eqref{prop:A-infty:Hom_Ext:it:Hom_Ext} The first isomorphism is proven in
  \th\ref{prop:d-cone:interval_modules:interlacing} while the second one follows
  immediately from the first one together with statement
  \eqref{prop:A-infty:Hom_Ext:it:AR_formulas}.
\end{proof}

The next theorem describes basic representation-theoretic properties of
$A_\infty^{(d)}$ from the viewpoint of higher Auslander--Reiten theory. Its
proof is postponed to Subsection \ref{subsec:proof:thm:mesh} as it relies on the
content of the next subsection.

\begin{theorem}
  \th\label{thm:mesh} The following statements hold.
  \begin{enumerate}
  \item\label{thm:mesh:it:gldimd} The abelian category $\mmod A_\infty^{(d)}$
    has global dimension $d$.
  \item\label{thm:mesh:it:d-CT} $\mathcal{M}_\infty^{(d)}$ is a
    $d$-cluster-tilting subcategory of $\mmod A_\infty^{(d)}$.
  \item\label{thm:mesh:it:d-almost-split} The category
    $\mathcal{M}_\infty^{(d)}$ has $d$-almost split sequences. Moreover, for
    each $\bm{\lambda}\in\mathbf{os}^{d+1}$ there are isomorphisms
    \[
      \tau_d(M(\bm{\lambda}))\cong M(\tau_d(\bm{\lambda}))\qquad\text{and}\qquad
      \tau_d^-(M(\bm{\lambda}))\cong M(\tau_d^-(\bm{\lambda})).
    \]
  \item\label{thm:mesh:it:taud:isos} For every indecomposable
    $A_\infty^{(d)}$-module $M\in\mathcal{M}_\infty^{(d)}$ and every
    $i,j\in\mathbb{Z}$ there is an isomorphism $\tau_d^i(M)\cong\tau_d^j(M)$ if
    and only if $i=j$.
  \item\label{thm:mesh:it:taud} For every simple $A_\infty^{(d)}$-module
    $S\in\mathcal{M}_\infty^{(d)}$ and for every $i\in\mathbb{Z}$ the
    $A_\infty^{(d)}$-module $\tau_d^i(S)$ is simple.
  \end{enumerate}
\end{theorem}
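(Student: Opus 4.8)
The plan is to exploit the exhaustion of $\mmod A_\infty^{(d)}$ by the module categories of the finite-dimensional algebras $A_{[a,b]}^{(d)}\cong A_n^{(d)}$ (with $n=b-a+1$) recorded in \th\ref{not:A-infty:ab}, together with the fact (\th\ref{prop:A-infty:Ext}) that each inclusion $\mmod A_{[a,b]}^{(d)}\hookrightarrow\mmod A_\infty^{(d)}$ is a homological embedding. Every finite-dimensional $A_\infty^{(d)}$-module, and every morphism between two such, lives in $\mmod A_{[a,b]}^{(d)}$ for a window $[a,b]$ large enough; since $A_{[a,b]}^{(d)}$ is $d$-representation-finite $d$-hereditary (\th\ref{thm:An}), the finite-dimensional theory is already understood there. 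The guiding principle is that each assertion about $A_\infty^{(d)}$ is verified by choosing $[a,b]$ large enough to contain all the data involved and then quoting the corresponding statement for $A_{[a,b]}^{(d)}$. For \eqref{thm:mesh:it:gldimd} I would first bound $\gldim\le d$: given $M,N\in\mmod A_\infty^{(d)}$, choose $[a,b]$ with $M,N\in\mmod A_{[a,b]}^{(d)}$, so that $\Ext_{A_\infty^{(d)}}^i(M,N)\cong\Ext_{A_{[a,b]}^{(d)}}^i(M,N)$ by \th\ref{prop:A-infty:Ext}, and the right-hand side vanishes for $i>d$. The reverse inequality is immediate from \th\ref{prop:A-infty:Hom_Ext}, which exhibits pairs with $\Ext^d\neq0$.

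For \eqref{thm:mesh:it:d-CT} I would verify the three conditions of \th\ref{def:d-CT}. The generating-cogenerating property and the $\Ext$-orthogonality characterisation transfer directly from the finite pieces: any finite-dimensional $X$ that is $\Ext^i$-orthogonal to $\mathcal{M}_\infty^{(d)}$ for $1\le i\le d-1$ is, in a window $[a,b]$ containing its support, $\Ext^i$-orthogonal to $\mathcal{M}_{[a,b]}^{(d)}$ by \th\ref{prop:A-infty:Ext} and \th\ref{prop:A-infty:Hom_Ext}, hence lies in $\mathcal{M}_{[a,b]}^{(d)}\subseteq\mathcal{M}_\infty^{(d)}$. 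The delicate point is functorial finiteness, and here I expect the main obstacle, precisely because $A_\infty^{(d)}$ is \emph{not} locally bounded (\th\ref{rmk:A-infty:locally_bounded}) so the general machinery does not apply verbatim. Given $X\in\mmod A_{[a,b]}^{(d)}$ I would take a right $\mathcal{M}_{[a,b]}^{(d)}$-approximation $f\colon C\to X$ and claim it is already a right $\mathcal{M}_\infty^{(d)}$-approximation. The issue is a nonzero $g\colon M(\bm{\mu})\to X$ with $\mu_1<a$; since the simple top $S_{(\mu_2,\dots,\mu_{d+1})}$ of $M(\bm{\mu})$ must be a composition factor of $X$, one has $a\le\mu_2$ and $\mu_{d+1}\le b$, so $\bm{\mu}':=(a,\mu_2,\dots,\mu_{d+1})\in\mathbf{os}_{[a,b]}^{d+1}$ and \th\ref{prop:d-cone:interval_modules:interlacing} provides a canonical surjection $M(\bm{\mu})\twoheadrightarrow M(\bm{\mu}')$ whose kernel is supported at positions $\bm{\nu}$ with $\nu_1<a$, on which $g$ necessarily vanishes. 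Hence $g$ factors through $M(\bm{\mu}')\in\mathcal{M}_{[a,b]}^{(d)}$ and therefore through $f$; covariant finiteness follows dually by truncating tops at $b$.

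For \eqref{thm:mesh:it:d-almost-split}--\eqref{thm:mesh:it:taud} the key observation is that, by \th\ref{prop:A-infty:NP}, $\mmod A_\infty^{(d)}$ has no nonzero projectives or injectives, so every $M(\bm{\lambda})$ is non-projective and non-injective and the translations are everywhere defined (in particular $\tau_d$ cannot be computed through projective resolutions in $\mmod A_\infty^{(d)}$, which is why the finite-window approach is forced). I would fix $\bm{\lambda}$, choose $[a,b]$ large enough that $M(\bm{\lambda})$ and $M(\tau_d(\bm{\lambda}))$ are non-projective non-injective over $A_{[a,b]}^{(d)}$, and transport the $d$-almost split sequence ending at $M(\bm{\lambda})$ from $\mathcal{M}_{[a,b]}^{(d)}$ (\th\ref{thm:d-CT:existence_d-AR_seq}, \th\ref{prop:An:taud}), whose middle terms are indexed by the interval $[\tau_d(\bm{\lambda}),\bm{\lambda}]$ (see \cite{OT12}) and hence are independent of the window. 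To see it remains $d$-almost split over $A_\infty^{(d)}$ in the sense of \th\ref{def:d-AR-seq}, I would test against an arbitrary $X=M(\bm{\xi})$ by enlarging the window to contain the support of $X$ as well; full faithfulness of the inclusion then identifies the relevant $\Hom$-sequences (and the factorisations of non-split monomorphisms and epimorphisms) with those over the larger finite algebra, where they are known. This yields $\tau_d(M(\bm{\lambda}))\cong M(\tau_d(\bm{\lambda}))$ and dually for $\tau_d^-$.

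Finally, iterating gives $\tau_d^i(M(\bm{\lambda}))\cong M(\bm{\lambda}-i(1,\dots,1))$ for every $i\in\mathbb{Z}$, with all terms nonzero. Statement \eqref{thm:mesh:it:taud:isos} is then immediate, since distinct elements of $\mathbf{os}^{d+1}$ yield non-isomorphic interval modules (\th\ref{prop:d-cone:interval_modules:interlacing}), so $M(\bm{\lambda}-i(1,\dots,1))\cong M(\bm{\lambda}-j(1,\dots,1))$ forces $i=j$. Statement \eqref{thm:mesh:it:taud} follows because $\len(\bm{\lambda}-i(1,\dots,1))=\lambda_{d+1}-\lambda_1+1$ is independent of $i$ by \th\ref{lemma:An:len}, so a summand of Loewy length one (that is, a simple module, necessarily of the form $M((s,\dots,s))=S_{(s,\dots,s)}$) stays simple throughout its $\tau_d$-orbit, exactly as in \th\ref{coro:Al:Loewy_length}. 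I expect the functorial finiteness argument in \eqref{thm:mesh:it:d-CT}, and the verification that the finite-window $d$-almost split sequences persist globally in \eqref{thm:mesh:it:d-almost-split}, to be the only genuinely non-routine steps, both hinging on the interval-module truncation combinatorics and on the homological embeddings of \th\ref{prop:A-infty:Ext}.
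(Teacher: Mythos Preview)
Your proposal is correct, and for parts \eqref{thm:mesh:it:gldimd} and the $d$-rigidity/orthogonality portions of \eqref{thm:mesh:it:d-CT} it coincides with the paper's argument. However, for functorial finiteness in \eqref{thm:mesh:it:d-CT} and for \eqref{thm:mesh:it:d-almost-split}--\eqref{thm:mesh:it:taud} you take a genuinely different route.

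The paper does \emph{not} stay inside the windows $A_{[a,b]}^{(d)}$ for these steps; instead it passes to the locally bounded selfinjective categories $A_{\mathbb{Z}\ell}^{(d)}$ from Subsection~4.2 (this is why the proof is postponed). For functorial finiteness the paper takes a right $\mathcal{M}_{\mathbb{Z}\ell}^{(d)}$-approximation $f$ of $N$ and observes that any $M(\bm{\mu})$ with $\len(\bm{\mu})>\ell$ is projective-injective over $A_{\mathbb{Z}\ell'}^{(d)}$ for $\ell'=\len(\bm{\mu})$, so a morphism $M(\bm{\mu})\to N$ factors through the epimorphism $f$ automatically. Similarly, for \eqref{thm:mesh:it:d-almost-split} the paper lifts the $d$-almost split sequence from $\mathcal{M}_{\mathbb{Z}\ell}^{(d)}$ with $\ell=\len(\bm{\lambda})+1$ and again uses projectivity in a larger $A_{\mathbb{Z}\ell'}^{(d)}$ to handle longer test objects. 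Your approach instead stays with the finite windows $A_{[a,b]}^{(d)}$: you factor a stray $g\colon M(\bm{\mu})\to X$ through the explicit truncation $M(\bm{\mu})\twoheadrightarrow M(a,\mu_2,\dots,\mu_{d+1})$, and for $d$-almost split sequences you exploit that the Oppermann--Thomas description of the middle terms as $[\tau_d(\bm{\lambda}),\bm{\lambda}]$ is window-independent, so the sequence remains $d$-almost split in every $\mathcal{M}_{[a',b']}^{(d)}$ with $[a',b']\supseteq[a,b]$.

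Both arguments are sound. Your route is more elementary and self-contained: it avoids the detour through \th\ref{thm:A-infty:lZ} (hence through \cite{IO13} and the repetitive-category identification of \th\ref{prop:A-infty:lZ:equivalences}), at the price of the explicit interval-module truncation and the window-stability check for $d$-almost split sequences. The paper's route trades that combinatorics for the uniform ``projective-injective, hence factors through any epi'' mechanism provided by the selfinjective structure of $A_{\mathbb{Z}\ell}^{(d)}$, which it needs anyway for Section~\ref{sec:A-tilde}.
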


\subsection{The higher Nakayama categories of type $\mathbb{A}_\infty^\infty$}

Let $d$ be a positive integer. Our proof of \th\ref{thm:mesh} relies on a
detailed study of certain idempotent quotients of $A_\infty^{(d)}$, which we now
introduce. These quotients, also essential in the construction of the higher
Nakayama algebras of type $\widetilde{\mathbb{A}}$ in Section \ref{sec:A-tilde},
should be thought of as higher dimensional analogues of the admissible quotients
of the path category of the infinite quiver
\[
  \cdots\to-1\to0\to1\to\cdots.
\]
We begin by extending the definition of a Kupisch series (\th\ref{def:KS:An}) to
infinite tuples of positive integers.

\begin{definition}
  \th\label{def:KS:A-infty} Let
  $\underline{\ell}=(\dots,\ell_{-1},\ell_0,\ell_1,\dots)$ be an infinite tuple
  of positive integers. We say that $\underline{\ell}$ is a \emph{(connected)
    Kupisch series of type $\mathbb{A}_\infty^\infty$} if for all
  $i\in\mathbb{Z}$ there are inequalities
  \[
    2\leq\ell_{i}\leq \ell_{i-1}+1.
  \]
  We denote the set of Kupisch series of type $\mathbb{A}_\infty^\infty$ by
  $\KSAi$.
\end{definition}

We use the following class of Kupisch series for constructing the higher
Nakayama algebras of type $\widetilde{\mathbb{A}}$ in Section \ref{sec:A-tilde}.

\begin{definition}
  \th\label{def:KS:A-infty:l-bounded} Let $\ell\geq2$ be an integer. A Kupisch
  series $\underline{\ell}$ of type $\mathbb{A}_\infty^\infty$ is
  \emph{$\ell$-bounded} if for all $i\in\mathbb{Z}$ there is an inequality
  $\ell_i\leq\ell$.
\end{definition}

\begin{setting}
  We fix positive integers $d$ and $\ell\geq2$ until further notice.
\end{setting}

The next definition is an infinite analogue of \th\ref{def:An:l}.

\begin{definition}
  \th\label{def:A-infty:l} Let $\underline{\ell}$ be an $\ell$-bounded Kupisch
  series of type $\mathbb{A}_\infty^\infty$.
  \begin{enumerate}
  \item The \emph{$\underline{\ell}$-restriction of $\mathbf{os}^{d+1}$} is the
    subset
    \begin{equation*}
      \mathbf{os}_{\underline{\ell}}^{d+1}:=\setP{\bm{\lambda}\in\mathbf{os}^{d+1}}{\len(\bm{\lambda})\leq\ell_{\lambda_{d+1}}}.
    \end{equation*}
  \item The \emph{$(d+1)$-Nakayama algebra with Kupisch series
      $\underline{\ell}$} is the idempotent quotient
    \[
      A_{\underline{\ell}}^{(d+1)}:=A_\infty^{(d+1)}/[\mathbf{os}^{d+1}\setminus\mathbf{os}_{\underline{\ell}}^{d+1}].
    \]
  \item We define the subcategory
    \begin{equation*}
      \mathcal{M}_{\underline{\ell}}^{(d)}:=\add\setP{M(\bm{\lambda})\in\mmod A_\infty^{(d)}}{\bm{\lambda}\in\mathbf{os}_{\underline{\ell}}^{d+1}}.
    \end{equation*}
  \end{enumerate}
\end{definition}

We make a couple of elementary but important observations.

\begin{proposition}
  \th\label{prop:A-infty:l:locally_bounded} Let $d,\ell\geq2$ be integers and
  $\underline{\ell}$ an $\ell$-bounded Kupisch series of type
  $\mathbb{A}_\infty^\infty$. Then, the category $A_{\underline{\ell}}^{(d)}$ is
  locally bounded.
\end{proposition}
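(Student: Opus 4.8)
The plan is to work directly with the combinatorial description of the morphism spaces of $A_{\underline{\ell}}^{(d)}$ and to show that $\ell$-boundedness caps the only coordinate of an object that is free to move. First I would record that, by definition, $A_{\underline{\ell}}^{(d)}$ is the idempotent quotient $A_\infty^{(d)}/[\mathbf{os}^d\setminus\mathbf{os}_{\underline{\ell}}^d]$, whose nonzero objects are the ordered sequences in $\mathbf{os}_{\underline{\ell}}^d=\setP{\bm{\lambda}\in\mathbf{os}^d}{\lambda_d-\lambda_1+1\le\ell_{\lambda_d}}$. Since collapsing objects in two successive idempotent quotients amounts to collapsing their union, I would identify $A_{\underline{\ell}}^{(d)}$ with $\mathcal{P}^d/[\mathcal{P}^d\setminus\mathbf{os}_{\underline{\ell}}^d]$, where $\mathcal{P}^d=(\mathbb{Z}^d,\preceq)$. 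Then \th\ref{lemma:posets:idempotent_quotient} gives
\[
  A_{\underline{\ell}}^{(d)}(\bm{x},\bm{y})\cong\begin{cases}\mathbbm{k}&\text{if }\bm{x}\preceq\bm{y}\text{ and }[\bm{x},\bm{y}]\subseteq\mathbf{os}_{\underline{\ell}}^d,\\ 0&\text{otherwise,}\end{cases}
\]
and by \th\ref{prop:poset:d-cone:interlacing} the condition $\bm{x}\preceq\bm{y}$ with $[\bm{x},\bm{y}]\subseteq\mathbf{os}_{\underline{\ell}}^d\subseteq\mathbf{os}^d$ is equivalent to the interlacing $\bm{x}\rightsquigarrow\bm{y}$ together with $[\bm{x},\bm{y}]\subseteq\mathbf{os}_{\underline{\ell}}^d$. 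In particular every morphism space is at most one-dimensional, so $A_{\underline{\ell}}^{(d)}$ is locally finite dimensional and only the two finiteness conditions remain.

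Fixing $\bm{x}\in\mathbf{os}_{\underline{\ell}}^d$, for the outgoing condition I would take any $\bm{y}$ with $A_{\underline{\ell}}^{(d)}(\bm{x},\bm{y})\ne0$. The interlacing $\bm{x}\rightsquigarrow\bm{y}$ reads $x_1\le y_1\le x_2\le y_2\le\cdots\le x_d\le y_d$, which confines each of $y_1,\dots,y_{d-1}$ to the bounded interval $[x_i,x_{i+1}]$; the only coordinate free to grow is $y_d$. Here the hypothesis enters: since $\bm{y}\in[\bm{x},\bm{y}]\subseteq\mathbf{os}_{\underline{\ell}}^d$ and $\underline{\ell}$ is $\ell$-bounded, we get $y_d-y_1+1=\len(\bm{y})\le\ell_{y_d}\le\ell$, and combining with $y_1\le x_2$ (this is where $d\ge2$ is used) yields $y_d\le x_2+\ell-1$. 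Thus every coordinate of $\bm{y}$ lies in an explicit finite range, so only finitely many $\bm{y}$ occur.

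The incoming condition is dual: if $A_{\underline{\ell}}^{(d)}(\bm{w},\bm{x})\ne0$ then $w_1\le x_1\le w_2\le\cdots\le w_d\le x_d$ confines $w_2,\dots,w_d$ to bounded intervals, while $w_d-w_1+1\le\ell_{w_d}\le\ell$ together with $w_d\ge x_{d-1}$ forces $w_1\ge x_{d-1}-\ell+1$, again using $d\ge2$. Hence there are finitely many $\bm{w}$, and $A_{\underline{\ell}}^{(d)}$ is locally bounded. The crux of the argument---and essentially the only place where anything must be checked---is the observation that the interlacing relation automatically pins down all coordinates except the last (resp.\ first), so that local boundedness reduces to a single bound on the Loewy length $\lambda_d-\lambda_1+1$; this is exactly what $\ell$-boundedness supplies, and it is also why the statement genuinely needs $d\ge2$ (for $d=1$ the length is identically $1$ and $A_{\underline{\ell}}^{(1)}$ is the non-locally-bounded path category of $\cdots\to-1\to0\to1\to\cdots$).
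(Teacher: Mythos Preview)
Your proof is correct and follows essentially the same line as the paper's: both arguments use the interlacing relation to confine all but the extreme coordinate of $\bm{y}$ (resp.\ $\bm{w}$) to a bounded range determined by $\bm{x}$, and then invoke $\ell$-boundedness via $\len(\bm{y})\le\ell_{y_d}\le\ell$ to bound $y_d\le x_2+\ell-1$ (the paper writes the same chain $\mu_d\le\ell+\mu_1-1\le\ell+\lambda_2-1$). Your added remark explaining why the argument genuinely needs $d\ge2$ is a nice touch that the paper leaves implicit.
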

\begin{proof}
  Let $\bm{\lambda},\bm{\mu}\in\mathbf{os}_{\underline{\ell}}^d$ be such that
  $\bm{\lambda}\rightsquigarrow\bm{\mu}$ and
  $[\bm{\lambda},\bm{\mu}]\subset\mathbf{os}_{\underline{\ell}}^d$. By
  definition, there are inequalities
  \[
    \lambda_1\leq\mu_1\leq\cdots\leq\lambda_d\leq\mu_d\leq\ell+\mu_1-1\leq\ell+\lambda_2-1
  \]
  where the penultimate inequality follows from the inequalities
  \[
    \len(\bm{\mu})=\mu_d-\mu_1+1\leq\ell_{\mu_d}\leq\ell.
  \]
  which hold by assumption. Therefore for fixed
  $\bm{\lambda}\in\mathbf{os}_{\underline{\ell}}^d$ the set
  \[
    \setP{\bm{\mu}\in\mathbf{os}_{\underline{\ell}}^d}{\bm{\lambda}\rightsquigarrow\bm{\mu}\text{
        and }[\bm{\lambda},\bm{\mu}]\subset\mathbf{os}_{\underline{\ell}}^d}
  \]
  is finite. A similar argument shows that for fixed
  $\bm{\mu}\in\mathbf{os}_{\underline{\ell}}^d$ the set
  \[
    \setP{\bm{\lambda}\in\mathbf{os}_{\underline{\ell}}^d}{\bm{\lambda}\rightsquigarrow\bm{\mu}\text{
        and }[\bm{\lambda},\bm{\mu}]\subset\mathbf{os}_{\underline{\ell}}^d}
  \]
  is finite. The claim now follows from \th\ref{prop:poset:d-cone:interlacing}
  and the definition of $A_{\underline{\ell}}^{(d)}$.
\end{proof}

\begin{proposition}
  \th\label{prop:A-infinity:l:1} Let $d$ and $\ell\geq2$ be integers and
  $\underline{\ell}$ an $\ell$-bounded Kupisch series of type
  $\mathbb{A}_\infty^\infty$. Then,
  $\mathcal{M}_{\underline{\ell}}^{(d)}\subseteq\mmod
  A_{\underline{\ell}}^{(d)}$.
\end{proposition}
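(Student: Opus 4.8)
The plan is to mirror the finite-case argument of \th\ref{prop:An:l:M}, adapting it to the bi-infinite index set. Since $\mmod A_{\underline{\ell}}^{(d)}$ is closed under direct sums and direct summands, and $\mathcal{M}_{\underline{\ell}}^{(d)}$ is by definition the additive closure of the modules $M(\bm{\lambda})$ with $\bm{\lambda}\in\mathbf{os}_{\underline{\ell}}^{d+1}$ (see \th\ref{def:A-infty:l}), it suffices to show that each such $M(\bm{\lambda})$ is an $A_{\underline{\ell}}^{(d)}$-module. Recall from Subsection \ref{subsec:locally_bounded_categories} that, under the standard identification, $\mmod A_{\underline{\ell}}^{(d)}$ is precisely the full subcategory of $\mmod A_\infty^{(d)}$ consisting of those modules $M$ with $M_{\bm{\mu}}=0$ for every $\bm{\mu}\in\mathbf{os}^d\setminus\mathbf{os}_{\underline{\ell}}^d$. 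Thus the claim reduces to verifying that every composition factor $S_{\bm{\mu}}$ of $M(\bm{\lambda})$ satisfies $\bm{\mu}\in\mathbf{os}_{\underline{\ell}}^d$.

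First I would identify the composition factors. By \th\ref{def:posets:intervals}, the interval module $M(\bm{\lambda})=M[(\lambda_1,\dots,\lambda_d),(\lambda_2,\dots,\lambda_{d+1})]$ is supported exactly on the closed interval $[(\lambda_1,\dots,\lambda_d),(\lambda_2,\dots,\lambda_{d+1})]$, which by the obvious interlacing and \th\ref{prop:poset:d-cone:interlacing} is contained in $\mathbf{os}^d$; hence, exactly as in \th\ref{thm:M_nd}, its composition factors are the simples $S_{\bm{\mu}}$ indexed by those $\bm{\mu}\in\mathbf{os}^d$ with
\[
  \lambda_1\leq\mu_1\leq\cdots\leq\lambda_d\leq\mu_d\leq\lambda_{d+1}.
\]
Next I would record the bi-infinite analogue of \th\ref{lemma:KS}\eqref{lemma:KS:An:it:2}: since $\underline{\ell}$ satisfies $\ell_i\leq\ell_{i-1}+1$ for all $i\in\mathbb{Z}$ (see \th\ref{def:KS:A-infty}), the same telescoping induction yields $i-j\leq\ell_i-\ell_j$ whenever $i\leq j$, now for arbitrary integers $i\leq j$.

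With these two ingredients the estimate is identical to the one in \th\ref{prop:An:l:M}. Setting $x:=\lambda_{d+1}+1-\ell_{\lambda_{d+1}}$, the hypothesis $\bm{\lambda}\in\mathbf{os}_{\underline{\ell}}^{d+1}$, that is $\len(\bm{\lambda})\leq\ell_{\lambda_{d+1}}$, is equivalent to $x\leq\lambda_1$, whence $x\leq\lambda_1\leq\mu_1\leq\mu_d\leq\lambda_{d+1}$. It then follows that
\[
  \len(\bm{\mu})=\mu_d-\mu_1+1\leq\mu_d-x+1=\mu_d-\lambda_{d+1}+\ell_{\lambda_{d+1}}\leq\ell_{\mu_d},
\]
where the final inequality is the relation $\mu_d-\lambda_{d+1}\leq\ell_{\mu_d}-\ell_{\lambda_{d+1}}$ supplied by the bi-infinite Kupisch inequality applied to $\mu_d\leq\lambda_{d+1}$. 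Hence $\bm{\mu}\in\mathbf{os}_{\underline{\ell}}^d$, which is what we needed.

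I do not expect any genuine obstacle here. The only point requiring attention is recognising that the support condition characterising $\mmod A_{\underline{\ell}}^{(d)}$ inside $\mmod A_\infty^{(d)}$ is exactly what must be checked, together with the observation that the Kupisch inequality of \th\ref{lemma:KS} is insensitive to replacing a finite index set by a bi-infinite one. Everything else is a faithful transcription of the finite-case estimate.
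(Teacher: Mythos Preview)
Your proposal is correct and follows exactly the approach indicated by the paper, which simply states that the proof of \th\ref{prop:An:l:M} carries over and leaves the details to the reader. You have faithfully supplied those details, including the one genuinely new observation needed, namely that the Kupisch inequality $i-j\leq\ell_i-\ell_j$ for $i\leq j$ extends verbatim to the bi-infinite index set.
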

\begin{proof}
  The proof of \th\ref{prop:An:l:M} carries over. We leave the details to the
  reader.
\end{proof}

The following theorem is an infinite analogue of \th\ref{thm:An:l}.

\begin{theorem}
  \th\label{thm:Ail} Let $d$ be a positive integer. The following statements
  hold.
  \begin{enumerate}
  \item\label{it:thm:Ail:KS} For each $i\in\mathbb{Z}$ the indecomposable
    projective $A_{\underline{\ell}}^{(d)}$-module at the vertex $(i,\dots,i)$
    has Loewy length $\ell_i$.
  \item\label{it:thm:Ail:dZ-RF} $\mathcal{M}_{\underline{\ell}}^{(d)}$ is a
    $d\mathbb{Z}$-cluster-tilting subcategory of $\mmod
    A_{\underline{\ell}}^{(d)}$.
  \item\label{it:thm:Ail:taud-formula} Let
    $\bm{\lambda}\in\mathbf{os}_{\underline{\ell}}^{d+1}$ be such that
    $M(\bm{\lambda})$ is non-projective as an
    $A_{\underline{\ell}}^{(d)}$-module. Then, there is an isomorphism of
    $A_{\underline{\ell}}^{(d)}$-modules
    \[
      \tau_d(M(\bm{\lambda}))\cong M(\tau_d(\bm{\lambda})).
    \]
  \item\label{it:thm:Ail:taud-formula:inj} Let
    $\bm{\lambda}\in\mathbf{os}_{\underline{\ell}}^{d+1}$ be such that
    $M(\bm{\lambda})$ is non-injective as an
    $A_{\underline{\ell}}^{(d)}$-module. Then, there is an isomorphism of
    $A_{\underline{\ell}}^{(d)}$-modules
    \[
      \tau_d^-(M(\bm{\lambda}))\cong M(\tau_d^-(\bm{\lambda})).
    \]
  \item\label{it:thm:Ail:taud} For every simple
    $A_{\underline{\ell}}^{(d)}$-module
    $S\in\mathcal{M}_{\underline{\ell}}^{(d)}$ and for every $i\in\mathbb{Z}$
    the $A_{\underline{\ell}}^{(d)}$-module $\tau_d^i(S)$ is simple. Moreover,
    for every pair of integers $i$ and $j$ there is an isomorphism
    $\tau_d^i(S)\cong\tau_d^j(S)$ if and only if $i=j$.
  \end{enumerate}
\end{theorem}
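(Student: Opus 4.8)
The plan is to reduce every assertion to the corresponding statement about the finite higher Nakayama algebras of type $\mathbb{A}$, which was settled in \th\ref{thm:An:l}. The bridge is provided by the finite ``windows'' of $A_{\underline{\ell}}^{(d)}$. Concretely, for integers $a\le b$ I would set $\mathbf{os}_{\underline{\ell},[a,b]}^{\bullet}:=\mathbf{os}_{\underline{\ell}}^{\bullet}\cap\mathbf{os}_{[a,b]}^{\bullet}$ (for $\bullet\in\set{d,d+1}$) and consider the idempotent quotient $A_{\underline{\ell},[a,b]}^{(d)}:=A_{\underline{\ell}}^{(d)}/[\mathbf{os}_{\underline{\ell}}^d\setminus\mathbf{os}_{\underline{\ell},[a,b]}^d]$. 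The first point is that this window is isomorphic to a finite higher Nakayama algebra: relabelling $[a,b]$ as $\set{0,\dots,b-a}$, the tuple $\ell'_i:=\min(\ell_{a+i},i+1)$ is a (finite) Kupisch series of type $\mathbb{A}_{b-a+1}$, and a short computation using $\mu_d-\mu_1+1\le\mu_d-a+1$ shows that $\mathbf{os}_{\underline{\ell},[a,b]}^{d+1}$ is carried exactly onto $\mathbf{os}_{\underline{\ell}'}^{d+1}$, so that $A_{\underline{\ell},[a,b]}^{(d)}\cong A_{\underline{\ell}'}^{(d)}$ and $\mathcal{M}_{\underline{\ell},[a,b]}^{(d)}\cong\mathcal{M}_{\underline{\ell}'}^{(d)}$. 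In particular \th\ref{thm:An:l,prop:An:l:proj_inj,prop:An:l:taud,prop:An:l:M:dZ-CT} all apply to each window. Since every finite dimensional module is supported on finitely many vertices, one has $\mmod A_{\underline{\ell}}^{(d)}=\bigcup_{a\le b}\mmod A_{\underline{\ell},[a,b]}^{(d)}$ and $\mathcal{M}_{\underline{\ell}}^{(d)}=\bigcup_{a\le b}\mathcal{M}_{\underline{\ell},[a,b]}^{(d)}$, exactly as in \th\ref{not:A-infty:ab}.

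The technical heart, and the step I expect to be the main obstacle, is a locality principle: for fixed finite dimensional $M,N$ and a fixed cohomological degree $i$, the inclusion of a sufficiently large window induces an isomorphism $\Ext_{A_{\underline{\ell},[a,b]}^{(d)}}^i(M,N)\xrightarrow{\ \sim\ }\Ext_{A_{\underline{\ell}}^{(d)}}^i(M,N)$. The delicate point is that, unlike the higher Auslander algebras treated in \th\ref{prop:A-infty:Ext}, the algebra $A_{\underline{\ell}}^{(d)}$ need not have finite global dimension, so no honest homological embedding between windows can be expected in all degrees; what saves the day is $\ell$-boundedness. Since $A_{\underline{\ell}}^{(d)}$ is locally bounded (\th\ref{prop:A-infty:l:locally_bounded}), $M$ has a minimal projective resolution with finite dimensional terms, and by the (carried over) description of the projectives in \th\ref{prop:An:l:proj_inj} together with the syzygy computation in the proof of \th\ref{prop:An:l:M:dZ-CT}, each indecomposable projective has support of width at most $\ell$ and the $k$-th syzygy of $M$ is supported at most $k$ steps to the left of $M$. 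Hence the truncation $P^{i+1}\to\cdots\to P^0\to M$ is supported on a single window $[a',b']$, and for such a window these same modules and maps form the beginning of a projective resolution over $A_{\underline{\ell},[a',b']}^{(d)}$; computing $\Ext^i$ via $\Hom(P^\bullet,N)$ then yields the isomorphism. An analogous argument gives stability of $\Hom$-spaces and shows that the formation of right (and dually left) $\mathcal{M}_{\underline{\ell}}^{(d)}$-approximations is local, since by the $\Hom$-formula of \th\ref{prop:d-cone:interval_modules:interlacing} only finitely many indecomposables of $\mathcal{M}_{\underline{\ell}}^{(d)}$ map nontrivially to a given finite dimensional module.

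With these preparations each part follows by transport from the finite case. For \eqref{it:thm:Ail:KS} I would carry over \th\ref{prop:An:l:proj_inj} (the argument is purely combinatorial, using only \th\ref{prop:poset:d-cone:interlacing} and the inequality $i-j\le\ell_i-\ell_j$, which holds for infinite Kupisch series; the inequality $\ell_i\le i+1$ is not needed in the unbounded-below setting), obtaining $P_{(i,\dots,i)}=M(i+1-\ell_i,i,\dots,i)$ and hence $\len(P_{(i,\dots,i)})=\ell_i$. For \eqref{it:thm:Ail:dZ-RF} the locality principle reduces the axioms of \th\ref{def:d-CT} to the corresponding statements in each window: functorial finiteness and the generating--cogenerating property follow from the locality of approximations together with the fact (from the carried over \th\ref{prop:An:l:proj_inj}) that all projectives and injectives lie in $\mathcal{M}_{\underline{\ell}}^{(d)}$, while the orthogonality $\mathcal{M}_{\underline{\ell}}^{(d)}=\setP{X}{\Ext^i(\mathcal{M}_{\underline{\ell}}^{(d)},X)=0\ \text{for}\ 1\le i\le d-1}$ (and its dual) follows because, after enlarging the window to contain the support of a test module, the vanishing is detected inside $A_{\underline{\ell},[a,b]}^{(d)}$, where \th\ref{thm:An:l}\eqref{thm:An:l:it:dZ-RF} applies. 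The passage to $d\mathbb{Z}$-cluster-tilting then follows from De\-fi\-ni\-tion-Proposition 2.15 in \cite{IJ16}, since the $d$-th syzygy formula of \th\ref{prop:An:l:M:dZ-CT} holds verbatim in each window and shows $\mathcal{M}_{\underline{\ell}}^{(d)}$ is closed under $\Omega^d$. Parts \eqref{it:thm:Ail:taud-formula} and \eqref{it:thm:Ail:taud-formula:inj} are local assertions about a single non-projective (resp. non-injective) $M(\bm{\lambda})$: choosing a window containing $[\tau_d(\bm{\lambda}),\bm{\lambda}]$ in its interior, both $\tau_d$ and the relevant $d$-almost split sequence are computed inside the window, where \th\ref{prop:An:l:taud} gives $\tau_d(M(\bm{\lambda}))\cong M(\tau_d(\bm{\lambda}))$, and dually for $\tau_d^-$.

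Finally, \eqref{it:thm:Ail:taud} is deduced from \eqref{it:thm:Ail:taud-formula} and \eqref{it:thm:Ail:taud-formula:inj}. A summand $M(\bm{\lambda})$ of $M_{\underline{\ell}}^{(d)}$ is simple precisely when $\bm{\lambda}=(i,\dots,i)$, in which case $M(\bm{\lambda})=S_{(i,\dots,i)}$; since $\ell_i\ge2$ for every $i$, the Loewy length of its projective cover (resp. injective envelope) is at least $2$, so $S_{(i,\dots,i)}$ is neither projective nor injective. Thus \eqref{it:thm:Ail:taud-formula} and \eqref{it:thm:Ail:taud-formula:inj} apply for all iterates and give $\tau_d^{j}(S_{(i,\dots,i)})\cong M(\tau_d^{j}(i,\dots,i))=S_{(i-j,\dots,i-j)}$ for every $j\in\mathbb{Z}$. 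These are simple modules supported at pairwise distinct vertices, whence $\tau_d^i(S)\cong\tau_d^j(S)$ forces $i=j$, completing the argument.
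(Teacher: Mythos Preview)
Your argument is correct, but it follows a genuinely different route from the paper's. The paper does \emph{not} reduce to the finite type~$\mathbb{A}$ situation via windows; instead it first establishes the result for the constant Kupisch series $\mathbb{Z}\ell$ by identifying $A_{\mathbb{Z}\ell}^{(d)}$ with the repetitive category $\widehat{A_{\ell-1}^{(d)}}$ (\th\ref{prop:A-infty:lZ:equivalences}, \th\ref{thm:A-infty:lZ}), invoking results of Iyama--Oppermann on $\mathcal{U}(A_{\ell-1}^{(d)})$, and then descends from $A_{\mathbb{Z}\ell}^{(d)}$ to an arbitrary $\ell$-bounded $A_{\underline{\ell}}^{(d)}$ by iterated applications of the idempotent reduction \th\ref{lemma:d-CT:idempotent_reduction}, exactly as \th\ref{thm:An:l} descends from $A_n^{(d)}$ to $A_{\underline{\ell}}^{(d)}$ in the finite case.

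Your approach has the virtue of being more self-contained: it uses only \th\ref{thm:An:l} and avoids the repetitive-category machinery and the Iyama--Oppermann input entirely. The price is the locality principle, which (as you correctly anticipate) is the delicate point: near the left boundary of a window the projective covers over $A_{\underline{\ell},[a,b]}^{(d)}$ and over $A_{\underline{\ell}}^{(d)}$ differ, so one must take the window large enough to the left that the first $d{+}1$ terms of the relevant minimal projective resolution are unaffected, and observe that for $M(\bm{\lambda})$ with $\lambda_{d+1}$ below the support of the test module $N$ all the $\Ext$-groups in question vanish anyway. The paper's route sidesteps this bookkeeping by using \th\ref{lemma:d-CT:idempotent_reduction} directly, at the cost of needing the base case $A_{\mathbb{Z}\ell}^{(d)}$; it also has the structural advantage that the same orbit-category framework is reused for the type~$\widetilde{\mathbb{A}}$ construction in Section~\ref{sec:A-tilde}.
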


The proof of \th\ref{thm:Ail} is completely analogous to that of
\th\ref{thm:An:l}. The main difference is that we need to consider a suitable
replacement of the finite dimensional algebra $A_n^{(d)}$ for which the claim is
essentially known by results of Iyama and Opperman \cite{IO13} (some arguments
are needed in order to translate their results to our specific combinatorial
framework). The general case then follows by iterated application of
\th\ref{lemma:d-CT:idempotent_reduction}.

\begin{notation}
  To simplify the notation, we write\nomenclature[35]{$\mathbb{Z}\ell$}{the
    $\ell$-bounded Kupisch series $(\dots,\ell,\ell,\ell,\dots)$ of type
    $\mathbb{A}_\infty^\infty$}
  \[
    A_{\mathbb{Z}\ell}^{(d)}:=A_{(\dots,\ell,\ell,\ell,\dots)}^{(d)}.
  \]
\end{notation}

\begin{remark}
  \th\label{rmk:A-infty:lZ:mesh2} The category $A_{\mathbb{Z}\ell}^{(d+1)}$
  should be thought of as a higher dimensional analogue of the mesh category of
  type $\mathbb{ZA}_\ell$, see \th\ref{rmk:A-infty:lZ:mesh}.
\end{remark}

\begin{remark}
  The following observations should be compared with the discussion in
  \th\ref{not:A-infty:ab}. Let $\ell\geq2$ be an integer. As explained in
  Subsection \ref{subsec:locally_bounded_categories}, the canonical functor
  $A_\infty^{(d)}\to A_{\mathbb{Z}\ell}^{(d)}$ induces a fully faithful exact
  functor
  \[
    \mmod A_{\mathbb{Z}\ell}^{(d)}\hookrightarrow\mmod A_\infty^{(d)}
  \]
  which clearly restricts to a fully faithful functor
  \[
    \mathcal{M}_{\mathbb{Z}\ell}^{(d)}\hookrightarrow\mathcal{M}_\infty^{(d)}.
  \]
  Moreover,
  \[
    \textstyle\mmod A_\infty^{(d)}=\bigcup_{{\ell\geq2}}\mmod
    A_{\mathbb{Z}\ell}^{(d)}\qquad\text{and}\qquad\mathcal{M}_\infty^{(d)}=\bigcup_{\ell\geq2}\mathcal{M}_{\mathbb{Z}\ell}^{(d)}.
  \]
\end{remark}

For a finite dimensional algebra $A$, we denote its repetitive category by
$\widehat{A}$, see Chapter II in \cite{Hap88} for the
definition.\nomenclature[38]{$\widehat{A}$}{the repetitive category of a finite
  dimensional algebra $A$} The next result combines Theorem 6.12 in \cite{Iya11}
and Theorem 4.7 in \cite{IO13} applied to our specific setting.

\begin{proposition}\th\label{prop:A-infty:lZ:equivalences}
  There are equivalences of categories
  \[
    A_{\mathbb{Z}\ell}^{(d+1)}\cong\widehat{A_{\ell-1}^{(d+1)}}\qquad\text{and}\qquad\add
    A_{\mathbb{Z}\ell}^{(d+1)}\cong\mathcal{U}(A_{\ell}^{(d)}).
  \]
\end{proposition}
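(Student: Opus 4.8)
The plan is to prove the two displayed equivalences separately, treating the middle category $\add\widehat{A_{\ell-1}^{(d+1)}}$ as a bridge: I would obtain the leftmost equivalence from the explicit combinatorial description of a repetitive category (Theorem 6.12 in \cite{Iya11}) and the rightmost one from the higher analogue of Happel's theorem relating a repetitive category to a derived cluster-tilting subcategory (Theorem 4.7 in \cite{IO13}). Before invoking either I would record the standing facts that legitimise all three objects: by \th\ref{thm:An} the algebra $A_\ell^{(d)}$ is $d$-representation-finite $d$-hereditary, so by \th\ref{thm:d-RF-d-H:D} the subcategory $\mathcal{U}(A_\ell^{(d)})\subseteq\Db(\mmod A_\ell^{(d)})$ is a well-defined $d\mathbb{Z}$-cluster-tilting subcategory; the algebra $A_{\ell-1}^{(d+1)}$ is a genuine finite dimensional algebra (the $(d+1)$-cone of $A_{\ell-1}$), so its repetitive category $\widehat{A_{\ell-1}^{(d+1)}}$ is defined and self-injective; and $A_{\mathbb{Z}\ell}^{(d+1)}$ is locally bounded by \th\ref{prop:A-infty:l:locally_bounded}. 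All three are Krull--Schmidt $\mathbbm{k}$-linear additive categories with countably many indecomposables, and since the statement only concerns $\mathbbm{k}$-linear additive equivalences I would discard at the outset any triangulated enrichment of $\mathcal{U}(A_\ell^{(d)})$.

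For the leftmost equivalence I would match Gabriel quivers with relations. The quiver of $A_{\mathbb{Z}\ell}^{(d+1)}$, together with its defining relations, was computed in \th\ref{rmk:A-infty:lZ:mesh}: under the bijection $\bm{\lambda}\mapsto((\lambda_2-\lambda_1,\dots,\lambda_{d+1}-\lambda_1),\lambda_1)$ its vertices become $\mathbf{os}_\ell^d\times\mathbb{Z}$, the arrows split into \emph{horizontal} arrows $b_i$ and \emph{connecting} arrows $b_0$, and the relations fall into the three families (I), (II), (III). The task is then to exhibit a $\mathbbm{k}$-linear isomorphism of this quiver-with-relations onto the corresponding presentation of $\widehat{A_{\ell-1}^{(d+1)}}$, sending the horizontal arrows to the arrows internal to each copy of $A_{\ell-1}^{(d+1)}$, the connecting arrows $b_0$ to the arrows of the $DA_{\ell-1}^{(d+1)}$-bimodule that glues consecutive copies, the type-(I) relations to the relations of $A_{\ell-1}^{(d+1)}$, and the type-(II) and type-(III) relations to the commutativity imposed by the bimodule action. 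This is exactly Theorem 6.12 in \cite{Iya11} transported into the language of ordered sequences; once the dictionary of \th\ref{rmk:A-infty:lZ:mesh} is in place the verification is routine, so I do not expect difficulty here.

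For the rightmost equivalence I would apply Theorem 4.7 in \cite{IO13} to $\Lambda:=A_\ell^{(d)}$, which yields a $\mathbbm{k}$-linear additive equivalence between $\mathcal{U}(\Lambda)$ and the additive closure of the repetitive category of an associated finite dimensional algebra. The substantive point is to identify that algebra with $A_{\ell-1}^{(d+1)}$, and I expect the off-by-one in the index $\ell-1$ (as opposed to the naive $A_\ell^{(d+1)}=\End_{A_\ell^{(d)}}(M(A_\ell^{(d)}))$) together with the increase of the homological dimension from $d$ to $d+1$ to be the main obstacle. I would resolve it combinatorially: the indecomposables of $\mathcal{U}(A_\ell^{(d)})$ are the shifts $M(\bm{\mu})[di]$ with $\bm{\mu}\in\mathbf{os}_\ell^{d+1}$ (\th\ref{thm:d-RF-d-H:D} and \th\ref{thm:M_nd}), and I would compare their morphism and degree-$d$ extension spaces, computed through the interlacing formulae of \th\ref{prop:An:Hom_Ext} and the $\tau_d$-translate formula of \th\ref{prop:An:taud}, against the graded morphisms of $\widehat{A_{\ell-1}^{(d+1)}}$ read off from the presentation used in the previous paragraph. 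A secondary subtlety, which I would address explicitly rather than leave to a per-period count, is that the three natural $\mathbb{Z}$-periodicities involved---the translation of the cone $A_{\mathbb{Z}\ell}^{(d+1)}$, the Nakayama shift of $\widehat{A_{\ell-1}^{(d+1)}}$, and the action of $\nu_d$ on $\mathcal{U}(A_\ell^{(d)})$---need not share a fundamental domain, so the equivalences must be produced globally and their compatibility with these periodicities recorded.
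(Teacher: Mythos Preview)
Your plan for the rightmost equivalence is close to the paper's, but you are missing the single observation that dissolves the ``off-by-one'' difficulty: \th\ref{prop:An:Hom_Ext} immediately yields an isomorphism of algebras
\[
  \underline{\End}_{A_\ell^{(d)}}(M(A_\ell^{(d)}))\cong A_{\ell-1}^{(d+1)},
\]
since the indecomposable projective summands of $M(A_\ell^{(d)})$ are exactly the $M(\bm{\lambda})$ with $\lambda_1=0$, and quotienting them out leaves $\mathbf{os}_{\ell-1}^{d+1}$ (after shifting by one). This is precisely the input to Theorem~4.7 in \cite{IO13}, which then gives $\add\widehat{A_{\ell-1}^{(d+1)}}\cong\mathcal{U}(A_\ell^{(d)})$ directly. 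Your proposed route---matching $\Hom$ and $\Ext^d$ spaces between $M(\bm{\mu})[di]$'s against graded morphisms in the repetitive category---would eventually arrive at the same place, but the stable endomorphism algebra identification is both the conceptual reason for the index shift and the literal hypothesis of the theorem you want to cite.

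The leftmost equivalence has a genuine gap. The bijection from \th\ref{rmk:A-infty:lZ:mesh} slices $A_{\mathbb{Z}\ell}^{(d+1)}$ along the $\tau_d$-action (fixing $\lambda_1$), and each slice has vertex set $\mathbf{os}_\ell^{d}$, of cardinality $\binom{\ell+d-1}{d}$. The repetitive category $\widehat{A_{\ell-1}^{(d+1)}}$ is sliced along its Nakayama automorphism, and each slice has vertex set $\mathbf{os}_{\ell-1}^{d+1}$, of cardinality $\binom{\ell+d-1}{d+1}$. These cardinalities differ for $d\geq 2$, so your proposed matching ``horizontal arrows $\mapsto$ arrows internal to each copy of $A_{\ell-1}^{(d+1)}$, connecting arrows $b_0\mapsto$ bimodule arrows'' cannot be a bijection on vertices, let alone on arrows. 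The paper instead introduces the automorphism
\[
  \mathbb{S}(\bm{\lambda}):=(\lambda_2,\dots,\lambda_{d+1},\lambda_1+\ell-1)
\]
of $\mathbf{os}_{\mathbb{Z}\ell}^{d+1}$, checks that $\mathbf{os}_{\ell-1}^{d+1}$ is a fundamental domain for $\mathbb{S}$, observes that the only arrows crossing between consecutive $\mathbb{S}$-copies are the $a_{d+1}$-arrows $\bm{\lambda}\to\bm{\lambda}+e_{d+1}=\mathbb{S}(0,\lambda_1,\dots,\lambda_d)$ with $\lambda_{d+1}=\ell-2$, and then matches this presentation against the quiver-with-relations description of $\widehat{A_{\ell-1}^{(d+1)}}$ obtained from the main theorem of \cite{Sch99}. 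In short: the correct $\mathbb{Z}$-action to compare with the Nakayama shift is $\mathbb{S}$, not $\tau_d$.
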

\begin{proof}
  We first observe that \th\ref{prop:An:Hom_Ext} readily implies that there are
  isomorphisms of algebras
  \[
    \underline{\End}_{A_{\ell}^{(d)}}(M_{\ell}^{(d)})\cong\End_{A_\ell^{(d)}}(\bigoplus\setP{M(\bm{\lambda})}{\bm{\lambda}\in\mathbf{os}^{d+1}:\lambda_1>0})\cong A_{\ell-1}^{(d+1)}.
  \]
  Indeed, it suffices to note that if
  $\bm{\lambda},\bm{\mu}\in\mathbf{os}^{d+1}$ are such that $\lambda_1=0$ and
  $\mu_1>0$, then $\bm{\mu}\not\rightsquigarrow\bm{\lambda}$. Then, according to
  Theorem 4.7 in \cite{IO13}, there is an equivalence of additive categories
  \[
    \add\widehat{A_{\ell-1}^{(d+1)}}\cong\mathcal{U}(A_{\ell}^{(d)}).
  \]
  Therefore we only need to prove that there is an equivalence of categories
  \[
    A_{\mathbb{Z}\ell}^{(d+1)}\cong\widehat{A_{\ell-1}^{(d+1)}}.
  \]
  Although this is a straightforward verification, we provide some details for
  the convenience of the reader. We use the main theorem in \cite{Sch99}, which
  describes the quiver with relations of the repetitive category of a finite
  dimensional algebra defined by a quiver with relations which are zero
  relations or commutativity relations.

  Motivated by Lemma 4.9 in \cite{IO13}, for
  $\bm{\lambda}\in\mathbf{os}_{\mathbb{Z}\ell}^{d+1}$ we define
  \[
    \mathbb{S}(\bm{\lambda}):=(\lambda_2,\dots,\lambda_{d+1},\lambda_1+\ell-1).
  \]
  Note that
  \[
    (\lambda_1+\ell-1)-\lambda_{d+1}=\ell-\len(\bm{\lambda})\geq0
  \]
  and
  \[
    \len(\mathbb{S}(\bm{\lambda}))=(\lambda_1+\ell-1)-\lambda_2+1=\lambda_1+\ell-\lambda_2\leq\ell,
  \]
  therefore $\mathbb{S}(\bm{\lambda})\in\mathbf{os}_{\mathbb{Z}\ell}^{d+1}$. It
  is easy to verify that this defines a bijection
  \[
    \mathbb{S}\colon\mathbf{os}_{\mathbb{Z}\ell}^{d+1}\to\mathbf{os}_{\mathbb{Z}\ell}^{d+1}
  \]
  which induces an automorphism
  \[
    \mathbb{S}\colon A_{\mathbb{Z}\ell}^{(d+1)}\to A_{\mathbb{Z}\ell}^{(d+1)}.
  \]
  It is also straightforward to verify that $\mathbf{os}_{\ell-1}^{d+1}$ is a
  fundamental domain for the action of $\mathbb{S}$ on
  $\mathbf{os}_{\mathbb{Z}\ell}^{d+1}$, by which we mean that the function
  \begin{align*}
    \mathbf{os}_{\ell-1}^{d+1}\times\mathbb{Z}&\to\mathbf{os}_{\mathbb{Z}\ell}^{d+1}\\
    (\bm{\lambda},i)&\mapsto\mathbb{S}^i(\bm{\lambda})
  \end{align*}
  is bijective. We also note that if $\bm{\lambda}\in\mathbf{os}_{\ell-1}^{d+1}$
  is such that $\lambda_{d+1}=\ell-2$, then
  \[
    \bm{\lambda}+e_{d+1}=(\lambda_1,\dots,\lambda_d,\ell-1)=\mathbb{S}(0,\lambda_1,\dots,\lambda_d).
  \]

  The above discussion shows that the Gabriel quiver of
  $A_{\mathbb{Z}\ell}^{(d+1)}$ can be equivalently described as the quiver
  obtained by gluing together a $\mathbb{Z}$-indexed collection of copies of the
  Gabriel quiver of $A_{\ell-1}^{(d+1)}$, the $i$-th copy joined to the
  $(i+1)$-th by `connecting arrows'
  \[
    \mathbb{S}^i(\bm{\lambda})\to\mathbb{S}^i(\bm{\lambda})+e_{d+1}=\mathbb{S}^i(0,\lambda_1,\dots,\lambda_d)
  \]
  for each $\bm{\lambda}\in\mathbf{os}_{\ell-1}^{d+1}$ such that
  $\lambda_{d+1}=\ell-2$. Moreover, the proof of
  \th\ref{prop:An:proj_inj}\eqref{it:A_nd-projectives} shows that
  $(0,\lambda_1,\dots,\lambda_d)$ is the initial vertex of any longest path in
  $A_{\ell-1}^{(d)}$ ending at $\bm{\lambda}$.

  \begin{figure}
    \begin{center}
    \includegraphics[width=\textwidth]{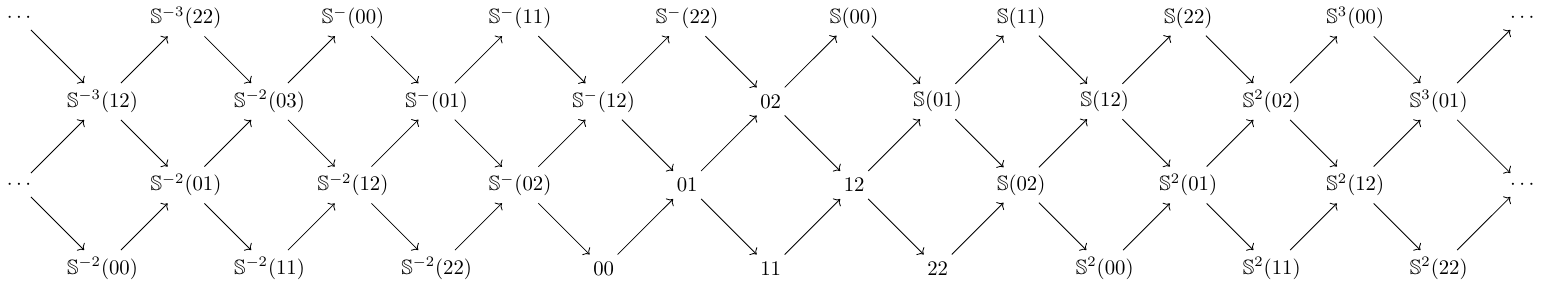}\vskip2em
    \includegraphics[width=\textwidth]{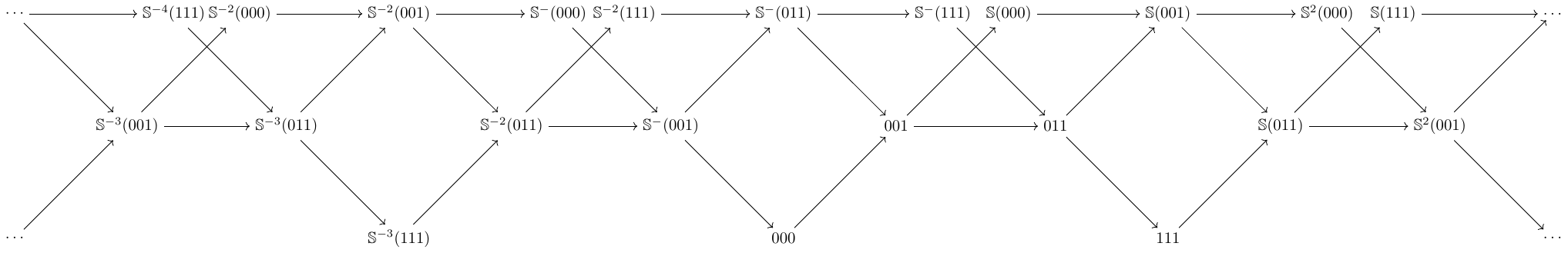}
    \end{center}
    \caption{The isomorphisms $A_{\mathbb{Z}4}^{(2)}\cong\widehat{A_3^{(2)}}$
      (top) and $A_{\mathbb{Z}3}^{(3)}\cong\widehat{A_2^{(3)}}$ (bottom), see
      \th\ref{prop:A-infty:lZ:equivalences}.}
    \label{fig:repetitive}
  \end{figure}
  
  It is now straightforward to verify that the quiver with relations
  $\widehat{A_{\ell-1}^{(d+1)}}$ obtained by applying the main theorem in
  \cite{Sch99} to the finite dimensional algebra $A_{\ell-1}^{(d+1)}$ agrees
  with the above description of $A_{\mathbb{Z}\ell}^{(d+1)}$ (see Figure
  \ref{fig:repetitive} for a couple of examples). We leave the remaining details
  to the reader.
\end{proof}

\begin{remark}
  If $d=1$ and $\ell\ge2$ is arbitrary, then the equivalence
  \[
    \add A_{\mathbb{Z}\ell}^{(2)}\cong\mathcal{U}(A_\ell^{(1)})=\Db(\mmod
    A_{\ell}^{(1)})
  \]
  in \th\ref{prop:A-infty:lZ:equivalences} is a special case of a result of
  Happel, see Proposition 5.6 in \cite{Hap88}.
\end{remark}

The next theorem is essentially a consequence of
\th\ref{prop:A-infty:lZ:equivalences} together with a special case of Corollary
4.10 in \cite{IO13}. It settles \th\ref{thm:Ail} in the case
$\underline{\ell}=\mathbb{Z}\ell$.

\begin{theorem}
  \th\label{thm:A-infty:lZ} The following statements hold.
  \begin{enumerate}
  \item\label{thm:A-infty:lZ:it:selfinjective} The locally bounded category
    $A_{\mathbb{Z}\ell}^{(d)}$ is selfinjective. Moreover,
    \[
      \proj A_{\mathbb{Z}\ell}^{(d)}=\add\setP{M(\bm{\lambda})\in\mmod
        A_{\mathbb{Z}\ell}^{(d)}}{\bm{\lambda}\in\mathbf{os}_{\mathbb{Z}\ell}^{d+1}\text{
          and }\len(\bm{\lambda})=\ell}.
    \]
  \item\label{thm:A-infty:lZ:it:nakayama} Let
    $\bm{\lambda}\in\mathbf{os}_{\mathbb{Z}\ell}^{d+1}$ be such that
    $\len(\bm{\lambda})=\ell$ and $M(\bm{\lambda})$ the associated
    indecomposable projective $A_{\mathbb{Z}\ell}^{(d)}$-module. Then, there is
    an isomorphism
    \[
      \nu(M(\bm{\lambda}))\cong M(\lambda_2,\dots,\lambda_{d+1},\lambda_2+\ell-1).
    \]
  \item\label{thm:A-infty:lZ:it:taud} Let
    $\bm{\lambda}\in\mathbf{os}_{\mathbb{Z}\ell}^{d+1}$ be such that
    $\len(\bm{\lambda})<\ell$. Then, there are isomorphisms of
    $A_{\mathbb{Z}\ell}^{(d)}$-modules
    \[
      \tau_d(M(\bm{\lambda}))\cong
      M(\tau_d(\bm{\lambda}))\quad\text{and}\quad\tau_d^-(M(\bm{\lambda}))\cong
      M(\tau_d^-(\bm{\lambda})).
    \]
  \item\label{thm:A-infty:lZ:it:equivalence} There is an equivalence of
    triangulated categories
    \[
      \underline{\mmod}\,A_{\mathbb{Z}\ell}^{(d)}\xrightarrow{\sim}\Db(\mmod
      A_{\ell-1}^{(d)})
    \]
    which restricts to an equivalence between the subcategories
    \[
      \underline{\mathcal{M}}_{\mathbb{Z}\ell}^{(d)}\xrightarrow{\sim}\mathcal{U}(A_{\ell-1}^{(d)}).
    \]
    In particular, $\mathcal{M}_{\mathbb{Z}\ell}^{(d)}$ is a
    $d\mathbb{Z}$-cluster-tilting subcategory of $\mmod
    A_{\mathbb{Z}\ell}^{(d)}$.
  \item\label{thm:A-infty:lZ:it:taud2} For every simple
    $A_{\mathbb{Z}\ell}^{(d)}$-module $S\in\mathcal{M}_{\mathbb{Z}\ell}^{(d)}$
    the $A_{\mathbb{Z}\ell}^{(d)}$-module $\tau_d(S)$ is simple. Moreover, for
    every pair of integers $i$ and $j$ there is an isomorphism
    $\tau_d^i(S)\cong\tau_d^j(S)$ if and only if $i=j$.
  \end{enumerate}
\end{theorem}
\begin{proof}
  \eqref{thm:A-infty:lZ:it:selfinjective} By
  \th\ref{prop:A-infty:lZ:equivalences} there is an equivalence of categories
  \[
    A_{\mathbb{Z}\ell}^{(d)}\cong\widehat{A_{\ell-1}^{(d)}}.
  \]
  It is a general fact that the repetitive category of a finite dimensional
  algebra is a locally bounded selfinjective category, see Lemma 2.2 in
  \cite{Hap88}. This proves the first claim.
  
  Now we prove the second claim, concerning the classification of the
  projective(-injective) $A_{\mathbb{Z}\ell}^{(d)}$-modules. Let
  $\bm{\lambda}\in\mathbf{os}_{\mathbb{Z}\ell}^d$. We claim that the
  indecomposable projective $A_{\mathbb{Z}\ell}^{(d)}$-module at the vertex
  $\bm{\lambda}$ is precisely
  \[
    M(\lambda_d-\ell+1,\lambda_1,\dots,\lambda_d),
  \]
  which clearly has Loewy length $\ell$. First, observe that
  \[
    \lambda_d-\ell+1\leq\lambda_1
  \]
  if and only if
  \[
    \lambda_1-(\lambda_d-\ell+1)=\ell-\len(\bm{\lambda})\geq0,
  \]
  which holds by assumption. Thus
  $M(\lambda_d-\ell+1,\lambda_1,\dots,\lambda_d)$ is a well defined
  $A_\infty^{(d)}$-module. In order to show that it is moreover an
  $A_{\mathbb{Z}\ell}^{(d)}$-module we need to prove that each
  $\bm{\mu}\in\mathbf{os}^d$ such that
  \[
    \lambda_d-\ell+1\leq\mu_1\leq\lambda_1\leq\cdots\leq\mu_d\leq\lambda_d
  \]
  also satisfies $\len(\bm{\mu})\leq\ell$. Indeed, given such $\bm{\mu}$ we have
  \[
    \len(\bm{\mu})=\mu_d-\mu_1+1\leq\lambda_d-(\lambda_d-\ell+1)+1=\ell.
  \]
  To prove that $M(\bm{\lambda})$ is projective as an
  $A_{\mathbb{Z}\ell}^{(d)}$-module it suffices to show that each
  $\bm{\mu}\in\mathbf{os}_{\mathbb{Z}\ell}^d$ such that
  $\bm{\mu}\rightsquigarrow\bm{\lambda}$ and
  $[\bm{\mu},\bm{\lambda}]\subset\mathbf{os}_{\mathbb{Z}\ell}^d$ also satisfies
  \[
    \lambda_d-\ell+1\leq\mu_1\leq\lambda_1\leq\cdots\leq\mu_d\leq\lambda_d.
  \]
  Such $\bm{\mu}$ by definition satisfies
  \[
    \mu_1\leq\lambda_1\leq\cdots\leq\mu_d\leq\lambda_d.
  \]
  Suppose that $\lambda_d-\ell+1>\mu_1$ so that the tuple
  \[
    \bm{\rho}:=(\lambda_d-\ell,\lambda_2,\dots,\lambda_d)\in\mathbf{os}^d
  \]
  satisfies $\bm{\rho}\in[\bm{\mu},\bm{\lambda}]$. Then, the strict inequality
  \[
    \len(\bm{\rho})=\lambda_d-(\lambda_d-\ell)+1=\ell+1>\ell
  \]
  contradicts the assumption
  $[\bm{\mu},\bm{\lambda}]\subset\mathbf{os}_{\mathbb{Z}\ell}^d$. Therefore
  $\lambda_d-\ell+1\leq\mu_1$, as required.

  \eqref{thm:A-infty:lZ:it:nakayama} Let
  $\bm{\lambda}\in\mathbf{os}_{\mathbb{Z}\ell}^{d+1}$ be such that
  $\len(\bm{\lambda})=\ell$ and $M(\bm{\lambda})$ the associated projective
  $A_{\mathbb{Z}\ell}^{(d)}$-module. Then $\nu M(\bm{\lambda})$ is the
  indecomposable injective $A_{\mathbb{Z}\ell}^{(d)}$-module whose simple socle
  is the simple top $S(\lambda_2,\dots,\lambda_{d+1})$ of $M(\bm{\lambda})$.
  Thus, in view of the proof of \eqref{thm:A-infty:lZ:it:selfinjective}, we obtain the
  required isomorphism
        \[
      \nu(M(\bm{\lambda}))\cong M(\lambda_2,\dots,\lambda_{d+1},\lambda_2+\ell-1).
    \]
  
    \eqref{thm:A-infty:lZ:it:taud} Let
    $\bm{\lambda}\in\mathbf{os}_{\mathbb{Z}\ell}^{d+1}$ be such that
    $\len(\bm{\lambda})<\ell$. A direct calculation (which we leave to the
    reader) shows that the morphism $P^d\to P^{d-1}$ between the terms of degree
    $d$ and $d-1$ in the minimal projective resolution of $M(\bm{\lambda})$ is
    given by the (essentially unique) non-zero morphism
    \[
      \begin{tikzcd}
        M(\lambda_d-\ell,\lambda_1-1,\dots,\lambda_{d-1}-1,\lambda_d-1)\rar&
        M(\lambda_{d+1}-\ell+1,\lambda_1-1,\dots,\lambda_{d-1}-1,\lambda_{d+1});
      \end{tikzcd}
    \]
    note that these modules are well defined since $\len(\bm{\lambda})<\ell$ and
    that they are indeed projective by \eqref{thm:A-infty:lZ:it:selfinjective}.
    Since $P^d\to P^{d-1}$ is the minimal projective presentation of
    $\Omega^{d-1}(M(\bm{\lambda}))$, by definition $\tau_d M(\bm{\lambda})$ is
    the kernel of the morphism $\nu P^d\to \nu P^{d-1}$ which, in view of
    \eqref{thm:A-infty:lZ:it:nakayama}, is readily seen to be isomorphic to the
    interval $A_{\mathbb{Z}\ell}^{(d)}$-module
    \[
      M[(\lambda_1-1,\dots,\lambda_d-1),(\lambda_2-1,\dots,\lambda_{d+1}-1)]
    \]
    which is is precisely $M(\tau_d(\bm{\lambda}))$.
  
  \eqref{thm:A-infty:lZ:it:equivalence} We follow the proof of Corollary 4.10 in
  \cite{IO13}. The general theory of repetitive categories shows that there is
  an equivalence of triangulated categories
  \[
    \underline{\mmod}\,A_{\mathbb{Z}\ell}^{(d)}=\underline{\mmod}\,\widehat{A_{\ell-1}^{(d)}}\xrightarrow{\sim}\Db(\mmod
    A_{\ell-1}^{(d)})
  \]
  induced by the tilting object
  \[
    T:=\bigoplus\setP{{M(0,\lambda_1,\dots,\lambda_d)}}{\bm{\lambda}\in\mathbf{os}_{\ell-1}^d}\in\underline{\mmod}\,
    A_{\mathbb{Z}\ell}^{(d)},
  \]
  where we use the explicit isomorphism
  \[
    A_{\mathbb{Z}\ell}^{(d)}\cong\widehat{A_{\ell-1}^{(d)}}
  \]
  given in the proof of \th\ref{prop:A-infty:lZ:equivalences} to identify
  $A_{\ell-1}^{(d)}$ with $T$, see for example Section II.4 in \cite{Hap88}.
  Given that the canonical functor $\mmod
  A_{\mathbb{Z}\ell}^{(d)}\to\underline{\mmod}A_{\mathbb{Z}\ell}^{(d)}$ induces
  a bijective correspondence between $d$-cluster-tilting subcategories of $\mmod
  A_{\mathbb{Z}\ell}^{(d)}$ and those of
  $\underline{\mmod}A_{\mathbb{Z}\ell}^{(d)}$, it is enough to show that
  \[
    \underline{\mathcal{M}}_{\mathbb{Z}\ell}^{(d)}=\add\setP{\mathbb{S}_d^i(T)\in\underline{\mmod}\,A_{\mathbb{Z}\ell}^{(d)}}{i\in\mathbb{Z}}
  \]
  where $\mathbb{S}_d=\mathbb{S}\circ\Omega^{d}$ and $\mathbb{S}$ is the Serre
  functor on $\underline{\mmod}\,A_{\mathbb{Z}\ell}^{(d)}$, see
  \th\ref{not:d-RF_d-H:D}, \th\ref{thm:d-RF-d-H:D} and recall that $\Omega$ is
  the inverse of the suspension functor in $\underline{\mmod}\,A_{\mathbb{Z}\ell}^{(d)}$.
  Therefore, in view of statement \eqref{thm:A-infty:lZ:it:taud}, it is enough
  to prove that there are isomorphisms of functors $\mathbb{S}_d\cong\tau_d$.
  But this is clear since $\mathbb{S}=\tau\circ\Omega^-$ implies
  \[
    \mathbb{S}_d=\mathbb{S}\circ\Omega^d\cong(\tau\circ\Omega^-)\circ\Omega^d\cong
    \tau\circ\Omega^{d-1}=\tau_d.
  \]
  This proves the claim.
  
  \eqref{thm:A-infty:lZ:it:taud2} The claim follows from the explicit formula
  for the higher Auslander--Reiten translate given in statement
  \eqref{thm:A-infty:lZ:it:taud}.
\end{proof}

We conclude this subsection with a sketch of the proof of \th\ref{thm:Ail}. The
argumentation is essentially the same as in the proof of \th\ref{thm:An}.

\begin{proof}[Sketch of proof of \th\ref{thm:Ail}]
  Let $d$ be a positive integer, $\ell\geq2$, and $\underline{\ell}$ an
  $\ell$-bounded Kupisch series of type $\mathbb{A}_\infty^\infty$. The proof of
  \th\ref{thm:Ail} is obtained using \th\ref{lemma:d-CT:idempotent_reduction},
  by constructing $A_{\underline{\ell}}^{(d)}$ as an iterated idempotent
  quotient of $A_{\mathbb{Z}\ell}^{(d)}$ for which the claim is known by
  \th\ref{thm:A-infty:lZ}. We leave the details to the reader.
\end{proof}

\subsection{Proof of {\th\ref{thm:mesh}}}
\label{subsec:proof:thm:mesh}

We are now ready to prove \th\ref{thm:mesh}.

\begin{proof}[Proof of \th\ref{thm:mesh}]
  \eqref{thm:mesh:it:gldimd} Let $i>d$, $M$ and $N$ two
  $A_\infty^{(d)}$-modules, and
  \[
    \delta\colon 0\to N\to L^1\to\cdots\to L^i\to M\to0
  \]
  an exact sequence in $\mmod A_\infty^{(d)}$. We need to prove that this
  sequence is trivial in the sense of Yoneda. For this, choose integers $a\leq
  b$ such that $\delta$ is an exact sequence of $A_{[a,b]}^{(d)}$-modules (this
  is possible since $\delta$ involves only finitely many finite dimensional
  $A_\infty^{(d)}$-modules). Since $A_{[a,b]}^{(d)}\cong A_n^{(d)}$ has global
  dimension at most $d$ (see \th\ref{thm:An}), the exact sequence $\delta$ is
  trivial as an exact sequence in $\mmod A_{[a,b]}^{(d)}$ whence also as an
  exact sequence in $\mmod A_\infty^{(d)}$. This proves the claim.

  \eqref{thm:mesh:it:d-CT} We divide the proof into three steps.

  \emph{Step 1: $\mathcal{M}$ is a generating-cogenerating functorially finite
    subcategory of $\mmod A_\infty^{(d)}$.} Let $N$ be a finite dimensional
  $A_\infty^{(d)}$-module and $\ell\geq2$ an integer such that $N\in\mmod
  A_{\mathbb{Z}\ell}^{(d)}$. Since $\mathcal{M}_{\mathbb{Z}\ell}^{(d)}$ is a
  generating(-cogenerating) functorially finite subcategory of $\mmod
  A_{\mathbb{Z}\ell}^{(d)}$, there exists a surjective right
  $\mathcal{M}_{\mathbb{Z}\ell}^{(d)}$-approximation $f\colon M\to N$ of $N$. We
  claim that $f$ is a right $\mathcal{M}_\infty^{(d)}$-approximation of $N$.
  Indeed, let $\bm{\lambda}\in\mathbf{os}^{d+1}$ and $g\colon M(\bm{\lambda})\to
  N$ a morphism of $A_\infty^{(d)}$-modules. If
  $\bm{\lambda}\in\mathbf{os}_{\mathbb{Z}\ell}^{d+1}$, then we already know that
  $g$ factors through $f$. Suppose that
  $\bm{\lambda}\not\in\mathbf{os}_{\mathbb{Z}\ell}^{d+1}$, that is
  $\ell':=\len(\bm{\lambda})>\ell$. According to
  \th\ref{thm:A-infty:lZ}\eqref{thm:A-infty:lZ:it:selfinjective},
  $M(\bm{\lambda})$ is a projective-injective
  $A_{\mathbb{Z}\ell'}^{(d)}$-module, whence $g$ factors through $f$, as the
  latter is an epimorphism between $A_{\mathbb{Z}\ell'}^{(d)}$-modules. This
  proves that $f$ is a right $\mathcal{M}_\infty^{(d)}$-approximation of $N$.
  One can prove the existence of injective left
  $\mathcal{M}_\infty^{(d)}$-approximations dually.

  \emph{Step 2: $\mathcal{M}_\infty^{(d)}$ is a $d$-rigid subcategory of $\mmod
    A_{\infty}^{(d)}$.} Let $\bm{\lambda},\bm{\mu}\in\mathbf{os}^{d+1}$. We need
  to prove that $\Ext_{A_\infty^{(d)}}^i(M(\bm{\lambda}),M(\bm{\mu}))$ vanishes
  for all $i\in\set{1,\dots,d-1}$. For this, let $i\in\set{1,\dots,d-1}$ and
  \[
    \delta\colon0\to M(\bm{\mu})\to L^1\to\cdots\to L^i\to M(\bm{\lambda})\to0
  \]
  be an exact sequence in $\mmod A_\infty^{(d)}$. Let $a\leq b$ be integers such
  that all of the modules involved in $\delta$ are $A_{[a,b]}^{(d)}$-modules.
  Since $\mathcal{M}_{[a,b]}^{(d)}$ is a $d$-rigid subcategory of $\mmod
  A_{[a,b]}^{(d)}$, the above sequence is trivial in
  $\Ext_{A_{[a,b]}^{(d)}}^i(M(\bm{\lambda}),M(\bm{\mu}))$ in the sense of Yoneda
  whence it is also trivial as an element of
  $\Ext_{A_\infty^{(d)}}^i(M(\bm{\lambda}),M(\bm{\mu}))$, as required.
  
  \emph{Step 3: $\mathcal{M}_\infty^{(d)}$ is a $d$-cluster-tilting subcategory
    of $\mmod A_\infty^{(d)}$.} Let $N$ be an $A_\infty^{(d)}$-module such that
  for each $i\in\set{1,\dots,d-1}$ there is an isomorphism
  \[
    \Ext_{A_\infty^{(d)}}^i(\mathcal{M}_\infty^{(d)},N)\cong0.
  \]
  We need to prove that $N\in\mathcal{M}_\infty^{(d)}$. Let $a\leq b$ be
  integers such that $N$ is an $A_{[a,b]}^{(d)}$-module. Since
  $\mathcal{M}_{[a,b]}^{(d)}$ is a $d$-cluster-tilting subcategory of $\mmod
  A_{[a,b]}^{(d)}$, it is enough to note that, if
  $\bm{\lambda}\in\mathbf{os}_{[a,b]}^{d+1}$, then for each
  $i\in\set{1,\dots,d-1}$ there is an isomorphism
  \[
    \Ext_{A_{[a,b]}^{(d)}}^i(M(\bm{\lambda}),N)\cong\Ext_{A_\infty^{(d)}}^i(M(\bm{\lambda}),N)=0,
  \]
  (see \th\ref{prop:A-infty:Ext}). This shows that
  $N\in\mathcal{M}_{[a,b]}\subset\mathcal{M}_\infty^{(d)}$. Dually, one can show
  that if for each $i\in\set{1,\dots,d-1}$ there is an isomorphism
  \[
    \Ext_{A_\infty^{(d)}}^i(N,\mathcal{M}_\infty^{(d)})\cong0,
  \]
  then $N\in\mathcal{M}_\infty^{(d)}$.
  
  \eqref{thm:mesh:it:d-almost-split}, \eqref{thm:mesh:it:taud:isos}, and
  \eqref{thm:mesh:it:taud} These claims are all consequences of the
  corresponding statements in \th\ref{thm:A-infty:lZ}. Indeed, for
  $\bm{\lambda}\in\mathbf{os}^{d+1}$ let $\ell:=\len(\bm{\lambda})+1$ so that, by
  \th\ref{thm:A-infty:lZ}\eqref{thm:A-infty:lZ:it:selfinjective}, the module
  $M(\bm{\lambda})$ is neither projective nor injective as an
  $A_{\mathbb{Z}\ell}^{(d)}$-module. We claim that the $d$-almost split sequence
  \[
    \delta\colon0\to \tau_d(M(\bm{\lambda}))\to L^1\to\cdots\to
    L^d\xrightarrow{f} M(\bm{\lambda})\to0
  \]
  in $\mathcal{M}_{\mathbb{Z}\ell}^{(d)}$ is still $d$-almost split in
  $\mathcal{M}_\infty^{(d)}$. It is enough to prove that $f$ is right almost
  split in $\mathcal{M}_\infty^{(d)}$. Let $\bm{\mu}\in\mathbf{os}^{d+1}$ and
  $g\colon M(\bm{\mu})\to M(\bm{\lambda})$ a non-zero morphism which is not a
  retraction. If $\len(\bm{\mu})\leq\ell$, then we already know that $g$ factors
  through $f$. If $\len(\bm{\mu})>\ell$, then
  \th\ref{thm:A-infty:lZ}\eqref{thm:A-infty:lZ:it:selfinjective} shows that
  $M(\bm{\mu})$ is projective as $A_{\mathbb{Z}\ell'}^{(d)}$-module where
  $\ell':=\len(\bm{\mu})$. Therefore $g$ factors through $f$, since the latter
  is an epimorphism between $A_{\mathbb{Z}\ell'}^{(d)}$-modules. This shows that
  $\delta$ is a $d$-almost split sequence in $\mathcal{M}_\infty^{(d)}$ and also
  that
  \[
    \tau_d(M(\bm{\lambda}))=M(\tau_d(\bm{\lambda}))
  \]
  is the $d$-Auslander--Reiten translate of $M(\bm{\lambda})$ in
  $\mathcal{M}_\infty^{(d)}$. The remaining statements follow immediately from
  this identity. This finishes the proof of the theorem.
\end{proof}


\section{The higher Nakayama algebras of type $\widetilde{\mathbb{A}}$}
\label{sec:A-tilde}

Let $d$ be a positive integer. In this section we construct the $d$-Nakayama
algebras of type $\widetilde{\mathbb{A}}$. We show that they belong to the class
of $d\mathbb{Z}$-representation-finite algebras and establish their basic
properties.

\subsection{The selfinjective higher Nakayama algebras}

We begin by constructing certain selfinjective
$d\mathbb{Z}$-representation-finite algebras which should be thought of as
higher dimensional analogues of the selfinjective Nakayama algebras. Our
construction uses results of Darp\"o and Iyama \cite{DI17}.

We begin by recalling the definition of the orbit category of a category with a
free $\mathbb{Z}$-action, see for example page 1136 in \cite{MOS09}.

\begin{definition}
  \th\label{def:orbit_category} Let $\mathcal{A}$ be a small category and
  $\varphi\colon\mathcal{A}\to\mathcal{A}$ an automorphism acting freely on the
  set of objects of $\mathcal{A}$ (we say that $\mathcal{A}$ \emph{has a free
    $\mathbb{Z}$-action}). Given an object $x\in\mathcal{A}$ we denote its
  $\varphi$-orbit by
  \[
    \varphi^\mathbb{Z}(x):=\setP{\varphi^i(x)\in\mathcal{A}}{i\in\mathbb{Z}}.
  \]
  The \emph{orbit category $\mathcal{A}/\varphi$} has objects the set of
  $\varphi$-orbits of objects in $\mathcal{A}$. The vector space of morphisms
  $\varphi^{\mathbb{Z}}(x)\to\varphi^{\mathbb{Z}}(y)$ is defined by the short
  exact sequence
  \[
    [ f-\varphi^k(f)\ |\
    k\in\mathbb{Z}]\hookrightarrow\bigoplus_{i,j\in\mathbb{Z}}\mathcal{A}(\varphi^{i}(x),\varphi^{j}(y))\twoheadrightarrow(\mathcal{A}/\varphi)(\varphi^{\mathbb{Z}}(x),\varphi^{\mathbb{Z}}(y)).
  \]
\end{definition}

\begin{remark}
  \th\label{rmk:orbit_category} Let $\mathcal{A}$ be a category and
  $\varphi\colon\mathcal{A}\to\mathcal{A}$ an automorphism acting freely on the
  set of objects of $\mathcal{A}$. Given $x,y\in\mathcal{A}$, there is a
  canonical isomorphism of vector spaces
  \[
    (\mathcal{A}/\varphi)(\varphi^{\mathbb{Z}}(x),\varphi^{\mathbb{Z}}(y))\cong\bigoplus_{i\in\mathbb{Z}}\mathcal{A}(x,\varphi^i(y)).
  \]
\end{remark}

\begin{example}
  Consider the category $A_\infty^{(1)}$, that is the path category of the
  infinite quiver
  \[
    \cdots\to-1\to0\to1\to\cdots.
  \]
  The quiver automorphism $\tau_0(i\to j)= (i-1)\to(j-1)$ extends to an
  automorphism of $A_\infty^{(1)}$ acting freely on the set of objects of
  $A_\infty^{(1)}$. Therefore for every positive integer $n$ we can form the
  orbit category
  \[
    \widetilde{A}_{n-1}^{(1)}:=A_\infty^{(1)}/\tau_0^n
  \]
  which can be identified with the path algebra of the
  $\widetilde{\mathbb{A}}_{n-1}$ quiver
  \begin{center}
    \begin{tikzpicture}
      \node (0) at (180:1) {$0$}; \node (1) at (90:1) {$1$}; \node (2) at (0:1)
      {$\vdots$}; \node (n1) at (270:1) {$n-1$};
      \path[commutative diagrams/.cd, every arrow] (0) edge[bend left=20] (1)
      (1) edge[bend left=20] (2) (2) edge[bend left=20] (n1) (n1) edge[bend
      left=20] (0);
    \end{tikzpicture}
  \end{center}
  By definition, the basic Nakayama algebras of type
  $\widetilde{\mathbb{A}}_{n-1}$ are the admissible quotients of
  $\widetilde{A}_{n-1}^{(1)}$. Note that such admissible quotients correspond to those
  admissible quotients of $A_\infty^{(1)}$ which are stable under the action of
  $\tau_0^n$. These observations are the motivation for our construction of the
  higher Nakayama algebras of type $\widetilde{\mathbb{A}}$.
\end{example}

\begin{notation}
  Let $\mathcal{A}$ be a locally bounded category and
  $\varphi\colon\mathcal{A}\to\mathcal{A}$ an automorphism acting freely on the
  set of objects of $\mathcal{A}$. The canonical functor
  $F\colon\mathcal{A}\to\mathcal{A}/\varphi$ induces an exact functor
  \[
    F^*\colon\mmod(\mathcal{A}/\varphi)\to\mmod\mathcal{A}
  \]
  which admits an exact left adjoint
  \[
    F_*\colon\mmod\mathcal{A}\to\mmod(\mathcal{A}/\varphi),
  \]
  see for example Section 14.3 in \cite{GR97}.
\end{notation}

We introduce the following modification of \th\ref{def:KS:An}.

\begin{definition}
  \th\label{def:KS:A-tilde} Let $n$ be a positive integer and
  $\underline{\ell}=(\ell_0,\ell_1,\dots,\ell_{n-1})$ be a tuple of positive
  integer numbers. We say that $\underline{\ell}$ is a \emph{(connected) Kupisch
    series of type $\widetilde{\mathbb{A}}_{n-1}$} if for all
  $i\in\mathbb{Z}/n\mathbb{Z}$ there are inequalities $2\leq\ell_{i}\leq
  \ell_{i-1}+1$. Two Kupisch series $\underline{\ell}$ and $\underline{\ell}'$
  of type $\widetilde{\mathbb{A}}_{n-1}$ are \emph{equivalent} if there exists a
  cyclic permutation $\sigma\in\mathfrak{S}_n$ which is either trivial or does
  not have fixed points such that
  \[
    \underline{\ell}'=(\ell_{\sigma(0)},\ell_{\sigma(1)},\cdots,\ell_{\sigma(n-1)}).
  \]
  We denote the set of equivalence classes of Kupisch series of type
  $\widetilde{\mathbb{A}}_{n-1}$ by $\KSAt{n-1}$.
\end{definition}

\begin{remark}
  Suppose that the ground field is algebraically closed. It is well known that
  there is a bijective correspondence between Morita equivalence classes of
  (connected) Nakayama algebras of type $\widetilde{\mathbb{A}}_{n-1}$ and
  equivalence classes of Kupisch series of type $\widetilde{\mathbb{A}}_{n-1}$.
  In one direction the correspondence is given by associating the tuple
  \[
    \tuple{\len(e_0A),\len(e_1A),\dots,\len(e_{n-1}A)}
  \]
  to a Nakayama algebra $A$ with Gabriel quiver $\widetilde{\mathbb{A}}_{n-1}$.
  Moreover, a non-semisimple (connected) Nakayama algebra of type
  $\widetilde{\mathbb{A}}_{n-1}$ is selfinjective if and only if it has Kupisch
  series $(\ell,\dots,\ell)$ for some integer $\ell\geq2$, see for example
  Proposition V.3.8 in \cite{ASS06}.
\end{remark}

\begin{notation}
  Let $\underline{\ell}$ be a Kupisch series of type
  $\widetilde{\mathbb{A}}_{n-1}$. We denote the basic Nakayama algebra with
  Kupisch series $\underline{\ell}$ by $\widetilde{A}_{\underline{\ell}}^{(1)}$.
  By definition, $\widetilde{A}_{\underline{\ell}}^{(1)}$ is an admissible
  quotient of $\widetilde{A}_{n-1}^{(1)}$.
\end{notation}

\begin{setting}
  We fix positive integers $d,n$ and $\ell\geq2$ until further notice.
\end{setting}

We are ready give the definition of the selfinjective higher Nakayama algebras.

\begin{definition}
  \th\label{def:A-tilde:selfinjective} The \emph{selfinjective $(d+1)$-Nakayama
    algebra with Kupisch series $(\ell,\dots,\ell)$} is the orbit
  category\nomenclature[40]{$\widetilde{A}_{n-1,\ell}^{(d+1)}$}{the
    selfinjective $(d+1)$-Nakayama algebra with Kupisch series
    $(\ell,\dots,\ell)$}
  \[
    \widetilde{A}_{n-1,\ell}^{(d+1)}:=A_{\mathbb{Z}\ell}^{(d+1)}/\tau_d^n.
  \]
  We also define the subcategory
  \[
    \widetilde{\mathcal{M}}_{n-1,\ell}^{(d)}:=F_*(\mathcal{M}_{\mathbb{Z}\ell}^{(d)})\subseteq\mmod\widetilde{A}_{n-1,\ell}^{(d)}.
  \]
\end{definition}

The following theorem is the starting point of our construction of the higher
Nakayama algebras of type $\widetilde{\mathbb{A}}_{n-1}$ which we give in the
subsequent section. For the most part, it is a direct application of Theorem 2.3
in \cite{DI17}.

\begin{theorem}
  \th\label{thm:A-tilde:selfinjective} The following statements hold.
  \begin{enumerate}
  \item\label{thm:A-tilde:selfinjective:it:selfinjective} The algebra
    $\widetilde{A}_{n-1,\ell}^{(d)}$ is finite dimensional and selfinjective.
  \item\label{thm:A-tilde:selfinjective:it:LL} Every indecomposable
    projective-injective $\widetilde{A}_{n-1,\ell}^{(d)}$-module has Loewy
    length $\ell$.
  \item\label{thm:A-tilde:selfinjective:it:addM} There exists an
    $\widetilde{A}_{n-1,\ell}^{(d)}$-module $\widetilde{M}_{n-1,\ell}^{(d)}$
    such that
    \[
      \widetilde{\mathcal{M}}_{n-1,\ell}^{(d)}=\add\widetilde{M}_{n-1,\ell}^{(d)}.
    \]
  \item\label{thm:A-tilde:selfinjective:it:d-CT}
    $\widetilde{\mathcal{M}}_{n-1,\ell}^{(d)}$ is a
    $d\mathbb{Z}$-cluster-tilting subcategory of
    $\mmod\widetilde{A}_{n-1,\ell}^{(d)}$.
  \item\label{thm:A-tilde:selfinjective:it:tau_d} For every indecomposable
    non-projective $\widetilde{A}_{n-1,\ell}^{(d)}$-module
    $M\in\widetilde{\mathcal{M}}_{n-1,\ell}^{(d)}$ and every $i,j\in\mathbb{Z}$
    there is an isomorphism $\tau_d^i(M)\cong\tau_d^j(M)$ if and only if $i-j\in
    n\mathbb{Z}$.
  \item\label{thm:A-tilde:selfinjective:it:tau_d-simples} For every simple
    $\widetilde{A}_{n-1,\ell}^{(d)}$-module
    $S\in\widetilde{\mathcal{M}}_{n-1,\ell}^{(d)}$ and for each integer $i$ the
    $\widetilde{A}_{n-1,\ell}^{(d)}$-module $\tau_d^i(S)$ is simple.
  \end{enumerate}
\end{theorem}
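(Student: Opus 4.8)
The plan is to present $\widetilde{A}_{n-1,\ell}^{(d)}$ as the orbit category $A_{\mathbb{Z}\ell}^{(d)}/\tau_d^n$ of the selfinjective locally bounded category $A_{\mathbb{Z}\ell}^{(d)}$, and then to transport the structural results of \th\ref{thm:A-infty:lZ} across the canonical covering functor $F\colon A_{\mathbb{Z}\ell}^{(d)}\to\widetilde{A}_{n-1,\ell}^{(d)}$, the bulk of the work being an application of Theorem 2.3 in \cite{DI17}. First I would note that the automorphism $\tau_d$ acts on the set of objects of $A_{\mathbb{Z}\ell}^{(d)}$ by the free translation $\bm{\lambda}\mapsto\bm{\lambda}-(1,\dots,1)$, so that $\tau_d^n$ acts freely on objects with only finitely many orbits (normalising $0\leq\lambda_1<n$, the bound $\len(\bm{\lambda})\leq\ell$ leaves a unique representative in each orbit). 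Together with the local boundedness from \th\ref{prop:A-infty:l:locally_bounded} and the description of morphism spaces in an orbit category (\th\ref{rmk:orbit_category}), this shows that $\widetilde{A}_{n-1,\ell}^{(d)}$ has finitely many objects and finite dimensional morphism spaces, hence is a finite dimensional algebra. To upgrade this to selfinjectivity and to statement \eqref{thm:A-tilde:selfinjective:it:d-CT}, I would verify that the triple $\bigl(A_{\mathbb{Z}\ell}^{(d)},\mathcal{M}_{\mathbb{Z}\ell}^{(d)},\tau_d^n\bigr)$ meets the hypotheses of Theorem 2.3 in \cite{DI17}: by \th\ref{thm:A-infty:lZ} the category $A_{\mathbb{Z}\ell}^{(d)}$ is selfinjective with $d\mathbb{Z}$-cluster-tilting subcategory $\mathcal{M}_{\mathbb{Z}\ell}^{(d)}$, while $\tau_d^n$, being a power of the $d$-Auslander--Reiten translation, commutes with the Serre functor and permutes the modules $M(\bm{\lambda})$ by \th\ref{thm:A-infty:lZ}\eqref{thm:A-infty:lZ:it:taud}, hence stabilises $\mathcal{M}_{\mathbb{Z}\ell}^{(d)}$. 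Theorem 2.3 in \cite{DI17} then yields that $\widetilde{A}_{n-1,\ell}^{(d)}$ is selfinjective and that $\widetilde{\mathcal{M}}_{n-1,\ell}^{(d)}=F_*(\mathcal{M}_{\mathbb{Z}\ell}^{(d)})$ is $d\mathbb{Z}$-cluster-tilting, settling \eqref{thm:A-tilde:selfinjective:it:selfinjective} and \eqref{thm:A-tilde:selfinjective:it:d-CT}; statement \eqref{thm:A-tilde:selfinjective:it:addM} follows since the finitely many $\tau_d^n$-orbits of indecomposables in $\mathcal{M}_{\mathbb{Z}\ell}^{(d)}$ have images under $F_*$ that furnish a basic additive generator $\widetilde{M}_{n-1,\ell}^{(d)}$.

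For the remaining statements I would use two properties of the Galois covering $F$ with group $\langle\tau_d^n\rangle$: that its push-down intertwines the higher Auslander--Reiten translations, $F_*\circ\tau_d\cong\tau_d\circ F_*$, and that it sends indecomposables to indecomposables with $F_*M(\bm{\lambda})\cong F_*M(\bm{\mu})$ precisely when $\bm{\mu}$ lies in the $\tau_d^n$-orbit of $\bm{\lambda}$. Statement \eqref{thm:A-tilde:selfinjective:it:LL} follows because, by \th\ref{thm:A-infty:lZ}\eqref{thm:A-infty:lZ:it:selfinjective}, the indecomposable projective-injectives upstairs are exactly the $M(\bm{\lambda})$ with $\len(\bm{\lambda})=\ell$, each of Loewy length $\ell$ by \th\ref{lemma:An:len}, and unique path lifting along $F$ shows that the push-down of such a module is again indecomposable projective-injective of the same Loewy length. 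For \eqref{thm:A-tilde:selfinjective:it:tau_d}, writing an indecomposable non-projective $M\in\widetilde{\mathcal{M}}_{n-1,\ell}^{(d)}$ as $F_*M(\bm{\lambda})$ with $\len(\bm{\lambda})<\ell$, the intertwining gives $\tau_d^i(M)\cong F_*M(\tau_d^i(\bm{\lambda}))$, so $\tau_d^i(M)\cong\tau_d^j(M)$ holds if and only if $\tau_d^i(\bm{\lambda})$ and $\tau_d^j(\bm{\lambda})$ lie in a common $\tau_d^n$-orbit, which by freeness of the shift on objects is equivalent to $i-j\in n\mathbb{Z}$. Finally, \eqref{thm:A-tilde:selfinjective:it:tau_d-simples} follows from \th\ref{thm:A-infty:lZ}\eqref{thm:A-infty:lZ:it:taud2}: the simple summands of $\mathcal{M}_{\mathbb{Z}\ell}^{(d)}$ are the modules $M(s,\dots,s)$, their images under $F_*$ are simple, and $\tau_d^i(F_*S)\cong F_*(\tau_d^i(S))$ remains simple for every $i$.

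The step I expect to be the main obstacle is the precise matching of our orbit datum with the hypotheses of Theorem 2.3 in \cite{DI17} and, intimately related to it, the compatibility $F_*\circ\tau_d\cong\tau_d\circ F_*$ of the covering functor with the higher Auslander--Reiten translation. This is the higher analogue of the classical fact that Galois coverings intertwine the ordinary Auslander--Reiten translation, and it is exactly what legitimises the transport of statements \eqref{thm:A-tilde:selfinjective:it:tau_d} and \eqref{thm:A-tilde:selfinjective:it:tau_d-simples}. I would isolate this compatibility as a preliminary lemma, deducing it either from the covering formalism of \cite{GR97} together with the identification $A_{\mathbb{Z}\ell}^{(d)}\cong\widehat{A_{\ell-1}^{(d)}}$ of \th\ref{prop:A-infty:lZ:equivalences}, or directly from the construction underlying Theorem 2.3 in \cite{DI17}, in which the induced $d\mathbb{Z}$-cluster-tilting subcategory downstairs is built so as to be stable under $\tau_d$.
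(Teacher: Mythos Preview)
Your proposal is correct and follows essentially the same approach as the paper: apply Theorem~2.3 in \cite{DI17} to the triple $\bigl(A_{\mathbb{Z}\ell}^{(d)},\mathcal{M}_{\mathbb{Z}\ell}^{(d)},\tau_d^n\bigr)$, using \th\ref{prop:A-infty:lZ:equivalences} and \th\ref{thm:A-infty:lZ} to verify its hypotheses, and then deduce the remaining statements from the intertwining $F_*\circ\tau_d\cong\tau_d\circ F_*$ together with the explicit description of projectives and the $\tau_d$-formula upstairs. The compatibility you flag as the main obstacle is not something you need to prove separately: the paper simply cites Lemma~3.9(b) in \cite{DI17} for the commutativity of the square relating $F_*$ and $\tau_d$ on the stable categories, which is exactly the second of the two sources you anticipated.
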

\begin{proof}
  All claims are well known in the case $d=1$, hence we may assume that
  $d\geq2$. We note that \th\ref{prop:A-infty:lZ:equivalences,thm:A-infty:lZ}
  show that we are in the setting of Theorem 2.3 in \cite{DI17} and therefore
  statements \eqref{thm:A-tilde:selfinjective:it:selfinjective},
  \eqref{thm:A-tilde:selfinjective:it:addM}, and
  \eqref{thm:A-tilde:selfinjective:it:d-CT} hold. Note also that Lemma 3.9(b) in
  \cite{DI17} implies that the diagram
  \[
    \begin{tikzcd}
      \underline{\mathcal{M}}_{\mathbb{Z}\ell}^{(d)}\rar{F_*}\dar{\tau_d}&\underline{\widetilde{\mathcal{M}}}_{n-1,\ell}^{(d)}\dar{\tau_d}\\
      \underline{\mathcal{M}}_{\mathbb{Z}\ell}^{(d)}\rar{F_*}&\underline{\widetilde{\mathcal{M}}}_{n-1,\ell}^{(d)}
    \end{tikzcd}
  \]
  commutes. The remaining statements follow immediately from the explicit
  description of the indecomposable projective-injective
  $A_{\mathbb{Z}\ell}^{(d)}$-modules and the formula for the
  $d$-Auslander--Reiten translation given in \th\ref{thm:A-infty:lZ}, taking
  into account the existence of the above commutative diagram.
\end{proof}

\subsection{The higher Nakayama algebras of type $\widetilde{\mathbb{A}}$}

We are ready to give the construction of the higher Nakayama algebras of type
$\widetilde{\mathbb{A}}$. As their name suggests, these are to be thought of as
higher dimensional analogues of the admissible quotients of basic selfinjective
Nakayama algebras.

\begin{definition}
  \th\label{def:A-tilde} Let $d$ and $n$ be positive integers,
  $\underline{\ell}$ a Kupisch series of type $\widetilde{\mathbb{A}}_{n-1}$,
  and $\ell=\max\underline{\ell}$. We make the following definitions.
  \begin{enumerate}
  \item The \emph{$\underline{\ell}$-restriction of
      $\mathbf{os}_{n-1,\ell}^{d+1}$} is the subset
    \begin{equation*}
      \mathbf{os}_{\underline{\ell}}^{d+1}:=\setP{\bm{\lambda}\in\mathbf{os}_{n-1,\ell}^{d+1}}{\len(\bm{\lambda})\leq\ell_{\lambda_{d+1}}}.
    \end{equation*}
  \item The \emph{$(d+1)$-Nakayama algebra with Kupisch series
      $\underline{\ell}$} is the finite dimensional algebra
    \[
      \widetilde{A}_{\underline{\ell}}^{(d+1)}:=A_{n-1,\ell}^{(d+1)}/[\mathbf{os}_n^{d+1}\setminus\mathbf{os}_{\underline{\ell}}^{d+1}].
    \]
  \item The $\widetilde{A}_{n-1,\ell}^{(d)}$-module $M_{\underline{\ell}}^{(d)}$
    is by definition
    \begin{equation*}
      M_{\underline{\ell}}^{(d)}:=\bigoplus_{\bm{\lambda}\in\mathbf{os}_{\underline{\ell}}^{d+1}}M(\bm{\lambda}).
    \end{equation*}
  \end{enumerate}
\end{definition}

The following theorem and its proof are analogous to \th\ref{thm:An:l}.

\begin{theorem}
  \th\label{thm:A-tilde} The following statements hold.
  \begin{enumerate}
  \item\label{it:thm:A-tilde:KS} For each $i\in\set{0,1,\dots,n-1}$ the
    indecomposable projective $\widetilde{A}_{\underline{\ell}}^{(d)}$-module at
    the vertex $(i,\dots,i)$ has Loewy length $\ell_i$.
  \item\label{it:thm:A-tilde:dZ-RF} $M_{\underline{\ell}}^{(d)}$ is a
    $d\mathbb{Z}$-cluster-tilting
    $\widetilde{A}_{\underline{\ell}}^{(d)}$-module. In particular,
    $\widetilde{A}_{\underline{\ell}}^{(d)}$ is a
    $d\mathbb{Z}$-representation-finite algebra.
  \item\label{it:thm:d-Nakayama-tilde:A:taud} For every simple
    $\widetilde{A}_{\underline{\ell}}^{(d)}$-module $S$ which is a direct
    summand of $M_{\underline{\ell}}^{(d)}$ the
    $\widetilde{A}_{\underline{\ell}}^{(d)}$-module $\tau_d(S)$ is simple.
  \end{enumerate}
\end{theorem}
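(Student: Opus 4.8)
The plan is to mirror the proof of \th\ref{thm:An:l} line by line, with the selfinjective higher Nakayama algebra $\widetilde{A}_{n-1,\ell}^{(d)}$, where $\ell=\max\underline{\ell}$, playing the role that the higher Auslander algebra $A_n^{(d)}$ (with its maximal Kupisch series $(1,2,\dots,n)$) played there. The base case of the induction is the constant Kupisch series $(\ell,\dots,\ell)$: for it the three statements are exactly the content of \th\ref{thm:A-tilde:selfinjective}, since $\mathbf{os}_{(\ell,\dots,\ell)}^{d+1}=\mathbf{os}_{n-1,\ell}^{d+1}$ and hence $\widetilde{A}_{(\ell,\dots,\ell)}^{(d)}=\widetilde{A}_{n-1,\ell}^{(d)}$. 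Because $\ell_i\leq\ell$ for all $i$ we have $\underline{\ell}\leq(\ell,\dots,\ell)$ in the product order, and—arguing as in \th\ref{lemma:KS:An:poset}, now with indices in $\mathbb{Z}/n\mathbb{Z}$—one checks that one may descend from $(\ell,\dots,\ell)$ to $\underline{\ell}$ through genuine (cyclic) Kupisch series, decreasing a single entry by $1$ at each step. Since $\underline{\ell}$ itself attains the value $\ell$, that coordinate never drops, so every intermediate series again has maximum $\ell$; thus each step is a passage from a Kupisch series $\underline{\ell}'$ for which the statements are already known to a neighbour $\underline{\ell}\leq\underline{\ell}'$, and by construction $\widetilde{A}_{\underline{\ell}}^{(d)}$ is the idempotent quotient of $\widetilde{A}_{\underline{\ell}'}^{(d)}$ corresponding to $\mathbf{os}_{\underline{\ell}'}^{d+1}\setminus\mathbf{os}_{\underline{\ell}}^{d+1}$. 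This passage is effected by \th\ref{lemma:d-CT:idempotent_reduction:algebras}.

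First I would record the analogue of \th\ref{prop:An:l:proj_inj}: for $\bm{\lambda}\in\mathbf{os}_{\underline{\ell}}^{d}$ the indecomposable projective $\widetilde{A}_{\underline{\ell}}^{(d)}$-module with top $S_{\bm{\lambda}}$ is $M(x,\lambda_1,\dots,\lambda_d)$ with $x=\lambda_d+1-\ell_{\lambda_d}$, and dually the injective with socle $S_{\bm{\lambda}}$ is $M(\lambda_1,\dots,\lambda_d,y)$ for the appropriate maximal $y$. The proof is the same interlacing computation via \th\ref{prop:poset:d-cone:interlacing} carried out in \th\ref{prop:An:l:proj_inj}; the only novelty is that the $\lambda_i$ now live in $\mathbb{Z}/n\mathbb{Z}$, but the relevant longest paths are local so the argument is unaffected. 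Combined with \th\ref{lemma:An:len} this yields $\len(P_{(i,\dots,i)})=\ell_i$, which is statement \eqref{it:thm:A-tilde:KS}.

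For statement \eqref{it:thm:A-tilde:dZ-RF} I would verify the two hypotheses of \th\ref{lemma:d-CT:idempotent_reduction:algebras} for the step $\underline{\ell}\leq\underline{\ell}'$, reproducing Steps 1 and 2 of the proof of \th\ref{prop:An:l:M:d-CT}. In Step 1 the inequality $\lambda_d-\lambda_1+1\leq\ell_{\lambda_d}\leq\ell_{\lambda_d}'$ gives $\mathbf{os}_{\underline{\ell}}^{d}\subseteq\mathbf{os}_{\underline{\ell}'}^{d}$, and together with the projective/injective description just obtained this places all projective and all injective $\widetilde{A}_{\underline{\ell}}^{(d)}$-modules inside $\add M_{\underline{\ell}'}^{(d)}$. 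In Step 2, for $\bm{\lambda}\in\mathbf{os}_{\underline{\ell}'}^{d+1}\setminus\mathbf{os}_{\underline{\ell}}^{d+1}$ the neighbour condition forces $\len(\bm{\lambda})=\ell_{\lambda_{d+1}}'$, hence $\lambda_1=\lambda_{d+1}-\ell_{\lambda_{d+1}}$, and the two computations of \th\ref{prop:An:l:M:d-CT} show $M(\bm{\lambda})$ is both projective and injective over $\widetilde{A}_{\underline{\ell}'}^{(d)}$. Applying \th\ref{lemma:d-CT:idempotent_reduction:algebras} yields that $M_{\underline{\ell}}^{(d)}$ is $d$-cluster-tilting; closure of $\add M_{\underline{\ell}}^{(d)}$ under $d$-th syzygies, verified by the syzygy computation of \th\ref{prop:An:l:M:dZ-CT} together with \th\ref{lemma:KS}\eqref{lemma:KS:An:it:2}, then upgrades this to $d\mathbb{Z}$-cluster-tilting by Definition-Proposition 2.15 in \cite{IJ16}.

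Finally, statement \eqref{it:thm:d-Nakayama-tilde:A:taud} I would deduce from the formula $\tau_d(M(\bm{\lambda}))\cong M(\tau_d(\bm{\lambda}))$ for non-projective summands, established exactly as in \th\ref{prop:An:l:taud}: one transports the $d$-almost split sequence from the selfinjective case—using the commutative square relating $\tau_d$ to $F_*$ in the proof of \th\ref{thm:A-tilde:selfinjective}—and checks that the interval $[\tau_d(\bm{\lambda}),\bm{\lambda}]$ remains inside $\mathbf{os}_{\underline{\ell}}^{d+1}$, so the sequence lies in $\add M_{\underline{\ell}}^{(d)}$. A simple summand corresponds to a constant sequence $(i,\dots,i)$, so the formula sends it either to the simple at $(i-1,\dots,i-1)$ or to zero. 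The main obstacle I anticipate is the bookkeeping specific to the $\widetilde{\mathbb{A}}$ setting: confirming that the descent through the Hasse quiver stays within bona fide cyclic Kupisch series at every intermediate step, and that the $\tau_d$-formula base case is correctly imported from $A_{\mathbb{Z}\ell}^{(d)}$ to its orbit category $\widetilde{A}_{n-1,\ell}^{(d)}$; once these are settled the inequality verifications are routine and parallel the type $\mathbb{A}$ arguments verbatim.
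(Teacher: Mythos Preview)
Your proposal is correct and follows the same overall strategy as the paper: take the selfinjective algebra $\widetilde{A}_{n-1,\ell}^{(d)}$ with $\ell=\max\underline{\ell}$ as the base case (via \th\ref{thm:A-tilde:selfinjective}) and descend to $\widetilde{A}_{\underline{\ell}}^{(d)}$ along a chain of neighbouring Kupisch series, applying \th\ref{lemma:d-CT:idempotent_reduction:algebras} at each step. The tactical difference lies in how the hypotheses of the reduction lemma are checked. You work directly on the finite cyclic algebras and re-establish the analogues of \th\ref{prop:An:l:proj_inj}, \th\ref{prop:An:l:M:d-CT}, \th\ref{prop:An:l:M:dZ-CT}, and \th\ref{prop:An:l:taud} in the $\widetilde{\mathbb{A}}$ setting. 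The paper instead lifts the cyclic series $\underline{\ell}$ to its $n$-periodic extension of type $\mathbb{A}_\infty^\infty$, chooses an $n$-periodic chain from that extension up to $\mathbb{Z}\ell$ inside $\KSAi$, invokes the already-completed proof of \th\ref{thm:Ail} on the locally bounded covers $A_{\underline{\ell}(i)}^{(d)}$, and only then passes to the orbit algebras $A_{\underline{\ell}(i)}^{(d)}/\tau_{d-1}^n$. The paper's route is shorter because the interlacing and projective--injective verifications are reused wholesale from the infinite type~$\mathbb{A}$ case; your route is more self-contained, avoids the covering altogether, and makes the geometry of the cyclic Hasse-quiver descent explicit (the obstacle you flag---that every intermediate tuple remains a genuine cyclic Kupisch series with maximum $\ell$---is real but elementary, and the paper sidesteps it by working with $n$-periodic infinite series, where the corresponding connectivity is already implicit in \th\ref{thm:Ail}).
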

\begin{proof}
  The proof is completely analogous to that of \th\ref{thm:An:l}. By
  \th\ref{thm:A-tilde:selfinjective} the claims are known for the finite
  dimensional algebra $\widetilde{A}_{n-1,\ell}^{(d)}$ were
  $\ell=\max\underline{\ell}$. In the general case, the theorem can be proven by
  iterated application of \th\ref{lemma:d-CT:idempotent_reduction:algebras}. To
  see how the lemma applies, note that $\underline{\ell}$ induces a
  $\ell$-bounded Kupisch series of type $\mathbb{A}_\infty^\infty$
  \[
    \underline{\ell}':=(\cdots,\ell_{n-1},\ell_0,\ell_1,\dots,\ell_{n-1},\ell_0,\dots).
  \]
  which is $n$-periodic in the obvious sense. Moreover, it is clear that there
  is a finite sequence of inequalities
  \[
    \underline{\ell}'=\underline{\ell}(0)\leq\cdots\leq\underline{\ell}(1)\leq\cdots\leq\underline{\ell}(t)=\mathbb{Z}\ell
  \]
  in $\KSAi$ in which every Kupisch series $\underline{\ell}(i)$ is also
  $n$-periodic. By the proof of \th\ref{thm:Ail} we know that
  \th\ref{lemma:d-CT:idempotent_reduction} can be applied inductively to the
  categories $\widetilde{A}_{\underline{\ell}(i)}^{(d)}$. It is now
  straightforward to verify that this implies that
  \th\ref{lemma:d-CT:idempotent_reduction:algebras} can be inductively applied
  to the sequence of finite dimensional algebras
  \[
    \widetilde{A}_{\underline{\ell}}^{(d)}=A_{\underline{\ell}(0)}^{(d)}/\tau_{d-1}^n,\
    A_{\underline{\ell}(1)}^{(d)}/\tau_{d-1}^n,\ \dots,\
    A_{\underline{\ell}(t)}^{(d)}/\tau_{d-1}^n=\widetilde{A}_{n-1,\ell}^{(d)}.
  \]
  We leave the details to the reader.
\end{proof}

\subsection{The higher dimensional analogues of the tubes}

We conclude this article by introducing the higher dimensional analogues of the
tubes. We need to introduce further terminology.

Let $\mathcal{A}$ be a small category. A finite dimensional $\mathcal{A}$-module
is \emph{nilpotent} if there exists a positive integer $n$ such that $M$ is an
$(\mathcal{A}/\mathcal{J}^n)$-module, where $\mathcal{J}$ denotes the Jacobson
radical of $\mathcal{A}$, see \cite{Kel64a} for the definition of the Jacobson
radical of a category. We denote the category of finite dimensional nilpotent
$\mathcal{A}$-modules by $\nil\mathcal{A}$. It is an abelian subcategory of
$\mmod\mathcal{A}$.
\begin{example}
  Let $n$ be a positive integer. It is well known that the abelian category
  $\nil\widetilde{A}_{n-1}^{(1)}$ has almost split sequences, see for example
  Lemma X.1.4(f) in \cite{SS07}. Moreover, for every indecomposable module
  $M\in\nil\widetilde{A}_{n-1}^{(1)}$ and for each $i,j\in\mathbb{Z}$ there is
  an isomorphism $\tau^i(M)\cong\tau^j(M)$ if and only if $i-j\in n\mathbb{Z}$.
  Thus, the Auslander--Reiten quiver of $\nil \widetilde{A}_{n-1}^{(1)}$ forms a tube of
  period $n$. This observation is the motivation for our construction of the
  higher dimensional analogues of the tubes.
\end{example}

\begin{setting}
  We fix positive integers $d$ and $n$.
\end{setting}

\begin{definition}
  \th\label{def:tubes} We define the \emph{$(d-1)$-tube of rank $n$} to be the
  category $\widetilde{A}_{n-1}^{(d)}:=A_\infty^{(d)}/\tau_{d-1}^n$. We also
  define the subcategory
  \begin{equation*}
    \mathcal{T}_n^{(d)}:=F_*(\mathcal{M}_\infty^{(d)})\subset\nil \widetilde{A}_{n-1}^{(d)}.
  \end{equation*}
\end{definition}

\begin{remark}
  Note that the $0$-tube of rank $n$ is just the path algebra of the circular
  quiver with $n$ vertices and $n$ arrows. Whence $\widetilde{A}_{n-1}^{(1)}$ is
  not locally finite dimensional.
\end{remark}

\begin{remark}
  Let $d\geq2$. Then, the category $\widetilde{A}_{n-1}^{(d)}$ is not locally
  bounded. Indeed, for each $\lambda\in\mathbf{os}^d$ and for each
  $i\in\mathbb{Z}$ the non-zero morphisms $\bm{\lambda}\to\bm{\lambda}+i
  e_d$ and $\bm{\lambda}-ie_1\to\bm{\lambda}$ in $A_\infty^{(d)}$ induce
  non-zero morphisms in $\widetilde{A}_{n-1}^{(d)}$ and for all $i\neq j$ we
  have
  \[
    \bm{\lambda}+i e_d\neq \bm{\lambda}+j
    e_d\qquad\text{and}\qquad\bm{\lambda}-ie_1\neq\bm{\lambda}-je_1
  \]
  in $\widetilde{A}_{n-1}^{(d)}$.
\end{remark}

\begin{theorem}
  \th\label{thm:tubes} Let $d$ and $n$ be positive integers. Then, the following
  statements hold.
  \begin{enumerate}
  \item The abelian category $\nil \widetilde{A}_{n-1}^{(d)}$ has global
    dimension $d$.
  \item\label{it:thm:tubes:d-CT} $\mathcal{T}_n^{(d)}$ is a $d$-cluster-tilting
    subcategory of $\nil\widetilde{A}_{n-1}^{(d)}$.
  \item\label{it:thm:tubes:taud} For every simple
    $\widetilde{A}_{n-1}^{(d)}$-module $S\in\mathcal{T}_n^{(d)}$ the
    $\widetilde{A}_{n-1}^{(d)}$-module $\tau_d(S)$ is simple.
  \item For every indecomposable $\widetilde{A}_{n-1}^{(d)}$-module
    $\mathcal{T}_n^{(d)}$ and every pair of integers $i$ and $j$ there is an
    isomorphism $\tau_d^i(M)\cong\tau_d^j(M)$ if and only if $i-j\in
    n\mathbb{Z}$.
  \end{enumerate}
\end{theorem}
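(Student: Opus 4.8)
The plan is to deduce every assertion from the corresponding statement for the mesh category in \th\ref{thm:mesh}, by treating the canonical functor $F\colon A_\infty^{(d)}\to\widetilde{A}_{n-1}^{(d)}$ as a Galois covering with group generated by the automorphism $\tau_{d-1}^n$ and transporting homological data along the exact pushdown $F_*\colon\mmod A_\infty^{(d)}\to\nil\widetilde{A}_{n-1}^{(d)}$. Under the identification of the automorphism $\tau_{d-1}$ of $A_\infty^{(d)}$ with the twist sending $M(\bm{\lambda})$ to $M(\tau_d(\bm{\lambda}))$ (which is exactly the content of the formula in \th\ref{thm:mesh}\eqref{thm:mesh:it:d-almost-split}), the subcategory $\mathcal{M}_\infty^{(d)}$ is stable under $\tau_{d-1}$ and $\mathcal{T}_n^{(d)}=F_*(\mathcal{M}_\infty^{(d)})$ by construction. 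I would first record the basic covering dictionary: $F_*$ is dense onto $\nil\widetilde{A}_{n-1}^{(d)}$, it sends simples to simples, it reflects isomorphism classes exactly up to the $\tau_{d-1}^n$-orbit, and for all $M,N\in\mmod A_\infty^{(d)}$ and all $i\geq0$ there are isomorphisms
\[
  \Ext_{\widetilde{A}_{n-1}^{(d)}}^i(F_*M,F_*N)\cong\bigoplus_{k\in\mathbb{Z}}\Ext_{A_\infty^{(d)}}^i\bigl(M,\tau_{d-1}^{nk}(N)\bigr),
\]
the sum being finite since $M,N$ are finite dimensional. I would also note, as in the remark following \th\ref{thm:A-infty:lZ}, that $\nil\widetilde{A}_{n-1}^{(d)}=\bigcup_{\ell\geq2}\mmod\widetilde{A}_{n-1,\ell}^{(d)}$ and $\mathcal{T}_n^{(d)}=\bigcup_{\ell\geq2}\widetilde{\mathcal{M}}_{n-1,\ell}^{(d)}$.

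Granting the covering formula, part (1) is immediate. For $i>d$ every summand on the right vanishes because $\mmod A_\infty^{(d)}$ has global dimension $d$ by \th\ref{thm:mesh}\eqref{thm:mesh:it:gldimd}; combined with density this gives $\gldim\nil\widetilde{A}_{n-1}^{(d)}\leq d$. Conversely, choosing $M,N$ with $\Ext_{A_\infty^{(d)}}^d(M,N)\neq0$, the $k=0$ summand makes $\Ext_{\widetilde{A}_{n-1}^{(d)}}^d(F_*M,F_*N)$ nonzero, so the global dimension equals $d$.

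For part (2) I would run the three standard checks. Functorial finiteness and the generating-cogenerating property are obtained exactly as in Step 1 of the proof of \th\ref{thm:mesh}: given $X\in\nil\widetilde{A}_{n-1}^{(d)}$, it lies in some $\mmod\widetilde{A}_{n-1,\ell}^{(d)}$, where $\widetilde{\mathcal{M}}_{n-1,\ell}^{(d)}$ affords a surjective right approximation by \th\ref{thm:A-tilde:selfinjective}\eqref{thm:A-tilde:selfinjective:it:d-CT}; any map into $X$ from an $M(\bm{\mu})$ with $\len(\bm{\mu})>\ell$ factors through it because such $M(\bm{\mu})$ is projective-injective over $\widetilde{A}_{n-1,\len(\bm{\mu})}^{(d)}$ by \th\ref{thm:A-tilde:selfinjective}\eqref{thm:A-tilde:selfinjective:it:LL}, so the approximation remains one in $\mathcal{T}_n^{(d)}$. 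Rigidity is the covering formula in degrees $1\leq i\leq d-1$, where each summand vanishes by the $d$-rigidity in \th\ref{thm:mesh}\eqref{thm:mesh:it:d-CT}. Finally, if $X=F_*X'$ satisfies $\Ext^i(\mathcal{T}_n^{(d)},X)=0$ for $1\leq i\leq d-1$, then the $k=0$ summand of the covering formula forces $\Ext_{A_\infty^{(d)}}^i(\mathcal{M}_\infty^{(d)},X')=0$, whence $X'\in\mathcal{M}_\infty^{(d)}$ by \th\ref{thm:mesh}\eqref{thm:mesh:it:d-CT} and thus $X\in\mathcal{T}_n^{(d)}$; the dual characterisation is symmetric.

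Parts (3) and (4) follow from the compatibility of $F_*$ with the higher Auslander--Reiten structure, as in \th\ref{thm:A-tilde:selfinjective}. Since $F_*$ carries the $d$-almost split sequences of \th\ref{thm:mesh}\eqref{thm:mesh:it:d-almost-split} to $d$-almost split sequences in $\mathcal{T}_n^{(d)}$, there is a natural isomorphism $\tau_d\circ F_*\cong F_*\circ\tau_d$ on $\mathcal{M}_\infty^{(d)}$; together with the preservation of simples this yields part (3). For part (4) one has $\tau_d^i(F_*M(\bm{\lambda}))\cong F_*(M(\tau_d^i\bm{\lambda}))$, and since $F_*$ identifies two such modules precisely when their indices lie in the same $\tau_d^n$-orbit, the freeness of the $\tau_d$-action established in \th\ref{thm:mesh}\eqref{thm:mesh:it:taud:isos} gives $\tau_d^i(M)\cong\tau_d^j(M)$ if and only if $i-j\in n\mathbb{Z}$. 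The main obstacle is the covering dictionary of the first paragraph: because $A_\infty^{(d)}$ is not locally bounded and $\mmod A_\infty^{(d)}$ has neither projectives nor injectives (\th\ref{prop:A-infty:NP}), classical Galois covering theory does not apply directly. I expect the cleanest remedy to be checking that $A_\infty^{(d)}$ is locally support-finite for the free $\tau_{d-1}^n$-action, so that density, preservation of almost split sequences and the Hom-formula hold; the genuinely non-formal step is then establishing the displayed Ext-isomorphism in all degrees in terms of Yoneda extensions rather than projective resolutions.
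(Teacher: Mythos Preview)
The paper's own proof of \th\ref{thm:tubes} consists of a single sentence: ``The proof is completely analogous to that of \th\ref{thm:mesh}. We leave the details to the reader.'' So there is essentially nothing to compare against at the level of detail you have supplied; your proposal is a reasonable fleshing-out of what the paper leaves implicit.

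That said, your argument is not a literal transcription of the proof of \th\ref{thm:mesh}. There the key device is the exhaustion $\mmod A_\infty^{(d)}=\bigcup_{a\le b}\mmod A_{[a,b]}^{(d)}$ by algebras of global dimension $d$ (for part \eqref{thm:mesh:it:gldimd} and Steps 2--3 of part \eqref{thm:mesh:it:d-CT}), together with the exhaustion by the selfinjective $A_{\mathbb{Z}\ell}^{(d)}$ (for Step 1 and parts \eqref{thm:mesh:it:d-almost-split}--\eqref{thm:mesh:it:taud}). In the tube situation the only natural exhaustion is $\nil\widetilde{A}_{n-1}^{(d)}=\bigcup_{\ell\ge2}\mmod\widetilde{A}_{n-1,\ell}^{(d)}$, and the $\widetilde{A}_{n-1,\ell}^{(d)}$ are all selfinjective, so there is no direct substitute for the $A_{[a,b]}^{(d)}$ in the global-dimension argument. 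Your decision to route the Ext-computations through the covering formula for $F_*$ is therefore not just a stylistic choice but a genuine repair of the analogy, and you are right to flag the Yoneda-level justification of that formula as the one non-formal step. An alternative, closer in spirit to the paper, would be to establish analogues of \th\ref{prop:A-infty:Ext} for the chain $\mmod\widetilde{A}_{n-1,\ell}^{(d)}\hookrightarrow\mmod\widetilde{A}_{n-1,\ell'}^{(d)}$ (these are $(d-1)$-homological embeddings by \th\ref{the-acor}), which handles rigidity and the maximality condition, and then treat global dimension by lifting a Yoneda $i$-extension with $i>d$ along $F_*$ to $\mmod A_\infty^{(d)}$; either way one cannot avoid invoking the covering at some point. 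For functorial finiteness, generating--cogenerating, and parts (3)--(4) your argument matches exactly what the paper does in the parallel situations (\th\ref{thm:mesh} Step 1 and \th\ref{thm:A-tilde:selfinjective}).
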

\begin{proof}
  The proof is completely analogous to that of \th\ref{thm:mesh}. We leave the
  details to the reader.
\end{proof}



\appendix

\section{Relative $(d-1)$-homological embeddings\protect\footnote{by Julian K\"ulshammer and Chrysostomos Psaroudakis}}
\label{app:relative_homological_embeddings}

In this short appendix we prove that the canonical embedding $\mmod
\underline{\mathcal{A}}_{\mathcal{X}}\to\mmod \mathcal{A}$ in
\th\ref{lemma:d-CT:idempotent_reduction} is in fact a $(d-1)$-homological
embedding (and not only a relative one). We recall from Definition 3.6 in
\cite{Psa14} that an exact functor $i_*\colon\mathcal{B}\to\mathcal{A}$ is an
\emph{$m$-homological embedding} if for all $j\in\set{0,\dots,m}$ and for all
$X,Y\in\mathcal{B}$ the induced morphism
\[
  \Ext_\mathcal{B}^j(X,Y)\to\Ext_\mathcal{A}^j(i_*(X),i_*(Y))
\]
is an isomorphism.

\begin{proposition}
  \th\label{lemma:d-CT:idempotent_reduction:homological_embedding} Consider the
  recollement situation of abelian categories
  \[
    \begin{tikzcd}
      \mathcal{B}\arrow{r}[description]{i_*}
      &\mathcal{A}\arrow{r}[description]{j^*}\lar[bend right,
      swap]{i^*}\lar[bend left]{i^!} &\mathcal{C}\lar[bend right,
      swap]{j_!}\lar[bend left]{j_*}
    \end{tikzcd}
  \]
  where $\mathcal{A}$ and $\mathcal{C}$ have enough projectives. Then, the following statements
  hold.
  \begin{enumerate}
  \item\label{it:enough} $\mathcal{B}$ has enough projectives.
  \item\label{it:after_enough} Denote by $\mathcal{P}_{\mathcal{A}}\subseteq
    \mathcal{A}$ and $\mathcal{P}_{\mathcal{B}}\subseteq \mathcal{B}$ the
    corresponding subcategories of projectives. Let $\mathcal{N}\subseteq
    \mathcal{B}$ be a subcategory containing $i^*(P)$ for all $P\in
    \mathcal{P}_{\mathcal{A}}$. Assume that $i_*\colon \mathcal{B}\to
    \mathcal{A}$ is a contravariantly $\mathcal{N}$-relative $m$-homological
    embedding. Then, $i_*$ is an $m$-homological embedding.
  \end{enumerate}

\end{proposition}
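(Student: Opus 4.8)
The plan is to use the standard features of a recollement throughout: the functor $i_*$ is exact and fully faithful, the counit $i^*i_*\to\mathrm{id}_{\mathcal{B}}$ of the adjunction $(i^*,i_*)$ is a natural isomorphism, $i^*$ is right exact, and, being the left adjoint of the exact functor $i_*$, the functor $i^*$ preserves projective objects.

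To prove \eqref{it:enough}, I would start from an arbitrary $B\in\mathcal{B}$ and, using that $\mathcal{A}$ has enough projectives, choose an epimorphism $P\twoheadrightarrow i_*(B)$ with $P\in\mathcal{P}_{\mathcal{A}}$. Applying the right exact functor $i^*$ together with the counit isomorphism produces an epimorphism $i^*(P)\twoheadrightarrow i^*i_*(B)\cong B$ with $i^*(P)$ projective in $\mathcal{B}$; this already shows that $\mathcal{B}$ has enough projectives. Iterating, taking kernels in the abelian category $\mathcal{B}$ at each step, I obtain for every $X\in\mathcal{B}$ a projective resolution
\[
  \cdots\to Q^1\to Q^0\to X\to0
\]
in which each term has the form $Q^k=i^*(P^k)$ with $P^k\in\mathcal{P}_{\mathcal{A}}$.

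For \eqref{it:after_enough}, fix $X,Y\in\mathcal{B}$ and pick a resolution of $X$ of the above form. By hypothesis each $Q^k=i^*(P^k)$ lies in $\mathcal{N}$, and since $Q^k$ is projective in $\mathcal{B}$ we have $\Ext_{\mathcal{B}}^j(Q^k,Y)=0$ for $1\leq j\leq m$. The relative homological embedding hypothesis then transports this vanishing to $\mathcal{A}$: taking $N=Q^k\in\mathcal{N}$, the contravariantly $\mathcal{N}$-relative $m$-homological embedding property gives
\[
  \Ext_{\mathcal{A}}^j(i_*(Q^k),i_*(Y))\cong\Ext_{\mathcal{B}}^j(Q^k,Y)=0\qquad(1\leq j\leq m).
\]
Applying the exact functor $i_*$ to the resolution yields an exact resolution $\cdots\to i_*(Q^1)\to i_*(Q^0)\to i_*(X)\to0$ of $i_*(X)$ whose terms are acyclic for $\Ext_{\mathcal{A}}^j(-,i_*(Y))$ in the range $1\leq j\leq m$. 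Just as in the proof of \th\ref{lemma:d-CT:idempotent_reduction}, I would invoke the dimension-shifting result Proposition 3.18 in \cite{Jas16} to identify, for each $j\in\set{1,\dots,m}$, the group $\Ext_{\mathcal{A}}^j(i_*(X),i_*(Y))$ with the cohomology at the $j$-th place of $\Hom_{\mathcal{A}}(i_*(Q^\bullet),i_*(Y))$. Because $i_*$ is fully faithful this complex is isomorphic to $\Hom_{\mathcal{B}}(Q^\bullet,Y)$, whose cohomology computes $\Ext_{\mathcal{B}}^j(X,Y)$ since $Q^\bullet$ is a genuine projective resolution in $\mathcal{B}$. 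Chasing the identifications shows that the resulting isomorphism is the natural map induced by $i_*$, which is exactly the assertion that $i_*$ is an $m$-homological embedding.

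The hard part will be the bookkeeping in the last step: one must check that Proposition 3.18 in \cite{Jas16} applies to the resolution $i_*(Q^\bullet)$, which is \emph{not} projective in $\mathcal{A}$ but only acyclic for the relevant extension functors in degrees $\leq m$, and that the dimension-shifting isomorphism it produces is compatible with the natural comparison map $\Ext_{\mathcal{B}}^j(X,Y)\to\Ext_{\mathcal{A}}^j(i_*(X),i_*(Y))$ rather than merely being an abstract isomorphism. Everything else is a routine transfer of the argument used for \th\ref{lemma:d-CT:idempotent_reduction}.
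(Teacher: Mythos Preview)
Your argument is correct and part \eqref{it:enough} coincides with the paper's proof verbatim. For part \eqref{it:after_enough}, however, you take a genuinely different route from the paper.

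The paper does not work with a resolution of an arbitrary $X\in\mathcal{B}$. Instead it invokes the characterisation from \cite{Psa14} (Theorem~3.9 and Proposition~3.3 there): $i_*$ is an $m$-homological embedding if and only if $\Ext_{\mathcal{A}}^{j}(F(P),i_*(B))=0$ for all $B\in\mathcal{B}$, all $P\in\mathcal{P}_{\mathcal{A}}$ and all $0\le j\le m-1$, where $F(P)=\operatorname{Im}\bigl(j_!j^*(P)\to P\bigr)$. One then applies $\Hom_{\mathcal{A}}(-,i_*(B))$ to the short exact sequence $0\to F(P)\to P\to i_*i^*(P)\to 0$, uses the adjunction to see that the first connecting map is already surjective in degree~$0$, and deduces $\Ext_{\mathcal{A}}^{j}(F(P),i_*(B))\cong\Ext_{\mathcal{A}}^{j+1}(i_*i^*(P),i_*(B))$. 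Since $i^*(P)\in\mathcal{N}\cap\mathcal{P}_{\mathcal{B}}$, the contravariant relative hypothesis kills the right-hand side for $0\le j\le m-1$.

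Your approach trades the black-box criterion from \cite{Psa14} for a direct dimension-shifting computation along a resolution of $X$ by objects $i^*(P^k)\in\mathcal{N}\cap\mathcal{P}_{\mathcal{B}}$, exactly mirroring the proof of \th\ref{lemma:d-CT:idempotent_reduction}. This is more self-contained and in fact uses only the adjoint pair $(i^*,i_*)$ with $i_*$ exact and fully faithful---the functors $j_!,j^*,j_*$ never enter---so your argument applies slightly more generally than the recollement setting. The price is the bookkeeping you already flag: one must check that Proposition~3.18 in \cite{Jas16} yields the iso in all degrees $1\le j\le m$ (it does, since you have vanishing up to and including degree~$m$) and that the resulting identification agrees with the natural map induced by $i_*$. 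The paper's argument sidesteps this naturality issue entirely by reducing to a vanishing statement.
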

\begin{proof}
  (i) Let $Y\in\mathcal{B}$. Since $\mathcal{A}$ has enough projectives, there
  exists an epimorphism $P\twoheadrightarrow i_*(Y)$ in $\mathcal{A}$. Since
  $i^*$ is right exact and sends projective objects in $\mathcal{A}$ to
  projective objects in $\mathcal{B}$, this yields an epimorphism
  $i^*(P)\twoheadrightarrow i^*i_*(Y)\cong Y$. This shows that $\mathcal{B}$ has
  enough projectives.

  (ii) According to Theorem 3.9 and Proposition 3.3 in \cite{Psa14}, for the
  morphism $i_*\colon \mathcal{B}\to \mathcal{A}$ to be an $m$-homological
  embedding is equivalent to $\Ext_{\mathcal{A}}^j(F(P),i_*(B))=0$ for all $B\in
  \mathcal{B}, P\in \mathcal{P}_{\mathcal{A}}, 0\leq j\leq m-1$ where
  $F(P):=\operatorname{Im}(j_!j^*(P)\to P)$ is induced by the counit of the
  adjunction.

  Applying $\Hom_{\mathcal{A}}(-,i_*(B))$ to the short exact sequence
  \[
    0\to F(P)\to P\to i_*i^*(P)\to 0
  \]
  yields the long exact sequence
  \[
    \begin{tikzcd}[column sep=tiny, row sep=tiny]
      0\rar&\Hom_{\mathcal{A}}(i_*i^*(P),i_*(B)) \arrow{r}{}
      &\Hom_{\mathcal{A}}(P,i_*(B))\rar&\Hom_{\mathcal{A}}(F(P),i_*(B))&{}\\
      {}\rar&\Ext_{\mathcal{A}}^1(i_*i^*(P),i_*(B))\rar&0\rar&\Ext_{\mathcal{A}}^1(F(P),i_*(B))\rar&{}\\
      &&\cdots\\
      \rar&\Ext_{\mathcal{A}}^{m}(i_*i^*(P),i_*(B))\rar&0
    \end{tikzcd}
  \]
  As $i_*$ is fully-faithful and $(i^*,i_*)$ is an adjoint pair, it follows that
  the first map in this long exact sequence is an isomorphism. Hence, the second
  map is $0$. Thus, $\Ext_\mathcal{A}^{j+1}(i_*i^*(P),i_*(B))\cong
  \Ext_\mathcal{A}^j(F(P),i_*(B))$ for all $j\geq 0$.

  Since $P\in \mathcal{P}_{\mathcal{A}}$, it follows that $i^*(P)\in
  \mathcal{P}_{\mathcal{B}}$. As, by assumption, $i^*(P)\in \mathcal{N}$, it
  follows that $\Ext^{j+1}_{\mathcal{A}}(i_*i^*(P),i_*(B))\cong
  \Ext^{j+1}_{\mathcal{B}}(i^*(P),B)=0$ for all $0\leq j\leq m-1$. We conclude
  that $\Ext_\mathcal{A}^j(F(P),i_*(B))=0$ for all $0\leq j\leq m-1$.
\end{proof}

\begin{corollary}
  \th\label{the-acor} Under the assumptions of
  \th\ref{lemma:d-CT:idempotent_reduction}, the embedding
  \[
    \begin{tikzcd}[column sep=small]
      \mmod \underline{\mathcal{A}}_{\mathcal{X}}\rar[hook]&\mmod \mathcal{A}
    \end{tikzcd}
  \]
  is a $(d-1)$-homological embedding.
\end{corollary}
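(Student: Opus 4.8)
The plan is to obtain the corollary as a direct application of Proposition \th\ref{lemma:d-CT:idempotent_reduction:homological_embedding}, once the embedding $\mmod\underline{\mathcal{A}}_{\mathcal{X}}\hookrightarrow\mmod\mathcal{A}$ has been fitted into a suitable recollement and the hypotheses of that proposition have been checked. First I would set up the recollement of abelian categories attached to the idempotent ideal $[\mathcal{X}]$,
\[
  \begin{tikzcd}
    \mmod\underline{\mathcal{A}}_{\mathcal{X}}\arrow{r}[description]{i_*}
    &\mmod\mathcal{A}\arrow{r}[description]{j^*}\lar[bend right,
    swap]{i^*}\lar[bend left]{i^!} &\mmod\mathcal{X}\lar[bend right,
    swap]{j_!}\lar[bend left]{j_*}
  \end{tikzcd}
\]
where $i_*=\pi^*$ is the fully faithful exact functor induced by the quotient functor $\pi\colon\mathcal{A}\to\underline{\mathcal{A}}_{\mathcal{X}}$ (see Subsection \ref{subsec:locally_bounded_categories}) and $j^*$ is restriction along the inclusion $\mathcal{X}\hookrightarrow\mathcal{A}$. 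This is the standard recollement associated to an idempotent ideal; for finite-dimensional algebras it specialises to the familiar recollement $\mmod(A/AeA)\to\mmod A\to\mmod(eAe)$.

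Next I would verify the hypotheses. Since $\mathcal{A}$ is locally bounded, the abelian category $\mmod\mathcal{A}$ has enough projectives (Subsection \ref{subsec:locally_bounded_categories}), so the standing assumption of the proposition is satisfied and, by part \eqref{it:enough}, $\mmod\underline{\mathcal{A}}_{\mathcal{X}}$ has enough projectives as well. It remains to exhibit a subcategory $\mathcal{N}\subseteq\mmod\underline{\mathcal{A}}_{\mathcal{X}}$ containing $i^*(P)$ for every projective $\mathcal{A}$-module $P$ and for which $i_*$ is a contravariantly $\mathcal{N}$-relative $(d-1)$-homological embedding. I would take $\mathcal{N}:=\mathcal{M}_{\mathcal{X}}$. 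Being left adjoint to the exact functor $i_*$, the functor $i^*$ preserves projectivity, so $i^*(P)$ is a projective $\underline{\mathcal{A}}_{\mathcal{X}}$-module for every projective $\mathcal{A}$-module $P$. By the first hypothesis of \th\ref{lemma:d-CT:idempotent_reduction} every projective $\underline{\mathcal{A}}_{\mathcal{X}}$-module lies in $\mathcal{M}$, hence in $\mathcal{M}_{\mathcal{X}}=\mathcal{M}\cap\mmod\underline{\mathcal{A}}_{\mathcal{X}}$; thus $i^*(P)\in\mathcal{M}_{\mathcal{X}}$ as required.

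Finally, part \eqref{it:MX-relative} of \th\ref{lemma:d-CT:idempotent_reduction} asserts precisely that $i_*$ is an $\mathcal{M}_{\mathcal{X}}$-relative $(d-1)$-homological embedding, and in particular a contravariantly $\mathcal{M}_{\mathcal{X}}$-relative one. Applying Proposition \th\ref{lemma:d-CT:idempotent_reduction:homological_embedding}\eqref{it:after_enough} with $m=d-1$ and $\mathcal{N}=\mathcal{M}_{\mathcal{X}}$ then shows that $i_*$ is a $(d-1)$-homological embedding, which is the assertion of the corollary. The only step requiring genuine care is the first one: confirming that the idempotent quotient genuinely produces a recollement in the locally bounded (and possibly non-finite-dimensional) setting, with $i^*$ the claimed left adjoint; once this structural input is secured, the remaining verifications are routine bookkeeping built on the first hypothesis of \th\ref{lemma:d-CT:idempotent_reduction}.
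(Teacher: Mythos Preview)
Your proposal is correct and is exactly the intended argument: the paper states the corollary immediately after Proposition~\ref{lemma:d-CT:idempotent_reduction:homological_embedding} without proof, and your write-up supplies precisely the details needed to apply that proposition with $\mathcal{N}=\mathcal{M}_{\mathcal{X}}$, using \th\ref{lemma:d-CT:idempotent_reduction}\eqref{it:MX-relative} for the relative embedding hypothesis and the first bullet of the hypotheses of \th\ref{lemma:d-CT:idempotent_reduction} to ensure $i^*(P)\in\mathcal{M}_{\mathcal{X}}$. Your closing caveat about the existence of the recollement at the level of $\mmod$ in the locally bounded setting is appropriate and is the only point not made fully explicit in the paper itself.
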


\begin{remark}
  In the setting of \th\ref{the-acor}, we note that in general the embedding
  \[
    \begin{tikzcd}[column sep=small]
      \mmod \underline{\mathcal{A}}_{\mathcal{X}}\rar[hook]&\mmod \mathcal{A}
    \end{tikzcd}
  \]
  is not a $d$-homological embedding.
\end{remark}

\section{Nakayama algebras associated to unbounded Kupisch series\protect\footnote{by Sondre Kvamme}}

In this appendix, the restriction on Kupisch series to be bounded is removed. 

\begin{definition}
A \emph{Kupisch series of type} $\mathbb{A}^{\infty}_{\infty}$ is an infinite tuple $\underline{\ell}=(\cdots, \ell_{-1},\ell_0,\ell_1,\cdots )$ where $\ell_i$ is a nonnegative integer or $\ell_i=\infty$, satisfying the following inequalities
\[
\ell_i\leq \ell_{i-1}+1
\] 
for all $i\in \mathbb{Z}$ (with the obvious interpretation for $\ell_i=\infty$ or $\ell_{i-1}=\infty$ ).
\end{definition} 

\begin{definition} Let $\underline{\ell}$ be a Kupisch series of type $\mathbb{A}^{\infty}_{\infty}$. Define
\begin{align*}\mathbf{os}^{d+1}_{\underline{\ell}}:= \{\mathbf{\lambda}\in \mathbf{os}^{d+1}\mid \len(\mathbf{\lambda})\leq \ell_{\lambda_{d+1}}\}
\end{align*}
\begin{align*}
A^{(d+1)}_{\underline{\ell}}= A^{(d+1)}_{\infty}/[\mathbf{os}^{d+1}\setminus\mathbf{os}^{d+1}_{\underline{\ell}}]
\end{align*}
\begin{align*}
\mathcal{M}^{(d)}_{\underline{\ell}}:= \add \{M(\mathbf{\lambda})\mid \mathbf{\lambda}\in \mathbf{os}^{d+1}_{\underline{\ell}}\}
\end{align*}
With the obvious interpretation for $\ell_i=\infty$.
\end{definition}
Then as in Proposition \ref{prop:An:l:M}  one obtains that 
\[
\mathcal{M}^{(d)}_{\underline{\ell}}\subseteq \mmod A^{(d)}_{\underline{\ell}}.
\]
Note that the set $\operatorname{KS}(\mathbb{A}^{\infty}_{\infty})$ of Kupisch series of type $\mathbb{A}^{\infty}_{\infty}$ has a partial order coming from the product order. If $\underline{\ell}\leq \underline{\ell'}$, then we get an inclusion
\[
\mmod A^{(d)}_{\underline{\ell}} \to \mmod A^{(d)}_{\underline{\ell'}}
\]
and it is easy to see that 
\[
\mathcal{M}^{(d)}_{\underline{\ell}} = \mathcal{M}^{(d)}_{\underline{\ell'}}\cap \mmod A^{(d)}_{\underline{\ell}}
\]
Furthermore, if $\underline{\ell}^1\leq\underline{\ell}^2\leq\cdots$ is an
increasing sequence in $\operatorname{KS}(\mathbb{A}^{\infty}_{\infty})$ which converges to
$\underline{\ell}$ (in the obvious way), then we have that
\begin{align*}
& \mmod A^{(d)}_{\underline{\ell}} = \bigcup_{i\geq 0} \mmod A^{(d)}_{\underline{\ell^i}} \\
& \mathcal{M}^{(d)}_{\underline{\ell}} = \bigcup_{i\geq 0} \mathcal{M}^{(d)}_{\underline{\ell^i}}
\end{align*}
We show that convergence preserve $d$-cluster tilting subcategories.

\begin{lemma}
If $\mathcal{M}^{(d)}_{\underline{\ell^i}}\subseteq \mmod A^{(d)}_{\underline{\ell^i}}$ is $d\mathbb{Z}$-cluster tilting for all $i>0$, then $\mathcal{M}^{(d)}_{\underline{\ell}}\subseteq \mmod A^{(d)}_{\underline{\ell}}$ is $d\mathbb{Z}$-cluster tilting.
\end{lemma}

\begin{proof}
Obviously $\mathcal{M}^{(d)}_{\underline{\ell}}\subseteq \mmod A^{(d)}_{\underline{\ell}}$ is generating and cogenerating. We show rigidity. Let $X,Y\in \mathcal{M}^{(d)}_{\underline{\ell}}$, and let $\delta \in \Ext^j_{A^{(d)}_{\underline{\ell}}}(X,Y)$ be an arbitrary element where $j\notin d\mathbb{Z}$. Then there exists $i\geq 0$ such that $X$, $Y$, and all the middle terms in $\delta$ are in $\mmod A^{(d)}_{\underline{\ell^i}}$. Since $X,Y\in \mathcal{M}^{(d)}_{\underline{\ell}}\cap \mmod A^{(d)}_{\underline{\ell^i}}= \mathcal{M}^{(d)}_{\underline{\ell^i}}$, it follows that $\delta$ is trivial in the sense of Yoneda in $\mmod A^{(d)}_{\underline{\ell^i}}$, and therefore also in $\mmod A^{(d)}_{\underline{\ell}}$. This proves rigidity.

Now assume $X\in \mmod A^{(d)}_{\underline{\ell}}$ and $\Ext^j_{A^{(d)}_{\underline{\ell}}}(X,\mathcal{M}^{(d)}_{\underline{\ell}})=0$ for all $1\leq j\leq d-1$. Choose $i$ such that $X\in \mmod A^{(d)}_{\underline{\ell^i}}$, and choose an exact sequence
\[
M_d\to M_{d-1}\to \cdots \to M_1\to X\to 0
\]
with $M_j\in \mathcal{M}^{(d)}_{\underline{\ell^i}}\subset \mathcal{M}^{(d)}_{\underline{\ell}}$ for all $1\leq j\leq d$. Then by assumption on rigidity, we know that the sequence
\begin{multline*}
0\to \Hom_{A^{(d)}_{\underline{\ell}}}(X,Y)\to \Hom_{A^{(d)}_{\underline{\ell}}}(M_1,Y)\to \cdots \to \Hom_{A^{(d)}_{\underline{\ell}}}(M_d,Y)
\end{multline*}
is exact for $Y\in \mathcal{M}^{(d)}_{\underline{\ell^i}}$. Since $\Hom_{A^{(d)}_{\underline{\ell}}}(M,N)= \Hom_{A^{(d)}_{\underline{\ell^i}}}(M,N)$ for modules $M,N\in \mmod A^{(d)}_{\underline{\ell^i}}$, it follows that the sequence 
\begin{multline*}
0\to \Hom_{A^{(d)}_{\underline{\ell^i}}}(X,Y)\to \Hom_{A^{(d)}_{\underline{\ell^i}}}(M_1,Y)\to \cdots \to \Hom_{A^{(d)}_{\underline{\ell^i}}}(M_d,Y)
\end{multline*}
is exact. This implies that $\Ext^j_{A^{(d)}_{\underline{\ell^i}}}(X,Y)=0$ for all $1\leq j \leq d-1$, and since $Y\in \mathcal{M}^{(d)}_{\underline{\ell^i}}$ was arbitrary, we get that $X\in \mathcal{M}^{(d)}_{\underline{\ell^i}}\subseteq \mathcal{M}^{(d)}_{\underline{\ell}}$. The proof that $\Ext^j_{A^{(d)}_{\underline{\ell}}}(\mathcal{M}^{(d)}_{\underline{l}},X)=0$ for all $1\leq j\leq d-1$ implies $X\in \mathcal{M}^{(d)}_{\underline{\ell}}$ is dual.

Finally, we prove that $\mathcal{M}^{(d)}_{\underline{\ell}}$ is functorially finite. Let $X\in \mmod A^{(d)}_{\underline{\ell}}$ be arbitrary. Choose $i$ such that $X\in \mmod A^{(d)}_{\underline{\ell^i}}$, and choose an exact sequence
\[
0\to M_{d}\to \cdots\to M_1\to X\to 0
\]
 with $M_j\in \mathcal{M}^{(d)}_{\underline{\ell^i}}\subseteq \mathcal{M}^{(d)}_{\underline{\ell}}$ for all $1\leq j\leq d$ (this is possible since $\mathcal{M}^{(d)}_{\underline{\ell^i}}$ is $d$-cluster tilting). Since $\mathcal{M}^{(d)}_{\underline{\ell}}$ is rigid in degrees $1$ to $d-1$, it follows that the map $M_1\to X$ is a right $\mathcal{M}^{(d)}_{\underline{\ell}}$-approximation. Hence, $\mathcal{M}^{(d)}_{\underline{\ell}}$ is contravariantly finite. Covariantly finiteness is proved dually. 
\end{proof}

We say that a Kupisch series $\underline{\ell}$ is finite if $\ell_i\neq 0$ for
only finitely many integers $i$. Note that in this case
$A^{(d)}_{\underline{\ell}}$ is just a product of higher Nakayama algebras of
type $\mathbb{A}$, and it is known that
$\mathcal{M}^{(d)}_{\underline{\ell}}\subseteq \mmod A^{(d)}_{\underline{\ell}}$
is $d\mathbb{Z}$-cluster tilting, see \th\ref{thm:An:l}.

\begin{theorem}
  The subcategory $\mathcal{M}_{\underline{\ell}}^{(d)} \subseteq \mmod
  A_{\underline{\ell}}^{(d)}$ is $d\mathbb{Z}$-cluster tilting for every
  Kupisch series $\underline{\ell}$ of type $\mathbb{A}_{\infty}^{\infty}$.
\end{theorem}
\begin{proof}
  Since every Kupisch series of type $\mathbb{A}_{\infty}^{\infty}$ can be written as an
  increasing convergent sequence of finite Kupisch series, the claim follows
  from the previous lemma.
\end{proof}


\bibliographystyle{amsalpha}%
\bibliography{library}%

\providecommand{\bysame}{\leavevmode\hbox to3em{\hrulefill}\thinspace}
\providecommand{\MR}{\relax\ifhmode\unskip\space\fi MR }
\providecommand{\MRhref}[2]{%
  \href{http://www.ams.org/mathscinet-getitem?mr=#1}{#2}
}
\providecommand{\href}[2]{#2}
\begin{thebibliography}{RVdB02}

\bibitem[AR74]{AR74}
M.~Auslander and I.~Reiten, \emph{Stable equivalence of dualizing
  {$R$}-varieties}, Advances in Math. \textbf{12} (1974), 306--366.
  \MR{0342505}

\bibitem[ARS97]{ARS97}
M.~Auslander, I.~Reiten, and S.~O. Smal{\o}, \emph{Representation theory of
  {A}rtin algebras}, Cambridge Studies in Advanced Mathematics, vol.~36,
  Cambridge University Press, Cambridge, 1997, Corrected reprint of the 1995
  original. \MR{1476671}

\bibitem[AS80]{AS80}
M.~Auslander and S.~O. Smal\o, \emph{Preprojective modules over {A}rtin
  algebras}, J. Algebra \textbf{66} (1980), no.~1, 61--122. \MR{591246}

\bibitem[ASS06]{ASS06}
I.~Assem, D.~Simson, and A.~Skowro{\'n}ski, \emph{Elements of the
  representation theory of associative algebras. {V}ol. 1}, London Mathematical
  Society Student Texts, vol.~65, Cambridge University Press, Cambridge, 2006,
  Techniques of representation theory. \MR{2197389}

\bibitem[Bar15]{Bar15}
M.~Barot, \emph{Introduction to the representation theory of algebras},
  Springer, Cham, 2015. \MR{3309698}

\bibitem[BK89]{BK89}
A.~I. Bondal and M.~M. Kapranov, \emph{Representable functors, {S}erre
  functors, and reconstructions}, Izv. Akad. Nauk SSSR Ser. Mat. \textbf{53}
  (1989), no.~6, 1183--1205, 1337. \MR{1039961}

\bibitem[CB88]{Cra88}
W.~W. Crawley-Boevey, \emph{On tame algebras and bocses}, Proc. London Math.
  Soc. (3) \textbf{56} (1988), no.~3, 451--483. \MR{931510}

\bibitem[DI17]{DI17}
E.~Darp{\"o} and O.~Iyama, \emph{d-representation-finite self-injective
  algebras}, arXiv:1702.01866 (2017).

\bibitem[DJW18]{DJW18}
T.~Dyckerhoff, G.~Jasso, and T.~Walde, \emph{Simplicial structures in higher
  {A}uslander--{R}eiten theory}, arXiv:1811.02461 (2018).

\bibitem[Erd95]{Erd95}
K.~Erdmann, \emph{On {A}uslander-{R}eiten components for group algebras}, J.
  Pure Appl. Algebra \textbf{104} (1995), no.~2, 149--160. \MR{1360172}

\bibitem[Gab72]{Gab72}
P.~Gabriel, \emph{Unzerlegbare {D}arstellungen. {I}}, Manuscripta Math.
  \textbf{6} (1972), 71--103; correction, ibid. 6 (1972), 309. \MR{0332887}

\bibitem[GKO13]{GKO13}
C.~Geiss, B.~Keller, and S.~Oppermann, \emph{{$n$}-angulated categories}, J.
  Reine Angew. Math. \textbf{675} (2013), 101--120. \MR{3021448}

\bibitem[GL91]{GL91}
W.~Geigle and H.~Lenzing, \emph{Perpendicular categories with applications to
  representations and sheaves}, J. Algebra \textbf{144} (1991), no.~2,
  273--343. \MR{1140607}

\bibitem[GR97]{GR97}
P.~Gabriel and A.~V. Roiter, \emph{Representations of finite-dimensional
  algebras}, Springer-Verlag, Berlin, 1997, Translated from the Russian, With a
  chapter by B. Keller, Reprint of the 1992 English translation. \MR{1475926}

\bibitem[Hap88]{Hap88}
D.~Happel, \emph{Triangulated categories in the representation theory of
  finite-dimensional algebras}, London Mathematical Society Lecture Note
  Series, vol. 119, Cambridge University Press, Cambridge, 1988. \MR{935124}

\bibitem[HI11]{HI11}
M.~Herschend and O.~Iyama, \emph{Selfinjective quivers with potential and
  2-representation-finite algebras}, Compos. Math. \textbf{147} (2011), no.~6,
  1885--1920. \MR{2862066}

\bibitem[HIO14]{HIO14}
M.~Herschend, O.~Iyama, and S.~Oppermann, \emph{{$n$}-representation infinite
  algebras}, Adv. Math. \textbf{252} (2014), 292--342. \MR{3144232}

\bibitem[IJ17]{IJ17}
O.~Iyama and G.~Jasso, \emph{Higher {A}uslander correspondence for dualizing
  {$R$}-varieties}, Algebr. Represent. Theory \textbf{20} (2017), no.~2,
  335--354. \MR{3638352}

\bibitem[IO11]{IO11}
O.~Iyama and S.~Oppermann, \emph{{$n$}-representation-finite algebras and
  {$n$-APR} tilting}, Trans. Amer. Math. Soc. \textbf{363} (2011), no.~12,
  6575--6614. \MR{2833569}

\bibitem[IO13]{IO13}
\bysame, \emph{Stable categories of higher preprojective algebras}, Adv. Math.
  \textbf{244} (2013), 23--68. \MR{3077865}

\bibitem[IT13]{IT13}
O.~Iyama and R.~Takahashi, \emph{Tilting and cluster tilting for quotient
  singularities}, Math. Ann. \textbf{356} (2013), no.~3, 1065--1105.
  \MR{3063907}

\bibitem[IW14]{IW14}
O.~Iyama and M.~Wemyss, \emph{Maximal modifications and {A}uslander-{R}eiten
  duality for non-isolated singularities}, Invent. Math. \textbf{197} (2014),
  no.~3, 521--586. \MR{3251829}

\bibitem[Iya07a]{Iya07a}
O.~Iyama, \emph{Auslander correspondence}, Adv. Math. \textbf{210} (2007),
  no.~1, 51--82. \MR{2298820}

\bibitem[Iya07b]{Iya07}
\bysame, \emph{Higher-dimensional {A}uslander-{R}eiten theory on maximal
  orthogonal subcategories}, Adv. Math. \textbf{210} (2007), no.~1, 22--50.
  \MR{2298819}

\bibitem[Iya11]{Iya11}
\bysame, \emph{Cluster tilting for higher {A}uslander algebras}, Adv. Math.
  \textbf{226} (2011), no.~1, 1--61. \MR{2735750}

\bibitem[Jas16]{Jas16}
G.~Jasso, \emph{{$n$}-abelian and {$n$}-exact categories}, Math. Z.
  \textbf{283} (2016), no.~3-4, 703--759. \MR{3519980}

\bibitem[JKa]{JKa}
G.~Jasso and J.~K{\"u}lshammer, \emph{Higher {N}akayama algebras {II}:
  {P}eriodic algebras which are almost {K}oszul and {C}alabi--{Y}au algebras},
  In prepraration.

\bibitem[JKb]{JKb}
\bysame, \emph{Higher {N}akayama algebras {III}: {S}pherical objects in higher
  {A}uslander--{R}eiten theory and triangulations of the cyclic apeirotope}, In
  prepraration.

\bibitem[Kel64]{Kel64a}
G.~M. Kelly, \emph{On the radical of a category}, J. Austral. Math. Soc.
  \textbf{4} (1964), 299--307. \MR{0170922}

\bibitem[Kra15]{Kra15}
H.~Krause, \emph{Krull-{S}chmidt categories and projective covers}, Expo. Math.
  \textbf{33} (2015), no.~4, 535--549. \MR{3431480}

\bibitem[Kup59]{Kup59}
H.~Kupisch, \emph{Beitr\"age zur {T}heorie nichthalbeinfacher {R}inge mit
  {M}inimalbedingung}, J. Reine Angew. Math. \textbf{201} (1959), 100--112.
  \MR{0104707}

\bibitem[MOS09]{MOS09}
V.~Mazorchuk, S.~Ovsienko, and C.~Stroppel, \emph{Quadratic duals, {K}oszul
  dual functors, and applications}, Trans. Amer. Math. Soc. \textbf{361}
  (2009), no.~3, 1129--1172. \MR{2457393}

\bibitem[OT12]{OT12}
S.~Oppermann and H.~Thomas, \emph{Higher-dimensional cluster combinatorics and
  representation theory}, J. Eur. Math. Soc. (JEMS) \textbf{14} (2012), no.~6,
  1679--1737. \MR{2984586}

\bibitem[Psa14]{Psa14}
C.~Psaroudakis, \emph{Homological theory of recollements of abelian
  categories}, J. Algebra \textbf{398} (2014), 63--110. \MR{3123754}

\bibitem[Rie80]{Rie80}
C.~Riedtmann, \emph{Algebren, {D}arstellungsk\"ocher, \"uberlagerungen und
  zur\"uck}, Comment. Math. Helv. \textbf{55} (1980), no.~2, 199--224.
  \MR{576602}

\bibitem[Rin78]{Rin78}
C.~M. Ringel, \emph{Finite dimensional hereditary algebras of wild
  representation type}, Math. Z. \textbf{161} (1978), no.~3, 235--255.
  \MR{501169}

\bibitem[RVdB02]{RVdB02}
I.~Reiten and M.~Van~den Bergh, \emph{Noetherian hereditary abelian categories
  satisfying {S}erre duality}, J. Amer. Math. Soc. \textbf{15} (2002), no.~2,
  295--366. \MR{1887637}

\bibitem[Sch99]{Sch99}
J.~Schr\"oer, \emph{On the quiver with relations of a repetitive algebra},
  Arch. Math. (Basel) \textbf{72} (1999), no.~6, 426--432. \MR{1687516}

\bibitem[SS07]{SS07}
D.~Simson and A.~Skowro\'nski, \emph{Elements of the representation theory of
  associative algebras. {V}ol. 2}, London Mathematical Society Student Texts,
  vol.~71, Cambridge University Press, Cambridge, 2007, Tubes and concealed
  algebras of Euclidean type. \MR{2360503}

\bibitem[Ste71]{Ste71}
B.~Stenstr\"{o}m, \emph{Rings and modules of quotients}, Lecture Notes in
  Mathematics, Vol. 237, Springer-Verlag, Berlin-New York, 1971. \MR{0325663}

\bibitem[Web82]{Web82}
P.~J. Webb, \emph{The {A}uslander-{R}eiten quiver of a finite group}, Math. Z.
  \textbf{179} (1982), no.~1, 97--121. \MR{643050}

\bibitem[Yao96]{Yao96}
D.~Yao, \emph{On equivalence of derived categories}, $K$-Theory \textbf{10}
  (1996), no.~3, 307--322. \MR{1394382}

\bibitem[Yon60]{Yon60}
N.~Yoneda, \emph{On {E}xt and exact sequences}, J. Fac. Sci. Univ. Tokyo Sect.
  I \textbf{8} (1960), 507--576 (1960). \MR{0225854}

\end{thebibliography}


\end{document}